\DeclareFontFamily{T1}{pzc}{}
\DeclareFontShape{T1}{pzc}{m}{it}{1.8 <-> pzcmi8t}{}
\DeclareMathAlphabet{\mathpzc}{T1}{pzc}{m}{it}
\theoremstyle{plain}
\newtheorem{prop}{Proposition}[section]
\newtheorem{lem}[prop]{Lemma}%[section]
\newtheorem{cor}[prop]{Corollary}%[section]
\newtheorem{thm}[prop]{Theorem}%[section]
\newtheorem{theorem}[prop]{Theorem}
\newtheorem{lemma}[prop]{Lemma}
\newtheorem{proposition}[prop]{Proposition}
\newtheorem{corollary}[prop]{Corollary}
\theoremstyle{definition}
\newtheorem{defn}[prop]{Definition}%[section]
\newtheorem{empt}[prop]{}%[section]
\newtheorem{rem}[prop]{Remark}%[section]
\theoremstyle{definition}
\newtheorem{definition}[prop]{Definition}
\newtheorem{example}[prop]{Example}
\newtheorem{remark}[prop]{Remark}
\numberwithin{equation}{section}
\newcommand{\vertiii}[1]{{\left\vert\kern-0.25ex\left\vert\kern-0.25ex\left\vert #1
    \right\vert\kern-0.25ex\right\vert\kern-0.25ex\right\vert}}
\newcommand{\Ga}{\Gamma}                     %% short for  \Gamma
\newcommand{\Coo}{C^\infty}                  %% smooth functions
\newbox\ncintdbox \newbox\ncinttbox %% noncommutative integral symbols
\newcommand{\Id}{\mathrm{Id}}                %% identity map
\newcommand{\A}{\mathcal{A}}                 %% an algebra
\newcommand{\C}{\mathbb{C}}                  %% complex numbers
\renewcommand{\H}{\mathcal{H}}               %% Hilbert space
\newcommand{\half}{\tfrac{1}{2}}             %% small fraction  1/2
\newcommand{\hookto}{\hookrightarrow}        %% abbreviation
\newcommand{\La}{\Lambda}                    %% short for \Lambda
\newcommand{\la}{\lambda}                    %% short for \lambda
\newcommand{\N}{\mathbb{N}}                  %% nonnegative integers
\newcommand{\om}{\omega}                     %% short for \omega
\newcommand{\eps}{\varepsilon}                    %% tensor product
\newcommand{\Q}{\mathbb{Q}}                  %% rational numbers
\newcommand{\R}{\mathbb{R}}                  %% real numbers
\newcommand{\set}[1]{\{\,#1\,\}}             %% set notation
\renewcommand{\SS}{\mathcal{S}}              %% Schwartz space
\DeclareMathOperator{\supp}{\mathfrak{supp}}         %% support
\newcommand{\T}{\mathbb{T}}                  %% circle as a group
\renewcommand{\th}{\theta}                   %% short for \theta
\DeclareMathOperator{\tr}{tr}                %% trace of matrix
\newcommand{\Z}{\mathbb{Z}}                  %% integers
\renewcommand{\.}{\cdot}                     %% anonymous variable
\newcommand{\sS}{\mathcal{S}}       %%
\newcommand{\Om}{\Omega}       %%
\newcommand{\al}{\alpha}          %% short for  \alpha
\newcommand{\bt}{\beta}           %% short for  \beta
\newcommand{\ga}{\gamma}          %% short for  \gamma
\newcommand{\Th}{\Theta}          %% short for  \Theta
\renewcommand{\th}{\theta}        %% short for  \theta
\def\<#1|#2>{\langle#1\stroke#2\rangle} %% \braket (Dirac notation)
\def\?#1|#2?{\{#1\stroke#2\}}        %% left-linear pairing
\def\<#1,#2>{\langle#1,#2\rangle}            %% bilinear pairing
\def\ee_#1{e_{{\scriptscriptstyle#1}}}       %% basis projector
\def\wick:#1:{\mathopen:#1\mathclose:}       %% Wick-ordered operator
\newbox\ncintdbox \newbox\ncinttbox %% noncommutative integral symbols
\newcommand{\stroke}{\mathbin|}   %% (for `\bbraket' and such)
\newcommand{\Aut}{\mathrm{Aut}}       %%
\newcommand{\be}{\begin{equation}}
\renewcommand{\ee}{\end{equation}}
\newcommand{\bea}{\begin{eqnarray}}
\newcommand{\eea}{\end{eqnarray}}
\newcommand{\bean}{\begin{eqnarray*}}
	\newcommand{\eean}{\end{eqnarray*}}
\newcommand{\brray}{\begin{array}}
	\newcommand{\erray}{\end{array}}
\title{Quantization of noncompact coverings}
\begin{document}
\maketitle  \setlength{\parindent}{0pt}
\begin{center}
\author{
{\textbf{Petr R. Ivankov*}\\
e-mail: * monster.ivankov@gmail.com \\
}
}
\end{center}

\vspace{1 in}

%\begin{abstract}
\noindent

\paragraph{}

The concept of  quantization consists in replacing commutative
quantities by noncommutative ones. In mathematical language
an algebra of continuous functions on a locally compact topological
space is replaced with a noncommutative $C^*$-algebra. Some classical topological notions have noncommutative generalizations. This article is concerned with a generalization of coverings. 
%\end{abstract}
\tableofcontents
\section{Motivation. Preliminaries}
\paragraph*{}
Gelfand-Na\u{\i}mark theorem \cite{arveson:c_alg_invt} states the correspondence between  locally compact Hausdorff topological spaces and commutative $C^*$-algebras.

\begin{theorem}\label{gelfand-naimark}\cite{arveson:c_alg_invt} (Gelfand-Na\u{\i}mark). 
	Let $A$ be a commutative $C^*$-algebra and let $\mathcal{X}$ be the spectrum of $A$. There is the natural $*$-isomorphism $\gamma:A \to C_0(\mathcal{X})$.
\end{theorem}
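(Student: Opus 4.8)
The plan is to realize $\gamma$ as the \emph{Gelfand transform} and verify it is an isometric $*$-isomorphism onto $C_0(\mathcal{X})$. Recall that the spectrum $\mathcal{X}$ is the set of nonzero characters (multiplicative linear functionals) $\varphi\colon A\to\mathbb{C}$, topologized by the weak-$*$ topology inherited from the dual $A^*$. First I would establish that $\mathcal{X}$ is a locally compact Hausdorff space: every character is automatically contractive (since $|\varphi(a)|\le r(a)\le\|a\|$), so by the Banach--Alaoglu theorem the set $\mathcal{X}\cup\{0\}$ is weak-$*$ compact, and it contains $0$ as a point; hence $\mathcal{X}$ is an open subset of a compact Hausdorff space, thus locally compact Hausdorff. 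I then define $\gamma(a)=\hat a$ by $\hat a(\varphi)=\varphi(a)$. Continuity of $\hat a$ is precisely the definition of the weak-$*$ topology, and $\hat a$ vanishes at infinity because it extends continuously to $\mathcal{X}\cup\{0\}$ by the value $0$ there. So $\gamma$ maps $A$ into $C_0(\mathcal{X})$.

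Next I would check that $\gamma$ is a $*$-homomorphism. Linearity and multiplicativity are immediate, since each $\varphi$ is linear and multiplicative. For the $*$-property one must show $\varphi(a^*)=\overline{\varphi(a)}$ for every character $\varphi$; writing $a=x+iy$ with $x,y$ self-adjoint it suffices to treat self-adjoint $a$, and then $\varphi(a)\in\mathbb{R}$ because $\varphi(a)$ lies in the spectrum of $a$ (computed in the unitization $\widetilde A$), which is real for self-adjoint elements of a $C^*$-algebra. Hence $\widehat{a^*}=\overline{\hat a}$.

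The heart of the argument is that $\gamma$ is isometric. For self-adjoint $a$ the $C^*$-identity gives $\|a^2\|=\|a^*a\|=\|a\|^2$, and iterating, $\|a^{2^n}\|=\|a\|^{2^n}$, so the spectral radius formula $r(a)=\lim_n\|a^n\|^{1/n}$ yields $r(a)=\|a\|$. Since $A$ is commutative, the Gelfand theory of commutative Banach algebras identifies the spectrum of $a$ with $\{\varphi(a):\varphi\in\mathcal{X}\}\cup\{0\}$, so $r(a)=\sup_{\varphi\in\mathcal{X}}|\varphi(a)|=\|\hat a\|_\infty$. For general $a\in A$ we then get $\|a\|^2=\|a^*a\|=\|\widehat{a^*a}\|_\infty=\bigl\||\hat a|^2\bigr\|_\infty=\|\hat a\|_\infty^2$, so $\|\gamma(a)\|=\|a\|$. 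In particular $\gamma$ is injective with closed range.

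Finally, surjectivity follows from the Stone--Weierstrass theorem for $C_0$ of a locally compact Hausdorff space: the image $\gamma(A)$ is a closed $*$-subalgebra of $C_0(\mathcal{X})$ that separates the points of $\mathcal{X}$ (two distinct characters disagree on some $a$, hence $\hat a$ separates them) and vanishes nowhere (a nonzero character is nonzero on some $a$); therefore $\gamma(A)=C_0(\mathcal{X})$. Naturality — that the isomorphism is compatible with $*$-homomorphisms $A\to B$, which induce proper continuous maps of spectra in the opposite direction — is then a routine diagram chase. The step I expect to be the main obstacle is the isometry: within it, the reduction to self-adjoint elements and the careful bookkeeping of spectra under adjoining a unit, since $A$ need not be unital; the remaining ingredients are either soft topology (Banach--Alaoglu) or a direct invocation of Stone--Weierstrass.
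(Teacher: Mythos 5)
Your proposal is correct and is essentially the standard Gelfand-transform proof found in the cited reference: local compactness of the character space via Banach--Alaoglu, the $*$-property from reality of the spectrum of self-adjoint elements, isometry from $r(a)=\|a\|$ on self-adjoint elements combined with the $C^*$-identity, and surjectivity from Stone--Weierstrass. The paper states this theorem as a citation without proof, and your argument, including the careful handling of the non-unital case through the unitization and the point $0$ as the point at infinity, fills it in correctly.
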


So any (noncommutative) $C^*$-algebra may be regarded as a generalized (noncommutative)  locally compact Hausdorff topological space. Following theorem yields a pure algebraic description of finite-fold coverings of compact spaces.
\begin{theorem}\label{pavlov_troisky_thm}\cite{pavlov_troisky:cov}
	Suppose $\mathcal X$ and $\mathcal Y$ are compact Hausdorff connected spaces and $p :\mathcal  Y \to \mathcal X$
	is a continuous surjection. If $C(\mathcal Y )$ is a projective finitely generated Hilbert module over
	$C(\mathcal X)$ with respect to the action
	\begin{equation*}
	(f\xi)(y) = f(y)\xi(p(y)), ~ f \in  C(\mathcal Y ), ~ \xi \in  C(\mathcal X),
	\end{equation*}
	then $p$ is a finite-fold  covering.
\end{theorem}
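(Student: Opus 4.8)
The plan is to combine the Serre--Swan correspondence with an analysis of pointwise multiplication operators as vector bundle endomorphisms. Since $C(\mathcal Y)$ is a finitely generated projective $C(\mathcal X)$-module, Serre--Swan produces a finite-rank complex vector bundle $E\to\mathcal X$ with $C(\mathcal Y)\cong\Ga(E)$ as $C(\mathcal X)$-modules, and the rank is constant, say equal to $n$, because $\mathcal X$ is connected. (Connectedness of $\mathcal Y$ will not be needed for this direction.) The fibre at $x$ is $E_x\cong C(\mathcal Y)/I_xC(\mathcal Y)$ with $I_x=\{\xi\in C(\mathcal X):\xi(x)=0\}$. One checks that $I_xC(\mathcal Y)=\{g\in C(\mathcal Y):g|_{p^{-1}(x)}=0\}$: the inclusion $\subseteq$ is clear, and conversely, given such a $g$ and $\eps>0$, the set $p(\{\,|g|\ge\eps\,\})$ is compact and omits $x$, so Urysohn's lemma yields $\xi\in I_x$ equal to $1$ on it, whence $(\xi\circ p)\,g\in I_xC(\mathcal Y)$ approximates $g$ to within $\eps$. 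By the Tietze extension theorem this ideal is exactly the kernel of the restriction map $C(\mathcal Y)\to C(p^{-1}(x))$, so $E_x\cong C(p^{-1}(x))$. Comparing dimensions forces $p^{-1}(x)$ to be finite with exactly $n$ elements, for every $x\in\mathcal X$.

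Next, fix $x_0\in\mathcal X$, write $p^{-1}(x_0)=\{y_1,\dots,y_n\}$, and use Urysohn's lemma to choose $h\in C(\mathcal Y)$ with the numbers $h(y_1),\dots,h(y_n)$ pairwise distinct. Multiplication by $h$ is a bounded endomorphism $T_h$ of $C(\mathcal Y)$ that commutes with the $C(\mathcal X)$-action (which is multiplication by functions of the form $\xi\circ p$), hence is $C(\mathcal X)$-linear, i.e.\ a continuous bundle endomorphism of $E$. Under the identification $E_x\cong C(p^{-1}(x))$ it acts on the fibre as multiplication by $h|_{p^{-1}(x)}$, a diagonal operator with spectrum $\{h(y):y\in p^{-1}(x)\}$, so its characteristic polynomial is $P_x(\la)=\prod_{y\in p^{-1}(x)}(\la-h(y))$, monic of degree $n$, and its coefficients, being up to sign the traces of the exterior powers of the bundle map $T_h$, depend continuously on $x$.

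Finally, since the roots of $P_{x_0}$ are simple, its discriminant is nonzero, so there is an open $U\ni x_0$ on which the discriminant stays nonzero; by the classical continuous dependence of simple roots on the coefficients there are continuous maps $\la_1,\dots,\la_n\colon U\to\C$, pairwise distinct at each point, with $\{\la_1(x),\dots,\la_n(x)\}=\{h(y):y\in p^{-1}(x)\}$ for $x\in U$ and $\la_j(x_0)=h(y_j)$. Put $V_j=\{y\in p^{-1}(U):h(y)=\la_j(p(y))\}$. Then $p^{-1}(U)=V_1\sqcup\dots\sqcup V_n$ is a partition into subsets that are simultaneously open and closed in $p^{-1}(U)$, and each $p|_{V_j}\colon V_j\to U$ is a continuous bijection (distinctness of the $h$-values on a fibre gives injectivity; that every $\la_j(x)$ is attained gives surjectivity), with continuous inverse by a standard compactness argument: a net in a fibre has a convergent subnet whose limit is pinned down by its value under $h$. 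Thus $p$ is trivialised as an $n$-sheeted covering over $U$, and since $x_0$ was arbitrary, $p$ is a finite-fold covering. I expect the crux to be this last passage: constant finite fibre cardinality by itself does not force $p$ to be a covering, and the decisive extra ingredient is that pointwise multiplication operators descend to bundle endomorphisms of $E$, so that the fibrewise spectrum of a single separating function $h$ varies continuously and splits near $x_0$ into disjoint continuous sheets.
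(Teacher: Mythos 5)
Your argument is correct. Note that the paper itself gives no proof of this statement -- it is imported verbatim from the cited Pavlov--Troitsky reference -- so there is nothing internal to compare against; your Serre--Swan route (realize $C(\mathcal Y)$ as $\Ga(E)$, identify $E_x\cong C(p^{-1}(x))$ to get constant fibre cardinality $n$, then use the multiplication operator $T_h$ as a bundle endomorphism whose fibrewise characteristic polynomial has continuous coefficients, and split $p^{-1}(U)$ into sheets by following the simple roots) is a complete and self-contained proof, and you correctly identify that the decisive point is local triviality rather than mere constancy of $|p^{-1}(x)|$. Two steps deserve one explicit sentence each. First, your approximation argument shows only that a function vanishing on $p^{-1}(x)$ lies in the \emph{closure} of $I_xC(\mathcal Y)$; you need that $I_xC(\mathcal Y)$ is already closed, which holds because for a finitely generated projective module $P=eC(\mathcal X)^N$ one has $I_xP=eC(\mathcal X)^N\cap I_x^N$, a closed subspace. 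Second, nonvanishing of the discriminant on $U$ does not by itself produce single-valued continuous root functions $\la_1,\dots,\la_n$ on all of $U$ (the root set is an $n$-fold covering of the space of discriminant-nonzero monic polynomials); you should shrink $U$ so that its image in coefficient space lies in a neighborhood of $P_{x_0}$ over which the roots stay in $n$ disjoint discs (the standard Rouch\'e argument), which is all you need since only a neighborhood of $x_0$ is required. With those two remarks made explicit, the proof is sound; in particular your observation that simplicity of the roots of $P_x$ for $x\in U$ forces $h$ to separate the points of \emph{every} fibre over $U$ is exactly what makes both the injectivity of $p|_{V_j}$ and the net argument for continuity of its inverse work.
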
 
This article contains  pure algebraic generalizations of following topological objects:
\begin{itemize}
	\item Coverings of noncompact spaces,
	\item Infinite coverings. 
\end{itemize}

This article assumes elementary knowledge of following subjects:
\begin{enumerate}
	\item Set theory \cite{halmos:set},
	\item Category theory  \cite{spanier:at},
	\item Algebraic topology  \cite{spanier:at},
	\item $C^*$-algebras, $C^*$-Hilbert modules \cite{blackadar:ko,pedersen:ca_aut}.
	
\end{enumerate}

The words "set", "family" and "collection" are synonyms.
% For any additive category the component $A_\iota$ of a direct product $\prod_{\iota \in I}A_\iota$ or a direct sum $\oplus_{\iota \in I}A_\iota$ can be regarded as the subobject, i.e.
% \begin{equation}\label{dir_summand_eqn}
% \begin{split}
% A_\iota \subset \prod_{\iota \in I}A_\iota,  \\
% A_\iota \subset \bigoplus_{\iota \in I}A_\iota.
%\end{split}
% \end{equation}
% In this article the above inclusions will be implicitly used.

\break
Following table contains  special symbols.
\newline
\begin{tabular}{|c|c|}
\hline
Symbol & Meaning\\
\hline
%$A^+$  & Unitization of $C^*-$ algebra $A$\\
%$A^0$  & Opposite algebra of  $A$  consisting of elements
%$\{a^0 : a \in A\}$ \\
%& with product $a^0b^0 = (ba)^0$.\\
&\\
$\hat{A}$ & Spectrum of a  $C^*$- algebra $A$  with the hull-kernel topology \\
 & (or Jacobson topology)\\
$A_+$  & Cone of positive elements of $C^*$- algebra, i.e. $A_+ = \left\{a\in A \ | \ a \ge 0\right\}$\\
$A^G$  & Algebra of $G$ - invariants, i.e. $A^G = \left\{a\in A \ | \ ga=a, \forall g\in G\right\}$\\
$\mathrm{Aut}(A)$ & Group of * - automorphisms of $C^*$- algebra $A$\\
$A''$  & Enveloping von Neumann algebra  of $A$\\

$B(\H)$ & Algebra of bounded operators on a Hilbert space $\H$\\
%$B_{\infty}=B_{\infty}(\{z\in \mathbb{C} \ | \ |z|=1\})$  & Algebra of Borel measured functions on the $\{z\in \mathbb{C} \ | \ |z|=1\}$ set. \\
$\mathbb{C}$ (resp. $\mathbb{R}$)  & Field of complex (resp. real) numbers \\
%$\mathbb{C}^*$ & $\{z \in \mathbb{C} \ | \ |z| = 1\}$ \\
$C(\mathcal{X})$ & $C^*$- algebra of continuous complex valued \\
 & functions on a compact  space $\mathcal{X}$\\
$C_0(\mathcal{X})$ & $C^*$- algebra of continuous complex valued functions on a locally \\
 &   compact  topological space $\mathcal{X}$ equal to $0$ at infinity\\
$C_c(\mathcal{X})$ & Algebra of continuous complex valued functions on a \\
 &  topological  space $\mathcal{X}$ with compact support\\
 $C_b(\mathcal{X})$ & $C^*$- algebra of bounded  continuous complex valued \\
  & functions on a locally compact topological space $\mathcal{X}$ \\
 %$\cl\left( \mathcal{U}\right)  $ & The closure of the subset $\mathcal{U} \subset \mathcal{X}$ of the topological space $\mathcal{X}$ \\
 % $\mathfrak{int}\left( \mathcal{U}\right)  $ & The interior of the subset $\mathcal{U} \subset \mathcal{X}$ of the topological space $\mathcal{X}$ \\
  %$G_{tors} \subset G$  & The torsion subgroup of an Abelian group\\
$G\left( \widetilde{\mathcal{X}}~ |~ \mathcal{X}\right) $ & Group of covering transformations of covering  $\widetilde{\mathcal{X}} \to \mathcal{X}$ \cite{spanier:at}  \\
%$\overline{G/G'}\subset G$  & A set of representatives of a quotient set $G/G'$\\
%$\delta_{ij}$ & Delta symbol. If $i = j$ then $\delta_{ij}=1$.  If $i \neq j$ then $\delta_{ij}=0$  \\
%$\Ga(\mathcal X, E)$ & A $C(\mathcal{X})$-module of sections of a locally trivial vector bundle $E \in \mathrm{Vect}(\mathcal{X})$ \\
$\H$ & Hilbert space \\
%$H_A$ & Hilbert space over  $A$ (definition \ref{hilb_a}) \\
$\mathcal{K}= \mathcal{K}\left(\H \right) $ & $C^*$- algebra of compact operators on the separable Hilbert space $\H$  \\
%$\mathcal{K}(X_A)$ & $C^*$ - algebra of compact operators of a Hilbert $A$ module $X_A$ \\
%$K_i(A)$ ($i = 0, 1$) & $K$ groups of $C^*$-algebra $A$\\
%$I = [0, 1] \subset \mathbb{R}$ & Closed unit  interval\\
$K(A)$ & Pedersen ideal of $C^*$-algebra $A$\\
%$\mathcal{K}(H)$ or $\mathcal{K}$ & Algebra of compact operators on Hilbert space $H$\\
$\varinjlim$ & Direct limit \\
$\varprojlim$ & Inverse limit \\
$M(A)$  & A multiplier algebra of $C^*$-algebra $A$\\
$\mathbb{M}_n(A)$  & The $n \times n$ matrix algebra over $C^*$-algebra $A$\\
$\mathbb{N}$  & A set of positive integer numbers\\
$\mathbb{N}^0$  & A set of nonnegative integer numbers\\
%$S^n$ & The $n$-dimensional sphere\\
%$SU(n)$ & Special unitary group \\

%$\mathscr{P}(\mathcal{X})$  & Fundamental groupoid of a topological space $\mathcal{X}$\\

%$\mathbb{Q}$  & Field of rational numbers \\
 % $\mathrm{sp}(a)$ & Spectrum of element of $C^*$-algebra $a\in A$  \\
%  $\mathrm{supp}(f)$ & Support of $f\in C_0(\mathcal{X})$, $\mathrm{supp}(f) = \left\{x \in \mathcal{X} \ | \ f(x)\neq 0 \right\}$   \\

%$TM$ (resp. $T^*M$) & Tangent (resp. cotangent) bundle of differentiable manifold $M$ \cite{koba_nomi:fgd}\\$U(H) \subset \mathcal{B}(H) $ & Group of unitary operators on Hilbert space $H$\\
$U(A) \subset A $ & Group of unitary operators of algebra $A$\\
%$U(n) \subset GL(n, \mathbb{C}) $ & Unitary subgroup of general linear group\\
%$\mathrm{Vect}(\mathcal{X})$ & A category of locally trivial vector bundles over a topological space $\mathcal X$ \cite{karoubi:k}\\ 
$\mathbb{Z}$ & Ring of integers \\

$\mathbb{Z}_n$ & Ring of integers modulo $n$ \\
$\overline{k} \in \mathbb{Z}_n$ & An element in $\mathbb{Z}_n$ represented by $k \in \mathbb{Z}$  \\
%$\Omega$ &  Natural contravariant functor from category  of commutative \\ & $C^*$ - algebras, to category of Hausdorff spaces\\
$X \backslash A$ & Difference of sets  $X \backslash A= \{x \in X \ | \ x\notin A\}$\\
$|X|$ & Cardinal number of a finite set $X$\\ 
$\left[x\right]$ & The range projection of element $x$ of a von Neumann algebra.\\ 
$f|_{A'}$& Restriction of a map $f: A\to B$ to $A'\subset A$, i.e. $f|_{A'}: A' \to B$\\ 
\hline
\end{tabular}

\break

%\begin{defn}
%Let 
%\begin{equation*}
%G_1 \leftarrow G_2 \leftarrow ...
%\end{equation*}

%$G_1 \leftarrow G_2 \leftarrow ...$ be infinite sequence of groups and epimorphisms, and let $G$ be a group with epimorpisms $h_n: G \to G_n$. The sequence is said to be {\it coherent} if $\bigcap\mathrm{ker} \ h_n$ is trivial and a following diagram is commutative.

%\begin{tikzpicture}\label{borel_local_comm}
 % \matrix (m) [matrix of math nodes,row sep=3em,column sep=4em,minimum width=2em]
%  {
%       & G  &  \\
%    G_n &  &  G_{n-1} \\};
%  \path[-stealth]
%    (m-1-2) edge node [left] {$h_n$} (m-2-1)
%    (m-1-2) edge node [right] {$h_{n-1}$} (m-2-3)
 %   (m-2-1) edge node [left] {}  (m-2-3);
  
%\end{tikzpicture}

%\end{defn}

%Henceforth  $\left\{x_{\iota}\right\}_{\iota \in I}$ means a set indexed by finite or countable set $I$ of indexes.
\subsection{Prototype. Inverse limits of coverings in topology}\label{inf_to}
\subsubsection{Topological construction}
\paragraph*{}

This subsection is concerned with a topological construction of the inverse limit in the category of coverings. 
\begin{definition}\label{comm_cov_pr_defn}\cite{spanier:at}
	Let $\widetilde{\pi}: \widetilde{\mathcal{X}} \to \mathcal{X}$ be a continuous map. An open subset $\mathcal{U} \subset \mathcal{X}$ is said to be {\it evenly covered } by $\widetilde{\pi}$ if $\widetilde{\pi}^{-1}(\mathcal U)$ is the disconnected union of open subsets of $\widetilde{\mathcal{X}}$ each of which is mapped homeomorphically onto $\mathcal{U}$ by $\widetilde{\pi}$. A continuous map $\widetilde{\pi}: \widetilde{\mathcal{X}} \to \mathcal{X}$ is called a {\it covering projection} if each point $x \in \mathcal{X}$ has an open neighborhood evenly covered by $\widetilde{\pi}$. $\widetilde{\mathcal{X}}$ is called the {
		\it covering space} and $\mathcal{X}$ the {\it base space} of the covering.
\end{definition}
%\begin{definition}\label{one_one_cov_defn}
%	Let $\widetilde{\pi}: \widetilde{\mathcal{X}} \to \mathcal{X}$ be a covering. A connected open subset $\widetilde{\mathcal{U}} \subset \widetilde{\mathcal{X}}$ is said to be a {\it one-to-one subset} with respect to $\widetilde{\pi}$ if the restriction $\widetilde{\pi}_{\widetilde{\mathcal{U}}}: \widetilde{\mathcal{U}} \to \widetilde{\pi}\left(\widetilde{\mathcal{U}}\right)$ is a homeomorphism. The family $\left\{\widetilde{\mathcal{U}}_\iota\right\}_{\iota \in I}$ of one-to-one subsets with respect to $\widetilde{\pi}$ such that $\widetilde{\mathcal{X}} = \bigcap_{\iota \in I}\widetilde{\mathcal{U}}_\iota$ is said to be a {\it one-to-one covering}  with respect to $\widetilde{\pi}$.
%\end{definition}
\begin{definition}\cite{spanier:at}
	A fibration $p: \mathcal{\widetilde{X}} \to \mathcal{X}$ with unique path lifting is said to be  {\it regular} if, given any closed path $\omega$ in $\mathcal{X}$, either every lifting of $\omega$ is closed or none is closed.
\end{definition}
\begin{definition}\cite{spanier:at}
	A topological space $\mathcal X$ is said to be \textit{locally path-connected} if the path components of open sets are open.
\end{definition}
Denote by $\pi_1$ the functor of fundamental group \cite{spanier:at}.
%\begin{lemma}\label{lem_3_spanier_lem}\cite{spanier:at}
%Let $p: \widetilde{\mathcal X} \to \mathcal X$ be a fibration with unique path lifting. If $\om$ and $\om'$ are paths such that $\om\left(0\right)=\om'\left(0\right)$ and $p \circ \om$ is homotopic to $p \circ \om'$ then $\om$ is homotopic to $\om'$.  
%\end{lemma}
% It follows from Lemma \ref{lem_3_spanier_lem} that if $p: \widetilde{\mathcal X} \to \mathcal X$ be a fibration with unique path lifting then for any two objects $\widetilde{x}_1$ and $\widetilde{x}_2$ in the fundamental groupoid of $\widetilde{X}$
\begin{theorem}\label{locally_path_lem}\cite{spanier:at}
	Let $p: \widetilde{\mathcal X} \to \mathcal X$ be a fibration with unique path lifting and assume that a nonempty $\widetilde{\mathcal X}$ is a locally path-connected space. Then $p$ is regular if and only if for some $\widetilde{x}_0 \in  \widetilde{\mathcal X}$, $\pi_1\left(p\right)\pi_1\left(\widetilde{\mathcal X}, \widetilde{x}_0\right)$ is a normal subgroup of $\pi_1\left(\mathcal X, p\left(\widetilde{x}_0\right)\right)$.
\end{theorem}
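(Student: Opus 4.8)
The plan is to recast regularity as a statement about the monodromy action of $\pi_1$ on a fibre, and to recognise the condition in the theorem as the assertion that a stabiliser of this action is normal. Put $x_0=p(\widetilde x_0)$, $F=p^{-1}(x_0)$, $G=\pi_1(\mathcal X,x_0)$ and $H=\pi_1(p)\,\pi_1(\widetilde{\mathcal X},\widetilde x_0)\le G$ (the image of the induced homomorphism).

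First I would construct the monodromy action. Since $p$ is a fibration, the homotopy lifting property together with unique path lifting yields the monodromy theorem: if two paths in $\mathcal X$ are homotopic rel endpoints, then their lifts with a common initial point have a common terminal point. Consequently, for $[\omega]\in G$ and $\widetilde x\in F$, letting $\widetilde\omega$ be the unique lift of $\omega$ with $\widetilde\omega(0)=\widetilde x$ and setting $\widetilde x\cdot[\omega]:=\widetilde\omega(1)$ defines a right action of $G$ on $F$. Two elementary facts are then recorded. (i) The stabiliser of $\widetilde x\in F$ is exactly $\pi_1(p)\,\pi_1(\widetilde{\mathcal X},\widetilde x)$; in particular $[\omega]\in H$ if and only if the lift of $\omega$ at $\widetilde x_0$ is a closed path. (ii) Because $\widetilde{\mathcal X}$ is locally path connected, its path components are open and, restricted to the set of points of $F$ lying in a single path component of $\widetilde{\mathcal X}$, the action is transitive --- any two such points are joined by a path whose projection is a loop at $x_0$ carrying one to the other --- so the stabilisers of such points are mutually conjugate in $G$.

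Next I would translate regularity. By unique path lifting, the liftings of a loop $\omega$ based at $x_0$ are indexed by their initial points in $F$, and such a lifting is closed precisely when its initial point is fixed by $[\omega]$. Hence "every lifting of $\omega$ is closed or none is", ranging over loops based at $x_0$, says exactly that each $[\omega]\in G$ fixes all of $F$ or fixes no point of $F$. Using (ii) and the conjugacy of stabilisers, this is equivalent to the equality of all the subgroups $\pi_1(p)\,\pi_1(\widetilde{\mathcal X},\widetilde x)$, $\widetilde x\in F$, and hence --- since these are precisely the $G$-conjugates of $H$ --- to $H\trianglelefteq G$. To reduce an arbitrary closed path to a loop at $x_0$ and to see that testing the single point $\widetilde x_0$ suffices, one conjugates by a connecting path: a loop at another basepoint $x_1$ in the image of $p$, together with its liftings, is carried to $x_0$ by the lift of a path from $x_0$ to $x_1$, the associated basepoint-change isomorphisms of $\pi_1$ intertwine $\pi_1(p)$, and loops lying outside $p(\widetilde{\mathcal X})$ have no liftings at all; thus normality of the one group $H$ propagates to the normality condition at every point, and conversely regularity forces it at $\widetilde x_0$.

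The substantive inputs are the monodromy theorem for fibrations with unique path lifting --- this is where the homotopy lifting property is used --- and the bookkeeping with path components, where local path-connectedness of $\widetilde{\mathcal X}$ is essential: it is precisely this hypothesis that makes the monodromy action transitive on the part of $F$ inside a given path component, hence that the relevant fibre stabilisers are conjugate. Once these are in place, the equivalence "$p$ regular $\iff$ all fibre stabilisers equal $\iff$ one fibre stabiliser normal" is formal group theory. I expect the delicate point to be organising the component-by-component argument and the change of basepoint in $\mathcal X$ cleanly, rather than any single hard estimate.
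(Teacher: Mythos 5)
The paper itself offers no proof of this statement (it is quoted from Spanier), so your argument has to stand on its own. Your overall strategy --- monodromy action of $G=\pi_1(\mathcal X,x_0)$ on the fibre $F$, stabilisers equal to the images $\pi_1(p)\pi_1(\widetilde{\mathcal X},\widetilde x)$, and the translation of regularity into ``each element of $G$ fixes all of $F$ or none of it'' --- is the standard and correct route, and your items (i) and the basepoint-change bookkeeping are fine.

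There is, however, one genuine gap, at the pivot of the whole argument: the claim that the subgroups $\pi_1(p)\pi_1(\widetilde{\mathcal X},\widetilde x)$ for $\widetilde x\in F$ ``are precisely the $G$-conjugates of $H$.'' The $G$-conjugates of $H$ are the stabilisers of the points in the $G$-\emph{orbit} of $\widetilde x_0$, and that orbit is $F\cap C_0$, where $C_0$ is the path component of $\widetilde x_0$ (every point reached by transporting $\widetilde x_0$ along a lifted loop lies on that lift, hence in $C_0$). Your own observation (ii) only gives conjugacy of stabilisers \emph{within} a path component; it says nothing about points of $F$ in other components, and for those the equivalence breaks down. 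Indeed, with the hypotheses exactly as written (nonempty, locally path-connected, but possibly disconnected) the theorem is false: take $\widetilde{\mathcal X}=S^1\sqcup\mathbb R\to S^1=\mathcal X$, the identity on the first piece and the universal covering on the second. This is a covering, hence a fibration with unique path lifting, with locally path-connected total space; at $\widetilde x_0$ in the circle component the subgroup is all of $\mathbb Z$, hence normal, yet the generating loop has one closed lift and one non-closed lift. The missing hypothesis is connectedness of $\widetilde{\mathcal X}$, which Spanier's original statement carries and which the paper itself imposes in the neighbouring Proposition \ref{reg_cov_prop} and Corollary \ref{top_cov_from_pi1_cor}; together with local path-connectedness it makes $\widetilde{\mathcal X}$ path-connected, so $F$ is a single orbit and your identification of the stabilisers with the conjugates of $H$ becomes valid. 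This also corrects a misattribution in your last paragraph: local path-connectedness is not what makes the action transitive on $F\cap C$ (that is automatic from the definition of a path component); its only role is to upgrade ``connected'' to ``path-connected.'' With the connectedness hypothesis restored, your proof closes up.
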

\begin{definition}\label{cov_proj_cov_grp}\cite{spanier:at}
	Let $p: \mathcal{\widetilde{X}} \to \mathcal{X}$ be a covering projection.  A \textit{self-equivalence} is a homeomorphism $f:\mathcal{\widetilde{X}}\to\mathcal{\widetilde{X}}$ such that $p \circ f = p$. This group of such homeomorphisms is said to be the {\it group of covering transformations} of $p$ or the {\it covering group}. Denote by $G\left( \mathcal{\widetilde{X}}~|~\mathcal{X}\right)$ this group.
\end{definition}

%\begin{theorem}\label{g_cov_thm}\cite{spanier:at}
%Let $G$ be a properly discontinuous group of homeomorphisms of a space $\mathcal X$. Then the projection $\mathcal X \to \mathcal X/G$ of $\mathcal X$ to the orbit space $\mathcal X/G$ is a covering. If $\mathcal X$ is connected then this covering is regular and $G$ is its group of covering transformations.
%\end{theorem}
%\begin{empt}\label{fin_prop_disc_empt}
%	\cite{spanier:at} A finite group action without fixed points on a Hausdorff space is properly discontinuous.
%\end{empt}
\begin{proposition}\label{reg_cov_prop}\cite{spanier:at}
	If $p: \mathcal{\widetilde{X}} \to \mathcal{X}$ is a regular covering projection and $\mathcal{\widetilde{X}}$ is connected and locally path connected, then $\mathcal{X}$ is homeomorphic to space of orbits of $G\left( \mathcal{\widetilde{X}}~|~\mathcal{X}\right)$, i.e. $\mathcal{X} \approx \mathcal{\widetilde{X}}/G\left( \mathcal{\widetilde{X}}~|~\mathcal{X}\right) $. So $p$ is a principal bundle.
\end{proposition}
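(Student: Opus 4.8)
The plan is to produce the canonical continuous map $\bar p : \widetilde{\mathcal{X}}/G \to \mathcal{X}$ induced by $p$, where $G = G\left(\widetilde{\mathcal{X}}~|~\mathcal{X}\right)$, and to show it is a homeomorphism; the principal bundle assertion will then be read off from the local triviality built into Definition~\ref{comm_cov_pr_defn}. Since every $g \in G$ satisfies $p \circ g = p$ by Definition~\ref{cov_proj_cov_grp}, the map $p$ is constant on $G$-orbits, so it descends to a continuous $\bar p$ with $p = \bar p \circ q$, where $q : \widetilde{\mathcal{X}} \to \widetilde{\mathcal{X}}/G$ is the orbit projection ($\bar p$ is automatically continuous as $q$ is a quotient map). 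A covering projection is a local homeomorphism, hence an open map, and $q$ is open because $q^{-1}(q(V)) = \bigcup_{g \in G} gV$; thus $\bar p(V) = p(q^{-1}(V))$ is open for open $V$, so $\bar p$ is open. Moreover $p(\widetilde{\mathcal{X}})$ is open (as just noted) and closed in $\mathcal{X}$ (an evenly covered neighbourhood of a point of its closure must meet, hence lie in, the image), so $p$ — therefore $\bar p$ — is onto, $\mathcal{X}$ being connected. It remains to prove $\bar p$ injective, that is, that $G$ acts transitively on each fibre $p^{-1}(x_0)$.

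This transitivity is the heart of the matter and is exactly where regularity is used. Since $\widetilde{\mathcal{X}}$ is locally path connected, $p$ is a fibration with unique path lifting, so Theorem~\ref{locally_path_lem} applies; also $\widetilde{\mathcal{X}}$ is path connected. Fix $\widetilde{x}_1, \widetilde{x}_2 \in p^{-1}(x_0)$. I would define $h : \widetilde{\mathcal{X}} \to \widetilde{\mathcal{X}}$ by: for $\widetilde{y} \in \widetilde{\mathcal{X}}$ choose a path $\widetilde\gamma$ from $\widetilde{x}_1$ to $\widetilde{y}$, lift $p \circ \widetilde\gamma$ to the path $\widetilde\gamma'$ starting at $\widetilde{x}_2$, and set $h(\widetilde{y}) = \widetilde\gamma'(1)$. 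Well-definedness is precisely the content of regularity: for two choices $\widetilde\gamma, \widetilde\gamma''$ the loop $\widetilde\gamma * \overline{\widetilde\gamma''}$ at $\widetilde{x}_1$ projects to a closed path in $\mathcal{X}$ one of whose liftings (namely $\widetilde\gamma * \overline{\widetilde\gamma''}$ itself) is closed, so by regularity the lifting starting at $\widetilde{x}_2$ is closed too, which forces the two candidate values to coincide. Continuity of $h$ is local and follows from $p$ being a local homeomorphism (work inside an evenly covered neighbourhood of $p(\widetilde{y})$), and $p \circ h = p$ by construction. Swapping $\widetilde{x}_1$ and $\widetilde{x}_2$ yields $h'$ with $p \circ h' = p$ and $h'(\widetilde{x}_2) = \widetilde{x}_1$. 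Then $h' \circ h$ and $\id_{\widetilde{\mathcal{X}}}$ are both lifts of $p$ along $p$ agreeing at $\widetilde{x}_1$, hence equal by uniqueness of lifts (a consequence of unique path lifting together with connectedness of $\widetilde{\mathcal{X}}$); likewise $h \circ h' = \id$. So $h$ is a homeomorphism with $p \circ h = p$, i.e.\ $h \in G$, and $h(\widetilde{x}_1) = \widetilde{x}_2$. This gives transitivity, so $\bar p$ is a continuous open bijection and $\mathcal{X} \approx \widetilde{\mathcal{X}}/G$.

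For the last assertion, the $G$-action on $\widetilde{\mathcal{X}}$ is free: a $g \in G$ with a fixed point is a lift of $p$ along $p$ coinciding with $\id_{\widetilde{\mathcal{X}}}$ at that point, hence equals $\id$. Given $x \in \mathcal{X}$, choose a connected evenly covered neighbourhood $\mathcal{U}$ (possible since $\mathcal{X}$, being the image of the locally path connected $\widetilde{\mathcal{X}}$ under a local homeomorphism, is locally path connected) and one sheet $\widetilde{\mathcal{U}}$ over it. Using fibrewise transitivity and freeness one checks that the sheets of $p^{-1}(\mathcal{U})$ are exactly the pairwise disjoint translates $\{\, g\widetilde{\mathcal{U}} : g \in G \,\}$, so $(g, \widetilde u) \mapsto g\widetilde u$ is a homeomorphism $G \times \widetilde{\mathcal{U}} \xrightarrow{\ \sim\ } p^{-1}(\mathcal{U})$ intertwining $p$ with projection to $\mathcal{U}$ and the $G$-action with left translation on the first factor. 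These are the local trivializations exhibiting $p$ as a principal $G$-bundle.

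The step I expect to be the main obstacle is the well-definedness and continuity of the deck transformation $h$: this is where the precise form of regularity (all liftings of a closed path closed, or none) must be combined with unique path lifting, and it quietly relies on several ancillary facts — that a covering projection is a local homeomorphism and a fibration with unique path lifting, that local path connectedness passes to the base, and that $\widetilde{\mathcal{X}}$ is path connected — each of which should be recorded explicitly.
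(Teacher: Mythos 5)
The paper offers no proof of this proposition: it is quoted verbatim from Spanier (cf.\ the remark following Corollary \ref{top_cov_from_pi1_cor}, ``Above results are copied from \cite{spanier:at}''), so there is nothing in the source to compare against line by line. Your argument is the standard textbook proof and is essentially correct: descending $p$ to an open continuous surjection $\bar p$, reducing injectivity to fibrewise transitivity of the deck group, and manufacturing the required deck transformation by path lifting, with regularity invoked exactly where it is needed (to make $h$ well defined via the ``one lift closed implies all lifts closed'' property) and the unique lifting theorem closing the loop $h'\circ h=\id$. Two small points are worth tightening. First, surjectivity of $p$ via the open-and-closed argument needs $\mathcal{X}$ connected, which is not among the stated hypotheses (only $\widetilde{\mathcal{X}}$ is assumed connected); this is a standing convention in the paper but should be said. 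Second, every covering projection is a fibration with unique path lifting irrespective of local path connectedness of $\widetilde{\mathcal{X}}$; that hypothesis is what you actually use for path connectedness of $\widetilde{\mathcal{X}}$, for the continuity of $h$, and for choosing connected evenly covered neighbourhoods in the trivialization step, so the attribution in your second paragraph is slightly off though harmless.
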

\begin{corollary}\label{top_cov_from_pi1_cor}\cite{spanier:at}
	Let $p: \widetilde{\mathcal X} \to \mathcal X$ be a fibration with a unique path lifting. If $ \widetilde{\mathcal X}$ is connected and locally path-connected and $\widetilde{x}_0 \in \widetilde{\mathcal X}$ then $p$ is regular if and only if $G\left(\widetilde{\mathcal X}~|~{\mathcal X} \right)$ transitively acts on each fiber of $p$, in which case 
	$$
	\psi: G\left(\widetilde{\mathcal X}~|~{\mathcal X} \right) \approx \pi_1\left(\mathcal X, p\left( \widetilde{x}_0\right)  \right) / \pi_1\left( p\right)\pi_1\left(\widetilde{\mathcal X}, \widetilde{x}_0 \right).  
	$$
\end{corollary}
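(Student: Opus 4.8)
The plan is to prove the statement in two stages: the logical equivalence between regularity of $p$ and transitivity of the action of $G := G(\widetilde{\mathcal X}\,|\,\mathcal X)$ on each fibre of $p$, and then, under these conditions, the construction of the isomorphism $\psi$. Set $x_0 := p(\widetilde{x}_0)$ and let $H$ denote the subgroup $\pi_1(p)\pi_1(\widetilde{\mathcal X}, \widetilde{x}_0)$ of $\pi_1(\mathcal X, x_0)$. First I would record two facts used throughout. A nonempty connected and locally path-connected space is path-connected, so $\widetilde{\mathcal X}$ is path-connected. And, since $\widetilde{\mathcal X}$ is connected and locally path-connected, if $f, f' : \widetilde{\mathcal X} \to \widetilde{\mathcal X}$ both satisfy $p \circ f = p \circ f' = p$ and agree at a single point, then $f = f'$ (uniqueness of lifts, a consequence of unique path lifting). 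In particular any $g \in G$ with a fixed point is the identity, so $G$ acts freely on $\widetilde{\mathcal X}$; and $G$ visibly permutes each fibre $p^{-1}(x)$.

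For the implication from regularity to fibrewise transitivity I would invoke Proposition \ref{reg_cov_prop}: since $\widetilde{\mathcal X}$ is connected and locally path-connected, a regular $p$ is identified with the orbit map $\widetilde{\mathcal X} \to \widetilde{\mathcal X}/G$, so each fibre is a single $G$-orbit. (Alternatively one could use Theorem \ref{locally_path_lem} to see that $H$ is normal, together with the lifting criterion for fibrations with unique path lifting, to produce a deck transformation carrying $\widetilde{x}_0$ to any prescribed point of its fibre.) For the converse, let $\omega$ be a closed path in $\mathcal X$ and suppose that some lift $\widetilde\omega$ of $\omega$, starting at $\widetilde{x} \in p^{-1}(\omega(0))$, is closed; given any other point $\widetilde{x}'$ of the same fibre, transitivity yields $g \in G$ with $g\widetilde{x} = \widetilde{x}'$, and then $g \circ \widetilde\omega$ is a closed path lifting $p \circ g \circ \widetilde\omega = \omega$ and beginning at $\widetilde{x}'$, so by unique path lifting it is the lift of $\omega$ at $\widetilde{x}'$, hence closed. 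Thus every lift of $\omega$ is closed, i.e.\ $p$ is regular. I would also note here that once $p$ is regular, Theorem \ref{locally_path_lem} makes $H$ a normal subgroup of $\pi_1(\mathcal X, x_0)$, so the quotient $\pi_1(\mathcal X, x_0)/H$ in the statement is genuinely a group.

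Finally, assuming these equivalent hypotheses, I would construct $\psi$ explicitly. For $g \in G$ choose a path $\widetilde\gamma_g$ in $\widetilde{\mathcal X}$ from $\widetilde{x}_0$ to $g\widetilde{x}_0$ and set $\psi(g) := [p \circ \widetilde\gamma_g]\,H$. This is well defined since two such paths differ by a loop at $\widetilde{x}_0$, whose image under $\pi_1(p)$ lies in $H$; it is a homomorphism, by taking $\widetilde\gamma_{g_1 g_2}$ to be the concatenation of $\widetilde\gamma_{g_1}$ with $g_1 \circ \widetilde\gamma_{g_2}$ and using $p \circ g_1 = p$; it is injective, because $\psi(g) = H$ forces $p \circ \widetilde\gamma_g$ to be homotopic rel endpoints to $p \circ \widetilde\delta$ for some loop $\widetilde\delta$ at $\widetilde{x}_0$, whence homotopy lifting gives a rel-endpoints homotopy between $\widetilde\gamma_g$ and $\widetilde\delta$, so $g\widetilde{x}_0 = \widetilde{x}_0$ and $g = \id$ by freeness; and it is surjective, because for $[\omega] \in \pi_1(\mathcal X, x_0)$ the lift of $\omega$ starting at $\widetilde{x}_0$ ends at some $\widetilde{x}_1 \in p^{-1}(x_0)$, transitivity produces $g \in G$ with $g\widetilde{x}_0 = \widetilde{x}_1$, and this lift itself may serve as $\widetilde\gamma_g$. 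The main friction I anticipate is purely technical bookkeeping: stating the lifting, unique-path-lifting and homotopy-lifting facts correctly in the generality of a fibration with unique path lifting rather than an honest covering projection, and keeping the composition convention in $\pi_1$ consistent so that $\psi$ comes out a homomorphism rather than an anti-homomorphism. Each of these is standard in this setting, and local path-connectedness of $\widetilde{\mathcal X}$ is precisely the hypothesis that legitimises both the lifting criterion and the appeal to Theorem \ref{locally_path_lem}.
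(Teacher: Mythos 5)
The paper offers no proof of this corollary — it is quoted verbatim from Spanier — so there is no internal argument to compare against; your proof is the standard covering-space argument and is correct. The one point to tighten is the direction (regular $\Rightarrow$ transitive on fibres): your primary appeal to Proposition \ref{reg_cov_prop} is not quite licit, since that proposition is stated for covering projections while here $p$ is only a fibration with unique path lifting, so you should run your parenthetical alternative instead — normality of $H$ from Theorem \ref{locally_path_lem} plus the lifting criterion, applied at an arbitrary point of an arbitrary fibre (using that regularity forces $\pi_1(p)\pi_1(\widetilde{\mathcal X},\widetilde y)$ to be normal for every basepoint $\widetilde y$, not just $\widetilde x_0$). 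Everything else — freeness of the $G$-action via uniqueness of lifts on the path-connected $\widetilde{\mathcal X}$, the transitivity $\Rightarrow$ regularity step via $g\circ\widetilde\omega$, and the well-definedness, multiplicativity, injectivity and surjectivity of $\psi$ — is sound as written.
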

\begin{remark}
	Above results are copied from \cite{spanier:at}. Below  the \textit{covering projection} word is replaced with \textit{covering}.
\end{remark}
\begin{definition}\label{top_comp_defn}\cite{munkres:topology}
A	\textit{ compactification} of a space $\mathcal X$ is a compact Hausdorff space $\mathcal Y$ containing $\mathcal X$ as a subspace and the closure $\overline{\mathcal X}$ of $\mathcal X$ is $\mathcal Y$, i.e $\overline{\mathcal X} = \mathcal Y$. 
\end{definition}
The algebraic construction requires following definition
\begin{definition}\label{top_cov_comp_defn}
	A   covering $\pi: \widetilde{   \mathcal X } \to \mathcal X$ is said to be a \textit{covering with compactification} if there are compactifications ${   \mathcal X } \hookto {   \mathcal Y }$ and $\widetilde{   \mathcal X } \hookto \widetilde{   \mathcal Y }$ such that:
	\begin{itemize}
		\item There is a covering $\overline{\pi}:\widetilde{   \mathcal Y }\to {   \mathcal Y }$,
		\item The covering $\pi$ is the restriction of $\overline{\pi}$, i.e. $\pi = \overline{\pi}|_{\widetilde{   \mathcal X }}$.
	\end{itemize}
\end{definition}
\begin{example}
	
	Let $g: S^1 \to S^1$ be an $n$-fold covering of a circle. Let $\mathcal X = \widetilde{\mathcal X} = S^1 \times \left[0,1\right)$. 
	The map
	\begin{equation*}
	\begin{split}
	\pi: \widetilde{   \mathcal X } \to \mathcal X,\\
	\pi = g \times \Id_{\left[0,1\right)}
	\end{split}
	\end{equation*}
	is an $n$-fold covering. If $\mathcal Y = \widetilde{\mathcal Y} = S^1 \times \left[0,1\right]$ then a compactification $\left[0,1\right) \hookto \left[0,1\right]$ induces  compactifications  $\mathcal X \hookto\mathcal Y$, $\widetilde{   \mathcal X } \hookto \widetilde{   \mathcal Y }$. The map
	\begin{equation*}
	\begin{split}
	\overline{\pi}: \widetilde{   \mathcal Y } \to \mathcal Y,\\
	\overline{\pi} = g \times \Id_{\left[0,1\right]}
	\end{split}
	\end{equation*}
	is a covering  such that $\overline{\pi}|_{\widetilde{   \mathcal X }}=\pi$. So if $n > 1$ then $\pi$ is  a nontrivial covering with  compactification.\\
\end{example}
\begin{example}
	Let $\mathcal X  = \C \backslash \{0\}$ be a complex plane with punctured 0, which is parametrized by the complex variable $z$. Let  $\mathcal X \hookto\mathcal Y$ be any compactification. If both $\left\{z'_n \in \mathcal X\right\}_{n \in \N}$, 	$\left\{z''_n \in \mathcal X\right\}_{n \in \N}$ are Cauchy sequences such that $\lim_{n \to \infty}\left|z'_n\right|=\lim_{n \to \infty}\left|z'_n\right| = 0$ then form $\lim_{n \to \infty}\left|z'_n-z''_n\right|= 0$ it turns out 
	\begin{equation}\label{x_0_eqn}
	x_0 = \lim_{n \to \infty} z'_n = \lim_{n \to \infty} z''_n \in \mathcal Y.
	\end{equation}
	
	If $\widetilde{   \mathcal X } = \mathcal X$ then for any $n \in \N$ there is a finite-fold covering 
	\begin{equation*}
	\begin{split}
	\pi: \widetilde{   \mathcal X } \to \mathcal X,\\
	z \mapsto z^n.
	\end{split}
	\end{equation*}
	If both  $\mathcal X \hookto\mathcal Y$, $\widetilde{   \mathcal X } \hookto \widetilde{   \mathcal Y }$ are compactifications, and $\overline{\pi}: \widetilde{   \mathcal Y } \to \mathcal Y$ is a covering such that $\overline{\pi}|_{\widetilde{   \mathcal X }} = \pi$ then from \eqref{x_0_eqn} it turns out $\overline{\pi}^{-1}\left(x_0 \right)= \left\{\widetilde{x}_0\right\}$ where $\widetilde{x}_0$ is the unique point such that following conditions hold:
	\begin{equation*}
	\begin{split}
	\widetilde{x}_0 = \lim_{n \to \infty} \widetilde{z}_n \in \widetilde{   \mathcal Y },\\
	\lim_{n \to \infty}\left|\widetilde{z}_n\right|= 0.
	\end{split}
	\end{equation*}
	It turns out $\left|\overline{\pi}^{-1}\left(x_0 \right) \right|=1$. However $\overline{\pi}$ is an $n$-fold covering and if $n >1$ then  $\left|\overline{\pi}^{-1}\left(x_0 \right) \right|=n>1$. It contradicts with $\left|\overline{\pi}^{-1}\left(x_0 \right) \right|=1$, and from the contradiction it turns out that for any $n > 1$ the map  $\pi$ is not a covering with  compactification.
\end{example}

\begin{definition}\label{top_sec_defn}
	The sequence of regular finite-fold coverings	
	\begin{equation*}
	\mathcal{X} = \mathcal{X}_0 \xleftarrow{}... \xleftarrow{} \mathcal{X}_n \xleftarrow{} ... 
	\end{equation*}
	is said to be a \textit{(topological)  finite covering sequence} if following conditions hold:
	\begin{itemize}
		\item   The space $\mathcal{X}_n$ is a second-countable \cite{munkres:topology}    locally compact connected Hausdorff space  for any $n \in \mathbb{N}^0$,
		\item If $k < l < m$ are any nonnegative integer numbers then there is the natural exact sequence
		$$
		\{e\}\to	G\left(\mathcal X_m~|~\mathcal X_l\right) \to 	G\left(\mathcal X_m~|~\mathcal X_k\right)\to 	G\left(\mathcal X_l~|~\mathcal X_k\right)\to \{e\}.
		$$ 
	\end{itemize} 
	For any finite covering sequence we will use a following notation
	\begin{equation*}
	\mathfrak{S} = \left\{\mathcal{X} = \mathcal{X}_0 \xleftarrow{}... \xleftarrow{} \mathcal{X}_n \xleftarrow{} ...\right\}= \left\{ \mathcal{X}_0 \xleftarrow{}... \xleftarrow{} \mathcal{X}_n \xleftarrow{} ...\right\},~~\mathfrak{S} \in \mathfrak{FinTop}.
	\end{equation*}
	
\end{definition}
\begin{example}
	Let $	\mathfrak{S} = \left\{	\mathcal{X} = \mathcal{X}_0 \xleftarrow{}... \xleftarrow{} \mathcal{X}_n \xleftarrow{} ... \right\}$ be a sequence of   locally compact connected Hausdorff spaces and finite-fold regular coverings such that $\mathcal X_n$ is  locally path-connected for any $n \in \mathbb{N}$. It follows from Lemma \ref{locally_path_lem} that if $p > q$ and $f_{pq}:\mathcal X_p \to \mathcal X_q$ then $\pi_1\left(f_{pq}\right)\pi_1\left(\mathcal X_p, x_0\right)$ is a normal subgroup of $\pi_1\left(\mathcal X_q, f_{pq}\left(x_0\right) \right)$. Otherwise from the Corollary \ref{top_cov_from_pi1_cor} it turns out 
	$$
	G\left(\mathcal X_p~|~\mathcal X_q\right) \approx \pi_1\left(\mathcal X_q, f_{pq}\left(x_0\right)\right) / \pi_1\left(f_{pq}\right)\pi_1\left(\mathcal X_p, x_0\right).
	$$
	If $k < l < m$ then a following sequence 
	\begin{equation*}
	\begin{split}
	\{e\}\to	\pi_1\left(\mathcal X_l, f_{ml}\left(x_0\right) \right)/ \pi_1\left(f_{ml}\right)\pi_1\left(\mathcal X_m, x_0\right) \to \\ \to
	\pi_1\left(\mathcal X_k, f_{mk}\left(x_0\right)\right) / \pi_1\left(f_{mk}\right)\pi_1\left(\mathcal X_m, x_0\right)\to \\
	\to \pi_1\left(\mathcal X_k, f_{mk}\left( x_0\right) \right)/ \pi_1\left(f_{lk}\right)\pi_1\left(\mathcal X_l, f_{ml}\left( x_0\right) \right)	\to \{e\}
	\end{split}
	\end{equation*}
	is exact. Above sequence is equivalent to the following  sequence
	$$
	\{e\}\to	G\left(\mathcal X_m~|~\mathcal X_l\right) \to 	G\left(\mathcal X_m~|~\mathcal X_k\right)\to 	G\left(\mathcal X_l~|~\mathcal X_k\right)\to \{e\}
	$$ 
	which is also exact. Thus $\mathfrak{S} \in \mathfrak{FinTop}$.
\end{example}
\begin{definition}\label{top_cov_trans_defn} Let  $\left\{\mathcal{X} = \mathcal{X}_0 \xleftarrow{}... \xleftarrow{} \mathcal{X}_n \xleftarrow{} ...\right\} \in \mathfrak{FinTop}$, and let
	$\widehat{\mathcal{X}} = \varprojlim \mathcal{X}_n$ be the inverse limit  in the category of topological spaces and continuous maps (cf. \cite{spanier:at}). If $\widehat{\pi}_0: \widehat{\mathcal{X}} \to \mathcal{X}_0$ is the natural continuous map then a homeomorphism $g$ of the space $\widehat{\mathcal{X}}$ is said to be a \textit{covering  transformation} if a following condition holds
	$$
	\widehat{\pi}_0 = \widehat{\pi}_0 \circ g.
	$$
	The group $\widehat{G}$ of such homeomorphisms is said to be the \textit{group of  covering  transformations} of $\mathfrak S$. Denote by $G\left(\widehat{\mathcal{X}}~|~\mathcal X \right)\stackrel{\text{def}}{=}\widehat{G}$. 
\end{definition}
\begin{lemma}\label{top_surj_group_lem}
	Let  $\left\{\mathcal{X} = \mathcal{X}_0 \xleftarrow{}... \xleftarrow{} \mathcal{X}_n \xleftarrow{} ...\right\} \in \mathfrak{FinTop}$, and let
	$\widehat{\mathcal{X}} = \varprojlim \mathcal{X}_n$ be the inverse limit  in the category of topological spaces and continuous maps. There is the natural group isomorphism $G\left(\widehat{\mathcal{X}}~|~\mathcal X \right) \cong \varprojlim G\left({\mathcal{X}}_n~|~\mathcal X \right)$. For any $n \in \mathbb{N}$ there is the natural surjective homomorphism $h_n:G\left(\widehat{\mathcal{X}}~|~\mathcal X \right) \to G\left(\mathcal{X}_n~|~\mathcal X \right)$ and $\bigcap_{n \in \mathbb{N}} \ker h_n$ is a trivial group.
\end{lemma}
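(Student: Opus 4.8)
The plan is to identify both sides of the claimed isomorphism with the group of ``coordinate-wise symmetries'' of the tower. Write a point of $\widehat{\mathcal X}=\varprojlim\mathcal X_n$ as a thread $x=(x_n)_{n\ge 0}$, $x_n\in\mathcal X_n$, with $f_{n,n-1}(x_n)=x_{n-1}$ for the structure maps $f_{n,n-1}\colon\mathcal X_n\to\mathcal X_{n-1}$, and let $\widehat\pi_n\colon\widehat{\mathcal X}\to\mathcal X_n$ be the projections, so that $\widehat\pi_0=f_{n,0}\circ\widehat\pi_n$. A first, purely set-theoretic, step is that every $\widehat\pi_n$ is surjective: the fibre $\widehat\pi_n^{-1}(x_n)$ is the inverse limit of the system of fibres $f_{m,n}^{-1}(x_n)$, $m\ge n$, each of which is finite and nonempty since the coverings are finite-fold, and an inverse limit over $\N$ of nonempty finite sets is nonempty; in particular $\widehat{\mathcal X}\neq\varnothing$.

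Next I would build the map $\Psi\colon\varprojlim G\left(\mathcal X_n~|~\mathcal X\right)\to G\left(\widehat{\mathcal X}~|~\mathcal X\right)$. A thread $(g_n)$ in the inverse system has the property, built into the connecting homomorphisms coming from the exact sequences of Definition \ref{top_sec_defn} (take $k=0$), that $g_n$ covers $g_{n-1}$, i.e. $f_{n,n-1}\circ g_n=g_{n-1}\circ f_{n,n-1}$; hence $x\mapsto(g_n(x_n))_{n\ge 0}$ is again a thread, and the resulting self-map $\Psi\big((g_n)\big)$ of $\widehat{\mathcal X}$ is a homeomorphism (a homeomorphism in each coordinate, with inverse $\Psi\big((g_n^{-1})\big)$). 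Since $G\left(\mathcal X_0~|~\mathcal X\right)=G\left(\mathcal X~|~\mathcal X\right)$ is trivial, $g_0=\id$, so $\widehat\pi_0\circ\Psi\big((g_n)\big)=\widehat\pi_0$ and $\Psi\big((g_n)\big)\in G\left(\widehat{\mathcal X}~|~\mathcal X\right)$. The homomorphism $\Psi$ is injective: if $\Psi\big((g_n)\big)=\id$ then $g_n(x_n)=x_n$ for every thread, and surjectivity of $\widehat\pi_n$ makes $x_n$ range over all of $\mathcal X_n$, so $g_n=\id$.

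The heart of the argument is the inverse map $\Phi$, i.e. the fact that every covering transformation of $\widehat{\mathcal X}$ descends. Given $g\in G\left(\widehat{\mathcal X}~|~\mathcal X\right)$, both $\widehat\pi_n$ and $\widehat\pi_n\circ g$ are lifts of $\widehat\pi_0$ (note $f_{n,0}\circ(\widehat\pi_n\circ g)=\widehat\pi_0\circ g=\widehat\pi_0$) along the finite regular covering $f_{n,0}\colon\mathcal X_n\to\mathcal X$; since $f_{n,0}$ is regular, its deck group $G\left(\mathcal X_n~|~\mathcal X\right)$ acts transitively on each fibre, and since a covering is a local homeomorphism into a Hausdorff space, two lifts of a map from a connected space which agree at one point agree everywhere, so there is a \emph{unique} $g_n\in G\left(\mathcal X_n~|~\mathcal X\right)$ with $\widehat\pi_n\circ g=g_n\circ\widehat\pi_n$. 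Uniqueness together with surjectivity of $\widehat\pi_n$ forces $f_{n,n-1}\circ g_n=g_{n-1}\circ f_{n,n-1}$, so $(g_n)$ is a thread; set $\Phi(g)=(g_n)$. A coordinate check (everything reduces to $\widehat\pi_n(g(x))=g_n(x_n)$, using surjectivity of $\widehat\pi_n$) gives $\Psi\circ\Phi=\id$ and $\Phi\circ\Psi=\id$, so $G\left(\widehat{\mathcal X}~|~\mathcal X\right)\cong\varprojlim G\left(\mathcal X_n~|~\mathcal X\right)$. \textbf{The main obstacle is exactly this descent step}: it uses that $\widehat{\mathcal X}$ is connected, so that the lift $g_n$ is uniquely determined by $g$; if connectedness of $\widehat{\mathcal X}$ is not available directly, one argues instead fibrewise, using that $\widehat\pi_0^{-1}(x_0)$ is an inverse limit of torsors under the regular deck groups, hence a torsor under $\varprojlim G\left(\mathcal X_n~|~\mathcal X\right)$, and that $g$ preserves each such fibre.

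Finally, define $h_n\colon G\left(\widehat{\mathcal X}~|~\mathcal X\right)\to G\left(\mathcal X_n~|~\mathcal X\right)$ as the composite of the isomorphism above with the canonical projection of the inverse limit onto its $n$-th term. The connecting homomorphisms $G\left(\mathcal X_{k+1}~|~\mathcal X\right)\to G\left(\mathcal X_k~|~\mathcal X\right)$ are surjective (take $k=0$ in the exact sequences of Definition \ref{top_sec_defn}), so any $a_n\in G\left(\mathcal X_n~|~\mathcal X\right)$ can be lifted successively to $a_{n+1},a_{n+2},\dots$ and pushed forward to $a_{n-1},\dots,a_0$, producing a thread mapping to $a_n$; hence $h_n$ is surjective. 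If $g\in\bigcap_{n\in\N}\ker h_n$, then under the isomorphism $g$ corresponds to the thread with all entries $e$ (its $0$-th entry lies in the trivial group $G\left(\mathcal X~|~\mathcal X\right)$ anyway), so $g=\Psi\big((e)_n\big)=\id$, and $\bigcap_{n\in\N}\ker h_n$ is trivial.
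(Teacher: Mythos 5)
Your proof is correct and follows essentially the same route as the paper's: both identify $G\left(\widehat{\mathcal X}~|~\mathcal X\right)$ with threads $\left(g_n\right)$ of deck transformations acting coordinate-wise on $\varprojlim \mathcal X_n$, using regularity of the $\pi_n$ for transitivity on fibres and detecting nontriviality of an element of $\bigcap_{n}\ker h_n$ at some finite level. You are in fact more explicit than the paper about the descent step (that a covering transformation of $\widehat{\mathcal X}$ is determined by its effect on a single fibre point), which the paper asserts without comment; flagging the connectedness issue there and offering the fibrewise torsor fallback is an improvement in rigour rather than a deviation in method.
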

\begin{proof}
	For any $n \in \mathbb{N}$ there is the natural continuous map $\widehat{\pi}_n:\widehat{\mathcal{X}} \to \mathcal{X}_n$. Let $x_0 \in \mathcal{X}_0$ and $\widehat{x}_0 \in \widehat{\mathcal{X}}$ be such that $\widehat{\pi}_0\left( \widehat{x}_0\right) = x_0$. Let $\widehat{x}' \in \widehat{\mathcal{X}}$ be such that $\widehat{\pi}_0\left( \widehat{x}'\right)=x_0$.  If  $x'_n = \widehat{\pi}_n\left(\widehat{x}' \right)$ and $x_{n} = \widehat{\pi}_n\left(\widehat{x}_0 \right)$ then $\pi_n\left(x_{n} \right)=\pi_n\left(x'_{n} \right)$, where $\pi_n : \mathcal X_n \to \mathcal X$ is the natural covering. Since $\pi_n$ is regular for any $n \in \N$ there is the unique $g_n \in G\left( \mathcal{X}_n~|~\mathcal{X}\right)$ such that $x'_n = g_n x_{n}$. In result there is a sequence $\left\{g_n \in G\left( \mathcal{X}_n~|~\mathcal{X}\right)\right\}_{n \in \N}$ which satisfies to the following condition
	\begin{equation*}
	g_m \circ	\pi^n_m = \pi^n_m \circ g_n
	\end{equation*}
	where $n > m$ and $\pi^n_m : \mathcal X_n \to \mathcal X_m$ is the natural covering. The sequence $\left\{g_n \right\}$ naturally defines an element $\widehat{g} \in \varprojlim G\left(\mathcal X_n~|\mathcal X \right)$. 
	Let us define an homeomorphism $\varphi_{\widehat{g}}: \widehat{\mathcal{X}} \to \widehat{\mathcal{X}}$ by a following construction. 	If $\widehat{x}''\in \widehat{\mathcal{X}}$ is any point  then there is a sequence $\left\{x''_n \in \mathcal X_n\right\}_{n \in \N}$ such that
	$$
	x''_n = \widehat{\pi}_n\left(\widehat{x}''\right) .
	$$
	On the other hand there is the sequence $\left\{x''^{\widehat{g}}_n \in \mathcal X_n\right\}_{n \in \N}$
	$$
	x''^{\widehat{g}}_n=g_nx''_n
	$$
	which for any $n > m$ satisfies to the following condition
	\begin{equation*}\label{top_xg_eqn}
	\pi^n_m\left(x''^{\widehat{g}}_n \right) = {x}''^{g}_m.
	\end{equation*}
	From the above equation and properties of inverse limits it follows that there is the unique $\widehat{x}''^{\widehat{g}} \in \widehat{\mathcal{X}}$ such that 
	$$
	\widehat{\pi}_n \left( \widehat{x}''^{\widehat{g}}\right) = x''^{\widehat{g}}_n; ~~ \forall n \in \N.
	$$
	The required homeomorphism $\varphi_{\widehat{g}}$ is given by
	$$
	\varphi_{\widehat{g}}\left( \widehat{x}''\right)  = \widehat{x}''^{\widehat{g}}.
	$$
	From $\widehat{\pi}\circ \varphi_{\widehat{g}} = \widehat{\pi}$ it follows that $\varphi_{\widehat{g}}$ corresponds to an element in $G\left(\widehat{  \mathcal X}~|~\mathcal X \right)$ which mapped onto $g_n$ for any $n \in \N$. Otherwise $\varphi_{\widehat{g}}$ naturally corresponds to the element $\widehat{g} \in \varprojlim G\left(\mathcal X_n~|\mathcal X \right)$, so one has  the natural group isomorphism $G\left(\widehat{\mathcal{X}}~|~\mathcal X \right) \cong \varprojlim G\left({\mathcal{X}}_n~|~\mathcal X \right)$.
	From the above construction it turns out that any homeomorphism $\widehat{g} \in G\left(\widehat{  \mathcal X}~|~\mathcal X \right)$ uniquely depends on $\widehat{x}'=\widehat{g}\widehat{x}_0\in \widehat{\pi}_0^{-1} \left( x_0\right)$.  It follows that there is the 1-1 map $\varphi:\widehat{\pi}_0^{-1}\left(x_0 \right)\xrightarrow{\approx} G\left(\widehat{\mathcal{X}}~|~\mathcal X \right)$. Since the covering $\pi_n : \mathcal{X}_n\to\mathcal X$ is regular there is the 1-1 map	$\varphi_n:\pi_n^{-1}\left(x_0 \right)\xrightarrow{\approx} G\left(\mathcal{X}_n~|~\mathcal X \right)$. The natural surjective map
	$$
	\widehat{\pi}_0^{-1}\left(x_0 \right) \to  \pi_n^{-1}\left(x_0 \right)
	$$
	induces the surjective homomorphism $G\left(\widehat{\mathcal{X}}~|~\mathcal X \right) \to G\left(\mathcal{X}_n~|~\mathcal X \right)$. If $\widehat{g} \in \bigcap_{n \in \mathbb{N}} \ker h_n$ is not trivial then $\widehat{g} \widehat{x}_0 \neq  \widehat{x}_0$ and there is $n \in \mathbb{N}$ such that $\widehat{\pi}_n\left(\widehat{x}_0\right)\neq \widehat{\pi}_n\left(\widehat{g}\widehat{x}_0\right)= h_n\left(\widehat{g} \right) \widehat{\pi}_n\left(\widehat{x}_0\right)$, so $h_n\left(\widehat{g} \right)  \in G\left(\mathcal{X}_n~|~\mathcal X \right)$ is not trivial and $\widehat{g} \notin \ker h_n$. From this contradiction it follows that $\bigcap_{n \in \mathbb{N}} \ker h_n$ is a trivial group.
\end{proof}
\begin{definition}\label{top_?oh_defn}
	Let $\mathfrak{S} = \left\{ \mathcal{X}_0 \xleftarrow{}... \xleftarrow{} \mathcal{X}_n \xleftarrow{} ...\right\}$ be 
	a finite covering sequence. The pair $\left(\mathcal{Y},\left\{\pi^{\mathcal Y}_n\right\}_{n \in \mathbb{N}} \right) $ of a (discrete) set $\mathcal{Y}$ with and  
	surjective  maps $\pi^{\mathcal Y}_n:\mathcal{Y} \to \mathcal X_n$ is said to be a \textit{coherent system} if for any $n \in \mathbb{N}^0$ a following diagram  
	\newline
	\begin{tikzpicture}
	\matrix (m) [matrix of math nodes,row sep=3em,column sep=4em,minimum width=2em]
	{
		& \mathcal{Y}  &  \\
		\mathcal{X}_n &  &  \mathcal{X}_{n-1} \\};
	\path[-stealth]
	(m-1-2) edge node [left] {$\pi^{\mathcal Y}_n~$} (m-2-1)
	(m-1-2) edge node [right] {$~\pi^{\mathcal Y}_{n-1}$} (m-2-3)
	(m-2-1) edge node [above] {$\pi_n$}  (m-2-3);
	
	\end{tikzpicture}
	\newline
	is commutative.	
\end{definition}

\begin{definition}\label{comm_top_constr_defn}
	Let $\mathfrak{S} = \left\{ \mathcal{X}_0 \xleftarrow{}... \xleftarrow{} \mathcal{X}_n \xleftarrow{} ...\right\}$ be 
	a topological finite covering sequence. A coherent system $\left(\mathcal{Y},\left\{\pi^{\mathcal Y}_n\right\} \right)$ is said to
	be a \textit{connected covering} of $\mathfrak{S}$ if $\mathcal Y$ is a connected topological space and $\pi^{\mathcal Y}_n$ is a regular covering  for any $n \in \mathbb{N}$.  We will use following notation $\left(\mathcal{Y},\left\{\pi^{\mathcal Y}_n\right\} \right)\downarrow \mathfrak{S}$ or simply $\mathcal{Y} \downarrow \mathfrak{S}$.
\end{definition}

\begin{definition}\label{top_spec_defn}
	Let $\left(\mathcal{Y},\left\{\pi^{\mathcal Y}_n\right\} \right)$ be  a coherent system   of $\mathfrak{S}$ and $y \in \mathcal{Y}$. A subset  $\mathcal V \subset \mathcal{Y}$ is said to be \textit{special} if $\pi^{\mathcal Y}_0\left(\mathcal{V} \right)$ is evenly covered by $\mathcal{X}_1 \to \mathcal{X}_0$ and for any  $n \in \mathbb{N}^0$ following conditions hold:
	\begin{itemize}
		\item $\pi^{\mathcal Y}_n\left(\mathcal{V} \right) \subset \mathcal X_n$ is  an open connected set, 
		\item The restriction $\pi^{\mathcal Y}_n|_{\mathcal V}:\mathcal{V}\to \pi^{\mathcal Y}_n\left( {\mathcal V}\right) $ is a bijective map.
	\end{itemize}
	
\end{definition}
\begin{remark}
	For any $n \in \N^0$ the space $\mathcal X_n$ is second-countable, so from the  Theorem \ref{comm_sep_thm} for any point $x \in \mathcal X_n$ there is an open connected neighborhood $\mathcal U \subset \mathcal X_n$.
\end{remark}
\begin{remark}
	If $\left(\mathcal{Y},\left\{\pi^{\mathcal Y}_n\right\} \right)$ is  a covering  of $\mathfrak{S}$ then the set of special sets is a base of the topology of $\mathcal{Y}$.
\end{remark}

\begin{lemma}\label{top_equ_borel_set_lem}
	Let $\widehat{\mathcal{X}} = \varprojlim \mathcal{X}_n$ be the inverse limit of the sequence $\mathcal{X}_0 \xleftarrow{}... \xleftarrow{} \mathcal{X}_n \xleftarrow{} ...$ in the category of topological spaces and continuous maps. Any special set of $\widehat{\mathcal{X}}$ is a Borel subset of $\widehat{\mathcal{X}}$.
	
\end{lemma}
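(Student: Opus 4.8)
The plan is to realise an arbitrary special set $\mathcal V\subset\widehat{\mathcal X}$ as a \emph{countable} intersection of open subsets of $\widehat{\mathcal X}$, so that $\mathcal V$ is a $G_\delta$ set and in particular Borel. Write $\widehat\pi_n\colon\widehat{\mathcal X}\to\mathcal X_n$ for the natural continuous maps of the inverse limit and put $\mathcal U_n\eqdef\widehat\pi_n(\mathcal V)$. By Definition \ref{top_spec_defn} each $\mathcal U_n$ is an open subset of $\mathcal X_n$, hence each $\widehat\pi_n^{-1}(\mathcal U_n)$ is open in $\widehat{\mathcal X}$ by continuity of $\widehat\pi_n$. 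Thus the whole proof reduces to establishing the identity
\[
\mathcal V=\bigcap_{n\in\mathbb N^0}\widehat\pi_n^{-1}\left(\mathcal U_n\right),
\]
after which $\mathcal V$, being a countable intersection of open sets, is automatically Borel.

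For the nontrivial inclusion I would argue as follows. The inclusion $\subseteq$ is clear since $\widehat\pi_n(\mathcal V)=\mathcal U_n$. Conversely, let $\widehat x$ lie in the right-hand side. For each $n$ we have $\widehat\pi_n(\widehat x)\in\mathcal U_n=\widehat\pi_n(\mathcal V)$, and since the restriction $\widehat\pi_n|_{\mathcal V}\colon\mathcal V\to\mathcal U_n$ is \emph{bijective} (this is the crucial clause in the definition of a special set) there is a unique $v_n\in\mathcal V$ with $\widehat\pi_n(v_n)=\widehat\pi_n(\widehat x)$. These points agree for all $n$: if $m\le n$, then composing with the transition covering $\pi^n_m\colon\mathcal X_n\to\mathcal X_m$ and using $\widehat\pi_m=\pi^n_m\circ\widehat\pi_n$ yields $\widehat\pi_m(v_n)=\pi^n_m\left(\widehat\pi_n(\widehat x)\right)=\widehat\pi_m(\widehat x)=\widehat\pi_m(v_m)$, and injectivity of $\widehat\pi_m|_{\mathcal V}$ forces $v_n=v_m$. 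Hence there is a single $v\in\mathcal V$ with $\widehat\pi_n(v)=\widehat\pi_n(\widehat x)$ for every $n$. Since $\widehat{\mathcal X}=\varprojlim\mathcal X_n$ sits inside $\prod_n\mathcal X_n$ with the maps $\widehat\pi_n$ acting as coordinate projections, an element of $\widehat{\mathcal X}$ is determined by the family of its images under all the $\widehat\pi_n$; therefore $\widehat x=v\in\mathcal V$, which proves $\supseteq$.

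The argument contains no serious obstacle; the one point to get right is the displayed identity, and within it the essential input is the bijectivity of $\widehat\pi_n|_{\mathcal V}$, which promotes the tautological inclusion $\mathcal V\subseteq\bigcap_n\widehat\pi_n^{-1}(\mathcal U_n)$ to an equality. The remaining ingredients — openness of the $\mathcal U_n$, continuity of the $\widehat\pi_n$, compatibility with the transition maps $\pi^n_m$, and the defining universal property of $\varprojlim$ — are either part of Definition \ref{top_spec_defn} or standard facts about inverse limits, and it is the countability of $\mathbb N^0$ that makes the conclusion "Borel" (indeed "$G_\delta$") come out at the end. Note that one should not expect $\mathcal V$ itself to be open in general, since $\widehat{\mathcal X}$ need not be a connected covering of $\mathfrak S$ in the sense of Definition \ref{comm_top_constr_defn}; only the weaker Borel conclusion is asserted and needed here.
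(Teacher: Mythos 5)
Your proof is correct and follows essentially the same route as the paper's: both realise the special set as the countable intersection $\bigcap_{n}\widehat\pi_n^{-1}\left(\widehat\pi_n\left(\mathcal V\right)\right)$ of open sets, hence a $G_\delta$ and in particular Borel. The only difference is that the paper asserts this identity without justification, whereas you supply the (correct) verification of the nontrivial inclusion via the bijectivity of $\widehat\pi_n|_{\mathcal V}$ and the fact that a point of $\varprojlim\mathcal X_n$ is determined by its coordinates.
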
\label{top_spec_borel_lem}
\begin{proof}
	If $\mathcal U_n\subset \mathcal X_n$ is an open set then $\widehat{\pi}_n^{-1} \left(\mathcal U_n \right) \subset \widehat{\mathcal X}$ is open. 
	If $\widehat{\mathcal U}$ is a special set then $\widehat{\mathcal U} = \bigcap_{n \in \mathbb{N}} \widehat{\pi}_n^{-1} \circ \widehat{\pi}_n\left(\widehat{\mathcal U}\right)$, i.e. $\widehat{\mathcal U}$ is a countable intersection of open sets. So $\widehat{\mathcal U}$ is a Borel subset.	
\end{proof}
\begin{definition}\label{comm_top_constr_morph_defn}
	Let us consider the situation of the Definition \ref{comm_top_constr_defn}. A \textit{morphism} from $\left(\mathcal{Y}',\left\{\pi^{\mathcal Y'}_n\right\}\right)\downarrow\mathfrak{S}$ to $\left(\mathcal{Y}'',\left\{\pi^{\mathcal Y''}_n\right\}\right)\downarrow\mathfrak{S}$ is a covering  $f: \mathcal{Y}' \to \mathcal{Y}''$ such that
	$$
	\pi_n^{\mathcal Y''} \circ f= \pi_n^{\mathcal Y'} 
	$$
	for any $n \in \N$.
	
\end{definition}
\begin{empt}\label{comm_top_constr}
	There is a category with objects and morphisms described by Definitions \ref{comm_top_constr_defn}, \ref{comm_top_constr_morph_defn}. Denote by $\downarrow \mathfrak S$ this category.
\end{empt}
\begin{lemma}\label{top_universal_covering_lem}
	There is the final object of the category $\downarrow \mathfrak S$  described in \ref{comm_top_constr}.
\end{lemma}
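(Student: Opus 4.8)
The plan is to realise the final object as a connected component of the set-theoretic inverse limit $\widehat{\mathcal X}=\varprojlim\mathcal X_n$, re-topologised so that the canonical projections become regular coverings. Explicitly, I would equip the set $\widehat{\mathcal X}$ with the topology whose base is the family of all special sets (Definition~\ref{top_spec_defn}), fix a connected component $\widetilde{\mathcal X}\subseteq\widehat{\mathcal X}$ of the resulting space, let $\widetilde\pi_n\colon\widetilde{\mathcal X}\to\mathcal X_n$ be the restriction of the natural projection $\widehat\pi_n\colon\widehat{\mathcal X}\to\mathcal X_n$, and argue that $(\widetilde{\mathcal X},\{\widetilde\pi_n\})$ is the final object of $\downarrow\mathfrak S$.

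First I would check that $(\widetilde{\mathcal X},\{\widetilde\pi_n\})$ is an object of $\downarrow\mathfrak S$, i.e. a connected covering of $\mathfrak S$ in the sense of Definition~\ref{comm_top_constr_defn}. To see that the special sets form a base of a topology one argues pointwise: given $\widehat x=(x_n)\in\widehat{\mathcal X}$, choose inductively connected open sets $W_n\ni x_n$ (using the statement quoted in the remark after Definition~\ref{top_spec_defn}, i.e. Theorem~\ref{comm_sep_thm}) so that $W_0$ is evenly covered by $\mathcal X_1\to\mathcal X_0$ and each natural map restricts to a homeomorphism $W_n\xrightarrow{\ \approx\ }W_{n-1}$; then $\{(y_n):y_n\in W_n\ \text{for all}\ n\}$ is a special set through $\widehat x$, and the intersection of two special sets clearly contains a special set about each of its points. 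With this topology each $\widehat\pi_n$ is a local homeomorphism by construction, and it is surjective because the fibres of $\mathcal X_m\to\mathcal X_n$ ($m\ge n$) form an inverse system of nonempty finite sets, whose limit is nonempty. Picking for $x\in\mathcal X_n$ a connected open $U\ni x$ evenly covered by every $\mathcal X_m\to\mathcal X_n$ with $m\ge n$, one finds that $\widehat\pi_n^{-1}(U)$ is the disjoint union, indexed by $\varprojlim_m(\text{sheets of }\mathcal X_m\text{ over }U)$, of special sets each carried homeomorphically onto $U$; hence $\widehat\pi_n$, and with it $\widetilde\pi_n$, is a covering (the latter onto because $\mathcal X_n$ is connected). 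For regularity I would invoke Corollary~\ref{top_cov_from_pi1_cor}: by Lemma~\ref{top_surj_group_lem} and the exact sequences built into the definition of $\mathfrak{FinTop}$, the group $G(\widehat{\mathcal X}\mid\mathcal X_n)\cong\varprojlim_m G(\mathcal X_m\mid\mathcal X_n)$ acts freely and transitively on each fibre of $\widehat\pi_n$ (a point of a fibre both determines and is determined by a covering transformation over $\mathcal X_n$), so the subgroup preserving $\widetilde{\mathcal X}$ acts transitively on each fibre of $\widetilde\pi_n$; since $\widetilde{\mathcal X}$ is connected and locally path connected (being locally homeomorphic to open connected subsets of the $\mathcal X_n$), the criterion applies.

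Next I would establish the universal property. Given any $(\mathcal Y,\{\pi^{\mathcal Y}_n\})\downarrow\mathfrak S$, the compatible family $\{\pi^{\mathcal Y}_n\}$ induces by the universal property of the inverse limit a map $\Phi\colon\mathcal Y\to\widehat{\mathcal X}$, $y\mapsto(\pi^{\mathcal Y}_n(y))_n$, with $\widehat\pi_n\circ\Phi=\pi^{\mathcal Y}_n$ for all $n$. Because the special sets of $\mathcal Y$ form a base of its topology and $\Phi$ carries each special set of $\mathcal Y$ bijectively onto a special set of $\widehat{\mathcal X}$ (both being identified, through the projections, with the same open connected subset of $\mathcal X_0$), the map $\Phi$ is a local homeomorphism; since $\mathcal Y$ is connected, $\Phi(\mathcal Y)$ is open and closed, hence a connected component of $\widehat{\mathcal X}$, and $\Phi\colon\mathcal Y\to\Phi(\mathcal Y)$ is a covering intertwining all the projections. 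Once $\Phi(\mathcal Y)$ is identified with the chosen component $\widetilde{\mathcal X}$ this exhibits a morphism $(\mathcal Y,\{\pi^{\mathcal Y}_n\})\to(\widetilde{\mathcal X},\{\widetilde\pi_n\})$, and it is the only one: any morphism $f\colon\mathcal Y\to\widetilde{\mathcal X}$ satisfies $\widehat\pi_n\circ f=\pi^{\mathcal Y}_n$ for all $n$, and a point of $\widehat{\mathcal X}$ is by the definition of the inverse limit nothing but the tuple of its images under the $\widehat\pi_n$, so $f=\Phi$.

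The step that will carry the real weight is this last identification of $\Phi(\mathcal Y)$ with the distinguished component $\widetilde{\mathcal X}$ as objects of $\downarrow\mathfrak S$ — matching up not merely the underlying coverings but the projection data as well. When the space $\widehat{\mathcal X}$ (with the special sets as base) happens to be connected the point is vacuous; in general it has many components, and the naive device of transporting one component onto another by an element of $G(\widehat{\mathcal X}\mid\mathcal X_0)$ does not respect the maps $\widehat\pi_n$ for $n\ge1$, so a finer analysis of the action of $G(\widehat{\mathcal X}\mid\mathcal X_0)$ on the components, relative to the projections, is required exactly here. The remaining pieces — the base property of the special sets, surjectivity of the $\widehat\pi_n$, the even-covering decomposition, and regularity via Lemma~\ref{top_surj_group_lem} and Corollary~\ref{top_cov_from_pi1_cor} — are routine once the coherent choice of neighbourhoods in the second paragraph has been organised.
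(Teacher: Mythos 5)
Your construction is the paper's own: re-topologise $\widehat{\mathcal X}=\varprojlim\mathcal X_n$ by the special sets, take a connected component $\widetilde{\mathcal X}$, and map any connected covering $\mathcal Y$ into $\overline{\mathcal X}$ via the universal property of the inverse limit. The one step you decline to carry out --- identifying $\Phi(\mathcal Y)$ with the distinguished component compatibly with \emph{all} the projections --- is a genuine gap, not a routine verification, and it is exactly where the paper's own proof breaks: the paper sets $\widetilde\pi=g^{-1}\circ\overline\pi$, but transporting by $g\in G\left(\widehat{\mathcal X}\mid\mathcal X\right)$ conjugates the $n$-th projection by $h_n(g)\in G\left(\mathcal X_n\mid\mathcal X\right)$, so $\pi^{\widetilde{\mathcal X}}_n\circ\widetilde\pi=h_n(g)^{-1}\circ\pi^{\mathcal Y}_n$. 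This equals $\pi^{\mathcal Y}_n$ for every $n$ only if every $h_n(g)$ is trivial, which by Lemma \ref{top_surj_group_lem} forces $g$ itself to be trivial. So the constructed map is a morphism of $\downarrow\mathfrak S$ in the sense of Definition \ref{comm_top_constr_morph_defn} only when $\Phi(\mathcal Y)$ already lies in the chosen component.

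The gap cannot be closed in general. Take $\mathcal X_n=S^1$ with each $\mathcal X_n\to\mathcal X_{n-1}$ the double cover, so that $\widehat{\mathcal X}$ is the dyadic solenoid, $\widehat G\cong\varprojlim\Z/2^n\Z$ is the group of dyadic integers, and the component of the basepoint is $\widetilde{\mathcal X}\cong\R$ with $\widetilde\pi_n(t)=e^{2\pi it/2^n}$. For $g=(g_n)\in\widehat G$ the component $g\widetilde{\mathcal X}$ is again a line and is itself an object of $\downarrow\mathfrak S$, with projections $t\mapsto e^{2\pi i(t+g_n)/2^n}$. A morphism $f\colon g\widetilde{\mathcal X}\to\widetilde{\mathcal X}$ would be a homeomorphism of $\R$ satisfying $f(t)\equiv t+g_n\ (\mathrm{mod}\ 2^n)$ for all $n$, hence $f(t)=t+c$ with $c\in\Z$ reducing to $g_n$ modulo $2^n$ for every $n$; for $g\in\widehat G\setminus\Z$ no such integer exists. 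Thus distinct components are pairwise non-isomorphic in $\downarrow\mathfrak S$, and since every object maps onto some component, the same computation shows this category has no final object at all. The strategy (yours and the paper's) can only establish finality in the weaker category whose morphisms are required to intertwine $\pi_0$ alone; with the morphisms as actually defined, the missing identification is not a finer analysis away --- it is false.
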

\begin{proof}
	Let $\widehat{\mathcal{X}} = \varprojlim \mathcal{X}_n$ be the inverse limit of the sequence $\mathcal{X}_0 \xleftarrow{}... \xleftarrow{} \mathcal{X}_n \xleftarrow{} ...$ in the category of topological spaces and continuous maps. Denote by $\overline{\mathcal X}$ a topological space such that
	\begin{itemize}
		\item $\overline{\mathcal X}$ coincides with $\widehat{\mathcal X}$ as a set, 
		\item A set of special sets of $\widehat{\mathcal X}$ is a base of the topology of $\overline{\mathcal X}$.
	\end{itemize}
	If $x_n \in \mathcal X_n$ is a point then there is $\overline{x}\in \overline{\mathcal X}=\widehat{\mathcal X}$ such that $x_n = \widehat{\pi}_n\left( \overline{x}\right)$ and there is a special subset $\widehat{\mathcal U}$ such that $\overline{x} \in \widehat{\mathcal U}$.  From the construction of special subsets it follows that:
	\begin{itemize}
		\item $\mathcal U_n = \widehat{\pi}_n\left( \widehat{\mathcal U}\right)$ is an open neighborhood of $x_n$;
		\item  $$\widehat{\pi}_n^{-1} \left(\mathcal U_n \right) = \bigsqcup_{g \in \ker\left(G\left(\widehat{\mathcal{X}}~|~\mathcal X \right) \to G\left(\mathcal{X}_n~|~\mathcal X \right) \right)  } g \widehat{\mathcal U};$$

		\item For any $g \in \ker\left(G\left(\widehat{\mathcal{X}}~|~\mathcal X \right) \to G\left(\mathcal{X}_n~|~\mathcal X \right) \right)$ the set $g \widehat{\mathcal U}$  mapped homeomorphically onto $\mathcal{U}_n$.
	\end{itemize}
	So the natural map 	$\pi^{\overline{\mathcal X}}_n:\overline{\mathcal X} \to \mathcal X_n$ is a covering. If $\widetilde{\mathcal{X}} \subset \overline{\mathcal{X}}$ is a nontrivial connected component then the map $\widetilde{\mathcal{X}} \to \mathcal X_n$ is a covering, hence  $\widetilde{\mathcal{X}}$ is an object of $\downarrow \mathfrak S$. Let $G \subset \widehat{G}$ be a maximal subgroup such that $G\widetilde{\mathcal{X}}=\widetilde{\mathcal{X}}$. The subgroup $G \subset \widehat{G}$ is normal. If $g \in  \widehat{G} \backslash G$ then $g \widetilde{\mathcal X} \bigcap \widetilde{\mathcal X} = \emptyset$, however  $g$ is a homeomorphism, i.e. $g: \widetilde{\mathcal X} \xrightarrow{\approx} g\widetilde{\mathcal X}$. If $\overline{x} \in \overline{\mathcal{X}}$ then there is $\widetilde{x} \in \widetilde{\mathcal X}$ such that $\overline{\pi}_0\left(\overline{x} \right)= \overline{\pi}_0\left(\widetilde{x} \right)$, hence there is $g \in \widehat{G}$ such that $\overline{x}=g\widetilde{x}$ and $\overline{x} \in g \widetilde{\mathcal X}$. It follows that
	\begin{equation}\label{top_disconnected_repr_eqn}
	\overline{\mathcal X}=  \bigsqcup_{g \in J    } g \widetilde{\mathcal X}                                                      
	\end{equation}
	where $J \subset \widehat{G} $ is a set of representatives of ${\widehat{G}/G}$.
	If $\left(\mathcal{Y},\left\{\pi^{\mathcal Y}_n\right\} \right)$ is  a connected covering  of $\mathfrak{S}$ then there is the natural continuous map $\mathcal{Y}\to \widehat{\mathcal{X}}$, because $\widehat{\mathcal{X}}$ is the inverse limit. Since the continuous map $\overline{\mathcal{X}}\to \widehat{\mathcal{X}}$ is bijective there is the natural map $\overline{\pi}:\mathcal{Y} \to \overline{\mathcal{X}}$. Let $\overline{x} \in \overline{\mathcal{X}}$  be such that  $\overline{x} \in \overline{\pi}\left( \mathcal Y\right)$, i.e.   $\exists y \in \mathcal Y$ which satisfies to a condition $\overline{x} = \overline{\pi}\left( y\right)$. Let $G^{\mathcal Y} \subset G\left(\mathcal Y~|~\mathcal X \right)$ be such that $\overline{\pi}\left( G^{\mathcal Y}y\right)  = \left\{\overline{x}\right\} $. If $\widehat{\mathcal U}$ is a special neighborhood of $\overline{x}$ then there is a connected neighborhood $\mathcal V$ of $y$ which is mapped homeomorphically onto $\widehat{\pi}_0\left(\widehat{\mathcal U}\right) \subset \mathcal X_0$. It follows that
	\begin{equation}\label{top_pi_u_eqn}
	\overline{\pi}^{-1}\left(\widehat{\mathcal U}\right)= \bigsqcup_{g \in G^{\mathcal Y}}g \mathcal V,
	\end{equation}
	i.e. $\widehat{\mathcal U}$ is evenly covered by $\overline{\pi}$. It turns out the map  $\overline{\pi}:\mathcal{Y} \to \overline{\mathcal{X}}$ is continuous. From \eqref{top_disconnected_repr_eqn} it turns out that there is $g \in \widehat{G}$, such that  $\overline{x} \in g \widetilde{\mathcal X}$. The space $\mathcal Y$ is connected  so it is mapped into $g \widetilde{\mathcal X}$, hence there is a continuous map $ \widetilde{\pi} = g^{-1} \circ \overline{\pi}: \mathcal Y \to \widetilde{\mathcal X}$. The set $\widetilde{\pi}\left(  \mathcal Y\right) \subset \widetilde{\mathcal X} $ contains a nontrivial open subset. Denote by $\mathring{  \mathcal X}_{\widetilde{\pi}}\subset \widetilde{\mathcal X}$ (resp. $\overline{  \mathcal X}_{\widetilde{\pi}}\subset \widetilde{\mathcal X}$) maximal open subset of $\widetilde{\pi}\left(  \mathcal Y\right) $ (resp. minimal closed superset of $\widetilde{\pi}\left(  \mathcal Y\right) $). The space $\widetilde{\mathcal X}$ is connected (i.e. open and closed), hence from $\widetilde{\pi}\left(  \mathcal Y\right) \neq \widetilde{\mathcal X}$ it turns out $ \overline{  \mathcal X}_{\widetilde{\pi}} \backslash\mathring{  \mathcal X}_{\widetilde{\pi}} \neq \emptyset$. Let $\widetilde{x} \in \overline{  \mathcal X}_{\widetilde{\pi}} \backslash\mathring{  \mathcal X}_{\widetilde{\pi}}$, and let $\widetilde{\mathcal U}$ be a special neighborhood of $\widetilde{x}$. There is $y \in \mathcal Y$ such that $y \in \widetilde{\pi}^{-1}\left(\widetilde{\mathcal U} \right)$ and there is a special connected neighborhood $\widetilde{\mathcal V} \subset \mathcal Y$ of $y$ such that $\widetilde{\pi}\left(\widetilde{\mathcal V}\right)$ is mapped homemorphically onto  $\pi^{\overline{\mathcal X}}_0\left( \widetilde{\mathcal U}\right) \subset \mathcal X_0$. Otherwise $\widetilde{\mathcal U}\subset \widetilde{\mathcal X} $ is mapped homeomorphically onto $\pi^{\overline{\mathcal X}}_0\left( \widetilde{\mathcal U}\right)$,  it follows that  $\widetilde{\mathcal V}$ is mapped onto $\widetilde{\mathcal U}$. The open set $\widetilde{\mathcal U}$ is such that:
	\begin{itemize}
		\item $\widetilde{\mathcal U}$ is a neighborhood of $\widetilde{x}$,
		\item $\widetilde{\mathcal U} \subset \widetilde{\pi}\left( \mathcal Y\right)$. 
	\end{itemize}
	From the above conditions it follows that $\widetilde{x}$ lies in open subset $\widetilde{\mathcal U} \subset \widetilde{\pi}\left(  \mathcal Y\right) $,   hence $\widetilde{x} \in \mathring{  \mathcal X}_{\widetilde{\pi}}$. This fact contradicts with $\widetilde{x}\notin\overline{  \mathcal X}_{\widetilde{\pi}} \backslash\mathring{  \mathcal X}_{\widetilde{\pi}}$ and from the contradiction it turns out $\widetilde{\pi}\left(  \mathcal Y\right) = \widetilde{\mathcal X} $, i.e. $\widetilde{\pi}$ is surjective. From \eqref{top_pi_u_eqn} it follows that $\widetilde{\pi}: \mathcal Y \to \widetilde{\mathcal X}$ is a covering. Thus $\widetilde{\mathcal X}$ is the final object of the category $\downarrow \mathfrak S$.
\end{proof}
\begin{definition}\label{top_topological_inv_lim_defn}
	The final object $\left(\widetilde{\mathcal{X}},\left\{\pi^{\widetilde{\mathcal X}}_n\right\} \right)$ of the category $\downarrow\mathfrak{S}$ is said to be the \textit{(topological) inverse limit} of $\downarrow\mathfrak{S}$.  The notation $\left(\widetilde{\mathcal{X}},\left\{\pi^{\widetilde{\mathcal X}}_n\right\} \right) = \varprojlim \downarrow \mathfrak{S}$ or simply $~\widetilde{\mathcal{X}} =  \varprojlim \downarrow\mathfrak{S}$ will be used. The space $\overline{\mathcal X}$ from the proof of the Lemma \ref{top_universal_covering_lem} is said to be the \textit{disconnected inverse limit} of $\mathfrak{S}$.
\end{definition}
\begin{lemma}\label{top_biject_lem}
	Suppose
	$\mathfrak{S} = \left\{\mathcal{X} = \mathcal{X}_0 \xleftarrow{}... \xleftarrow{} \mathcal{X}_n \xleftarrow{} ...\right\} \in \mathfrak{FinTop}$, and  $~\widehat{\mathcal X} = \varprojlim  \mathcal X_n$. If $\overline{\mathcal X}$ a topological space which coincides with  $\widehat{\mathcal X}$ as a set and the topology on $\overline{\mathcal X}$ is generated by special sets then there is the natural isomorphism $G\left(\overline{\mathcal X}~|~\mathcal{X} \right) \xrightarrow{\approx} G\left(\widehat{\mathcal{X}}~|~\mathcal{X} \right)$ induced by the map $\overline{\mathcal X} \to \widehat{\mathcal{X}}$.
	
\end{lemma}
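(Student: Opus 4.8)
My plan is to show that the canonical continuous bijection $\phi\colon\overline{\mathcal{X}}\to\widehat{\mathcal{X}}$, which is the identity on underlying sets, carries $G\left(\overline{\mathcal{X}}~|~\mathcal{X}\right)$ bijectively onto $G\left(\widehat{\mathcal{X}}~|~\mathcal{X}\right)$. First I would check that $\phi$ is continuous: the inverse-limit topology on $\widehat{\mathcal{X}}$ has the sets $\widehat{\pi}_n^{-1}\left(\mathcal{U}_n\right)$, $\mathcal{U}_n\subseteq\mathcal{X}_n$ open, as a subbase, and each of these equals $\left(\pi^{\overline{\mathcal{X}}}_n\right)^{-1}\left(\mathcal{U}_n\right)$, which is open in $\overline{\mathcal{X}}$ since $\pi^{\overline{\mathcal{X}}}_n\colon\overline{\mathcal{X}}\to\mathcal{X}_n$ is a covering (proof of Lemma \ref{top_universal_covering_lem}). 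Because $\widehat{\pi}_0\circ\phi=\pi^{\overline{\mathcal{X}}}_0$ and $\phi$ is a bijection of the common point set, a set-bijection $g$ satisfies $\widehat{\pi}_0\circ g=\widehat{\pi}_0$ precisely when $\pi^{\overline{\mathcal{X}}}_0\circ g=\pi^{\overline{\mathcal{X}}}_0$. Hence everything reduces to the assertion that such a $g$ is a homeomorphism of $\overline{\mathcal{X}}$ if and only if it is a homeomorphism of $\widehat{\mathcal{X}}$; the correspondence is then automatically an isomorphism of groups, being the identity on underlying maps.

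Next I would handle the easy implication. If $g\in G\left(\widehat{\mathcal{X}}~|~\mathcal{X}\right)$, then by Lemma \ref{top_surj_group_lem} it is given by a compatible family $\left(g_n\right)\in\varprojlim G\left(\mathcal{X}_n~|~\mathcal{X}\right)$ with $\widehat{\pi}_n\circ g=g_n\circ\widehat{\pi}_n$, each $g_n$ a homeomorphism of $\mathcal{X}_n$ and $g_0=\id$. For a special set $\widehat{\mathcal{U}}$ one then has $\widehat{\pi}_n\left(g\widehat{\mathcal{U}}\right)=g_n\left(\widehat{\pi}_n\left(\widehat{\mathcal{U}}\right)\right)$, which is open and connected; $\widehat{\pi}_n|_{g\widehat{\mathcal{U}}}$ is injective because $g_n$ is; and $\widehat{\pi}_0\left(g\widehat{\mathcal{U}}\right)=\widehat{\pi}_0\left(\widehat{\mathcal{U}}\right)$ is still evenly covered by $\mathcal{X}_1\to\mathcal{X}_0$. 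So $g\widehat{\mathcal{U}}$ is again special (Definition \ref{top_spec_defn}); applying the same to $g^{-1}$, the map $g$ takes the base of $\overline{\mathcal{X}}$ onto itself and is a homeomorphism of $\overline{\mathcal{X}}$, and it lies in $G\left(\overline{\mathcal{X}}~|~\mathcal{X}\right)$ since $\pi^{\overline{\mathcal{X}}}_0\circ g=\widehat{\pi}_0\circ g=\widehat{\pi}_0=\pi^{\overline{\mathcal{X}}}_0$.

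For the converse, take $g\in G\left(\overline{\mathcal{X}}~|~\mathcal{X}\right)$ and fix $n$. The maps $\pi^{\overline{\mathcal{X}}}_n$ and $\pi^{\overline{\mathcal{X}}}_n\circ g$ are two liftings of $\pi^{\overline{\mathcal{X}}}_0$ through the regular covering $\mathcal{X}_n\to\mathcal{X}$, so for every $\bar{x}$ the points $\pi^{\overline{\mathcal{X}}}_n\left(\bar{x}\right)$ and $\pi^{\overline{\mathcal{X}}}_n\left(g\bar{x}\right)$ lie in one fibre, yielding a unique $g_n\left(\bar{x}\right)\in G\left(\mathcal{X}_n~|~\mathcal{X}\right)$ with $\pi^{\overline{\mathcal{X}}}_n\left(g\bar{x}\right)=g_n\left(\bar{x}\right)\pi^{\overline{\mathcal{X}}}_n\left(\bar{x}\right)$; since both maps are local homeomorphisms and the fibres are discrete, $\bar{x}\mapsto g_n\left(\bar{x}\right)$ is locally constant. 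The crucial point is to promote this to a single $g_n\in G\left(\mathcal{X}_n~|~\mathcal{X}\right)$ with $\pi^{\overline{\mathcal{X}}}_n\circ g=g_n\circ\pi^{\overline{\mathcal{X}}}_n$; here I would use the description $\overline{\mathcal{X}}=\bigsqcup_{h\in J}h\widetilde{\mathcal{X}}$ of the disconnected inverse limit from the proof of Lemma \ref{top_universal_covering_lem}, its components being translates of a single connected $\widetilde{\mathcal{X}}$, together with the constraint $\pi^{\overline{\mathcal{X}}}_0\circ g=\pi^{\overline{\mathcal{X}}}_0$ and the normality built into $\mathfrak{FinTop}$, to force $g_n\left(\bar{x}\right)$ to take one and the same value on every component. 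Granting that, the $g_n$ are compatible and determine $\widehat{g}\in\varprojlim G\left(\mathcal{X}_n~|~\mathcal{X}\right)\cong G\left(\widehat{\mathcal{X}}~|~\mathcal{X}\right)$; and since $\widehat{\pi}_n=\pi^{\overline{\mathcal{X}}}_n$ on points and $\widehat{\pi}_n\circ g=g_n\circ\widehat{\pi}_n$ with $g_n$ continuous, $g$ and $g^{-1}$ are continuous for the coarser inverse-limit topology, so $g\in G\left(\widehat{\mathcal{X}}~|~\mathcal{X}\right)$.

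The main obstacle is exactly that last promotion from a locally constant to a globally constant $g_n$: $\overline{\mathcal{X}}$ is in general disconnected, so this does not come for free, and it is the one place where one must genuinely exploit the rigidity of the disconnected inverse limit (components permuted among translates of $\widetilde{\mathcal{X}}$) and the exact-sequence/normality conditions defining $\mathfrak{FinTop}$. Everything after that — that the two constructions are mutually inverse, respect composition, and assemble to the isomorphism induced by $\phi$ — is routine bookkeeping.
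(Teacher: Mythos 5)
Your first half --- showing that any $\widehat g\in G\left(\widehat{\mathcal X}~|~\mathcal X\right)$ carries special sets to special sets because $\widehat{\pi}_n\left(\widehat g\,\widehat{\mathcal U}\right)=h_n\left(\widehat g\right)\widehat{\pi}_n\left(\widehat{\mathcal U}\right)$, and is therefore a homeomorphism of $\overline{\mathcal X}$ lying in $G\left(\overline{\mathcal X}~|~\mathcal X\right)$ --- is exactly the argument the paper gives for that inclusion, so that part is in order.

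The genuine gap is in the converse inclusion, and while you have located the difficulty precisely, the plan you sketch for closing it cannot work. You want to promote the locally constant assignment $\overline x\mapsto g_n\left(\overline x\right)$ to a single element of $G\left(\mathcal X_n~|~\mathcal X\right)$ by exploiting the decomposition $\overline{\mathcal X}=\bigsqcup_{h\in J}h\widetilde{\mathcal X}$ and the normality conditions in $\mathfrak{FinTop}$. But nothing in the definition of $G\left(\overline{\mathcal X}~|~\mathcal X\right)$ (homeomorphisms of $\overline{\mathcal X}$ commuting with $\pi^{\overline{\mathcal X}}_0$ only) ties the values of $g_n$ on distinct components together: the components $h\widetilde{\mathcal X}$ are open and closed in $\overline{\mathcal X}$, so any family of homeomorphisms of the individual components, each commuting with $\pi^{\overline{\mathcal X}}_0$, glues to an element of $G\left(\overline{\mathcal X}~|~\mathcal X\right)$, and for such an element $g_n$ genuinely takes different values on different components. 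Concretely, for the sequence of degree-$2$ self-coverings of $S^1$, the space $\overline{\mathcal X}$ is a disjoint union of real lines, each dense in the solenoid $\widehat{\mathcal X}$; translating one line by its deck generator while fixing all the others lies in $G\left(\overline{\mathcal X}~|~\mathcal X\right)$ but is not continuous for the inverse-limit topology. So global constancy of $g_n$ is not a consequence of the stated hypotheses, and your second half does not go through. You should be aware that the paper's own proof disposes of this direction in a single sentence (``since the topology of $\overline{\mathcal X}$ is finer there is a natural injective map $G\left(\overline{\mathcal X}~|~\mathcal X\right)\hookrightarrow G\left(\widehat{\mathcal X}~|~\mathcal X\right)$''), which is not a proof either --- a homeomorphism for a finer topology need not be one for a coarser topology --- so you have not overlooked an argument supplied by the paper; but as written the converse inclusion would require either strengthening the definition of $G\left(\overline{\mathcal X}~|~\mathcal X\right)$ (e.g.\ requiring compatibility with every $\pi^{\overline{\mathcal X}}_n$, not just $\pi^{\overline{\mathcal X}}_0$) or an argument of a different kind.
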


\begin{proof}
	
	Since $\overline{\mathcal X}$ coincides with $\widehat{\mathcal X}$ as a set, and  the topology of $\overline{\mathcal X}$ is finer than the topology of $\widehat{\mathcal X}$ there is the natural injective map $G\left(\overline{\mathcal X}~|~\mathcal{X} \right)\hookto G\left(\widehat{\mathcal{X}}~|~\mathcal{X} \right)$. If $\widehat{g}\in G\left(\widehat{\mathcal{X}}~|~\mathcal{X} \right)$ and $\widehat{\mathcal U}$ is a special set, then for any $n \in \mathbb{N}$ following condition holds
	\begin{equation}\label{top_pi_g_u_eqn}
	\widehat{\pi}_n\left(\widehat{g} \widehat{\mathcal U} \right)= h_n\left( \widehat{g}\right)\circ\widehat{\pi}_n\left( \widehat{\mathcal U} \right)  
	\end{equation}
	where $\widehat{\pi}_n: \widehat{\mathcal X} \to \mathcal X_n$ is the natural map, and $h_n : G\left(\widehat{\mathcal{X}}~|~\mathcal{X} \right)\to G\left(\mathcal{X}_n~|~\mathcal{X} \right)$ is given by the Lemma \ref{top_surj_group_lem}. Clearly $h_n\left( \widehat{g}\right) $ is a homeomorphism of $\mathcal X_n$, so from \eqref{top_pi_g_u_eqn} it follows that $\widehat{\pi}_n\left(\widehat{g} \widehat{\mathcal U} \right)$ is an open subset of $\mathcal X_n$. Hence $\widehat{g} \widehat{\mathcal U}$ is special. So $\widehat{g}$ maps special sets onto special sets. Since topology of $\overline{\mathcal X}$ is generated by special sets the map $\widehat{g}$ is a homeomorphism of $\overline{\mathcal X}$, i.e. $\widehat{g} \in  G\left(\overline{\mathcal X}~|~\mathcal{X} \right)$.
	
\end{proof}

\subsubsection{Algebraic construction in brief}\label{comm_alg_constr_susub}
\paragraph*{}
The inverse limit of coverings $\widetilde{\mathcal X}$ is obtained from inverse limit of topological spaces $\widehat{\mathcal X}$ by a change of a topology. The topology of  $\widetilde{\mathcal X}$ is finer then topology of $\widehat{\mathcal X}$, it means that $C_0\left(\widehat{\mathcal X}\right)$ is a subalgebra of $C_b\left(\widetilde{\mathcal X}\right)$. The topology of $\widetilde{\mathcal X}$ is obtained from topology of $\widehat{\mathcal X}$ by addition of special subsets. Addition of new sets to a topology is equivalent to addition of new elements to $C_0\left(\widehat{\mathcal X}\right)$. To obtain $C_b\left(\widetilde{\mathcal X}\right)$ we will add to  $C_0\left(\widehat{\mathcal X} \right) $ special elements (cf. Definition \ref{special_el_defn}). If $\widetilde{\mathcal U}\subset \widetilde{\mathcal X}$ is a special set and $\widetilde{a} \in C_c\left( \widetilde{\mathcal X}\right)$ is positive element such that $\widetilde{a}|_{\widetilde{\mathcal X} \backslash \widetilde{\mathcal U}}= \{0\}$, and $a\in C_c\left(\mathcal X_0\right)$ is given by 
$
a =\sum_{\widehat{g} \in \widehat{G}} \widehat{g} \widetilde{a}
$,
 then following condition holds
$$
a\left(  \widetilde{\pi}_n\left(\widetilde{x}\right) \right)= \left( \sum_{\widehat{g} \in \widehat{G}} \widehat{g} \widetilde{a}\right) \left(  \widetilde{\pi}_n\left(\widetilde{x}\right) \right)= 
\left\{\begin{array}{c l}
\widetilde{a}\left( \widetilde{x} \right)  & \widetilde{x} \in \widetilde{\mathcal U} \\
0 & \widetilde{\pi}_n\left(\widetilde{x}\right) \notin \widetilde{\pi}_n\left(\widetilde{\mathcal U}\right)
\end{array}\right..
$$
From above equation it follows that
\begin{equation}\label{comm_alg_eqn}
\left( \sum_{\widehat{g} \in \widehat{G}} \widehat{g} \widetilde{a}\right)^2  = \sum_{\widehat{g} \in \widehat{G}} \widehat{g} \widetilde{a}^2.
\end{equation}
The equation \eqref{comm_alg_eqn} is purely algebraic and related to special subsets. From the Theorem \ref{comm_main_thm} it follows that the algebraic condition \eqref{comm_alg_eqn}  is sufficient for  construction of  $C_0\left( \widetilde{\mathcal X}\right)$. Thus noncommutative inverse limits of coverings can be constructed by purely algebraic methods.

\subsection{Locally compact spaces}

\paragraph*{} 
%In this article we consider second-countable locally compact Hausdorff spaces only \cite{munkres:topology}. So we will say a "topological space" (resp. "compact  space" ) instead "locally compact second-countable Hausdorff space" (resp. "compact second-countable Hausdorff space"). This subsection contains well known facts, I follow to \cite{munkres:topology}. 

There are two equivalent definitions of $C_0\left(\mathcal{X}\right)$ and both of them are used in this article.
\begin{defn}\label{c_c_def_1}
	An algebra $C_0\left(\mathcal{X}\right)$ is the $C^*$-norm closure of the algebra $C_c\left(\mathcal{X}\right)$ of compactly supported continuous functions.
\end{defn}
\begin{defn}\label{c_c_def_2}
	A $C^*$-algebra $C_0\left(\mathcal{X}\right)$ is given by the following equation
	\begin{equation*}
	C_0\left(\mathcal{X}\right) = \left\{\varphi \in C_b\left(\mathcal{X}\right) \ | \ \forall \varepsilon > 0 \ \ \exists K \subset \mathcal{X} \ ( K \text{ is compact}) \ \& \ \forall x \in \mathcal X \backslash K \ \left|\varphi\left(x\right)\right| < \varepsilon  \right\},
	\end{equation*}
	i.e.
	\begin{equation*}
	\left\|\varphi|_{\mathcal X \backslash K}\right\| < \varepsilon.
	\end{equation*}
\end{defn}
\begin{thm}\label{comm_sep_thm}\cite{chun-yen:separability}
	For a locally compact Hausdorff space $\mathcal X$, the following are
	equivalent:
	\begin{enumerate}
		\item[(a)] The Abelian $C^*$-algebra $C_0\left(\mathcal X \right)$  is separable;
		\item[(b)]$\mathcal X$ is $\sigma$-compact and metrizable;
		\item[(c)]$\mathcal X$ is second-countable.
	\end{enumerate}
	
\end{thm}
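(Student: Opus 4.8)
The plan is to prove the cycle of implications (c)$\,\Rightarrow\,$(b)$\,\Rightarrow\,$(a)$\,\Rightarrow\,$(c), using only standard general topology together with the Stone--Weierstrass theorem. \textbf{For (c)$\,\Rightarrow\,$(b):} assume $\mathcal X$ has a countable base $\mathcal B$. By local compactness every point lies in a member of $\mathcal B$ with compact closure; the countable subfamily $\mathcal B'\subset\mathcal B$ of such sets still covers $\mathcal X$, and $\mathcal X=\bigcup_{U\in\mathcal B'}\overline U$ exhibits $\mathcal X$ as $\sigma$-compact. A locally compact Hausdorff space is regular (indeed Tychonoff), so the second-countable space $\mathcal X$ is metrizable by the Urysohn metrization theorem.

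\textbf{For (b)$\,\Rightarrow\,$(a):} from (b) the space $\mathcal X$ is $\sigma$-compact, hence Lindel\"of, hence (being metrizable) second-countable; consequently its one-point compactification $\mathcal X^+$ is a second-countable compact Hausdorff space and therefore metrizable. I would then invoke that $C(K)$ is separable for every compact metric space $K$: a countable dense set $\{x_k\}\subset K$ yields the point-separating functions $x\mapsto d(x,x_k)$, the $\mathbb Q$-subalgebra of $C(K,\mathbb R)$ that they generate is dense by Stone--Weierstrass, and adjoining $i$ produces a countable dense subset of $C(K)$. Finally $C_0(\mathcal X)$ is isometrically isomorphic to the kernel of the character $g\mapsto g(\infty)$ on $C(\mathcal X^+)$, hence a closed subspace of the separable space $C(\mathcal X^+)$, so $C_0(\mathcal X)$ is separable.

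\textbf{For (a)$\,\Rightarrow\,$(c):} fix a countable dense subset $\{f_n\}_{n\in\mathbb N}\subset C_0(\mathcal X)$. I claim that
\begin{equation*}
\mathcal B=\bigl\{\,\{x\in\mathcal X:|f_n(x)|>q\}\ :\ n\in\mathbb N,\ q\in\mathbb Q_{>0}\,\bigr\}
\end{equation*}
is a base for the topology of $\mathcal X$. Each member is open, so it suffices to find, for every point $x$ and every open $U\ni x$, a member $W\in\mathcal B$ with $x\in W\subset U$. By Urysohn's lemma for locally compact Hausdorff spaces there is $f\in C_0(\mathcal X)$ with $0\le f\le 1$, $f(x)=1$ and $f|_{\mathcal X\setminus U}=0$. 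Choose $n$ with $\|f-f_n\|<1/3$ and a rational $q$ with $1/3<q<2/3$, and put $W=\{x'\in\mathcal X:|f_n(x')|>q\}$. Then $|f_n(x)|>2/3>q$, so $x\in W$, while if $y\notin U$ then $|f_n(y)|=|f_n(y)-f(y)|<1/3<q$, so $W\subset U$. Hence $\mathcal X$ is second-countable.

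The only step requiring genuine care is (a)$\,\Rightarrow\,$(c): one must verify that the sets $\{|f_n|>q\}$ constitute an honest base and not a mere subbase, which is precisely what the bump-function approximation above secures. The remaining ingredients --- the Urysohn metrization theorem, separability of $C(K)$ for compact metric $K$, and the locally compact version of Urysohn's lemma --- are entirely standard, so I anticipate no serious obstacle.
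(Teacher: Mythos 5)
The paper does not prove this statement at all: it is imported verbatim from the cited reference (Chou, \emph{Notes on the Separability of $C^*$-Algebras}), so there is no in-paper argument to compare yours against. Your cycle (c)$\Rightarrow$(b)$\Rightarrow$(a)$\Rightarrow$(c) is correct as written; the (a)$\Rightarrow$(c) step, which you rightly identify as the delicate one, is handled properly, since the bump function from the locally compact Urysohn lemma really does force the sets $\{|f_n|>q\}$ to form a base rather than a subbase. Two very minor points you should still write out: in (b)$\Rightarrow$(a) the claim that the one-point compactification $\mathcal X^+$ is second-countable needs a sentence (a countable exhaustion $K_1\subset K_2\subset\cdots$ with $K_n\subset \mathrm{int}\,K_{n+1}$ gives a countable neighborhood base at $\infty$, namely the sets $\mathcal X^+\setminus K_n$), and in the Stone--Weierstrass step you must adjoin the rational constants to the $\mathbb Q$-algebra generated by the functions $d(\cdot,x_k)$ so that the unital form of the theorem applies; neither affects countability or correctness.
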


%\begin{thm}\label{comp_normal}\cite{munkres:topology}
%	Every compact Hausdorff space is normal.
%\end{thm}
%\begin{thm}\label{urysohn_lem}\cite{munkres:topology}\textbf{ Urysohn lemma.}
%	Let $\mathcal X$ be a normal space, let $\mathcal A$, $\mathcal B$ be disconnected closed subsets of $\mathcal X$. Let $\left[a,b\right]$ be a closed interval in the real line. Then there exist a continuous map $f: \mathcal X \to \left[a, b\right]$ such that $f(\mathcal A)=\{a\}$ and $f(\mathcal B)=\{b\}$.
%\end{thm}
\begin{cor}\label{com_a_u_cor}
	If $\mathcal X$ a is locally compact second-countable Hausdorff space then for any $x \in  \mathcal X$ and any open neighborhood $\mathcal U\subset\mathcal X$ there is a bounded positive continuous function $a: \mathcal X \to \R$ such that $a\left( x\right) \neq 0$ and $a\left(\mathcal X \backslash \mathcal U \right)= \{0\}$. 
\end{cor}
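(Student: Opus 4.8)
The statement is the familiar Urysohn-type separation property for locally compact Hausdorff spaces, so the plan is to reduce it to Urysohn's lemma for normal spaces. Throughout I read ``open neighborhood $\mathcal U$ of $x$'' as $x\in\mathcal U$, which is necessary for the conclusion to be possible.

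First I would produce a relatively compact intermediate neighborhood: using that $\mathcal X$ is locally compact and Hausdorff, there is an open set $\mathcal V$ with $x\in\mathcal V$, $\overline{\mathcal V}$ compact, and $\overline{\mathcal V}\subset\mathcal U$. The construction is standard --- pick a compact neighborhood of $x$, intersect its interior with $\mathcal U$, and shrink once more using the Hausdorff property so that the closure stays inside $\mathcal U$. This is the step where local compactness is genuinely used (second-countability is not needed here, though it is available).

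Next, observe that $\overline{\mathcal V}$ with the subspace topology is compact Hausdorff, hence normal, and that $\{x\}$ and $\overline{\mathcal V}\setminus\mathcal V$ are disjoint closed subsets of $\overline{\mathcal V}$ (the latter because $\mathcal V$ is open). Urysohn's lemma then yields a continuous $b:\overline{\mathcal V}\to[0,1]$ with $b(x)=1$ and $b|_{\overline{\mathcal V}\setminus\mathcal V}=\{0\}$. Finally I would extend $b$ by zero: define $a:\mathcal X\to\mathbb R$ by $a|_{\overline{\mathcal V}}=b$ and $a|_{\mathcal X\setminus\mathcal V}=0$. The two prescriptions agree on the overlap $\overline{\mathcal V}\setminus\mathcal V$ (both equal $0$), and $\{\overline{\mathcal V},\,\mathcal X\setminus\mathcal V\}$ is a closed cover of $\mathcal X$, so the pasting lemma gives continuity of $a$. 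Then $0\le a\le 1$, so $a$ is bounded and positive, $a(x)=b(x)=1\neq 0$, and for $y\in\mathcal X\setminus\mathcal U$ we have $y\notin\mathcal V$ (as $\mathcal V\subset\mathcal U$), so either $y\notin\overline{\mathcal V}$, whence $a(y)=0$, or $y\in\overline{\mathcal V}\setminus\mathcal V$, whence $a(y)=b(y)=0$; thus $a(\mathcal X\setminus\mathcal U)=\{0\}$.

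The only real obstacle is the first step --- squeezing an open set with compact closure between $\{x\}$ and $\mathcal U$ --- which is exactly where local compactness enters; everything afterwards is Urysohn's lemma plus the pasting lemma. Alternatively, one may simply invoke the locally compact Hausdorff version of Urysohn's lemma with $K=\{x\}$ and $V=\mathcal U$, which produces such an $a\in C_c(\mathcal X)$ directly and makes the separate extension argument unnecessary.
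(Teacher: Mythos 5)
Your proof is correct. The paper itself supplies no argument for this corollary, but its placement immediately after Theorem \ref{comm_sep_thm} makes clear that the intended route is through that theorem: second-countability gives metrizability of $\mathcal X$, after which one simply sets $a(y)=\min\bigl(1,\,d(y,\mathcal X\setminus\mathcal U)\bigr)$, which is continuous, bounded, positive, vanishes on $\mathcal X\setminus\mathcal U$, and is nonzero at $x$ because $x$ lies in the open set $\mathcal U$ and hence at positive distance from its complement. You instead run the locally compact Hausdorff version of Urysohn's lemma: squeeze a relatively compact open $\mathcal V$ with $\overline{\mathcal V}\subset\mathcal U$ between $\{x\}$ and $\mathcal U$, apply Urysohn on the compact (hence normal) set $\overline{\mathcal V}$ to separate $\{x\}$ from $\overline{\mathcal V}\setminus\mathcal V$, and extend by zero via the pasting lemma. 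All the steps check out (including the degenerate case $\overline{\mathcal V}\setminus\mathcal V=\emptyset$, where $b\equiv 1$ works). What your argument buys is generality and self-containedness: it never uses second-countability, so it proves the statement for an arbitrary locally compact Hausdorff space and does not lean on the quoted separability theorem; what the metric argument buys is brevity, since given Theorem \ref{comm_sep_thm} it is a one-line formula. Your closing remark --- that one could instead directly invoke the standard $C_c$-version of Urysohn's lemma with $K=\{x\}$ and $V=\mathcal U$ --- is the cleanest formulation of your own route.
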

%\begin{proof}
%	Follows from the Theorems \ref{comm_sep_thm} and \ref{urysohn_lem}.
%\end{proof}
 \begin{defn}\cite{munkres:topology}
	If $\phi: \mathcal X \to \mathbb{C}$  is continuous then the \textit{support} of $\phi$ is defined to be the closure of the set $\phi^{-1}\left(\mathbb{\C}\backslash \{0\}\right)$ Thus if $x$ lies outside the support, there is some neighborhood of $x$ on which $\phi$ vanishes. Denote by $\supp \phi$ the support of $\phi$.
\end{defn}

%\begin{thm}\cite{munkres:topology}\label{urysohn_metr_lem}\textbf{ Urysohn metrization theorem.}
%	Every regular space with a countable basis is metrizable.
%\end{thm}
%From the Theorems \ref{comp_normal} and \ref{urysohn_lem} it follows that if $\mathcal X$ is locally compact Hausdorff space $x \in \mathcal X$, and $\mathcal B$ is closed subset of $\mathcal X$, such that $x \notin \mathcal B$ then there exist a continuous map $f: \mathcal X \to \left[a, b\right]$ such that $f(x)=a$ and $f(\mathcal B)=\{b\}$. It means that locally compact Hausdorff space is completely regular, whence $\mathcal X$ is regular (See \cite{munkres:topology}), and from the Theorem \ref{urysohn_metr_lem} it follows the next corollary.

%\begin{cor}\label{loc_comp_haus_metr_col}
%	Every locally compact second-countable Hausdorff space is metrizable.
%\end{cor}

\subsection{Hilbert modules}
\paragraph*{} We refer to \cite{blackadar:ko} for definition of Hilbert $C^*$-modules, or simply Hilbert modules. Let $A$ be a $C^*$- algebra, and let $X_A$ be an $A$-Hilbert module. Let $\langle \cdot, \cdot \rangle_{X_A}$ be the $A$-valued product on $X_A$. For any $\xi, \zeta \in X_A$ let us define an $A$-endomorphism $\theta_{\xi, \zeta}$ given by  $\theta_{\xi, \zeta}(\eta)=\xi \langle \zeta, \eta \rangle_{X_A}$ where $\eta \in X_A$. The operator  $\theta_{\xi, \zeta}$ shall be denoted by $\xi \rangle\langle \zeta$. The norm completion of a generated by operators $\theta_{\xi, \zeta}$ algebra is said to be an algebra of compact operators $\mathcal{K}(X_A)$. We suppose that there is a left action of $\mathcal{K}(X_A)$ on $X_A$ which is $A$-linear, i.e. action of  $\mathcal{K}(X_A)$ commutes with action of $A$.

  \subsection{$C^*$-algebras and von Neumann algebras}
 \paragraph*{} In this section I follow to \cite{pedersen:ca_aut}.
   \begin{definition}\label{strict_topology}\cite{pedersen:ca_aut}
  	Let $A$ be a $C^*$-algebra.  The {\it strict topology} on the multiplier algebra $M(A)$ is the topology generated by seminorms $\vertiii{x}_a = \|ax\| + \|xa\|$, ($a\in A$). % If $x \in M(A)$  and a sequence of partial sums $\sum_{i=1}^{n}a_i$ ($n = 1,2, ...$), ($a_i \in A$) tends to $x$ in the strict topology then we shall write
  % 	\begin{equation*}
  % 	x = \sum_{i=1}^{\infty}a_i.
  %  	\end{equation*}
  \end{definition}
  \begin{definition}	\label{strong_topology}\cite{pedersen:ca_aut} Let $\H$ be a Hilbert space. The {\it strong} topology on $B\left(\H\right)$ is the locally convex vector space topology associated with the family of seminorms of the form $x \mapsto \|x\xi\|$, $x \in B(\H)$, $\xi \in \H$.
  \end{definition}
  \begin{definition}\label{weak_topology}\cite{pedersen:ca_aut} Let $\H$ be a Hilbert space. The {\it weak} topology on $B\left(\H\right)$ is the locally convex vector space topology associated with the family of seminorms of the form $x \mapsto \left|\left(x\xi, \eta\right)\right|$, $x \in B(\H)$, $\xi, \eta \in \H$.
  \end{definition}
  
  \begin{theorem}\label{vN_thm}\cite{pedersen:ca_aut}
  	Let $M$ be a $C^*$-subalgebra of $B(\H)$, containing the identity operator. The following conditions are equivalent:
  	\begin{itemize}
  		\item $M=M''$ where $M''$ is the bicommutant of $M$;
  		\item $M$ is weakly closed;
  		\item $M$ is strongly closed.
  	\end{itemize}
  \end{theorem}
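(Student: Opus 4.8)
The plan is to prove the chain of implications: $M = M'' \Rightarrow M$ weakly closed $\Rightarrow M$ strongly closed $\Rightarrow M = M''$. The first implication is immediate: for any subset $S \subseteq B(\mathcal H)$ the commutant $S'$ is weakly closed, since it is an intersection of kernels of the weakly continuous maps $x \mapsto xs - sx$; in particular $M'' = (M')'$ is weakly closed, so if $M = M''$ then $M$ is weakly closed. The second implication is also immediate, because the weak topology is coarser than the strong topology, hence every weakly closed set is strongly closed. So the entire content of the theorem is the third implication, and that is where the work lies.

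For $M$ strongly closed $\Rightarrow M = M''$, first note the trivial inclusion $M \subseteq M''$ (everything in $M$ commutes with everything in $M'$). The task is the reverse inclusion: given $T \in M''$, approximate $T$ strongly by elements of $M$; since $M$ is strongly closed this forces $T \in M$. I would do this in two stages. \emph{Stage one (one vector).} Fix $\xi \in \mathcal H$ and let $\mathcal K = \overline{M\xi}$, the closure of the subspace $\{a\xi : a \in M\}$; since $M$ contains the identity, $\xi \in \mathcal K$. Let $p$ be the orthogonal projection onto $\mathcal K$. The key observation is that $p \in M'$: the subspace $\mathcal K$ is invariant under every $a \in M$ (because $M$ is an algebra), and since $M$ is a $*$-algebra $\mathcal K^\perp$ is invariant too, which is exactly the statement that $p$ commutes with $M$. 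Now because $T \in M'' $ commutes with $p$, we get $T\mathcal K \subseteq \mathcal K$; applying this to $\xi$ and using $\xi \in \mathcal K = \overline{M\xi}$ gives, for every $\varepsilon > 0$, some $a \in M$ with $\|T\xi - a\xi\| < \varepsilon$.

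\emph{Stage two (finitely many vectors).} To upgrade this to the full strong topology one must approximate $T$ simultaneously on $\xi_1, \dots, \xi_n$. The standard device is the $n$-fold amplification: consider $\mathcal H^n = \mathcal H \oplus \cdots \oplus \mathcal H$ and the algebra $M_n = \{a \oplus \cdots \oplus a : a \in M\} \subseteq B(\mathcal H^n)$, equivalently $M$ acting diagonally, i.e. tensored with the identity of $\mathbb M_n(\mathbb C)$. One checks that $(M_n)' = \mathbb M_n(M')$ (the commutant consists of $n\times n$ matrices with entries in $M'$) and consequently $(M_n)'' = (M'')_n$, the diagonal copy of $M''$. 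Hence $T \in M''$ gives $T \oplus \cdots \oplus T \in (M_n)''$, and Stage one applied to $M_n$ and the single vector $\xi := \xi_1 \oplus \cdots \oplus \xi_n \in \mathcal H^n$ produces $a \in M$ with $\|(T\oplus\cdots\oplus T)\xi - (a\oplus\cdots\oplus a)\xi\|^2 = \sum_{i=1}^n \|T\xi_i - a\xi_i\|^2 < \varepsilon^2$, i.e. $a$ is within $\varepsilon$ of $T$ in the strong topology determined by $\xi_1,\dots,\xi_n$. Since $\xi_1,\dots,\xi_n$ and $\varepsilon$ were arbitrary, $T$ lies in the strong closure of $M$, which equals $M$ by hypothesis; thus $M'' \subseteq M$ and we are done.

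The main obstacle is the passage from one vector to finitely many: the single-vector approximation from Stage one does not by itself give joint approximation, and the amplification trick together with the identification $(M_n)' = \mathbb M_n(M')$ is the crux of the argument — everything else is essentially bookkeeping (invariance of $\mathcal K$ and $\mathcal K^\perp$, coarseness of the weak topology, and the trivial inclusions). One should also take care that $M$ contains the identity operator so that $\xi \in \overline{M\xi}$; this hypothesis is explicitly part of the statement and is used precisely at that point.
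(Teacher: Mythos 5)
Your proposal is correct and is the standard proof of von Neumann's bicommutant theorem: the two easy implications via weak continuity of commutation and coarseness of the weak topology, and the essential implication via the cyclic-subspace projection $p\in M'$ together with the $n$-fold amplification and the identification $(M_n)'=\mathbb{M}_n(M')$. The paper states Theorem \ref{vN_thm} as a cited result from Pedersen without proof, and your argument is precisely the one given in that reference, so there is nothing to add.
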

  
  \begin{definition}
  	Any $C^*$-algebra $M$ is said to be a {\it von Neumann algebra} or a {\it $W^*$- algebra} if $M$ satisfies to the conditions of the Theorem \ref{vN_thm}.
  \end{definition}
%\begin{rem}\label{vN_rem} It 	  is known \cite{pedersen:ca_aut}  that each strongly closed convex in  set $B\left(\H \right)$ is weakly closed.
%\end{rem}
  \begin{definition} \cite{pedersen:ca_aut}
  	Let $A$ be a $C^*$-algebra, and let $S$ be the state space of $A$. For any $s \in S$ there is an associated representation $\pi_s: A \to B\left( \H_s\right)$. The representation $\bigoplus_{s \in S} \pi_s: A \to \bigoplus_{s \in S} B\left(\H_s \right)$ is said to be the \textit{universal representation}. The universal representation can be regarded as $A \to B\left( \bigoplus_{s \in S}\H_s\right)$.  
  \end{definition} 
  \begin{definition}\label{env_alg_defn}\cite{pedersen:ca_aut}
  	Let   $A$ be a $C^*$-algebra, and let $A \to B\left(\H \right)$ be the universal representation $A \to B\left(\H \right)$. The strong closure of $\pi\left( A\right)$ is said to be   the  {\it enveloping von Neumann algebra} or  the {\it enveloping $W^*$-algebra}  of $A$. The enveloping  von Neumann algebra will be denoted by $A''$.
  \end{definition}
  \begin{proposition}\label{env_alg_sec_dual}\cite{pedersen:ca_aut}
  	The enveloping von Neumann algebra $A''$ of a $C^*$-algebra $A$ is isomorphic, as a Banach space, to the second dual of $A$, i.e. $A'' \approx A^{**}$.
  \end{proposition}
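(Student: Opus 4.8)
The plan is to identify the unique predual of the von Neumann algebra $A''$ with $A^*$ and then to dualize. Write $\pi\colon A \to B(\H)$ for the universal representation, so that by Definition \ref{env_alg_defn} the algebra $A''$ is the strong closure, equivalently (Theorem \ref{vN_thm}) the weak closure, of $\pi(A)$ in $B(\H)$; as a von Neumann algebra it carries a unique isometric predual $(A'')_*$, namely its space of $\sigma$-weakly continuous (normal) functionals, with $A'' = ((A'')_*)^*$.

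First I would define a restriction map $R\colon (A'')_* \to A^*$ by $R(\omega) = \omega\circ\pi$. Its injectivity comes from the fact that $\pi(A)$ is $\sigma$-weakly dense in $A''$ (von Neumann density plus the Kaplansky density theorem, \cite{pedersen:ca_aut}) together with the $\sigma$-weak continuity of normal functionals. For surjectivity I would use that $A^* = \spn_{\C} S$: by the Jordan decomposition every self-adjoint functional is a difference of positive ones, and a general functional is $\varphi_1 + i\varphi_2$ with $\varphi_j$ self-adjoint (\cite{pedersen:ca_aut}), so it is enough to extend each state. Here the specific shape of the universal representation is used: since $\pi = \bigoplus_{s\in S}\pi_s$ is a direct sum of GNS representations, each state $s$ is the vector state $a \mapsto (\pi(a)\xi_s, \xi_s)$ for the canonical cyclic vector $\xi_s \in \H_s \subset \H$, and $T \mapsto (T\xi_s,\xi_s)$ is a normal state on $B(\H)$ whose restriction to $A''$ is a normal extension of $s$. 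Taking linear combinations then shows $R$ is onto.

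Next I would prove that $R$ is isometric. One inequality, $\|R(\omega)\| \le \|\omega\|$, is immediate. For the other I would pass to the polar decomposition $\omega = |\omega|(\,\cdot\, v)$ of a normal functional, with $v \in A''$ a partial isometry and $|\omega| \in (A'')_*$ normal positive with $\|\omega\| = \||\omega|\|$, and then approximate $v$ (and an approximate unit for $|\omega|$) $\sigma$-weakly by elements of the closed unit ball of $\pi(A)$ via the Kaplansky density theorem; this recovers $\|\omega\| = \sup\{\,|\omega(\pi(a))| : \|a\|\le 1\,\} = \|R(\omega)\|$. Hence $R$ is an isometric isomorphism of Banach spaces $(A'')_* \xrightarrow{\,\approx\,} A^*$.

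Finally, dualizing $R$ yields an isometric Banach-space isomorphism $A^{**} = (A^*)^* \xrightarrow{\,\approx\,} ((A'')_*)^* = A''$, which restricts on $\pi(A) \cong A$ to the canonical embedding $A \hookto A^{**}$ (for $a\in A$ and $\omega\in (A'')_*$ one has $\langle \widehat a, R(\omega)\rangle = \omega(\pi(a))$). I expect the main obstacle to be the isometry claim of the third paragraph: the restriction map only obviously decreases norms, and recovering the full norm of a normal functional from its values on the weak$^*$-dense subalgebra $\pi(A)$ is precisely where one needs the Kaplansky density theorem — or, what amounts to the same thing, a careful use of the polar decomposition of normal functionals on $A''$.
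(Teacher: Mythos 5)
Your proof is correct. Note that the paper itself offers no proof of this proposition --- it is quoted verbatim from Pedersen's book --- and your argument is essentially the standard one found there: identify $(A'')_*$ with $A^*$ via restriction along the universal representation (injectivity from $\sigma$-weak density of $\pi(A)$, surjectivity because every state is a vector state in the universal representation, isometry from Kaplansky density) and then dualize. The only simplification worth mentioning is in your isometry step: since the Kaplansky density theorem already gives that the closed unit ball of $\pi(A)$ is $\sigma$-weakly dense in the closed unit ball of $A''$, and normal functionals are $\sigma$-weakly continuous, the equality $\sup_{\|a\|\le 1}|\omega(\pi(a))| = \|\omega\|$ follows at once, so the detour through the polar decomposition of $\omega$ is not needed.
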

  
  % \begin{theorem}\label{env_alg_thm}\cite{pedersen:ca_aut}
  % For each non-degenerate representation $\pi: A \to B\left(\H \right)$ of a $C^*$-algebra $A$ there is a unique  normal morphism of  $A''$ onto $\pi\left( A\right)''$ which extends $\pi$.   
  %\end{theorem}

  %\begin{definition}\cite{pedersen:ca_aut}
  %Let $B \in B(H)$ be a $C^*$-algebra. Denote by $B''$ the strong closure of $B$ in $B(H)$. $B''$ is an unital weakly closed $C^*$-algebra and if $B$ acts non-degenerately on $H$ then  $B''$ is the {\it bicommutant} of $B$. Any strongly (=weakly) closed algebra is said to be a {\it von Neumann algebra}.
  %\end{definition}
  %\begin{definition}\cite{pedersen:ca_aut}
  %For any $x\in B(H)$ element $|x| \stackrel{\text{def}}{=} (xx^*)^{1/2}$ is said to be the {\it absolute value of} $x$.
  %\end{definition}
  \begin{lemma}\label{increasing_convergent_w}\cite{pedersen:ca_aut} Let $\Lambda$ be an increasing net in the partial ordering.  Let $\left\{x_\lambda \right\}_{\la \in \La}$ be an increasing net of self-adjoint operators in $B\left(\H\right)$, i.e. $\la \le \mu$ implies $x_\la \le x_\mu$. If $\left\|x_\la\right\| \le \ga$ for some $\ga \in \mathbb{R}$ and all $\la$ then $\left\{x_\lambda \right\}$ is strongly convergent to a self-adjoint element $x \in B\left(\H\right)$ with $\left\|x_\la\right\| \le \ga$.
  \end{lemma}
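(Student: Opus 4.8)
The plan is to construct the limit operator first as a weak limit --- using monotonicity and the norm bound to get convergence of the scalar nets $\langle x_\la\xi,\xi\rangle$ --- and then to upgrade weak convergence to strong convergence by means of a Cauchy--Schwarz estimate for positive operators.

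\textbf{Step 1 (weak limit).} Fix $\xi\in\H$. Since each $x_\la$ is self-adjoint, $\langle x_\la\xi,\xi\rangle\in\R$; since $\la\le\mu$ implies $x_\la\le x_\mu$, the net $\bigl(\langle x_\la\xi,\xi\rangle\bigr)_{\la\in\La}$ is increasing; and since $\|x_\la\|\le\ga$ it is bounded above by $\ga\|\xi\|^2$. Hence $\langle x_\la\xi,\xi\rangle$ converges in $\R$. Polarization then gives that $\langle x_\la\xi,\eta\rangle$ converges for all $\xi,\eta\in\H$; write the limit as $B(\xi,\eta)$. The form $B$ is sesquilinear with $|B(\xi,\eta)|\le\ga\|\xi\|\,\|\eta\|$, so by the Riesz representation of bounded sesquilinear forms there is a unique $x\in B(\H)$ with $\langle x\xi,\eta\rangle=B(\xi,\eta)$ and $\|x\|\le\ga$. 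Passing to the limit in $\langle x_\la\xi,\eta\rangle=\langle\xi,x_\la\eta\rangle$ shows $x=x^*$.

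\textbf{Step 2 (strong convergence).} This is the main step. For $\la\le\mu$ the operator $a_{\la\mu}=x_\mu-x_\la$ is positive with $\|a_{\la\mu}\|\le 2\ga$. For any positive $a\in B(\H)$ one has $a^2\le\|a\|\,a$ (indeed $\|a\|a-a^2=a^{1/2}(\|a\|-a)a^{1/2}\ge 0$), and therefore $\|a\xi\|^2=\langle a^2\xi,\xi\rangle\le\|a\|\,\langle a\xi,\xi\rangle$. Applying this to $a=a_{\la\mu}$ yields
\[
\|(x_\mu-x_\la)\xi\|^2\le 2\ga\bigl(\langle x_\mu\xi,\xi\rangle-\langle x_\la\xi,\xi\rangle\bigr).
\]
Because the scalar net $\langle x_\la\xi,\xi\rangle$ converges, the right-hand side tends to $0$ as $\la,\mu\to\infty$, so $(x_\la\xi)_{\la\in\La}$ is a Cauchy net in $\H$; by completeness it converges to some $y_\xi\in\H$. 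For every $\eta\in\H$, $\langle y_\xi,\eta\rangle=\lim_\la\langle x_\la\xi,\eta\rangle=\langle x\xi,\eta\rangle$, so $y_\xi=x\xi$, i.e. $x_\la\to x$ strongly.

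\textbf{Main obstacle.} There is essentially one non-formal ingredient: the operator inequality $a^2\le\|a\|a$ for $a\ge 0$, equivalently the Cauchy--Schwarz estimate $\|a\xi\|^2\le\|a\|\,\langle a\xi,\xi\rangle$, which converts control of the quadratic forms into control of the vectors $x_\la\xi$; together with completeness of $\H$ this is exactly what promotes the weak limit to a strong limit. Everything else --- monotone convergence of real nets, polarization, and Riesz representation of bounded sesquilinear forms --- is routine. (Note that the concluding bound should read $\|x\|\le\ga$, which is precisely what Step 1 supplies.)
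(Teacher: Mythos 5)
Your proof is correct and is essentially the standard argument for this cited result (Pedersen's Lemma 4.1.4): weak convergence from monotone bounded quadratic forms plus polarization and Riesz representation, upgraded to strong convergence via the inequality $\|a\xi\|^2 \le \|a\|\,\langle a\xi,\xi\rangle$ for positive $a$. The paper itself gives no proof, deferring to \cite{pedersen:ca_aut}, and your observation that the conclusion should read $\|x\|\le\ga$ is also correct.
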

  
  %\begin{defn}\label{range_proj_defn}\cite{pedersen:ca_aut}
  %For each $x\in B(\H)$ we define the {\it range projection} of $x$ (denoted by $[x]$) as projection on closure of $x\H$. If $x\ge 0$ then the sequence $\left(\left((1 /n) +x\right)^{-1}x\right)$ is monotone increasing to $[x]$.  If $p$ and $q$ are projections then $p \vee q = [p + q]$ and thus $p \wedge q = 1 - \left[2 - \left(p+q\right)\right]$. Similarly we have $p \setminus q = p - p\wedge q$. Since $[x]\H$ is the orthogonal complement of the null space of $x^*$ we have $[x]=[xx^*]$. 
  %\end{defn}
  %\begin{cor}
  %If $M$ is a $W^*$-algebra and $a \in M$ then the range projection $[a]$ of $a$ lies in $M$.
  %\end{cor}
  %If $\mathcal{M}$ is a von Neumann algebra in $B(\H)$ then $[x]\in \mathcal{M}$ for any $x\in \mathcal{M}$. We next prove a {\it polar decomposition}.
  \paragraph*{}    For each $x\in B(\H)$ we define the {\it range projection} of $x$ (denoted by $[x]$) as projection on the closure of $x\H$. If $M$ is a von Neumann algebra and $x \in M$ then $[x]\in M$.
  
  \begin{prop}\label{polar_decomposition_prop}\cite{pedersen:ca_aut}
  For each element $x$ in   a von Neumann algebra $M$ there is a unique partial isometry $u\in M$ and positive $\left|x\right| \in M_+$ with $uu^*=[|x|]$ and  $x=|x|u$.
  \end{prop}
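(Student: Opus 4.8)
The plan is to run the classical Hilbert space polar‑decomposition argument inside $M\subseteq B(\H)$ and then to recognize the resulting partial isometry as an element of $M$ via the bicommutant characterization of Theorem~\ref{vN_thm}. First I would set $|x|:=(xx^*)^{1/2}$, which lies in $M_+$ because a von Neumann algebra is stable under continuous functional calculus; for the same reason the range projection $[|x|]$ lies in $M$, and $\ker|x|=(1-[|x|])\H$. For every $\xi\in\H$ one has $\||x|\xi\|^2=\langle xx^*\xi,\xi\rangle=\|x^*\xi\|^2$, so the assignment $|x|\xi\mapsto x^*\xi$ is well defined and isometric on $|x|\H$; extending it by continuity to $\overline{|x|\H}=[|x|]\H$ and by zero on $(1-[|x|])\H$ yields a partial isometry $v\in B(\H)$ with $v|x|=x^*$ and $v^*v=[|x|]$. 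Putting $u:=v^*$ then gives $x=(v|x|)^*=|x|v^*=|x|u$ and $uu^*=v^*v=[|x|]$, which is the asserted factorization at the level of operators on $\H$.

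The only step with real content is to show $u\in M$, equivalently $v\in M$. By Theorem~\ref{vN_thm} it suffices to prove $v\in M''$, i.e.\ $vw=wv$ for every $w$ in the commutant $M'$. Fix such a $w$. Since $x^*$, $|x|$ and $[|x|]$ all lie in $M$, $w$ commutes with each of them; in particular $w$ leaves both $[|x|]\H$ and $(1-[|x|])\H$ invariant. On $|x|\H$ we compute $v(w|x|\xi)=v(|x|w\xi)=x^*w\xi=wx^*\xi=w(v|x|\xi)$, so $vw$ and $wv$ agree on $|x|\H$, hence on $[|x|]\H$ by continuity; on $(1-[|x|])\H$ both $vw$ and $wv$ vanish, because $v$ annihilates that subspace and $w$ preserves it. Thus $vw=wv$ on all of $\H$, so $v\in M''=M$ and $u=v^*\in M$.

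For uniqueness, suppose $x=pu$ with $p\in M_+$, $u$ a partial isometry and $uu^*=[p]$. Then $xx^*=p\,uu^*\,p=p[p]p=p^2$, so $p=(xx^*)^{1/2}=|x|$ by uniqueness of the positive square root. If $(|x|,u_1)$ and $(|x|,u_2)$ are two such pairs, then $|x|(u_1-u_2)=x-x=0$ forces $\operatorname{ran}(u_1-u_2)\subseteq\ker|x|=(1-[|x|])\H$, while $u_iu_i^*=[|x|]$ gives $\overline{\operatorname{ran}u_i}=[|x|]\H$, so $\operatorname{ran}(u_1-u_2)\subseteq[|x|]\H$; the two inclusions force $u_1=u_2$.

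I expect the membership $u\in M$ to be the crux of the argument: the identity $x=|x|u$ is pure Hilbert space geometry, but the partial isometry was constructed by hand and must be placed inside the algebra. The bicommutant theorem is the natural instrument, and the verification rests on the small but essential observations that $|x|$ already belongs to $M$ (functional calculus) and that $[|x|]$ already belongs to $M$ (range projection of an element of a von Neumann algebra), so that any $w\in M'$ commutes with them automatically. The remaining pieces — the norm identity, the well‑definedness of $v$, and the uniqueness bookkeeping — are routine.
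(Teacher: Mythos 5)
Your proof is correct and is essentially the standard argument from the cited reference (the paper itself only quotes this result from Pedersen without proof): take $|x|=(xx^*)^{1/2}$, build the partial isometry by hand from the norm identity $\||x|\xi\|=\|x^*\xi\|$, and place it in $M$ via the bicommutant theorem, with uniqueness from $p^2=xx^*$ and the range/kernel bookkeeping. No gaps; the two facts you lean on — closure of $M$ under continuous functional calculus and $[|x|]\in M$ — are exactly the ones the paper records just before the proposition.
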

  \begin{defn}\label{polar_decomposition_defn}
  	The formula $x=|x|u$ in the Proposition \ref{polar_decomposition_prop} is said to be the \textit{polar decomposition}.
  \end{defn}
  \begin{empt}\label{comm_gns_constr}
  	Any separable $C^*$-algebra $A$ has a state $\tau$ which induces a faithful GNS representation  \cite{murphy}. There is a $\mathbb{C}$-valued product on $A$ given by
  	\begin{equation*}
  	\left(a, b\right)=\tau\left(a^*b\right).
  	\end{equation*}
  	This product induces a product on $A/\mathcal{I}_\tau$ where $\mathcal{I}_\tau =\left\{a \in A \ | \ \tau(a^*a)=0\right\}$. So $A/\mathcal{I}_\tau$ is a pre-Hilbert space. Let denote by $L^2\left(A, \tau\right)$ the Hilbert  completion of $A/\mathcal{I}_\tau$.  The Hilbert space  $L^2\left(A, \tau\right)$ is a space of a  GNS representation of $A$.
  \end{empt}

  \section{Noncommutative finite-fold coverings}
\subsection{Basic construction}

\begin{definition}
	If $A$ is a $C^*$- algebra then an action of a group $G$ is said to be {\it involutive } if $ga^* = \left(ga\right)^*$ for any $a \in A$ and $g\in G$. The action is said to be \textit{non-degenerated} if for any nontrivial $g \in G$ there is $a \in A$ such that $ga\neq a$. 
\end{definition}
\begin{definition}\label{fin_def_uni}
	Let $A \hookto \widetilde{A}$ be an injective *-homomorphism of unital $C^*$-algebras. Suppose that there is a non-degenerated involutive action $G \times \widetilde{A} \to \widetilde{A}$ of a finite group $G$, such that $A = \widetilde{A}^G\stackrel{\text{def}}{=}\left\{a\in \widetilde{A}~|~ a = g a;~ \forall g \in G\right\}$. There is an $A$-valued product on $\widetilde{A}$ given by
	\begin{equation}\label{finite_hilb_mod_prod_eqn}
	\left\langle a, b \right\rangle_{\widetilde{A}}=\sum_{g \in G} g\left( a^* b\right) 
	\end{equation}
	and $\widetilde{A}$ is an $A$-Hilbert module. We say that a triple $\left(A, \widetilde{A}, G \right)$ is an \textit{unital noncommutative finite-fold  covering} if $\widetilde{A}$ is a finitely generated projective $A$-Hilbert module.
\end{definition}
\begin{remark}
	Above definition is motivated by the Theorem \ref{pavlov_troisky_thm}.
\end{remark}
\begin{definition}\label{fin_comp_def}
	Let $A$, $\widetilde{A}$ be $C^*$-algebras and let  $A \hookto \widetilde{A}$ be an inclusion such  that following conditions hold:
	\begin{enumerate}
		\item[(a)] 
		There are unital $C^*$-algebras $B$, $\widetilde{B}$  and inclusions 
		$A \subset B$,  $\widetilde{A}\subset \widetilde{B}$ such that $A$ (resp. $B$) is an essential ideal of $\widetilde{A}$ (resp. $\widetilde{B}$) and $A = B\bigcap \widetilde{A}$,
		\item[(b)] There is an unital  noncommutative finite-fold covering $\left(B ,\widetilde{B}, G \right)$,
		\item[(c)] $G\widetilde{A} = \widetilde{A}$.
		%\begin{equation}\label{wta_eqn}
		%\widetilde{A} =  \left\{a\in \widetilde{B}  ~|~ \left\langle \widetilde{B} ,a  \right\rangle_{\widetilde{B} } \in A \right\}.
		%\end{equation}
	\end{enumerate}
	
	The triple $\left(A, \widetilde{A},G \right)$ is said to be a \textit{noncommutative finite-fold covering with compactification}. The group $G$ is said to be the \textit{covering transformation group} (of $\left(A, \widetilde{A},G \right)$ ) and we use the following notation
	\begin{equation}\label{group_cov_eqn}
	G\left(\widetilde{A}~|~A \right) \stackrel{\mathrm{def}}{=} G.
	\end{equation}
\end{definition}
\begin{remark}
	The Definition \ref{fin_comp_def} is motivated by the Lemma \ref{comm_fin_lem}.
\end{remark}
\begin{remark}
Any unital noncommutative finite-fold covering is a noncommutative finite-fold covering with compactification.
\end{remark}
\begin{definition}\label{fin_def}
	Let $A$, $\widetilde{A}$ be $C^*$-algebras, $A\hookto\widetilde{A}$ an injective *-homomorphism and $G\times \widetilde{A}\to \widetilde{A}$ an involutive non-degenerated action of a finite group $G$  such  that following conditions hold:
	\begin{enumerate}
		\item[(a)] 
		$A \cong \widetilde{A}^G \stackrel{\mathrm{def}}{=} \left\{a\in \widetilde{A}  ~|~ Ga = a \right\}$,
		\item[(b)] 
		There is a family $\left\{\widetilde{I}_\la \subset \widetilde{A} \right\}_{\la \in \La}$ of closed ideals of $\widetilde{A}$ such that 
		\be\label{gi-i}
		G\widetilde{I}_\la = \widetilde{I}_\la.
		\ee
		Moreover $\bigcup_{\la \in \La} \widetilde{I}_\la$ (resp. $\bigcup_{\la \in \La} \left( A \bigcap \widetilde{I}_\la\right) $ ) is a dense subset of $\widetilde{A}$ (resp. $A$), and for any $\la \in \La$ there is a natural  noncommutative finite-fold covering with compactification $\left(\widetilde{I}_\la \bigcap A, \widetilde{I}_\la , G \right)$.  
	\end{enumerate}
We say that the triple  $\left(A, \widetilde{A},G \right)$ is a \textit{noncommutative finite-fold covering}.
\end{definition}

\begin{remark}
	The Definition \ref{fin_def} is motivated by the Theorem \ref{comm_fin_thm}.
\end{remark}
\begin{remark}
	Any noncommutative finite-fold covering with compactification is a  noncommutative finite-fold covering.
\end{remark}
%\begin{remark}
%	Any	unital noncommutative finite-fold  covering is a special case of a noncommutative finite-fold  covering.
%\end{remark}
\begin{definition}
	The injective *-homomorphism $A \hookto \widetilde{A}$ %which follows from %\eqref{wtag_eqn}  
	from the Definition \ref{fin_def}
	is said to be a \textit{noncommutative finite-fold covering}.
\end{definition}
\begin{definition}\label{hilbert_product_defn}
	Let $\left(A, \widetilde{A}, G\right)$ be a    noncommutative finite-fold covering.  The algebra  $\widetilde{A}$  is a Hilbert $A$-module with an $A$-valued  product given by
	\begin{equation}\label{fin_form_a}
	\left\langle a, b \right\rangle_{\widetilde{A}} = 
	\sum_{g \in G} g(a^*b); ~ a,b \in \widetilde{A}.
	\end{equation}
	We say that this structure of Hilbert $A$-module is {\it induced by the covering} $\left(A, \widetilde{A}, G\right)$. Henceforth we shall consider $\widetilde{A}$ as a right $A$-module, so we will write $\widetilde{A}_A$. 
\end{definition}

%\begin{empt}\label{dir_sum_constr}
%	Let $\left(A, \widetilde{A}, G\right)$ be a noncommutative finite covering. If $\widetilde{a} \in \widetilde{A}$ then $\widetilde{a} = a + p$ where $a = \frac{1}{|G|}\sum_{g \in G}g\widetilde{a}$ and $p = \widetilde{a}-a$. It is clear that $\sum_{g \in G}gp = 0$ and for any $G$-invariant $b \in \widetilde{A}$ we have 
%	\begin{equation*}
%	\langle b, p \rangle_{\widetilde{A}} = \frac{1}{|G|}\widetilde{b}\sum_{g \in G}gp = 0.
%	\end{equation*}
%	Otherwise the set of $G$-invariant elements is just a sublagebra $A \subset \widetilde{A}$.
%	So  $\widetilde{A}_A$ can be decomposed into the direct orthogonal sum, i.e.
%	\begin{equation}\label{hilb_mod_direct_sum}
%	\begin{split}
%	\widetilde{A}_A= A \oplus P; \  
%	A \perp P \ , \text{i.e. } \langle \widetilde{b}, p \rangle_{\widetilde{A}}= 0; \text{ for any } a \in A; \ p \in P. 
%	\end{split}
%	\end{equation}
%\end{empt}

\subsection{Induced representation}\label{induced_repr_fin_sec}

\begin{empt}\label{induced_repr_constr}
	Let $\left(A, \widetilde{A}, G\right)$ be a noncommutative finite-fold covering, and let $\rho: A \to B\left(\H\right)$ be a representation. If $X=\widetilde{A}\otimes_A \H$ is the algebraic tensor product then there is a sesquilinear $\C$-valued product $\left(\cdot, \cdot\right)_{X}$ on $X$  given by
	\begin{equation}\label{induced_prod_equ}
	\left(a \otimes \xi, b \otimes \eta \right)_{X}= \left(\xi, 	\left\langle a, b \right\rangle_{\widetilde{A}} \eta\right)_{\H}
	\end{equation}
	where $ \left(\cdot, \cdot\right)_{\H}$ means the Hilbert space product on $\H$, and $\left\langle \cdot, \cdot \right\rangle_{\widetilde{A}}$ is given by \eqref{fin_form_a}. So $X$ is a pre-Hilbert space. There is a natural map $p: \widetilde{A} \times \left( \widetilde{A}\otimes_A \H \right)\to \widetilde{A}\otimes_A \H$ given by
	$$
	(a, b \otimes \xi) \mapsto ab \otimes \xi.
	$$ 
\end{empt}

\begin{defn}\label{induced_repr_defn}
	
	Use notation of the Definition \ref{hilbert_product_defn}, and \ref{induced_repr_constr}.
	If $\widetilde{\H}$ is the Hilbert completion of  $X=\widetilde{A}\otimes_A \H$ then the map $p: \widetilde{A} \times \left( \widetilde{A}\otimes_A \H \right)\to \widetilde{A}\otimes_A \H$ induces the representation $\widetilde{\rho}: \widetilde{A} \to B\left( \widetilde{\H} \right)$. We say that $\widetilde{\rho}$ \textit{is induced by the pair} $\left(\rho,\left(A, \widetilde{A}, G\right)  \right)$.  
\end{defn}
\begin{rem}
	Below any $\widetilde a \otimes \xi\in\widetilde{A}\otimes_A \H$ will be regarded as element in $\widetilde{\H}$.
\end{rem}
\begin{lem}
	If $A \to B\left(\H \right) $ is faithful then $\widetilde{\rho}: \widetilde{A} \to B\left( \widetilde{\H} \right)$ is faithful. 
\end{lem}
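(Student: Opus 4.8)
The plan is to show directly that $\widetilde\rho(\widetilde a)=0$ forces $\widetilde a=0$, by transporting the vanishing from $\widetilde{\H}$ down to the $A$-valued inner product on $\widetilde A$ and then using that this inner product is positive-definite in a particularly transparent way (because $e\in G$). First I would use the construction of $\widetilde\rho$: for $b\in\widetilde A$ and $\xi\in\H$ one has $\widetilde\rho(\widetilde a)\,(b\otimes\xi)=\widetilde a b\otimes\xi$, and the simple tensors $b\otimes\xi$ are honest elements of $X=\widetilde A\otimes_A\H\subset\widetilde{\H}$. Hence $\widetilde\rho(\widetilde a)=0$ implies $\widetilde a b\otimes\xi=0$ in $\widetilde{\H}$ for every $b\in\widetilde A$ and every $\xi\in\H$.

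Next I would compute the $\widetilde{\H}$-norm of such a vector using \eqref{induced_prod_equ} and \eqref{fin_form_a}:
\[
0=\left\|\widetilde a b\otimes\xi\right\|^2_{\widetilde{\H}}=\left(\xi,\ \rho\!\left(\left\langle \widetilde a b,\widetilde a b\right\rangle_{\widetilde A}\right)\xi\right)_{\H}.
\]
Since this holds for all $\xi\in\H$ and $\rho\!\left(\left\langle \widetilde a b,\widetilde a b\right\rangle_{\widetilde A}\right)$ is a positive operator (a positive operator all of whose diagonal matrix coefficients vanish is $0$), we get $\rho\!\left(\left\langle \widetilde a b,\widetilde a b\right\rangle_{\widetilde A}\right)=0$, and faithfulness of $\rho$ gives $\left\langle \widetilde a b,\widetilde a b\right\rangle_{\widetilde A}=0$ in $A$. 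Because $e\in G$, the explicit formula yields, for any $x\in\widetilde A$,
\[
0\ \le\ x^*x\ \le\ x^*x+\sum_{g\in G\setminus\{e\}} g(x)^*g(x)\ =\ \left\langle x,x\right\rangle_{\widetilde A},
\]
so $\left\langle \widetilde a b,\widetilde a b\right\rangle_{\widetilde A}=0$ forces $(\widetilde a b)^*(\widetilde a b)=0$, i.e. $\widetilde a b=0$. This is valid for every $b\in\widetilde A$; choosing $b=\widetilde a^*$ gives $\widetilde a\widetilde a^*=0$, hence $\widetilde a=0$, which proves $\widetilde\rho$ is faithful.

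I do not expect any genuine obstacle here: the argument is a routine Hilbert-module computation. The only points requiring a little care are the reduction of the operator identity $\widetilde\rho(\widetilde a)=0$ to its action on simple tensors (immediate, since those tensors lie in $X\subset\widetilde{\H}$ and $\widetilde\rho(\widetilde a)\in B(\widetilde{\H})$), and the positivity bookkeeping, namely that each $g$ in the involutive action is a $*$-automorphism (so $g(x^*x)=g(x)^*g(x)\ge 0$) and that $0\le y\le 0$ in a $C^*$-algebra forces $y=0$.
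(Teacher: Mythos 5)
Your argument is correct and is essentially the paper's proof run in the contrapositive direction: both hinge on the inequality $x^*x\le\langle x,x\rangle_{\widetilde A}$ (the $g=e$ term of the sum) and both test $\widetilde\rho(\widetilde a)$ against the vector $\widetilde a^*\otimes\xi$. The paper exhibits the nonzero matrix coefficient $\bigl(\xi,\rho(\langle\widetilde a\widetilde a^*,\widetilde a\widetilde a^*\rangle_{\widetilde A})\xi\bigr)$ directly for a nonzero $\widetilde a$, while you deduce $\widetilde a=0$ from $\widetilde\rho(\widetilde a)=0$; the content is the same.
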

\begin{proof}
	If $\widetilde{a} \in \widetilde{A}$ is a nonzero element then
	$$
	a =\left\langle \widetilde{a}~\widetilde{a}^*, \widetilde{a}~\widetilde{a}^*\right\rangle_{\widetilde{A}} = \sum_{g \in G}g\left(\widetilde{a}^*\widetilde{a}~\widetilde{a}~\widetilde{a}^* \right) \in A 
	$$
	is a nonzero positive element. There is $\xi \in \H$ such that $\left( \xi, a\xi\right)_{\H} > 0$.
	However
	$$
	\left( \xi, a\xi\right)_{\H} = \left( \widetilde{a}\widetilde{\xi}, \widetilde{a}\widetilde{\xi}\right)_{\widetilde{\H}}
	$$
	where $\widetilde{\xi} = \widetilde{a}^*\otimes \xi \in \widetilde{A}\otimes_A \H \subset \widetilde{\H}$. Hence $\widetilde{a}\widetilde{\xi} \neq 0$.
\end{proof}
\
\begin{empt}
	Let $\left(A, \widetilde{A}, G\right)$ be a  noncommutative finite-fold covering, let $\rho: A \to B\left(\H \right)$ be a faithful representation, and let  $\widetilde{\rho}: \widetilde{A} \to B\left( \widetilde{\H} \right)$ is induced by the pair $\left(\rho,\left(A, \widetilde{A}, G\right)  \right)$. There is the natural action of $G$ on $\widetilde{\H}$ induced by the map
	$$
	g \left( \widetilde{a} \otimes \xi\right)  = \left( g\widetilde{a} \right) \otimes \xi; ~ \widetilde{a} \in \widetilde{A}, ~ g \in G, ~ \xi \in \H. 
	$$
	There is the natural orthogonal inclusion $\H \subset \widetilde{\H}$ induced by inclusions
	$$
	A \subset\widetilde{A}; ~~ A \otimes_A \H \subset\widetilde{A} \otimes_A \H.
	$$
	Action of $g$ on $\widetilde{A}$ can be defined by representation as $g \widetilde{a} = g \widetilde{a} g^{-1}$, i.e.
	$$
	(g\widetilde{a}) \xi = g\left(\widetilde{a} \left( g^{-1}\xi \right)  \right);~ \forall \xi \in \widetilde{\H}.
	$$
	%	Similarly there is the action of $g$ on $ B\left(\widetilde{\H} \right)$ given by $$\left( gb\right) \xi = g \left( b \left( g^{-1} \xi\right) \right) ; ~\forall b \in  B\left(\widetilde{\H} \right), ~\forall \xi \in \widetilde{\H}.$$
	%	As well as $A = \widetilde{A}^G$ following condition holds
	%	\begin{equation*}
	%	B\left(\H \right)= B\left(\widetilde{\H} \right)^G.  
	%	\end{equation*}
	
	%	$B\left(\widetilde{\H} \right)$ is a right - $B\left( \H\right) $ module with a sesquilinear product given by
	%	\begin{equation*}
	%	\left\langle a, b \right\rangle_{B\left(\widetilde{\H} \right)} = 
	%	\sum_{g \in G} g(a^*b) \in B\left(\H \right) .
	%	\end{equation*}
	%	There is the natural isomorphism of vector spaces $B\left( \widetilde{\H}\right) \otimes_{B\left(\H \right) } \H \approx \widetilde{\H}$, given by
	%	$$
	%	b \otimes \xi \mapsto b \xi; ~ b \in B\left( \widetilde{\H}\right), ~~ \xi \in \H \subset \widetilde{\H}.
	%	$$
\end{empt}

%\begin{empt}
%Let $\mathcal P \subset B\left( \widetilde{\H}\right)$ be the set of projectors such that $\left(gp \widetilde{\H}\right)  \perp p\widetilde{\H}$ for any nontrivial $g \in G$. According Zorn's lemma there is the maximal $p_{\max}\in \mathcal P$.
%\end{empt}
%\begin{thm}
%Following conditions hold
%$$
%\sum_{g \in G} gp_{\max} = 1_{B\left( \widetilde{\H}\right)}
%$$
%\end{thm}
%\begin{proof}
%If $\sum_{g \in G} gp_{\max} \neq 1_{B\left( \widetilde{\H}\right)}$ then $q =  1_{B\left( \widetilde{\H}\right)} - \sum_{g \in G} gp_{\max} \neq 1_{B\left( \widetilde{\H}\right)}$ is a nonzero projector. Let  $\mathcal P' = \left\{p' \in \mathcal P~|~ p' < q \right\}$.
%\end{proof}
\begin{defn}\label{mult_G_act_defn} If $M\left(\widetilde{A} \right)$ is the multiplier algebra of $\widetilde{A}$ then there is the natural action of $G$ on $M\left(\widetilde{A} \right)$ such that for any $\widetilde{a}\in M\left(\widetilde{A} \right)$, $\widetilde{b}\in\widetilde{A}$ and $g \in G$ a following condition holds
	$$
	\left(g \widetilde{a} \right)\widetilde{b} = g\left(\widetilde{a} \left( g^{-1}\widetilde{b} \right)  \right)
	$$
	We say that action of $G$ on $M\left(\widetilde{A} \right)$ is \textit{induced} by the action  of $G$ on $\widetilde{A}$.
\end{defn}
\begin{lem}\label{ind_mult_inv_lem}
	If an	action of $G$ on $M\left(\widetilde{A} \right)$ is induced by the action  of $G$ on $\widetilde{A}$ then
	\begin{equation}\label{mag_ma_eqn}
	M\left(\widetilde{A} \right)^G \subset M\left(\widetilde{A}^G \right) .
	\end{equation}
\end{lem}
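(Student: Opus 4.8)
The plan is to build, by restriction of multiplications, a natural map $M(\widetilde{A})^G \to M(A)$ with $A = \widetilde{A}^G$, to check that its image actually lies in $M(A)$, which is exactly $M(\widetilde{A}^G)$, and then to verify that it is injective, so that the asserted relation $M(\widetilde{A})^G \subseteq M(\widetilde{A}^G)$ holds literally. The first preparatory step I would carry out is to record the equivariance of multiplication under the action of Definition \ref{mult_G_act_defn}. Substituting $\widetilde{b}\mapsto g\widetilde{b}$ in the defining identity $(g\widetilde{a})\widetilde{b} = g\left(\widetilde{a}\left(g^{-1}\widetilde{b}\right)\right)$ gives, for all $\widetilde{a}\in M(\widetilde{A})$, $\widetilde{b}\in\widetilde{A}$, $g\in G$,
\[
g\left(\widetilde{a}\widetilde{b}\right) = \left(g\widetilde{a}\right)\left(g\widetilde{b}\right),
\]
and applying the adjoint (the induced action on $M(\widetilde{A})$ being involutive) yields likewise $g\left(\widetilde{b}\widetilde{a}\right) = \left(g\widetilde{b}\right)\left(g\widetilde{a}\right)$.

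Next I would fix $\widetilde{a}\in M(\widetilde{A})^G$ and observe that for $a\in A = \widetilde{A}^G$ one has $\widetilde{a}a\in\widetilde{A}$ (since $\widetilde{A}$ is an ideal of $M(\widetilde{A})$), and $g\left(\widetilde{a}a\right) = \left(g\widetilde{a}\right)\left(ga\right) = \widetilde{a}a$ for every $g\in G$, so $\widetilde{a}a\in\widetilde{A}^G = A$; symmetrically $a\widetilde{a}\in A$. Hence left and right multiplication by $\widetilde{a}$ restrict to bounded linear maps $L,R\colon A\to A$, and associativity in $M(\widetilde{A})$ gives $a_1 L(a_2) = R(a_1) a_2$ for $a_1,a_2\in A$. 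Thus $(L,R)$ is a double centralizer of $A$, i.e.\ an element $\widetilde{a}|_A\in M(A) = M(\widetilde{A}^G)$, and $\widetilde{a}\mapsto\widetilde{a}|_A$ is the map we want.

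For injectivity, suppose $\widetilde{a}\in M(\widetilde{A})^G$ with $\widetilde{a}|_A = 0$, in particular $\widetilde{a}A = 0$. For an arbitrary $\widetilde{b}\in\widetilde{A}$ I would set $c = \sum_{g\in G} g\left(\widetilde{b}\widetilde{b}^*\right) = \sum_{g\in G}\left(g\widetilde{b}\right)\left(g\widetilde{b}\right)^*$, which is a positive, $G$-invariant element of $\widetilde{A}$, hence $c\in A_+$, so $c^{1/2}\in A$ and therefore $\widetilde{a}c^{1/2} = 0$, whence $\widetilde{a}c\widetilde{a}^* = 0$. On the other hand, by the equivariance above, $\widetilde{a}c\widetilde{a}^* = \sum_{g\in G}\left(\widetilde{a}\,g\widetilde{b}\right)\left(\widetilde{a}\,g\widetilde{b}\right)^*$ is a sum of positive elements, so each summand vanishes; taking $g = e$ gives $\left(\widetilde{a}\widetilde{b}\right)\left(\widetilde{a}\widetilde{b}\right)^* = 0$, i.e.\ $\widetilde{a}\widetilde{b} = 0$. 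Since $\widetilde{b}\in\widetilde{A}$ was arbitrary, $\widetilde{a}\widetilde{A} = 0$, and as $\widetilde{A}$ is an essential ideal of $M(\widetilde{A})$ this forces $\widetilde{a} = 0$.

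I expect the injectivity step to be the main obstacle. The naive attempt to recover $\widetilde{a}\widetilde{b}$ directly from the relation $\widetilde{a}A = 0$ fails, because $A$ need not sit "densely enough" inside $\widetilde{A}$; what makes it work is the averaging trick that pushes $\widetilde{b}\widetilde{b}^*$ into $A_+$, together with the square-root and the sum-of-positives argument. The equivariance identities $g\left(\widetilde{a}\widetilde{b}\right) = \left(g\widetilde{a}\right)\left(g\widetilde{b}\right)$ are the other spot demanding a little care with left/right multiplication and adjoints, but they are routine once one substitutes $\widetilde{b}\mapsto g\widetilde{b}$ in Definition \ref{mult_G_act_defn}.
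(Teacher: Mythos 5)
Your proposal is correct, and it contains strictly more than the paper does. The paper's entire proof is the single observation that for $\widetilde{a}\in M(\widetilde{A})^G$ and $b\in\widetilde{A}^G$ one has $g(\widetilde{a}b)=(g\widetilde{a})(gb)=\widetilde{a}b$, so $\widetilde{a}b\in\widetilde{A}^G$; the author treats this as sufficient and stops. You carry out the same core computation (your equivariance identity is exactly the paper's one line), but you then do two things the paper silently skips: you package the restricted left and right multiplications into a genuine double centralizer, so that $\widetilde{a}|_A$ really is an element of $M(\widetilde{A}^G)$ in the abstract (non-spatial) sense, and you prove that $\widetilde{a}\mapsto\widetilde{a}|_A$ is injective, which is what entitles one to write the conclusion as an inclusion rather than merely as a $*$-homomorphism. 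Your injectivity argument --- averaging $\widetilde{b}\widetilde{b}^*$ over the (finite) group to land in $A_+$, taking the square root, and splitting $\widetilde{a}c\widetilde{a}^*=0$ into a sum of positive terms --- is sound and is the genuinely nontrivial part; it does rely on $G$ being finite so that $c=\sum_{g\in G}g(\widetilde{b}\widetilde{b}^*)$ converges, but that is the standing hypothesis in the section where the lemma lives. The only point worth flagging is that your right-multiplication equivariance uses that the induced action on $M(\widetilde{A})$ is involutive, which the paper's Definition of the induced action does not state explicitly, though it is routine to verify from $(g\widetilde{a})^*\widetilde{b}=\bigl(\widetilde{b}^*(g\widetilde{a})\bigr)^*$. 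In short: same starting move, but your version is the complete proof of which the paper's is a sketch.
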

\begin{proof}
	If $a \in M\left(\widetilde{A} \right)^G$ and $b \in \widetilde{A}^G$ then $ab \in \widetilde{A}$ is such that $g\left( ab\right) =\left( ga\right)\left(gb \right) = ab  \in \widetilde{A}^G$.
\end{proof}

\section{Noncommutative infinite coverings}
\subsection{Basic construction}\label{bas_constr}

This section contains a noncommutative generalization of infinite coverings.
\begin{definition}\label{comp_defn}
	Let
	\begin{equation*}
	\mathfrak{S} =\left\{ A =A_0 \xrightarrow{\pi_1} A_1 \xrightarrow{\pi_2} ... \xrightarrow{\pi_n} A_n \xrightarrow{\pi^{n+1}} ...\right\}
	\end{equation*}
	be a sequence of $C^*$-algebras and noncommutative finite-fold coverings such that:
	\begin{enumerate}
		\item[(a)] Any composition $\pi_{n_1}\circ ...\circ\pi_{n_0+1}\circ\pi_{n_0}:A_{n_0}\to A_{n_1}$ corresponds to the noncommutative covering $\left(A_{n_0}, A_{n_1}, G\left(A_{n_1}~|~A_{n_0}\right)\right)$;
		\item[(b)] If $k < l < m$ then $G\left( A_m~|~A_k\right)A_l = A_l$ (Action of $G\left( A_m~|~A_k\right)$ on $A_l$ means that $G\left( A_m~|~A_k\right)$ acts on $A_m$, so $G\left( A_m~|~A_k\right)$ acts on $A_l$ since $A_l$ a subalgebra of $A_m$);
		\item[(c)] If $k < l < m$ are nonegative integers then there is the natural exact sequence of covering transformation groups
		\begin{equation*}
		\{e\}\to G\left(A_{m}~|~A_{l}\right) \xrightarrow{\iota} G\left(A_{m}~|~A_{k}\right)\xrightarrow{\pi}G\left(A_{l}~|~A_{k}\right)\to\{e\}
		\end{equation*}
		where the existence of the homomorphism $G\left(A_{m}~|~A_{k}\right)\xrightarrow{\pi}G\left(A_{l}~|~A_{k}\right)$ follows from (b).
		
	\end{enumerate}
	The sequence
	$\mathfrak{S}$
	is said to be an \textit{(algebraical)  finite covering sequence}. 
	For any finite covering sequence we will use the notation $\mathfrak{S} \in \mathfrak{FinAlg}$.
\end{definition}
\begin{definition}\label{equiv_act_defn}
	Let $\widehat{A} = \varinjlim A_n$  be the $C^*$-inductive limit \cite{murphy}, and suppose that $\widehat{G}= \varprojlim G\left(A_n~|~A \right) $ is the projective limit of groups \cite{spanier:at}. There is the natural action of $\widehat{G}$ on $\widehat{A}$. A non-degenerate faithful representation $\widehat{A} \to B\left( \H\right) $ is said to be \textit{equivariant} if there is an action of $\widehat{G}$ on $\H$ such that for any $\xi \in \H$ and $g \in  \widehat{G}$ the following condition holds
	\begin{equation}\label{equiv_act_eqn}
	\left(ga \right) \xi = g\left(a\left(g^{-1}\xi \right)  \right) .
	\end{equation}
\end{definition}
\begin{example}
	Let $S$ be the state space of $\widehat{A}$, and let $\widehat{A} \to B\left(\bigoplus_{s \in S} \H_s \right)$ be the universal representation. There is the natural action of $\widehat{G}$ on $S$ given by
	$$
	\left(gs \right)\left(  a\right)  = s\left( ga\right); ~ s \in S,~ a \in \widehat{A},~ g \in \widehat{G}.
	$$
	The action of $\widehat{G}$ on $S$ induces the action of $\widehat{G}$ on $\bigoplus_{s \in S} \H_s$. It follows that the universal representation is equivariant.
\end{example}
\begin{example}\label{equiv_exm}
	Let $s$ be a faithful state which corresponds to the representation $\widehat{A} \to B\left(\H_s \right)$  and $\left\{g_n\in \widehat{G}\right\}_{n \in \N}= \widehat{G}$  is a bijection. The state
	$$
	\sum_{n \in \N}\frac{g_ns }{2^{n}}
	$$
	corresponds to an equvariant representation $\widehat{A} \to B\left(\bigoplus_{g \in \widehat{G}}\H_{gs} \right)$.
\end{example}

\begin{definition}\label{special_el_defn}
	Let $\pi:\widehat{A} \to B\left( \H\right) $ be an equivariant representation.  A positive element  $\overline{a}  \in B\left(\H \right)_+ $ is said to be \textit{special} (with respect to $\pi$) if following conditions hold:
	\begin{enumerate}
		\item[(a)] For any $n \in \mathbb{N}^0$  the following  series 
		\begin{equation*}
		\begin{split}
		a_n = \sum_{g \in \ker\left( \widehat{G} \to  G\left( A_n~|~A \right)\right)} g  \overline{a} 
		\end{split}
		\end{equation*}
		is strongly convergent and the sum lies in   $A_n$, i.e. $a_n \in A_n $;		
		\item[(b)]
		If $f_\eps: \R \to \R$ is given by 
		\begin{equation}\label{f_eps_eqn}
		f_\eps\left( x\right)  =\left\{
		\begin{array}{c l}
		0 &x \le \eps \\
		x - \eps & x > \eps
		\end{array}\right.
		\end{equation}
		then for any $n \in \mathbb{N}^0$ and for any $z \in A$   following  series 
		\begin{equation*}
		\begin{split}
		b_n = \sum_{g \in \ker\left( \widehat{G} \to  G\left( A_n~|~A \right)\right)} g \left(z  \overline{a} z^*\right) ,\\
		c_n = \sum_{g \in \ker\left( \widehat{G} \to  G\left( A_n~|~A \right)\right)} g \left(z  \overline{a} z^*\right)^2,\\
		d_n = \sum_{g \in \ker\left( \widehat{G} \to  G\left( A_n~|~A \right)\right)} g f_\eps\left( z  \overline{a} z^* \right) 
		\end{split}
		\end{equation*}
		are strongly convergent and the sums lie in   $A_n$, i.e. $b_n,~ c_n,~ d_n \in A_n $; 
		\item[(c)] For any $\eps > 0$ there is $N \in \N$ (which depends on $\overline{a}$ and $z$) such that for any $n \ge N$ a following condition holds
		\begin{equation}\label{square_condition_equ}
		\begin{split}
		\left\| b_n^2 - c_n\right\| < \eps.
		\end{split}
		\end{equation}	
	\end{enumerate}
	
	An element  $\overline{   a}' \in B\left( \H\right) $ is said to be \textit{weakly special} if 
	$$
	\overline{   a}' = x\overline{a}y; \text{ where }   x,y \in \widehat{A}, \text{ and } \overline{a} \in B\left(\H \right)  \text{ is special}.
	$$
	
\end{definition}
\begin{lemma}\label{stong_conv_inf_lem}
	If $\overline{a} \in B\left( \H\right)_+$ is a special element and $\overline{G}_n=\ker\left( \widehat{G} \to  G\left( A_n~|~A \right)\right)$ then from
	\begin{equation*}
	\begin{split}
	a_n = \sum_{g \in \overline{G}_n} g \overline{a},
	\end{split}
	\end{equation*}
	it follows that $\overline{a} = \lim_{n \to \infty} a_n$ in the sense of the strong convergence. Moreover  one has $\overline{a} =\inf_{n \in \N}a_n$.
\end{lemma}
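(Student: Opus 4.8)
The plan is to exhibit $\left\{a_n\right\}_{n\in\N}$ as a decreasing, uniformly norm-bounded sequence of positive operators, deduce from Lemma~\ref{increasing_convergent_w} that it converges strongly to its infimum, and then identify that infimum with $\overline{a}$ by comparing quadratic forms; the comparison is where condition (a) of Definition~\ref{special_el_defn} enters in an essential way.

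First I would record the monotonicity. The action of $\widehat{G}$ on $B\left(\H\right)$ coming from the equivariant representation (Definition~\ref{equiv_act_defn}) is implemented by unitaries, so each $g\overline{a}$ is positive. Because $A_n$ is a subalgebra of $A_{n+1}$, the composite $\widehat{G}\to G\left(A_{n+1}~|~A\right)\to G\left(A_n~|~A\right)$ is the canonical homomorphism, the right-hand map being the one supplied by the exact sequences of Definition~\ref{comp_defn}; hence $\overline{G}_{n+1}\subseteq\overline{G}_n$. Consequently $a_n-a_{n+1}=\sum_{g\in\overline{G}_n\setminus\overline{G}_{n+1}}g\overline{a}$ is a strongly convergent sum of positive terms, so $a_n\ge a_{n+1}$; and since $e\in\overline{G}_n$ we get $a_n\ge\overline{a}\ge 0$ and $0\le a_n\le a_0$, hence $\left\|a_n\right\|\le\left\|a_0\right\|$. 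Applying Lemma~\ref{increasing_convergent_w} to the increasing norm-bounded net $\left\{-a_n\right\}$ shows that $a_n$ converges strongly to a self-adjoint $a_\infty$, and, being the strong limit of a decreasing net, $a_\infty=\inf_{n\in\N}a_n$ with $\overline{a}\le a_\infty$.

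It remains to prove $a_\infty=\overline{a}$. Since strong convergence gives $\left(\xi,a_n\xi\right)_{\H}\to\left(\xi,a_\infty\xi\right)_{\H}$ for every $\xi\in\H$, it suffices to show $\left(\xi,a_n\xi\right)_{\H}\to\left(\xi,\overline{a}\xi\right)_{\H}$, for then $a_\infty$ and $\overline{a}$ have the same quadratic form and therefore coincide. This is the one substantive step. Observe that $G\left(A_0~|~A\right)=G\left(A~|~A\right)$ is trivial, so $\overline{G}_0=\widehat{G}$, and condition (a) of Definition~\ref{special_el_defn} with $n=0$ asserts that $a_0=\sum_{g\in\widehat{G}}g\overline{a}$ is strongly convergent; in particular the series of nonnegative numbers $\sum_{g\in\widehat{G}}\left(\xi,g\overline{a}\xi\right)_{\H}$ converges, with sum $\left(\xi,a_0\xi\right)_{\H}$. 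Fix $\eps>0$ and pick a finite $F\subseteq\widehat{G}$ with $\sum_{g\notin F}\left(\xi,g\overline{a}\xi\right)_{\H}<\eps$. Since $\widehat{G}=\varprojlim G\left(A_n~|~A\right)$ forces $\bigcap_{n\in\N}\overline{G}_n=\left\{e\right\}$, for each $g\in F\setminus\left\{e\right\}$ there is $n_g$ with $g\notin\overline{G}_{n_g}$; taking $N=\max\left\{n_g:g\in F\setminus\left\{e\right\}\right\}$ we obtain $\overline{G}_n\setminus\left\{e\right\}\subseteq\widehat{G}\setminus F$ for all $n\ge N$, whence
\[
0\le\left(\xi,a_n\xi\right)_{\H}-\left(\xi,\overline{a}\xi\right)_{\H}=\sum_{g\in\overline{G}_n\setminus\left\{e\right\}}\left(\xi,g\overline{a}\xi\right)_{\H}\le\sum_{g\notin F}\left(\xi,g\overline{a}\xi\right)_{\H}<\eps .
\]
Hence $a_\infty=\overline{a}$, and so $\overline{a}=\lim_{n\to\infty}a_n$ in the strong sense and $\overline{a}=\inf_{n\in\N}a_n$. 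The monotonicity bookkeeping, the decreasing-net strong convergence, and the polarization step are all routine; the only real input is the localization of the globally convergent series $\sum_{g\in\widehat{G}}g\overline{a}$ to its tails over the decreasing family $\left\{\overline{G}_n\right\}_{n\in\N}$, which I expect to be the crux.
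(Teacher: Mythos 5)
Your proof is correct and follows essentially the same route as the paper's: both deduce strong convergence of the decreasing, bounded sequence $\left\{a_n\right\}$ to $\inf_{n\in\N}a_n$ from Lemma~\ref{increasing_convergent_w}, and both identify that infimum with $\overline{a}$ by evaluating quadratic forms $\left(\xi,a_n\xi\right)$. The only difference is that you spell out the tail estimate (via the convergent series over $\widehat{G}$ and $\bigcap_{n\in\N}\overline{G}_n=\left\{e\right\}$) that the paper asserts in one line as $\inf_{n\in\N}\sum_{g\in\overline{G}_n}\left(\xi,g\overline{a}\xi\right)=\left(\xi,\overline{a}\xi\right)$.
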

\begin{proof}
	From the Lemma  \ref{increasing_convergent_w} it follows that the decreasing lower-bounded sequence $\left\{a_n\right\}$ is strongly convergent and $\lim_{n \to \infty} a_n=\inf_{n \in \N}a_n$. From $a_n > \overline{a}$ it follows that $\inf_{n \in \N}a_n \ge \overline{a}$. If $\inf_{n \in \N}a_n > \overline{a}$ then there is  $\xi \in \H$ such that 
	$$
	\left(\xi,\left( \inf_{n \in \N} a_n\right) \xi \right) > \left(\xi,\overline{a}\xi \right),
	$$ 
however one has
	\begin{equation*}
	\begin{split}
	\left(\xi,\left( \inf_{n \in \N} a_n\right) \xi \right)=	\inf_{n \in \N}\left(\xi, a_n\xi \right) = \inf_{n \in \N}\left(\xi,\left( \sum_{g \in \overline{G}_n} g \overline{a}\right) \xi \right)= \inf_{n \in \N} \sum_{g \in \overline{G}_n}\left(\xi, g\overline{a}\xi \right)= \left(\xi, \overline{a}\xi \right).
	\end{split}
	\end{equation*}
	It follows that $\overline{a}=\inf_{n \in \N}a_n$.
	
\end{proof}

\begin{corollary}\label{special_cor}
	Any weakly special element lies in the enveloping von Neumann algebra $\widehat{A}''$ of $\widehat{A}=\varinjlim A_n$. If $\overline{A}_\pi \subset B\left( \H\right)$ is the $C^*$-norm completion of an algebra generated by weakly special elements then $\overline{A}_\pi \subset \widehat{A}''$.
\end{corollary}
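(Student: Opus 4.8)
The plan is to reduce everything to the single fact recorded in Lemma~\ref{stong_conv_inf_lem}: a special element is the strong-operator limit (indeed the infimum) of a decreasing sequence of elements of $\widehat A$; after that, weak speciality and the passage to the $C^*$-completion cost nothing. So I would first fix the equivariant representation $\pi\colon\widehat A\to B\left(\H\right)$ used in Definition~\ref{special_el_defn}. Since $\pi$ is nondegenerate, $1_\H$ lies in the strong closure of $\pi(\widehat A)$, hence by Theorem~\ref{vN_thm} that strong closure is the von Neumann algebra $\pi(\widehat A)''$; by Definition~\ref{env_alg_defn} together with Proposition~\ref{env_alg_sec_dual} this is the image of the canonical normal extension $\overline\pi\colon\widehat A''\to B\left(\H\right)$ of $\pi$, and when $\pi$ is the (equivariant) universal representation one has $\pi(\widehat A)''=\widehat A''$ verbatim.

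Next I would take a special $\overline a\in B\left(\H\right)_+$, put $\overline G_n=\ker\left(\widehat G\to G\left(A_n\,|\,A\right)\right)$ and $a_n=\sum_{g\in\overline G_n}g\overline a$ as in condition~(a) of Definition~\ref{special_el_defn}. That condition gives $a_n\in A_n\subset\widehat A$, and Lemma~\ref{stong_conv_inf_lem} gives $\overline a=\lim_n a_n$ in the strong topology, so $\overline a$ lies in the strong closure of $\pi(\widehat A)$, i.e. in $\widehat A''$ under the identification above. If $\overline a'=x\overline a y$ is weakly special with $x,y\in\widehat A$, then $\overline a'\in\widehat A''$ because $\widehat A''$ is an algebra containing $\widehat A$; moreover $(\overline a')^*=y^*\overline a x^*$ is again weakly special since $\overline a^*=\overline a$, so the $*$-algebra generated by the weakly special elements is contained in $\widehat A''$. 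A von Neumann algebra is norm closed, hence its $C^*$-norm completion $\overline A_\pi$ is contained in $\widehat A''$ as well, which is the assertion.

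The one point I would phrase with care is the identification just used: in general the argument only produces $\overline A_\pi\subseteq\pi(\widehat A)''$, which one then views inside $\widehat A''$ through the canonical normal surjection $\widehat A''\twoheadrightarrow\pi(\widehat A)''$, so the inclusion $\overline A_\pi\subset\widehat A''$ is literal precisely when $\pi$ is taken equivalent to the universal representation. Apart from pinning down that convention, the proof is entirely mechanical — Lemma~\ref{stong_conv_inf_lem}, the double commutant theorem, and norm-closedness of von Neumann algebras — with no computation left to carry out; the main (very minor) obstacle is simply making the $\widehat A''$ identification precise rather than any analytic difficulty.
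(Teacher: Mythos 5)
Your proof is correct and is essentially the argument the paper intends: the corollary is stated without proof as an immediate consequence of Lemma~\ref{stong_conv_inf_lem}, namely that a special element is the strong limit of the $a_n\in A_n\subset\widehat A$, hence lies in the strong closure $\pi(\widehat A)''$, and weak speciality, $*$-algebra generation, and norm completion all preserve membership in that (norm-closed) von Neumann algebra. Your closing remark about distinguishing $\pi(\widehat A)''$ from $\widehat A''$ for a non-universal equivariant representation is a legitimate precision that the paper glosses over, but it does not change the substance of the argument.
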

\begin{lemma}
	If $\overline{a}\in B\left( \H\right)$ is special, (resp.  $\overline{a}'\in B\left( \H\right)$ weakly special) then for any  $g \in \widehat{G}$ the element  $g\overline{a}$ is special, (resp. $g\overline{a}'$ is weakly special).
\end{lemma}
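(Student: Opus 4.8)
The plan is to verify, clause by clause, that $g\overline a$ meets Definition \ref{special_el_defn}, reducing each condition to the corresponding condition for $\overline a$ via three elementary facts. First, since $\pi\colon\widehat A\to B(\H)$ is equivariant, each $g\in\widehat G$ acts on $B(\H)$ as conjugation $T\mapsto U_gTU_g^*$ by a unitary, i.e.\ as a strongly continuous, order preserving, isometric $*$-automorphism that restricts to the given action on $\widehat A$, carries each $A_n$ into itself (its restriction to $A_n$ factors through $h_n(g)\in G(A_n\,|\,A)$), and commutes with the continuous functional calculus. Second, each $\overline G_n=\ker\bigl(\widehat G\to G(A_n\,|\,A)\bigr)$ is a \emph{normal} subgroup of $\widehat G$, being a kernel. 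Third, $\widehat G$ acts trivially on $A=A_0$, because $G(A_0\,|\,A)=G(A\,|\,A)=\{e\}$; hence $gz=z$ for every $z\in A$ and $g\in\widehat G$.

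The computational heart is a single rearrangement. Fix $n$. Since $h(g\overline a)=(hg)\overline a$ and $\overline G_ng=g\overline G_n$ by normality, the partial sums of $\sum_{h\in\overline G_n}h(g\overline a)$ over finite subsets of $\overline G_n$ coincide, after the reindexing $h\mapsto g^{-1}hg$, with the $U_g$-conjugates of the partial sums of $\sum_{k\in\overline G_n}k\,\overline a$; since the latter form an increasing net of positive operators converging strongly to $a_n$ (Lemmas \ref{increasing_convergent_w} and \ref{stong_conv_inf_lem}) and conjugation by $U_g$ is strongly continuous, we obtain
\[
\sum_{h\in\overline G_n}h(g\overline a)=g\Bigl(\sum_{k\in\overline G_n}k\,\overline a\Bigr)=g(a_n)\in A_n,
\]
which is condition (a). For condition (b), fix $z\in A$; by triviality of the action on $A$, $z(g\overline a)z^*=(gz)(g\overline a)(gz)^*=g(z\overline a z^*)$, and since $g$ is a $*$-homomorphism commuting with functional calculus also $\bigl(z(g\overline a)z^*\bigr)^2=g\bigl((z\overline a z^*)^2\bigr)$ and $f_\eps\bigl(z(g\overline a)z^*\bigr)=g\bigl(f_\eps(z\overline a z^*)\bigr)$. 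The same rearrangement then identifies the series $b_n,c_n,d_n$ attached to $g\overline a$ with $g(b_n),g(c_n),g(d_n)$ attached to $\overline a$, so they converge strongly with sums in $A_n$. Condition (c) is immediate from isometry: $\bigl\|g(b_n)^2-g(c_n)\bigr\|=\bigl\|g(b_n^2-c_n)\bigr\|=\|b_n^2-c_n\|$, so the same $N$ works. Hence $g\overline a$ is special.

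For the weakly special case, write $\overline a'=x\overline a y$ with $x,y\in\widehat A$ and $\overline a$ special. Then $g\overline a'=(gx)(g\overline a)(gy)$, with $gx,gy\in\widehat A$ because $\widehat G$ acts on $\widehat A$ by $*$-automorphisms and with $g\overline a$ special by the previous paragraph; so $g\overline a'$ is weakly special by definition.

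The only point I expect to need care is the interchange of $g$ with the possibly infinite defining series in (a) and (b). I would handle it exactly as in Lemma \ref{stong_conv_inf_lem}: view each series as the supremum of the increasing net of positive partial sums over finite subsets of $\overline G_n$, use that conjugation by the unitary $U_g$ is strongly continuous and order preserving, and use that $h\mapsto g^{-1}hg$ permutes $\overline G_n$. Everything else is bookkeeping with the normality of $\overline G_n$ and the triviality of the $\widehat G$-action on $A$.
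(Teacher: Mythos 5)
Your proof is correct and follows the same route as the paper: verify conditions (a)--(c) of Definition \ref{special_el_defn} for $g\overline a$ directly, and handle the weakly special case via the factorization $g\overline a'=(gx)(g\overline a)(gy)$. The paper's own proof merely asserts that $g\overline a$ satisfies (a)--(c); your reindexing $h\mapsto g^{-1}hg$ over the normal subgroup $\overline G_n$, together with the triviality of the $\widehat G$-action on $A$ and the isometry and strong continuity of conjugation by the unitary implementing $g$, supplies exactly the details the paper omits.
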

\begin{proof}
	If	$\overline{a}  \in B\left( \H\right)$ is special then $g \overline{a}$ satisfies to (a)-(c) of the Definition \ref{special_el_defn}, i.e. $g\overline{   a}$ is special. If $\overline{   a}'$ is weakly special then form
	$$
	\overline{   a}' = x\overline{a}y; \text{ where }   x,y \in \widehat{A}, \text{ and } \overline{a} \in B\left(\H \right)  \text{ is special},
	$$
	it turns out that
	$$
	g\overline{   a}' = \left( gx\right) \left( g\overline{a}\right) \left( gy\right),
	$$
	i.e. $g\overline{   a}'$ is weakly special.
\end{proof}
\begin{corollary}\label{disconnect_group_action_cor}
	If $\overline{A}_\pi \subset B\left( \H\right)$ is the $C^*$-norm completion of algebra generated by weakly special elements, then there is a natural action of $\widehat{G}$ on $\overline{A}_\pi$.
\end{corollary}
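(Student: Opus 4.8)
The plan is to realize the desired $\widehat{G}$-action simply as the restriction of the spatial action on $B\left(\H\right)$, and then to check that this restriction makes sense. Since the representation $\pi:\widehat{A} \to B\left(\H\right)$ that is fixed throughout the construction is equivariant, there is an action of $\widehat{G}$ on the Hilbert space $\H$; as in the examples following Definition \ref{equiv_act_defn}, this action may (and will) be taken to be implemented by unitary operators $\left\{u_g\right\}_{g \in \widehat{G}} \subset B\left(\H\right)$, so that \eqref{equiv_act_eqn} becomes $\pi\left(ga\right) = u_g\,\pi\left(a\right)\,u_g^*$. For each $g \in \widehat{G}$ I would then define $\alpha_g: B\left(\H\right) \to B\left(\H\right)$ by $\alpha_g\left(x\right) = u_g x u_g^*$. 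Each $\alpha_g$ is a $*$-automorphism of $B\left(\H\right)$, hence isometric, and $g \mapsto \alpha_g$ is a group homomorphism $\widehat{G} \to \Aut\left(B\left(\H\right)\right)$ which, on $\widehat{A}$, agrees with the given action of $g$.

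Next I would invoke the lemma immediately preceding this corollary: if $\overline{a}' \in B\left(\H\right)$ is weakly special then so is $g\overline{a}' = \alpha_g\left(\overline{a}'\right)$ for every $g \in \widehat{G}$. Consequently $\alpha_g$ maps the set of weakly special elements bijectively onto itself, the inverse being the restriction of $\alpha_{g^{-1}}$, and therefore $\alpha_g$ maps the (a priori non-closed) $*$-subalgebra of $B\left(\H\right)$ generated by the weakly special elements onto itself.

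Finally, since $\alpha_g$ is norm-continuous — indeed isometric — it carries the $C^*$-norm closure of that $*$-subalgebra, which is by definition $\overline{A}_\pi$, onto itself; thus $\alpha_g|_{\overline{A}_\pi}$ is a $*$-automorphism of $\overline{A}_\pi$, and $g \mapsto \alpha_g|_{\overline{A}_\pi}$ is the required natural action of $\widehat{G}$ on $\overline{A}_\pi$. There is essentially no obstacle here beyond the bookkeeping already packaged in the previous lemma; the only point deserving a word of care is that the $\widehat{G}$-action on $\H$ be by unitaries, so that $\alpha_g$ is isometric and the passage from the generated $*$-algebra to its norm completion $\overline{A}_\pi$ is legitimate.
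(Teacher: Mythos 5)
Your proof is correct and follows essentially the same route the paper intends: the corollary is an immediate consequence of the preceding lemma (that $g$ maps weakly special elements to weakly special elements), combined with the observation that each $g\in\widehat{G}$ acts as an isometric $*$-automorphism of $B\left(\H\right)$ and therefore carries the generated $*$-algebra, and hence its $C^*$-norm closure $\overline{A}_\pi$, onto itself. Your explicit remark that the $\widehat{G}$-action on $\H$ must be unitary for $\alpha_g$ to be isometric is a worthwhile point of care that the paper leaves implicit in its Definition \ref{equiv_act_defn} and the examples following it.
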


\begin{definition}\label{main_defn_full}
	Let $\mathfrak{S} =\left\{ A =A_0 \xrightarrow{\pi^1} A_1 \xrightarrow{\pi^2} ... \xrightarrow{\pi^n} A_n \xrightarrow{\pi^{n+1}} ...\right\}$ be an  algebraical  finite covering sequence. Let  $\pi:\widehat{A} \to B\left( \H\right) $ be an equivariant representation.	 Let $\overline{A}_\pi \subset B\left( \H\right)$ be the $C^*$-norm completion of algebra generated by weakly special elements. We say that $\overline{A}_\pi$ is the {\it disconnected inverse noncommutative limit} of $\downarrow\mathfrak{S}$ (\textit{with respect to $\pi$}). 
	The triple  $\left(A, \overline{A}_\pi, G\left(\overline{A}_\pi~|~ A\right)\stackrel{\mathrm{def}}{=} \widehat{G}\right)$ is said to be the  {\it disconnected infinite noncommutative covering } of $\mathfrak{S}$ (\textit{with respect to $\pi$}). If $\pi$ is the universal representation then "with respect to $\pi$" is dropped and we will write
	$\left(A, \overline{A}, G\left(\overline{A}~|~ A\right)\right)$.	
\end{definition}
\begin{definition}\label{main_sdefn}
	Any  maximal irreducible subalgebra $\widetilde{A}_\pi \subset \overline{A}_\pi$  is said to be a {\it connected component} of $\mathfrak{S}$ ({\it with respect to $\pi$}). The maximal subgroup $G_\pi\subset G\left(\overline{A}_\pi~|~ A\right)$ among subgroups $G\subset G\left(\overline{A}_\pi~|~ A\right)$ such that $G\widetilde{A}_\pi=\widetilde{A}_\pi$  is said to be the $\widetilde{A}_\pi$-{\it  invariant group} of $\mathfrak{S}$. If $\pi$ is the universal representation then "with respect to $\pi$" is dropped. 
\end{definition}

\begin{remark}
	From the Definition \ref{main_sdefn} it follows that $G_\pi \subset G\left(\overline{A}_\pi~|~ A\right)$ is a normal subgroup.
\end{remark}
\begin{definition}\label{good_seq_defn} Let $$\mathfrak{S} = \left\{ A =A_0 \xrightarrow{\pi^1} A_1 \xrightarrow{\pi^2} ... \xrightarrow{\pi^n} A_n \xrightarrow{\pi^{n+1}} ...\right\} \in \mathfrak{FinAlg},$$ and let $\left(A, \overline{A}_\pi, G\left(\overline{A}_\pi~|~ A\right)\right)$ be a disconnected infinite noncommutative covering of $\mathfrak{S}$ with respect to an equivariant representation $\pi: \varinjlim A_n\to B\left(\H \right) $. Let $\widetilde{A}_\pi\subset \overline{A}_\pi$  be a connected component of $\mathfrak{S}$ with respect to $\pi$, and let $G_\pi \subset  G\left(\overline{A}_\pi~|~ A\right)$  be the $\widetilde{A}_\pi$ -  invariant group of $\mathfrak{S}$.
	Let  $h_n : G\left(\overline{A}_\pi~|~ A\right) \to  G\left( A_n~|~A \right)$ be the natural surjective  homomorphism. The  representation $\pi: \varinjlim A_n\to B\left(\H \right)$ is said to be \textit{good} if it satisfies to following conditions:
	\begin{enumerate}
		\item[(a)] The natural *-homomorphism $ \varinjlim A_n \to  M\left(\widetilde{A}_\pi \right)$ is injective,
		\item[(b)] If $J\subset G\left(\overline{A}_\pi~|~ A\right)$ is a set of representatives of $G\left(\overline{A}_\pi~|~ A\right)/G_\pi$, then the algebraic direct sum
		\begin{equation*}
		\bigoplus_{g\in J} g\widetilde{A}_\pi
		\end{equation*}
		is a dense subalgebra of $\overline{A}_\pi$,
		\item [(c)] For any $n \in \N$ the restriction $h_n|_{G_\pi}$ is an epimorphism, i. e. $h_n\left(G_\pi \right) = G\left( A_n~|~A \right)$.
	\end{enumerate}
	If $\pi$ is the universal representation we say that $\mathfrak{S}$ is \textit{good}.
\end{definition}

\begin{definition}\label{main_defn}
	Let $\mathfrak{S}=\left\{A=A_0 \to A_1 \to ... \to A_n \to ...\right\} \in \mathfrak{FinAlg}$ be  an algebraical  finite covering sequence. Let $\pi: \widehat{A} \to B\left(\H \right)$ be a good representation.    A connected component $\widetilde{A}_\pi \subset \overline{A}_\pi$  is said to be the {\it inverse noncommutative limit of $\downarrow\mathfrak{S}$ (with respect to $\pi$)}. The $\widetilde{A}_\pi$-invariant group $G_\pi$  is said to be the {\it  covering transformation group of $\mathfrak{S}$} ({\it with respect to $\pi$}).  The triple $\left(A, \widetilde{A}_\pi, G_\pi\right)$ is said to be the  {\it infinite noncommutative covering} of $\mathfrak{S}$  ({\it with respect to $\pi$}). %The algebra $A$  is said to be the {\it base algebra} of $\mathfrak{S}$. 
	We will use the following notation 
	\begin{equation*}
	\begin{split}
	\varprojlim_\pi \downarrow \mathfrak{S}\stackrel{\mathrm{def}}{=}\widetilde{A}_\pi,\\
	G\left(\widetilde{A}_\pi~|~ A\right)\stackrel{\mathrm{def}}{=}G_\pi.
	\end{split}
	\end{equation*}	
	If $\pi$ is the universal representation then "with respect to $\pi$" is dropped and we will write $\left(A, \widetilde{A}, G\right)$, $~\varprojlim \downarrow \mathfrak{S}\stackrel{\mathrm{def}}{=}\widetilde{A}$ and  $ G\left(\widetilde{A}~|~ A\right)\stackrel{\mathrm{def}}{=}G$.
\end{definition}
%\begin{remark}
%	Above definition does not depend on choice of $\widetilde{A}_\pi$ up to isomorphism.
%\end{remark}

\begin{definition}\label{inf_hilb_prod_defn}
	Let $\mathfrak{S}=\left\{A=A_0 \to A_1 \to ... \to A_n \to ...\right\} \in \mathfrak{FinAlg}$ be  an algebraical  finite covering sequence. Let $\pi: \widehat{A} \to B\left(\H \right)$ be a good representation.   Let $\left(A, \widetilde{A}_\pi, G_\pi\right)$  be the  infinite noncommutative covering of $\mathfrak{S}$  ( with respect to $\pi$). Let $K\left( \widetilde{A}_\pi\right)$ be the Pedersen ideal of  $\widetilde{A}_\pi$. We say that $\mathfrak{S}$ \textit{allows inner product (with respect to $\pi$)} if following conditions hold:
	\begin{enumerate}
		\item[(a)]  Any $\widetilde{a} \in K\left( \widetilde{A}_\pi\right)$ is weakly special,
		\item[(b)]
		For any $n \in \N$, and $\widetilde{a}, \widetilde{b} \in K\left( \widetilde{A}_\pi\right)$ the series
		\begin{equation*}
		\begin{split}
		a_n = \sum_{g \in \ker\left( \widehat{G} \to  G\left( A_n~|~A \right)\right)} g \left(\widetilde{a}^* \widetilde{b}  \right) 
		\end{split}
		\end{equation*}
		is strongly convergent and $a_n \in A_n$.
	\end{enumerate}
	
\end{definition}

\begin{remark}\label{inf_hilb_prod_rem}
	If $\mathfrak{S}$ allows  inner product (with respect to $\pi$) then $K\left( \widetilde{A}_\pi\right)$ is a pre-Hilbert $A$ module such that the inner product is given by
	\begin{equation*}
	\begin{split}
	\left\langle \widetilde{a}, \widetilde{b}  \right\rangle  = \sum_{g \in  \widehat{G}} g \left(\widetilde{a}^* \widetilde{b}  \right) \in A 
	\end{split}
	\end{equation*}
	where the above series is strongly convergent. The completion of  $K\left( \widetilde{A}_\pi\right)$ with respect to a norm
	\begin{equation*}
	\begin{split}
	\left\| \widetilde{a}\right\| = \sqrt{\left\| \left\langle \widetilde{a}, \widetilde{a}  \right\rangle\right\|}
	\end{split}
	\end{equation*}
	is an $A$-Hilbert module. Denote by $X_A$ this completion. The ideal $K\left( \widetilde{A}_\pi\right)$ is a left $\widetilde{A}_\pi$-module, so $X_A$ is also $\widetilde{A}_\pi$-module. Sometimes we will write  $_{\widetilde{A}_\pi}X_A$ instead  $X_A$.
	
\end{remark}
\begin{definition}\label{inf_hilb_mod_defn}
	Let $\mathfrak{S}=\left\{A=A_0 \to A_1 \to ... \to A_n \to ...\right\} \in \mathfrak{FinAlg}$ and  $\mathfrak{S}$ allows inner product (with respect to $\pi$) then  then we say that given by the Remark  \ref{inf_hilb_prod_rem} $A$-Hilbert module $_{\widetilde{A}_\pi}X_A$ \textit{corresponds to the pair} $\left(\mathfrak{S}, \pi \right) $. If $\pi$ is the universal representation then we say that $_{\widetilde{A}}X_A$ \textit{corresponds to } $\mathfrak{S}$.
\end{definition}
\subsection{Induced representation}\label{inf_ind_repr_subsection}

\paragraph*{} 	Let $\pi:\widehat{A} \to B\left(\overline{\H}_\pi \right)$ be a good representation. Let  $\left(A, \widetilde{A}_\pi, G_\pi\right)$ be an infinite noncommutative covering  with respect to $\pi$ of $\mathfrak{S}$. Denote by $\overline{W}_\pi \subset B\left(\overline{\H}_\pi \right)$ the $\widehat{A}$-bimodule of weakly special elements, and denote by
\begin{equation}\label{wealky_spec_eqn}
\widetilde{W}_\pi = \overline{W}_\pi \bigcap  \widetilde{A}_\pi.
\end{equation}
If $\pi$ is the universal representation then we write $\widetilde{W}$ instead $\widetilde{W}_\pi$.

\begin{lem}\label{w_conv_lem}
	If $\widetilde{a}, \widetilde{b} \in \widetilde{W}_\pi $ are weakly special elements  then a series
	$$
	\sum_{g \in G_\pi} g\left(\widetilde{a}^*\widetilde{b} \right)
	$$
	is strongly convergent.
\end{lem}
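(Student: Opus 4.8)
The plan is to peel the statement down to a single positive operator by polarisation, invoke the monotone strong-convergence principle (Lemma~\ref{increasing_convergent_w}) to recast everything as the production of a \emph{uniform} norm bound for the finite partial sums, and then extract that bound from the defining conditions of (weakly) special elements together with the goodness of $\pi$.

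First I would note that $\widetilde{W}_\pi = \overline{W}_\pi\cap\widetilde{A}_\pi$ is a complex-linear subspace of $B\left(\overline{\H}_\pi\right)$, being the intersection of an $\widehat{A}$-bimodule with a subalgebra; hence for $\widetilde{a},\widetilde{b}\in\widetilde{W}_\pi$ the polarisation identity
$$
4\,\widetilde{a}^*\widetilde{b}=\sum_{k=0}^{3}i^{k}\left(\widetilde{b}+i^{k}\widetilde{a}\right)^{*}\left(\widetilde{b}+i^{k}\widetilde{a}\right)
$$
exhibits $\widetilde{a}^*\widetilde{b}$ as a finite linear combination of terms $\widetilde{c}^*\widetilde{c}$ with $\widetilde{c}\in\widetilde{W}_\pi$. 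Since a finite linear combination of strongly convergent series is strongly convergent and all partial sums in play are finite, it suffices to prove that $\sum_{g\in G_\pi}g\left(\widetilde{c}^*\widetilde{c}\right)$ converges strongly for each such $\widetilde{c}$. The net $\left\{\sum_{g\in F}g\left(\widetilde{c}^*\widetilde{c}\right)\right\}_{F\subset G_\pi\text{ finite}}$ is increasing and consists of positive self-adjoint operators, so by Lemma~\ref{increasing_convergent_w} it is strongly convergent as soon as it is uniformly norm-bounded; constructing such a bound is the whole point.

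For the bound I would write $\widetilde{c}=x\,\overline{a}\,y$ with $x,y\in\widehat{A}$ and $\overline{a}\in B\left(\overline{\H}_\pi\right)_+$ special, and dominate $\widetilde{c}^*\widetilde{c}=y^*\overline{a}\,x^*x\,\overline{a}\,y\le\|x\|^{2}\,y^*\overline{a}^{2}y$. The idea is to localise $\overline{a}$ to a finite level: approximate $x,y$ by elements of $A_n$, use that $\ker h_n$ acts trivially on $A_n$ (the non-degeneracy of the covering actions forces $G\left(A_n~|~A\right)$ to act faithfully on $A_n$), and use that $h_n|_{G_\pi}$ is onto $G\left(A_n~|~A\right)$ (Definition~\ref{good_seq_defn}(c)) to split $G_\pi$ into the finitely many cosets of $G_\pi\cap\ker h_n$. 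On each coset the sum is then controlled, via Lemma~\ref{stong_conv_inf_lem}, by $a_n=\sum_{g\in\ker h_n}g\overline{a}\in A_n$, and the conditions (a)--(c) of Definition~\ref{special_el_defn} read at level $n=0$ (where $\ker h_0=\widehat{G}$ and $a_0\in A$), which guarantee the strong convergence in $A$ of sums of the form $\sum_{g\in\widehat{G}}g\left(z\overline{a}z^*\right)$ and $\sum_{g\in\widehat{G}}g\left(\left(z\overline{a}z^*\right)^2\right)$, supply an element of $A$ dominating every partial sum; since $\sum_{g\in G_\pi}g\left(\widetilde{c}^*\widetilde{c}\right)$ is a sub-sum of positive terms of that convergent $\widehat{G}$-sum, uniform boundedness follows and Lemma~\ref{increasing_convergent_w} closes the argument.

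I expect the main obstacle to be precisely the passage from the coefficients $z\in A$ for which Definition~\ref{special_el_defn}(b)--(c) is literally stated to arbitrary multipliers $x,y\in\widehat{A}$: a crude norm approximation $y\approx y_n$ contributes an error of the form $\varepsilon\cdot 1$, which is fatal once summed over the infinite group $G_\pi$. The genuine content is therefore that the orbit $\left\{g\left(\widetilde{c}^*\widetilde{c}\right)\right\}_{g\in G_\pi}$ is \emph{asymptotically orthogonal}, and this is exactly what the square condition $\left\|b_n^{2}-c_n\right\|<\varepsilon$ of Definition~\ref{special_el_defn}(c) is designed to encode; threading that estimate through the coset decomposition of $G_\pi$ so that the resulting bound does not degrade as $n\to\infty$ is the delicate step.
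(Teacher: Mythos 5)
Your overall strategy (polarise, reduce to a positive series, invoke Lemma~\ref{increasing_convergent_w}, and hunt for a uniform norm bound on the finite partial sums) is sound in outline but has two genuine gaps. First, the polarisation step silently assumes that $\widetilde{W}_\pi$ is closed under addition, so that $\widetilde{b}+i^k\widetilde{a}$ is again weakly special. The definition in the paper makes a weakly special element a \emph{single} product $x\overline{a}y$ with $\overline{a}$ special; nothing in Definition~\ref{special_el_defn} or in the construction of $\overline{W}_\pi$ shows that a sum of two such products is again of that form (the obvious trick of stacking them into a $2\times 2$ matrix leaves $B\left(\overline{\H}_\pi\right)$). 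Without that closure the reduction to terms $\widetilde{c}^*\widetilde{c}$ is unavailable, and expanding $\left(\widetilde{b}+i^k\widetilde{a}\right)^*\left(\widetilde{b}+i^k\widetilde{a}\right)$ just reintroduces the cross term you are trying to control, so the argument becomes circular. Second, and more seriously, the uniform bound --- which you correctly identify as ``the whole point'' --- is never actually produced. After dominating $\widetilde{c}^*\widetilde{c}\le\|x\|^2\,y^*\overline{a}^2y$ you need $\sum_{g\in\widehat{G}}g\left(y^*\overline{a}^2y\right)$ (or, after the further step $\overline{a}^2\le\|\overline{a}\|\,\overline{a}$, the series $\sum_g g\left(y^*\overline{a}y\right)$) to be norm-bounded for $y\in\widehat{A}$; Definition~\ref{special_el_defn}(b) only grants this for coefficients $z\in A$, and your closing paragraph explicitly defers the passage from $A$ to $\widehat{A}$ and the coset bookkeeping as ``the delicate step.'' Since a crude norm approximation $y\approx y_n\in A_n$ contributes an error term that is not summable over the infinite group (as you yourself observe), this is not a routine completion: the missing estimate is the mathematical content of the lemma, so what you have is a plan rather than a proof.

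For comparison, the paper's own proof avoids polarisation entirely. It writes $\widetilde{a}^*=x\widetilde{c}y$ with $\widetilde{c}$ special and $x,y\in\widehat{A}$, uses that $\sum_{g\in G_\pi}g\widetilde{c}$ is strongly convergent (hence norm-bounded), and combines this with a Cauchy-net estimate for $\sum_{g}\left(g\widetilde{b}\right)\xi$ to bound the tails of $\sum_g g\left(\widetilde{a}^*\widetilde{b}\right)\xi$ directly. That route never needs $\widetilde{W}_\pi$ to be a linear subspace and never needs Definition~\ref{special_el_defn}(b) for coefficients outside $A$; if you want to salvage your approach you would either have to prove closure of the weakly special elements under sums, or abandon polarisation and estimate the mixed series $\sum_g g\left(\widetilde{a}^*\widetilde{b}\right)$ directly as the paper does.
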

\begin{proof}

	From the definition of weakly special element one has $$\widetilde{a}^* = x \widetilde{c} y$$ where $\widetilde{c}$ is a (positive) special element and $x,y \in \widehat{A}$.	A series
	$$
	\sum_{g \in G_\pi} g \widetilde{c}
	$$
	is strongly convergent. For any $\xi\in\overline{\H}_\pi$ and $\varepsilon > 0$ there is a finite subset $G'\subset G_\pi$ such that for any finite $G''$ which satisfies to $G' \subset G'' \subset G_\pi$ following condition holds
	$$
	\left\| \sum_{g \in G''\backslash G'} \left( g\widetilde{b}\right)  \xi\right\| < \frac{\eps}{\left\|x\right\|\left\|\sum_{g\in G_\pi}g\widetilde{c}\right\|\left\|y\right\| }.
	$$
	Hence one has
	$$
	\left\| \sum_{g \in G''\backslash G'} \left( g\left( \widetilde{a}^*\widetilde{b}\right)\right)   \xi\right\| < \eps,
	$$
	i.e. the series 
	$$
	\sum_{g \in G_\pi} g\left(\widetilde{a}^*\widetilde{b} \right)
	$$
	is strongly convergent and $\sum_{g \in G_\pi} g\left(\widetilde{a}^*\widetilde{b} \right) \in \widehat{A}''$. 
\end{proof}

\begin{defn}\label{ss_defn}
	Element $\widetilde{a} \in \widetilde{A}_\pi$ is said to be \textit{square-summable} if the series 
	\be\label{ss_eqn}
	\sum_{g \in G_\pi} g\left(\widetilde{a}^*\widetilde{a} \right)
	\ee
	is strongly convergent to a bounded operator. Denote by $L^2\left(\widetilde{A}_\pi \right)$ (or 
	$L^2\left(\widetilde{A}\right)$ the $\C$-space of square-summable operators.
\end{defn}
\begin{rem}
	If $\widetilde{b} \in \widehat{A}$, and $\widetilde{a}\in L^2\left(\widetilde{A}\right)$ then
	$$
	\left\|	\sum_{g \in G_\pi}g\left(\widetilde{b}\widetilde{a} \right)^* \left(\widetilde{b}\widetilde{a}\right) \right\| \le \left\|\widetilde{b} \right\|^2\left\| 	\sum_{g \in G_\pi} g\left(\widetilde{a}^*\widetilde{a} \right)\right\| ,~~
	\left\|	\sum_{g \in G_\pi}g\left(\widetilde{a}\widetilde{b} \right)^* \left(\widetilde{a}\widetilde{b}\right) \right\| \le \left\|\widetilde{b} \right\|^2\left\| 	\sum_{g \in G_\pi} g\left(\widetilde{a}^*\widetilde{a} \right)\right\| 
	$$
	it turns out
	\be\label{act_on_l2_eqn}
	\widehat{A}L^2\left(\widetilde{A}_\pi\right) \subset L^2\left(\widetilde{A}_\pi\right),~~
	L^2\left(\widetilde{A}_\pi\right)\widehat{A} \subset L^2\left(\widetilde{A}_\pi\right),
	\ee
	i.e. there is the left and right action of $\widehat{A}$ on $L^2\left(\widetilde{A}\right)$.
\end{rem}
\begin{rem}
	If $a, b \in L^2\left(\widetilde{A}_\pi \right)$ then sum	$\sum_{g \in G_\pi} g\left(\widetilde{a}^*\widetilde{b} \right) \in \widehat{A}''$ is bounded and $G_\pi$-invariant, hence
	$\sum_{g \in G_\pi} g\left(\widetilde{a}^*\widetilde{b} \right) \in A'' $.
\end{rem}
\begin{rem}
	From the Lemma \ref{w_conv_lem} it turns out $\widetilde{W}_\pi\subset L^2\left(\widetilde{A}_\pi\right)$
\end{rem}
\begin{empt}\label{inf_repr_constr}
	Let $A \to B\left(\H \right)$ be a  representation.  Denote by $\widetilde{\H}$ a Hilbert completion of a pre-Hilbert space
	\begin{equation}\label{inf_ind_prod_eqn}
	\begin{split}
	L^2\left( \widetilde{A}_\pi \right) \otimes_A \H,\\
	\text{with a scalar product} 	\left(\widetilde{a} \otimes \xi, \widetilde{b} \otimes \eta \right)_{\widetilde{\H}} = \left( \xi, \left( \sum_{g \in G_\pi } g \left( \widetilde{a}^*\widetilde{b}\right) \right) \eta \right)_{\H}.
	\end{split}
	\end{equation}  
	There is the left action of $\widehat{A}$ on $L^2\left(\widetilde{A}_\pi\right) \otimes_{A} \H$ given by
	$$
	\widetilde{b}\left(\widetilde{a} \otimes \xi \right) = \widetilde{b}\widetilde{a} \otimes \xi
	$$
	where $\widetilde{a} \in 	L^2\left( \widetilde{A}_\pi \right) $, $\widetilde{b} \in \widehat{A}$, $\xi \in \H$.	
	The left action of $\widehat{A}$ on $L^2\left( \widetilde{A}_\pi\right)  \otimes_A \H$  induces following  representations
	\begin{equation*}
	\begin{split}
	\widehat{\rho}:\widehat{A} \to B\left( \widetilde{\H}\right),\\	
	\widetilde{\rho}:\widetilde{A}_\pi \to B\left( \widetilde{\H}\right).	
	\end{split}
	\end{equation*}

\end{empt}
\begin{defn}\label{inf_ind_defn}
	The constructed in \ref{inf_repr_constr} representation  $\widetilde{\rho}:\widetilde{A}_\pi \to  B\left( \widetilde{\H}\right)$ is said to be \textit{induced} by  $\left( \rho, \mathfrak{S}, \pi\right)  $. We also say that  $\widetilde{\rho}$ is  \textit{induced} by $\left( \rho, \left( A, \widetilde{A}_\pi, G\left(\widetilde{A}_\pi~|~A \right) \right), \pi\right) $. If $\pi$ is an universal representation we say that  $\widetilde{\rho}$ is  \textit{induced} by  $\left( \rho, \mathfrak{S}\right)$ and/or $\left( \rho, \left( A, \widetilde{A}, G\left(\widetilde{A}~|~A \right) \right)\right) $.  
\end{defn}
\begin{rem}
	If $\rho$ is faithful, then  ${\rho}$ is faithful.
\end{rem}
\begin{rem}\label{a_act_hilb_rem} 
	There is an action of $G_\pi$ on $\widetilde{\H}$ induced by the natural action of $G_\pi$ on the $\widetilde{A}_\pi$-bimodule $L^2\left( \widetilde{A}_\pi\right) $. If the representation $\widetilde A_\pi \to 	B\left( \widetilde{\H} \right)$ is faithful then an action of $ G_\pi$ on $\widetilde A_\pi$ is given by
	$$ 
	\left( g  \widetilde a\right) \xi =   g \left(  \widetilde a  \left(  g^{-1}\widetilde\xi\right) \right); ~ \forall  g  \in {G}, ~ \forall\widetilde a  \in \widetilde{A}_\pi, ~\forall\widetilde \xi \in \widetilde{\H}.
	$$
\end{rem}
\begin{empt}
	If $\mathfrak{S}$ allows inner product with respect to $\pi$ then for any representation $A \to B\left( \H\right)$ an algebraic tensor product $_{\widetilde{A}_\pi}X_A \otimes_A \H$ is a pre-Hilbert space with the product given by
	\begin{equation*}
	\left(a \otimes \xi, b \otimes \eta \right) = \left(\xi, \left\langle a, b \right\rangle\eta \right) 
	\end{equation*} 
	(cf. Definitions \ref{inf_hilb_mod_defn} and \ref{inf_hilb_prod_defn})
\end{empt}

\begin{lem}
	Suppose $\mathfrak{S}$ allows inner product with respect to $\pi$ and any $\widetilde{a} \in K\left( \widetilde{A}_\pi\right)$ is weakly special. If $\widetilde{\H}$ (resp. $\widetilde{\H}'$) is a Hilbert norm completion of 	$W_\pi \otimes_{A} \H$ (resp. $_{\widetilde{A}_\pi}X_A \otimes_A \H$) then there is the natural isomorphism $\widetilde{\H} \cong \widetilde{\H}'$.
\end{lem}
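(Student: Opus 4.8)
The plan is to realise both $\widetilde{\H}$ and $\widetilde{\H}'$ as the Hilbert completion of one and the same pre-Hilbert space, namely $K\left(\widetilde{A}_\pi\right)\otimes_A\H$. First observe that the two algebraic tensor products $\widetilde{W}_\pi\otimes_A\H$ and ${}_{\widetilde{A}_\pi}X_A\otimes_A\H$ carry inner products given by the single formula $\left(\widetilde a\otimes\xi,\widetilde b\otimes\eta\right)=\left(\xi,\left\langle\widetilde a,\widetilde b\right\rangle\eta\right)_\H$ with $\left\langle\widetilde a,\widetilde b\right\rangle=\sum_{g\in G_\pi}g\left(\widetilde a^*\widetilde b\right)$: for $\widetilde a,\widetilde b\in\widetilde{W}_\pi$ this series converges strongly by Lemma~\ref{w_conv_lem}, while for $\widetilde a,\widetilde b\in K\left(\widetilde{A}_\pi\right)$ it is the $A$-valued product of Remark~\ref{inf_hilb_prod_rem}. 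By hypothesis every element of $K\left(\widetilde{A}_\pi\right)$ is weakly special, so $K\left(\widetilde{A}_\pi\right)\subseteq\overline{W}_\pi$, and since trivially $K\left(\widetilde{A}_\pi\right)\subseteq\widetilde{A}_\pi$ we get $K\left(\widetilde{A}_\pi\right)\subseteq\widetilde{W}_\pi=\overline{W}_\pi\cap\widetilde{A}_\pi$; hence $K\left(\widetilde{A}_\pi\right)\otimes_A\H$ sits isometrically inside both tensor products. Moreover $K\left(\widetilde{A}_\pi\right)$ is dense in $X_A$ and $\left\|\widetilde a\otimes\xi\right\|^2=\left(\xi,\left\langle\widetilde a,\widetilde a\right\rangle\xi\right)\le\left\|\widetilde a\right\|_{X_A}^2\left\|\xi\right\|^2$, so $K\left(\widetilde{A}_\pi\right)\otimes_A\H$ is dense in ${}_{\widetilde{A}_\pi}X_A\otimes_A\H$ and its Hilbert completion is exactly $\widetilde{\H}'$. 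It therefore remains to prove that $K\left(\widetilde{A}_\pi\right)\otimes_A\H$ is dense in $\widetilde{W}_\pi\otimes_A\H$; the required isomorphism $\widetilde{\H}\cong\widetilde{\H}'$ is then the unique unitary extending the identity of this common subspace.

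For the density I would fix a bounded positive approximate unit $\left(u_\alpha\right)$ of the Pedersen ideal $K\left(\widetilde{A}_\pi\right)$ \cite{pedersen:ca_aut} and show, for every $\widetilde a\in\widetilde{W}_\pi$ and $\xi\in\H$, that $u_\alpha\widetilde a\otimes\xi\to\widetilde a\otimes\xi$ in $\widetilde{\H}$; here $u_\alpha\widetilde a\in K\left(\widetilde{A}_\pi\right)\subseteq\widetilde{W}_\pi$ since $K\left(\widetilde{A}_\pi\right)$ is an ideal, and $\left(1-u_\alpha\right)\widetilde a=\widetilde a-u_\alpha\widetilde a\in\widetilde{W}_\pi$ since $\widetilde{W}_\pi$ is a linear subspace. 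Set $C=\sup_\alpha\left\|1-u_\alpha\right\|^2<\infty$. As each $g\in G_\pi$ acts on $B\left(\H\right)$ by conjugation by a unitary (equivariance of $\pi$) and $\left(1-u_\alpha\right)^2\le C\cdot1$ in $B\left(\H\right)$, one has the operator inequality $g\left(\widetilde a^*\left(1-u_\alpha\right)^2\widetilde a\right)\le C\,g\left(\widetilde a^*\widetilde a\right)$ for all $g$. Since $\widetilde{W}_\pi\subseteq L^2\left(\widetilde{A}_\pi\right)$ by Lemma~\ref{w_conv_lem}, the nonnegative series $\sum_{g\in G_\pi}\left(\xi,g\left(\widetilde a^*\widetilde a\right)\xi\right)$ converges, so given $\varepsilon>0$ one picks a finite $G'\subset G_\pi$ whose complement contributes less than $\varepsilon/C$; the corresponding tail of $\sum_{g}\left(\xi,g\left(\widetilde a^*\left(1-u_\alpha\right)^2\widetilde a\right)\xi\right)$ is then $<\varepsilon$ uniformly in $\alpha$. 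For the finite part over $G'$, the Pedersen ideal is invariant under the automorphism $g$ of $\widetilde{A}_\pi$, so $\left(g u_\alpha\right)_\alpha$ is again an approximate unit of $K\left(\widetilde{A}_\pi\right)$, whence $\left\|\left(1-gu_\alpha\right)\left(g\widetilde a\right)\right\|\to0$ and $\left\|g\left(\widetilde a^*\left(1-u_\alpha\right)^2\widetilde a\right)\right\|=\left\|\left(1-gu_\alpha\right)\left(g\widetilde a\right)\right\|^2\to0$ for each of the finitely many $g\in G'$. Combining the two parts, $\left\|\left(1-u_\alpha\right)\widetilde a\otimes\xi\right\|^2=\left(\xi,\left\langle\left(1-u_\alpha\right)\widetilde a,\left(1-u_\alpha\right)\widetilde a\right\rangle\xi\right)\to0$, which is the claimed density.

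The main obstacle is precisely this density step: one must pass to the limit, uniformly in $\alpha$, in a series indexed by the (typically infinite) group $G_\pi$, and the only available tools are the square-summability supplied by Lemma~\ref{w_conv_lem} (which controls the tail through the domination $g\left(\widetilde a^*\left(1-u_\alpha\right)^2\widetilde a\right)\le C\,g\left(\widetilde a^*\widetilde a\right)$) and ordinary norm convergence of approximate units (which controls the finite part, and for this it is essential that the Pedersen ideal be $G_\pi$-invariant so that each translate $gu_\alpha$ is still an approximate unit). Everything else — that both tensor products carry the same sesquilinear form, and that $K\left(\widetilde{A}_\pi\right)\otimes_A\H$ is already dense in ${}_{\widetilde{A}_\pi}X_A\otimes_A\H$ — is routine once the inclusion $K\left(\widetilde{A}_\pi\right)\subseteq\widetilde{W}_\pi$ coming from the hypothesis is recorded, and the asserted natural isomorphism is then read off as the unitary extending the identity of $K\left(\widetilde{A}_\pi\right)\otimes_A\H$.
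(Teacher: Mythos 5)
Your proof is correct and rests on the same underlying idea as the paper's --- namely that the hypothesis $K\left(\widetilde{A}_\pi\right)\subset \widetilde{W}_\pi$ together with an approximation of general weakly special elements by Pedersen-ideal elements forces the two completions to coincide --- but the execution is genuinely different. The paper proves the two inclusions separately: $\widetilde{\H}'\subset\widetilde{\H}$ from density of $K\left(\widetilde{A}_\pi\right)$ in $_{\widetilde{A}_\pi}X_A$, and $\widetilde{\H}\subset\widetilde{\H}'$ by truncating a \emph{positive} $\widetilde{a}\in \widetilde{W}_\pi$ with $f_\eps$, using $f_\eps\left(\widetilde{a}\right)\in K\left(\widetilde{A}_\pi\right)$ and $f_\eps\left(\widetilde{a}\right)\to\widetilde{a}$ in norm. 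You instead exhibit $K\left(\widetilde{A}_\pi\right)\otimes_A\H$ as a common dense subspace of both pre-Hilbert spaces and approximate an \emph{arbitrary} $\widetilde{a}\in\widetilde{W}_\pi$ by $u_\al\widetilde{a}$ with an approximate unit of the Pedersen ideal, which avoids having to reduce to positive elements (a reduction the paper never justifies, since $\widetilde W_\pi$ is not obviously spanned by its positive part). More importantly, your tail/finite-part argument supplies a step the paper elides entirely: operator-norm convergence $f_\eps\left(\widetilde{a}\right)\to\widetilde{a}$ (or $u_\al\widetilde{a}\to\widetilde{a}$) does not by itself give convergence in the $\widetilde{\H}$-norm, which involves the infinite series over $G_\pi$; the domination $g\left(\widetilde{a}^*\left(1-u_\al\right)^2\widetilde{a}\right)\le C\,g\left(\widetilde{a}^*\widetilde{a}\right)$ combined with the square-summability from Lemma \ref{w_conv_lem} is exactly what legitimises the interchange of limits, and the paper's one-line deduction implicitly needs the same estimate. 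One caveat you inherit from the paper rather than introduce: you identify the inner product on $K\left(\widetilde{A}_\pi\right)\otimes_A\H$ induced from $_{\widetilde{A}_\pi}X_A$ (a sum over $\widehat{G}$ in Remark \ref{inf_hilb_prod_rem}) with the one induced from $\widetilde{\H}$ (a sum over $G_\pi$ in \ref{inf_repr_constr}); the asserted unitarity of the isomorphism requires these to agree, and neither your argument nor the paper's addresses that discrepancy.
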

\begin{proof}
	From $K\left( \widetilde{A}_\pi\right) \subset W_\pi$ and taking into account that $K\left( \widetilde{A}_\pi\right)$ is dense in $_{\widetilde{A}_\pi}X_A$ it turns out  $\widetilde{\H}' \subset \widetilde{\H}$. If $\widetilde{a} \in  W_\pi$ is a positive element and $f_\eps$ is given by \eqref{f_eps_eqn} then
	\begin{enumerate}
		\item[(a)] $f_\eps\left(\widetilde{a} \right) \in K\left( \widetilde{A}_\pi\right)$,
		\item[(b)] $\lim_{\eps \to 0} f_\eps\left(\widetilde{a} \right)=\widetilde{a}$.
	\end{enumerate}
	
	From (a) it follows that $f_\eps\left(\widetilde{a} \right) \otimes \xi \in _{\widetilde{A}_\pi}X_A \otimes_A \H $ for any $\xi \in \H$. From (b) it turns out $\widetilde{a} \otimes \xi \in \widetilde{\H}'$. From this fact it follows the natural inclusion $\widetilde{\H} \subset \widetilde{\H}'$. 
	Mutually inverse inclusions  $\widetilde{\H} \subset \widetilde{\H}'$ and $\widetilde{\H}' \subset \widetilde{\H}$ yield the isomorphism $\widetilde{\H} \cong \widetilde{\H}'$.
\end{proof}

\begin{empt}\label{h_n_to_h_constr}
	Let $\H_n$ be a Hilbert completion of $A_n \otimes_A \H$ which is constructed in the section \ref{induced_repr_fin_sec}. Clearly
	\begin{equation}\label{tensor_n_equ}
	L^2\left(\widetilde{A}_\pi\right)\otimes_{A_n} \H_n = 	L^2\left(\widetilde{A}_\pi\right) \otimes_{A_n} \left( A_n \otimes_A \H\right) = L^2\left(\widetilde{A}_\pi\right) \otimes_A \H.
	\end{equation}
	
	%	From the above equation it turns out the natural action of $B\left( \H_n\right)$ on $X_\pi =   W_\pi \otimes_A \H$. The space $Y \otimes_A \H$ is dense in $\widetilde{\H}$, the action can be uniquely extended up to action of $B\left( \H_n\right)$ on $B\left(\widetilde{ \H}\right)$, i.e. there is a natural homomorphism
	%\begin{equation*}
	%B\left( \H_n\right) \hookto B\left(\widetilde{ \H}\right).
	%\end{equation*}
\end{empt}

\section{Quantization of topological coverings}\label{top_chap}

\subsection{Finite-fold coverings}%\hspace*{\fill} \\
\paragraph*{}
%The application of the Definition \ref{fin_def} requires the notion of covering with compactification given by the Definition \ref{top_cov_comp_defn}. Following examples explain the notion.
%3.5.8

%\begin{theorem}\label{comm_fin_thm}
%Let $\mathcal X$ be a Tychonoff locally cubmpact space. Let $\widetilde{\mathcal X}$ be a locally compact space and let $\pi: \widetilde{\mathcal X} \to \mathcal X$ is a continuous map. Following condition are equivalent
%\begin{enumerate}
%	\item[(i)] The map $\pi$ is a (topological) finite-fold covering,
%	\item[(ii)] The triple  $\left(C_0\left(\mathcal  X \right), C_0\left(\widetilde{\mathcal X} \right), G  = G\left( \widetilde{\mathcal X}~|~\mathcal X\right)   \right)$ is a noncommutative finite-fold  covering.
%\end{enumerate}
%\end{theorem}

The following lemma supplies the quantization of coverings with compactification.
\begin{lemma}\label{comm_fin_lem}
	If $\mathcal X$, $\widetilde{\mathcal X}$  are  locally compact spaces, and  $\pi: \widetilde{\mathcal X}\to \mathcal X$ is a surjective continuous map, then following conditions are equivalent:
	\begin{enumerate}
		\item [(i)] The map $\pi: \widetilde{\mathcal X}\to \mathcal X$ is a finite-fold covering with  compactification,
		\item[(ii)] There is a natural  noncommutative finite-fold covering with compactification $$\left(C_0\left(\mathcal  X \right), C_0\left(\widetilde{\mathcal X} \right), G    \right).$$
	\end{enumerate}
\end{lemma}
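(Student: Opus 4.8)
The plan is to prove the two implications separately, in each direction translating between the topological data of Definition~\ref{top_cov_comp_defn} and the $C^{*}$-data of Definition~\ref{fin_comp_def} by means of Gelfand duality (Theorem~\ref{gelfand-naimark}) and the Pavlov--Troitsky theorem (Theorem~\ref{pavlov_troisky_thm}). \emph{$(i)\Rightarrow(ii)$:} given compactifications $\mathcal X\hookto\mathcal Y$, $\widetilde{\mathcal X}\hookto\widetilde{\mathcal Y}$ and a covering $\overline\pi\colon\widetilde{\mathcal Y}\to\mathcal Y$ with $\pi=\overline\pi|_{\widetilde{\mathcal X}}$, put $B=C(\mathcal Y)$, $\widetilde B=C(\widetilde{\mathcal Y})$, $A=C_0(\mathcal X)$, $\widetilde A=C_0(\widetilde{\mathcal X})$. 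I first record the topological facts: $\mathcal Y,\widetilde{\mathcal Y}$ are compact Hausdorff; $\overline\pi$ is automatically finite-fold (its fibres are discrete and compact); $\mathcal X$ (resp. $\widetilde{\mathcal X}$) is open and dense in $\mathcal Y$ (resp. $\widetilde{\mathcal Y}$), since a locally compact subspace of a Hausdorff space is open in its closure; and $\widetilde{\mathcal X}=\overline\pi^{-1}(\mathcal X)$, which follows from a sheet-counting argument using density of $\widetilde{\mathcal X}$ in $\widetilde{\mathcal Y}$ and the fact that both $\pi$ and $\overline\pi$ are coverings. Openness and density make $A$ (resp. $\widetilde A$) an essential ideal of $B$ (resp. $\widetilde B$), and $\widetilde{\mathcal X}=\overline\pi^{-1}(\mathcal X)$ yields $A=B\cap\widetilde A$ inside $\widetilde B$. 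Taking $G=G(\widetilde{\mathcal Y}\,|\,\mathcal Y)$ — using regularity of $\overline\pi$, so that $C(\widetilde{\mathcal Y})^{G}=C(\mathcal Y)$ — the triple $(B,\widetilde B,G)$ is a unital noncommutative finite-fold covering in the sense of Definition~\ref{fin_def_uni}; the substantive point is that $C(\widetilde{\mathcal Y})$ is a finitely generated projective Hilbert $C(\mathcal Y)$-module, the classical module description of a finite covering of a compact Hausdorff space (build generators and the idempotent from a finite partition of unity subordinate to a cover by evenly covered open sets). Finally deck transformations commute with $\overline\pi$, so $G\widetilde{\mathcal X}=G\,\overline\pi^{-1}(\mathcal X)=\overline\pi^{-1}(\mathcal X)=\widetilde{\mathcal X}$, i.e. $G\widetilde A=\widetilde A$; hence $(C_0(\mathcal X),C_0(\widetilde{\mathcal X}),G)$ satisfies (a)--(c) of Definition~\ref{fin_comp_def}.

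\emph{$(ii)\Rightarrow(i)$:} unwind Definition~\ref{fin_comp_def}: there are unital $B\supseteq C_0(\mathcal X)$ and $\widetilde B\supseteq C_0(\widetilde{\mathcal X})$ with $C_0(\mathcal X)$, $C_0(\widetilde{\mathcal X})$ essential ideals, $C_0(\mathcal X)=B\cap C_0(\widetilde{\mathcal X})$, a unital noncommutative finite-fold covering $(B,\widetilde B,G)$, and $G\,C_0(\widetilde{\mathcal X})=C_0(\widetilde{\mathcal X})$. Since $C_0(\mathcal X)$ is an essential ideal of $B$, the canonical map $B\to M(C_0(\mathcal X))=C_b(\mathcal X)$ is injective, so $B$ is commutative; by Theorem~\ref{gelfand-naimark}, $B=C(\mathcal Y)$ for a compact Hausdorff $\mathcal Y$, and the essential ideal $C_0(\mathcal X)$ is $C_0$ of a dense open subset of $\mathcal Y$ which, again by Gelfand duality, is homeomorphic to $\mathcal X$; thus $\mathcal Y$ is a compactification of $\mathcal X$ in the sense of Definition~\ref{top_comp_defn}, and likewise $\widetilde B=C(\widetilde{\mathcal Y})$ with $\widetilde{\mathcal Y}$ a compactification of $\widetilde{\mathcal X}$. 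The $G$-action on $C(\widetilde{\mathcal Y})$ dualizes to a $G$-action on $\widetilde{\mathcal Y}$ with $\widetilde{\mathcal Y}/G\cong\mathcal Y$, so the inclusion $C(\mathcal Y)\hookto C(\widetilde{\mathcal Y})$ is dual to the orbit map $\overline\pi\colon\widetilde{\mathcal Y}\to\mathcal Y$, whose fibres are the $G$-orbits. Because $C(\widetilde{\mathcal Y})$ is a finitely generated projective Hilbert $C(\mathcal Y)$-module with the module action $(h\cdot f)(\widetilde y)=h(\widetilde y)\,f(\overline\pi(\widetilde y))$ of Theorem~\ref{pavlov_troisky_thm}, that theorem — after the usual reduction to connected spaces via decomposition into components and stabilizer subgroups of $G$ — shows $\overline\pi$ is a finite-fold covering. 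It remains to recognize $\pi$ as its restriction over $\mathcal X$: computing the two relevant ideals of $C(\widetilde{\mathcal Y})$ and using injectivity of $\overline\pi^{*}$, the condition $C_0(\mathcal X)=B\cap C_0(\widetilde{\mathcal X})$ is equivalent to $\overline\pi(\widetilde{\mathcal Y}\setminus\widetilde{\mathcal X})=\mathcal Y\setminus\mathcal X$, and this together with $G\widetilde{\mathcal X}=\widetilde{\mathcal X}$ and ``fibres ${}={}$ orbits'' forces $\widetilde{\mathcal X}=\overline\pi^{-1}(\mathcal X)$; then $\pi=\overline\pi|_{\widetilde{\mathcal X}}$ is a finite-fold covering (a covering restricted to the preimage of an open set), and $(\mathcal X\hookto\mathcal Y,\ \widetilde{\mathcal X}\hookto\widetilde{\mathcal Y},\ \overline\pi)$ exhibits it as a covering with compactification.

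\emph{Expected main obstacle.} The genuinely technical steps are: in $(i)\Rightarrow(ii)$, establishing finite generation and projectivity of $C(\widetilde{\mathcal Y})$ over $C(\mathcal Y)$ — the converse direction to Theorem~\ref{pavlov_troisky_thm}, and the place where compactness of $\mathcal Y$ is essential — and arranging regularity of $\overline\pi$ so that $C(\widetilde{\mathcal Y})^{G}=C(\mathcal Y)$ with $G=G(\widetilde{\mathcal Y}\,|\,\mathcal Y)$; and, in both directions, pinning down the identity $\widetilde{\mathcal X}=\overline\pi^{-1}(\mathcal X)$, which is exactly what makes the $C^{*}$-algebraic relation $A=B\cap\widetilde A$ correspond to the set-theoretic relation between $\mathcal X$, $\widetilde{\mathcal X}$ and $\overline\pi$.
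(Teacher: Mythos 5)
Your proof is correct and follows essentially the same route as the paper's: both directions translate between Definition~\ref{top_cov_comp_defn} and Definition~\ref{fin_comp_def} via Gelfand duality and the Pavlov--Troitsky correspondence, with $B=C(\mathcal Y)$, $\widetilde B=C(\widetilde{\mathcal Y})$ and $G=G(\widetilde{\mathcal Y}\,|\,\mathcal Y)$. You actually supply several details the paper glosses over (commutativity of $B$ from essentiality of the ideal, the identity $\widetilde{\mathcal X}=\overline\pi^{-1}(\mathcal X)$, and the partition-of-unity construction of the finitely generated projective module), but the underlying argument is the same.
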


\begin{proof}
	(i)=>(ii) Denote by ${\mathcal X} \subset {\mathcal Y}$, $\widetilde{\mathcal X} \subset \widetilde{\mathcal Y}$ compactifications such that $\overline{\pi} : \widetilde{\mathcal Y} \to {\mathcal Y}$ is a finite-fold (topological) covering. Let $G = G\left(\widetilde{\mathcal Y}~|~{\mathcal Y} \right) $ be a group of covering transformations. If  $B = C\left( {\mathcal Y}\right)$ and $\widetilde{B}=C\left( \widetilde{\mathcal Y}\right)$ then $A = C_0\left( {\mathcal X}\right)$ (resp. $\widetilde{A}=C_0\left( \widetilde{\mathcal X}\right)$) is an essential ideal of $B$ (resp. $\widetilde{B}$). Taking into account $A=C_0\left( {\mathcal X}\right) = C_0\left( \widetilde{\mathcal X}\right)\bigcap C\left( {\mathcal Y}\right)= B \bigcap \widetilde{A}$ one concludes that these algebras satisfy to the condition (a) of the Definition \ref{fin_comp_def}. From the Theorem \ref{pavlov_troisky_thm} it turns out that the triple $\left( C\left( {\mathcal Y}\right), C\left( \widetilde{\mathcal Y}\right)  ,G\right)=\left(B ,\widetilde{B}, G \right)$ is an unital  noncommutative finite-fold covering. So the condition (b) of the Definition \ref{fin_comp_def} holds.
From $G \widetilde{\mathcal X} = \widetilde{\mathcal X}$ it turns out $G \widetilde{A}= G  C_0\left( \widetilde{\mathcal X}\right) = C_0\left( \widetilde{\mathcal X}\right)= \widetilde{A}$, i.e. the condition (c) of the Definition \ref{fin_comp_def} holds.
	\\
	(ii)=>(i) 
	If $A = C_0\left( {\mathcal X}\right)$, $\widetilde{A} = C_0\left( \widetilde{\mathcal X}\right)$ and inclusions $A \subset B$,  $\widetilde{A}\subset \widetilde{B}$ are such that $A$ (resp. $B$) is an essential ideal of $\widetilde{A}$ (resp. $\widetilde{B}$) then there are compactifications ${   \mathcal X } \hookto {   \mathcal Y }$ and $\widetilde{   \mathcal X } \hookto \widetilde{   \mathcal Y }$ such that $B = C\left(\mathcal Y \right)$,  $\widetilde{B} = C\left(\widetilde{\mathcal Y} \right)$.  From the condition (b) of the Definition \ref{fin_def} it turns out that the triple $\left(B ,\widetilde{B}, G \right)=\left( C \left( {\mathcal Y} \right), C \left( \widetilde{\mathcal Y} \right), G\right) $ is an unital  noncommutative finite-fold covering. From the Theorem \ref{pavlov_troisky_thm} it follows that the *-homomorphism $C\left( {\mathcal Y} \right)\hookto C \left( \widetilde{\mathcal Y} \right)$ induces a finite-fold (topological) covering $\overline{\pi}: \widetilde{\mathcal Y} \to {\mathcal Y}$. From condition (c) of of the Definition \ref{fin_comp_def} it turns out $G  C_0\left( \widetilde{\mathcal X}\right)=  C_0\left( \widetilde{\mathcal X}\right)$ or, equivalently
	\be\label{comm_gx=gx}
	G\widetilde{\mathcal X} = \widetilde{\mathcal X}.
	\ee
	 From $A= B \bigcap \widetilde{ A}$ or, equivalently $C_0\left( {\mathcal X}\right) = C_0\left( \widetilde{\mathcal X}\right)\bigcap C\left( {\mathcal Y}\right)$ and \eqref{comm_gx=gx} it turns out that $\pi$ is the restriction of finite-fold covering $\overline{\pi}$, i.e. $\pi = \overline{\pi}|_{\widetilde{\mathcal X}}$. So $\pi$ is a finite-fold covering. 
\end{proof}
\begin{lemma}\label{comm_fin_top_lem}
	Let $\pi:\widetilde{\mathcal X} \to \mathcal X$ be a surjective map of topological spaces such that there is a family of open subsets $\left\{\mathcal U_\la \subset \mathcal X\right\}_{\la \in \La}$ such that
	\begin{enumerate}
		\item[(a)] $\mathcal X = \bigcup_{\la \in \La} \mathcal U_\la$,
		\item[(b)] For any $\la \in \La$ the natural map $\pi^{-1}\left(\mathcal U_\la \right)\to \mathcal U_\la$ is a covering. 
	\end{enumerate}
Then the map $\pi:\widetilde{\mathcal X} \to \mathcal X$ is a covering.
\end{lemma}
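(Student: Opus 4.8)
The plan is to exploit the fact that being a covering is a condition that can be verified locally over the base space, so the conclusion follows almost directly from Definition \ref{comm_cov_pr_defn} once the topological bookkeeping is arranged. Throughout I regard $\pi$ as continuous, so that $\widetilde{\mathcal U}_\lambda:=\pi^{-1}(\mathcal U_\lambda)$ is open in $\widetilde{\mathcal X}$; this is the only ingredient beyond hypotheses (a) and (b) that is really used, and it is implicit in treating $\pi$ as a map of topological spaces in the covering sense (without it the statement already fails on trivial examples).

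First I would set up the restrictions. For each $\lambda\in\Lambda$ put $\widetilde{\mathcal U}_\lambda=\pi^{-1}(\mathcal U_\lambda)$, which is open in $\widetilde{\mathcal X}$, and by hypothesis (b) the restriction $\pi_\lambda:=\pi|_{\widetilde{\mathcal U}_\lambda}:\widetilde{\mathcal U}_\lambda\to\mathcal U_\lambda$ is a covering in the sense of Definition \ref{comm_cov_pr_defn}. Now take an arbitrary point $x\in\mathcal X$; by (a) choose $\lambda$ with $x\in\mathcal U_\lambda$. Applying the definition of covering to $\pi_\lambda$ at $x$ gives an open neighbourhood $\mathcal V\subset\mathcal U_\lambda$ of $x$ that is evenly covered by $\pi_\lambda$, say $\pi_\lambda^{-1}(\mathcal V)=\bigsqcup_{i\in I}\mathcal W_i$ with each $\mathcal W_i$ open in $\widetilde{\mathcal U}_\lambda$ and $\pi_\lambda|_{\mathcal W_i}:\mathcal W_i\to\mathcal V$ a homeomorphism.

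Then I would transfer this data back to $\pi$ itself. Since $\mathcal U_\lambda$ is open in $\mathcal X$, the set $\mathcal V$ is open in $\mathcal X$; since $\widetilde{\mathcal U}_\lambda$ is open in $\widetilde{\mathcal X}$, each $\mathcal W_i$ is open in $\widetilde{\mathcal X}$. Because $\mathcal V\subset\mathcal U_\lambda$ one has $\pi^{-1}(\mathcal V)\subset\pi^{-1}(\mathcal U_\lambda)=\widetilde{\mathcal U}_\lambda$, and on $\widetilde{\mathcal U}_\lambda$ the maps $\pi$ and $\pi_\lambda$ coincide; hence $\pi^{-1}(\mathcal V)=\pi_\lambda^{-1}(\mathcal V)=\bigsqcup_{i\in I}\mathcal W_i$ and $\pi|_{\mathcal W_i}=\pi_\lambda|_{\mathcal W_i}$ is a homeomorphism onto $\mathcal V$ for every $i$. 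Thus $\mathcal V$ is an open neighbourhood of $x$ evenly covered by $\pi$, and since $x\in\mathcal X$ was arbitrary, Definition \ref{comm_cov_pr_defn} shows that $\pi$ is a covering.

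I do not expect a genuine obstacle: this is the standard "coverings are local over the base" observation. The two points to watch are (i) ensuring $\widetilde{\mathcal U}_\lambda$ is open in $\widetilde{\mathcal X}$, which is exactly where continuity of $\pi$ enters and is what lets sheets open in $\widetilde{\mathcal U}_\lambda$ count as sheets open in $\widetilde{\mathcal X}$, and (ii) the identity $\pi^{-1}(\mathcal V)=\pi_\lambda^{-1}(\mathcal V)$ for $\mathcal V\subset\mathcal U_\lambda$, which allows the even covering produced for $\pi_\lambda$ to serve verbatim as an even covering for $\pi$.
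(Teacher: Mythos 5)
Your proof is correct and follows essentially the same approach as the paper: pick a point, locate it in some $\mathcal U_\la$, and use the evenly covered neighbourhood supplied by the restricted covering $\pi^{-1}(\mathcal U_\la)\to\mathcal U_\la$ as an evenly covered neighbourhood for $\pi$ itself. Your version merely spells out the openness and preimage-identification details that the paper leaves implicit.
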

\begin{proof}
For any point $x_0 \in \mathcal X$ there is $\la \in \La$ such that $x_0 \in \mathcal U_\la$. The map $\pi^{-1}\left(\mathcal U_\la \right)\to \mathcal U_\la$ is a covering, it follows that there is an open neighborhood $\mathcal V$ of $x_0$ such that $\mathcal V \subset \mathcal U_\la$ and $\mathcal V$ is evenly covered by $\pi$.
\end{proof}

\begin{theorem}\label{comm_fin_thm}
	If $\mathcal X$, $\widetilde{\mathcal X}$  are  locally compact spaces, and  $\pi: \widetilde{\mathcal X}\to \mathcal X$ is a surjective continuous map, then following conditions are equivalent:
	\begin{enumerate}
		\item [(i)] The map $\pi: \widetilde{\mathcal X}\to \mathcal X$ is a finite-fold regular covering,
		\item[(ii)] There is the natural  noncommutative finite-fold covering $\left(C_0\left(\mathcal  X \right), C_0\left(\widetilde{\mathcal X} \right), G    \right)$.
	\end{enumerate}
\end{theorem}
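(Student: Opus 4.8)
The plan is to bootstrap from the ``with compactification'' case, which is already Lemma~\ref{comm_fin_lem}, and to move between global and local statements with Lemma~\ref{comm_fin_top_lem}. The dictionary used throughout is that a $G$-invariant open set $\widetilde{\mathcal U}\subseteq\widetilde{\mathcal X}$ lying over an open $\mathcal U\subseteq\mathcal X$ gives a $G$-invariant closed ideal $C_0(\widetilde{\mathcal U})$ of $C_0(\widetilde{\mathcal X})$ with $C_0(\widetilde{\mathcal U})\cap C_0(\mathcal X)=C_0(\mathcal U)$; and conversely, once the fibres of $\pi$ are known to be exactly the $G$-orbits, every $G$-invariant open set is of the form $\pi^{-1}(\mathcal U)$ and every $G$-invariant closed ideal of $C_0(\widetilde{\mathcal X})$ is of this type. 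Regularity enters (in both directions) only through the identity $\mathcal X=\widetilde{\mathcal X}/G$, i.e. transitivity of the deck group on fibres; cf. Proposition~\ref{reg_cov_prop}.

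\emph{(i)$\Rightarrow$(ii).} I would take $G=G(\widetilde{\mathcal X}\,|\,\mathcal X)$ acting on $C_0(\widetilde{\mathcal X})$ by pull-back (so the action is involutive, non-degenerate, and $G$ is finite), and for the family of Definition~\ref{fin_def}(b) I would take the ideals $\widetilde I_{\mathcal U}=C_0(\pi^{-1}(\mathcal U))$ where $\mathcal U$ ranges over the open subsets of $\mathcal X$ with compact closure $\overline{\mathcal U}$ such that $\pi^{-1}(\overline{\mathcal U})\to\overline{\mathcal U}$ is still a covering. Every compact subset of $\mathcal X$ lies in such a $\mathcal U$ (cover it by finitely many evenly covered opens and shrink using local compactness), so $\bigcup_{\mathcal U}\widetilde I_{\mathcal U}\supseteq C_c(\widetilde{\mathcal X})$ and $\bigcup_{\mathcal U}(C_0(\mathcal X)\cap\widetilde I_{\mathcal U})\supseteq C_c(\mathcal X)$ are dense, and $G$-invariance of $\pi^{-1}(\mathcal U)$ gives $G\widetilde I_{\mathcal U}=\widetilde I_{\mathcal U}$. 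The real point is that each triple $\bigl(C_0(\mathcal U),C_0(\pi^{-1}(\mathcal U)),G\bigr)$ is a noncommutative finite-fold covering with compactification in the sense of Definition~\ref{fin_comp_def}: taking the closure \emph{inside} $\mathcal X$ as compactification, i.e. $B=C(\overline{\mathcal U})$ and $\widetilde B=C(\pi^{-1}(\overline{\mathcal U}))$, keeps $G$ acting (it acts over all of $\mathcal X$), and then conditions (a)--(c) of Definition~\ref{fin_comp_def} are formal: $\widetilde B^{\,G}=C(\pi^{-1}(\overline{\mathcal U})/G)=C(\overline{\mathcal U})=B$ by regularity, $B\cap\widetilde A=C_0(\mathcal U)$, and the essential-ideal and invariance clauses are immediate. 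The only non-formal ingredient is that a finite-fold covering of compact Hausdorff spaces makes the upstairs function algebra a finitely generated projective module downstairs (local triviality plus a partition of unity, the converse half of Theorem~\ref{pavlov_troisky_thm}). Finally $\widetilde A^{\,G}=C_0(\widetilde{\mathcal X})^G=C_0(\widetilde{\mathcal X}/G)=C_0(\mathcal X)=A$, so Definition~\ref{fin_def}(a) holds and $(A,\widetilde A,G)$ is a noncommutative finite-fold covering.

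\emph{(ii)$\Rightarrow$(i).} Given the family $\{\widetilde I_\lambda\}$ of Definition~\ref{fin_def}, write $\widetilde I_\lambda=C_0(\widetilde{\mathcal U}_\lambda)$ with $\widetilde{\mathcal U}_\lambda\subseteq\widetilde{\mathcal X}$ open; it is $G$-invariant since the ideal is, and $\widetilde A^{\,G}=A$ forces $\widetilde{\mathcal X}/G\cong\mathcal X$ compatibly with $\pi$, whence the fibres of $\pi$ are the $G$-orbits, $\mathcal U_\lambda:=\pi(\widetilde{\mathcal U}_\lambda)$ is open, $\widetilde{\mathcal U}_\lambda=\pi^{-1}(\mathcal U_\lambda)$, and $\widetilde I_\lambda\cap A=C_0(\mathcal U_\lambda)$. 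Thus $\bigl(C_0(\mathcal U_\lambda),C_0(\pi^{-1}(\mathcal U_\lambda)),G\bigr)$ is a noncommutative finite-fold covering with compactification, so the (ii)$\Rightarrow$(i) half of Lemma~\ref{comm_fin_lem} makes each restriction $\pi^{-1}(\mathcal U_\lambda)\to\mathcal U_\lambda$ a finite-fold covering. Density of $\bigcup_\lambda(A\cap\widetilde I_\lambda)$ in $C_0(\mathcal X)$ gives $\bigcup_\lambda\mathcal U_\lambda=\mathcal X$ (otherwise evaluation at a missed point is a nonzero functional vanishing on a dense set), so Lemma~\ref{comm_fin_top_lem} shows $\pi$ is a covering. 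Since $\widetilde{\mathcal X}/G\cong\mathcal X$ with $G$ finite, every fibre is a single $G$-orbit of size at most $|G|$: this makes $\pi$ finite-fold and shows its deck group (which contains $G$) acts transitively on fibres, i.e. $\pi$ is regular.

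I expect the only real obstacle to be the bookkeeping in (i)$\Rightarrow$(ii): arranging the covering family $\{\mathcal U_\lambda\}$ so that each local piece carries the \emph{global} group $G$ as its transformation group — which is exactly what forces the use of the ambient closure $\overline{\mathcal U}\subseteq\mathcal X$ rather than an abstract compactification — together with the one external fact that finite coverings of compact Hausdorff spaces yield finitely generated projective modules. The rest is unwinding which open sets the closed ideals correspond to and quoting the finite-fold and gluing lemmas already available.
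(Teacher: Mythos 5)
Your proposal is correct and follows essentially the same route as the paper's proof: localize over open sets $\mathcal U_\la$ with compact closure, use the closures $\overline{\mathcal U}_\la\subset\mathcal X$ and $\pi^{-1}\left(\overline{\mathcal U}_\la\right)$ as the compactifications so that the global group $G$ acts on each local triple, invoke Theorem \ref{pavlov_troisky_thm} (its converse direction, as you note) together with Lemma \ref{comm_fin_lem} on each piece, and glue with Lemma \ref{comm_fin_top_lem}. The only substantive additions are your explicit verification of regularity at the end of (ii)$\Rightarrow$(i) via transitivity of $G$ on fibres, which the paper's proof leaves implicit, and your remark that openness of $\pi\left(\widetilde{\mathcal U}_\la\right)$ comes from identifying $\mathcal X$ with $\widetilde{\mathcal X}/G$; both are welcome clarifications rather than a different argument.
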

\begin{proof}
(i)=>(ii) We need check that  $\left(C_0\left(\mathcal  X \right), C_0\left(\widetilde{\mathcal X} \right), G    \right)$ satisfies to condition (a), (b) of the Definition \ref{fin_def}.\\
(a) Covering $\pi$ is regular, so from the Proposition  \ref{reg_cov_prop} it turns out ${\mathcal X} = \widetilde{\mathcal X}/G$ where $G = G\left( \widetilde{\mathcal X}~|~{\mathcal X}\right)$ is a covering group. From   ${\mathcal X} = \widetilde{\mathcal X}/G$ it follows that $C_0\left( {\mathcal X}\right)  = C_0\left( \widetilde{\mathcal X}\right)^G$.\\
(b)
 The space $\mathcal X$ is locally compact, so for any $x \in  \mathcal X$ there is a compact neighborhood $\overline{   \mathcal U }$. The maximal open subset $   \mathcal U \subset \overline{   \mathcal U }$ is an open neighborhood of $x$. So there is family of open subsets $\left\{\mathcal U_\la \subset \mathcal X\right\}_{\la \in \La}$ such that
	\begin{itemize}
	\item $\mathcal X = \bigcup_{\la \in \La} \mathcal U_\la$,
	\item For any $\la \in \La$ the closure  $\overline{\mathcal U}_\la$ of $\mathcal U_\la$ in $\mathcal X$ is compact. 
\end{itemize}
Since $\pi$ is a finite-fold covering the set $\pi^{-1}\left(\overline{\mathcal U}_\la \right)$ is compact for any $\la \in \La$. 
If $\widetilde{I}_\la \subset C_0\left(\widetilde{\mathcal X} \right)$ is a closed ideal given by
$$
\widetilde{I}_\la \stackrel{\mathrm{def}}{=}C_0\left( \pi^{-1}\left( \mathcal U_\la \right)\right)   \cong \left\{\widetilde{a} \in C_0\left( \widetilde{\mathcal X}\right) ~|~ \widetilde{a}\left(\widetilde{\mathcal X} \backslash \pi^{-1}\left( \mathcal U_\la\right)  \right)= \left\{0\right\}  \right\}
$$
then $\widetilde{I}_\la \subset C\left(\overline{\mathcal U}_\la \right)$ is an essential ideal of the unital algebra  $C\left(\pi^{-1}\left( \overline{\mathcal U}_\la \right) \right)$. From $G \pi^{-1}\left( \mathcal U_\la\right) =  \pi^{-1}\left( \mathcal U_\la\right)$ it follows that $G\widetilde{I}_\la = \widetilde{I}_\la$.
If $I_\la = C_0\left(  \mathcal X  \right) \bigcap \widetilde{I}_\la$  then 
 one has
\be\nonumber
\begin{split}
{I}_\la	= C_0\left(\mathcal U_\la \right) \cong  \left\{a \in C_0\left( {\mathcal X}\right) ~|~ a\left({\mathcal X} \backslash  \mathcal U_\la  \right)= \left\{0\right\}  \right\}
\end{split}
\ee 
hence ${I}_\la$ is an essential ideal of an unital algebra $C\left(\overline{\mathcal U}_\la  \right)$.  The restriction map $$\pi_{\pi^{-1}\left( \overline{   \mathcal U }_\la\right)}:\pi^{-1}\left( \overline{   \mathcal U }_\la\right) \to \overline{   \mathcal U }_\la$$ is a finite-fold covering of compact spaces, so from the Theorem \ref{pavlov_troisky_thm} it follows that $$\left(C\left(\overline{\mathcal U}_\la  \right), C\left(\pi^{-1}\left(\overline{\mathcal U}_\la \right)  \right), G \right)$$ is an unital  noncommutative finite-fold covering. It turns out that  $$\left(	I_\la,	\widetilde{I}_\la, G \right)= \left(C_0\left( \mathcal U_\la \right), C_0\left( \pi^{-1}\left(\mathcal U_\la \right), G \right)\right)$$ is a noncommutative finite-fold covering with compactification.  From  $\mathcal X = \bigcup_{\la \in \La} \mathcal U_\la$ (resp. $\widetilde{\mathcal X} = \bigcup_{\la \in \La} \pi^{-1}\left( \mathcal U_\la\right) $)  it turns out that $\bigcup_{\la \in \La} I_\la$ (resp. $\bigcup_{\la \in \La} \widetilde{I}_\la$) is a dense subset of $C_0\left( \mathcal X\right) $ (resp. $C_0\left( \widetilde{\mathcal X}\right) $).\\
(ii)=>(i)
Let  $\left\{\widetilde{I}_\la \subset C_0\left(\widetilde{\mathcal X} \right)  \right\}_{\la \in \La}$ be a family of closed ideals from the condition (b) of the Definition \ref{fin_def}, and let  ${I}_\la = \widetilde{I}_\la \bigcap C_0\left({   \mathcal X } \right) $. If $\widetilde{\mathcal U}_\la \subset \widetilde{\mathcal X}$ is a given by
$$
\widetilde{\mathcal U}_\la \stackrel{\mathrm{def}}{=}\left\{\widetilde{x} \in \widetilde{\mathcal X}~|~ \exists~ \widetilde{a}\in  \widetilde{I}_\la; ~\widetilde{a}\left( \widetilde{x}\right) \neq 0\right\}
$$
then from $G \widetilde{I}_\la = \widetilde{I}_\la$ it turns out $G\widetilde{\mathcal U}_\la=\widetilde{\mathcal U}_\la$.
If $\mathcal U_\la \subset \mathcal X$ is given by
$$
\mathcal U_\la = \left\{x \in\mathcal X~|~\exists a \in I_\la;~~ a\left( x\right) \neq 0  \right\}
$$
then $\mathcal U_\la = \pi \left(\widetilde{\mathcal U}_\la \right)$, and $\widetilde{\mathcal U}_\la= \pi^{-1}\left(\mathcal U_\la \right) $, hence
there is the natural *-isomorphism
$$
\widetilde{I}_\la \cong C_0\left(\pi^{-1}\left(  \mathcal U_\la\right)  \right). 
$$
Any covering is an open map, so if $\overline{   \mathcal U }_\la$ is the closure of $\mathcal U_\la$ in $\mathcal X$ then $\pi^{-1}\left(\overline{   \mathcal U }_\la \right)$ is the closure of $\widetilde{\mathcal U}_\la$ in $\widetilde{\mathcal X}$. Following conditions hold:
\begin{itemize}
	\item $\overline{   \mathcal U }_\la$ (resp. $\pi^{-1}\left( \overline{   \mathcal U }_\la\right)$ ) is a compactification of $\mathcal U_\la$, (resp. $\pi^{-1}\left( {   \mathcal U }_\la\right)$),
	\item $I_\la = C_0\left( \mathcal U_\la\right) $, (resp. $\widetilde{I}_\la =C_0\left( \pi^{-1}\left( {   \mathcal U }_\la\right)\right) $) is an essential ideal of $C\left( \overline{   \mathcal U }_\la\right) $ (resp. $C\left( \pi^{-1}\left( \overline{   \mathcal U }_\la\right)\right) $),
	\item The triple $\left(C\left(  \overline{   \mathcal U }_\la\right), C\left( \pi^{-1}\left( \overline{   \mathcal U }_\la\right)\right), G \right)$ is an unital noncommutative finite-fold covering.
\end{itemize}
It follows that the triple $
\left(I_\la, \widetilde{I}_\la, G \right)=\left(C_0\left(\mathcal U_\la \right), C_0\left(\pi^{-1}\left(\mathcal U_\la \right)  \right), G  \right)
$
is a noncommutative finite-fold covering with compactification, hence from the Lemma  \ref{comm_fin_lem} it follows that the natural map  $\pi^{-1} \left(  \mathcal U_\la\right) \to \mathcal U_\la$ is a covering. From (b) of the Definition \ref{fin_def} it follows that $\bigcup_{\la \in \La} I_\la$ is dense subset of $C_0\left( \mathcal X\right)$ it turns out $\mathcal X = \bigcup \mathcal U_\la$, hence from the Lemma \ref{comm_fin_top_lem} it follows that  $\pi: \widetilde{\mathcal X}\to \mathcal X$ is a finite-fold covering.

\end{proof}

\subsection{Infinite coverings}

\paragraph*{}
%\section{Algebraic constructions of topological inverse limits}
This section supplies a purely algebraic  analog of the topological construction given by the Subsection \ref{inf_to}. Suppose that
\begin{equation*}
\mathfrak{S}_\mathcal{X} = \left\{\mathcal{X}_0 \xleftarrow{}... \xleftarrow{} \mathcal{X}_n \xleftarrow{} ... \right\}
\end{equation*}
is  a topological  finite covering sequence. From the Theorem \ref{comm_fin_thm} it turns out that
$\mathfrak{S}_{C_0\left( \mathcal{X}\right)}=\left\{C_0\left( \mathcal{X}_0\right)\to ... \to C_0\left( \mathcal{X}_n\right) \to ...\right\} $ is an algebraical  finite covering sequence.  The following theorem and the corollary gives the construction of $\widehat{C_0\left(\mathcal X \right)} = \varinjlim  C_0\left( \mathcal{X}_n\right) $.
\begin{theorem}\label{direct_lim_state_thm}\cite{takeda:inductive}
	If a $C^*$-algebra $A$ is a $C^*$-inductive limit of $A_\ga$ ($\ga \in \Ga$), the
	state space $\Om$ of A is homeomorphic to the projective limit of the state spaces $\Om_\ga$ of $A_\ga$.
\end{theorem}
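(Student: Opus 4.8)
The plan is to produce an explicit bijection $\Phi\colon \Om \to \varprojlim \Om_\ga$ given by restriction of states, to check that it is weak-$*$ continuous, and then to upgrade it to a homeomorphism by a compactness argument. Write $\iota_\ga\colon A_\ga \to A$ for the canonical maps into the inductive limit and $\phi_{\ga\dl}\colon A_\ga \to A_\dl$ (for $\ga \le \dl$) for the connecting $*$-homomorphisms, so that $\iota_\dl\circ\phi_{\ga\dl} = \iota_\ga$, all these maps are isometric (being injective $*$-homomorphisms), and $A_\infty := \bigcup_{\ga}\iota_\ga(A_\ga)$ is a dense $*$-subalgebra of $A$. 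Each $\phi_{\ga\dl}$ induces a weak-$*$ continuous affine map $\phi_{\ga\dl}^{*}\colon \Om_\dl \to \Om_\ga$, $t\mapsto t\circ\phi_{\ga\dl}$, making $\{\Om_\ga\}_{\ga\in\Ga}$ an inverse system; I equip $\varprojlim\Om_\ga \subset \prod_\ga\Om_\ga$ with the subspace topology, which is the inverse-limit topology.

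First I would set $\Phi(s) := (s\circ\iota_\ga)_{\ga\in\Ga}$ for a state $s$ of $A$. The identity $\phi_{\ga\dl}^{*}(s\circ\iota_\dl) = s\circ\iota_\dl\circ\phi_{\ga\dl} = s\circ\iota_\ga$ shows that $\Phi(s)$ is a coherent family, hence an element of $\varprojlim\Om_\ga$, and $\Phi$ is weak-$*$ continuous because each coordinate $s\mapsto s\circ\iota_\ga$ is. Injectivity is immediate from density: if $s\circ\iota_\ga = s'\circ\iota_\ga$ for every $\ga$, then $s$ and $s'$ agree on $A_\infty$, hence on all of $A$.

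The substantive step is surjectivity. Given a coherent family $(s_\ga)\in\varprojlim\Om_\ga$, define a functional $s_0$ on $A_\infty$ by $s_0(\iota_\ga(a)) := s_\ga(a)$. This is well defined because $\Ga$ is directed: if $\iota_\ga(a) = \iota_\dl(b)$ we may pick $\eps\ge\ga,\dl$ with $\phi_{\ga\eps}(a) = \phi_{\dl\eps}(b)$, and then coherence gives $s_\ga(a) = s_\eps(\phi_{\ga\eps}(a)) = s_\eps(\phi_{\dl\eps}(b)) = s_\dl(b)$. Clearly $s_0$ is linear and positive on $A_\infty$, and from $|s_\ga(a)|\le \|a\|_{A_\ga} = \|\iota_\ga(a)\|_A$ we get $\|s_0\|\le 1$; hence $s_0$ extends uniquely to a bounded positive functional $s$ on $A$ with $\|s\|\le 1$, and $\Phi(s) = (s_\ga)$. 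Since $s$ restricts to the state $s_\ga$ on $\iota_\ga(A_\ga)$ one also has $\|s\|\ge 1$, so $s$ is indeed a state. (In the non-unital case, where $\phi^{*}_{\ga\dl}$ need not carry states to states and the state space need not be weak-$*$ compact, the same argument should be run with the quasi-state spaces throughout, as in the cited reference; the state spaces then appear as faces of compact convex sets.)

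Finally, $\Phi$ is a continuous bijection between $\Om$ and $\varprojlim\Om_\ga$, both of which are compact Hausdorff --- the former by Banach--Alaoglu, the latter as a closed subset (the coherence relations are closed, all maps being continuous into Hausdorff spaces) of a product of compact Hausdorff spaces --- so $\Phi$ is automatically a homeomorphism. I expect the one genuine obstacle to be the surjectivity step: getting $s_0$ well defined and norm-bounded over an arbitrary directed index set rather than a sequence, and settling cleanly on the state-versus-quasi-state convention, which is precisely what makes the concluding compactness argument legitimate. Everything else is a routine unwinding of the weak-$*$ topology and of the universal property of the $C^*$-inductive limit.
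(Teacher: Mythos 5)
The paper offers no proof of this statement at all: it is quoted verbatim from Takeda's 1955 paper \cite{takeda:inductive} and used as a black box, so there is nothing internal to compare your argument against. Judged on its own, your proof is the standard one and is essentially correct: restriction along the canonical maps $\iota_\ga$ gives a coherent family, density of $\bigcup_\ga \iota_\ga(A_\ga)$ gives injectivity, the directedness argument gives a well-defined bounded positive functional on the union for surjectivity, and the continuous-bijection-between-compact-Hausdorff-spaces step closes the argument. One small remark on well-definedness: $\iota_\ga(a)=\iota_\dl(b)$ only guarantees $\lVert\phi_{\ga\eps}(a)-\phi_{\dl\eps}(b)\rVert\to 0$ in general, not equality at a finite stage; with injective (hence isometric) connecting maps, as you assume and as holds in this paper, the two are equivalent, and even without injectivity the estimate $|s_\eps(\phi_{\ga\eps}(a))-s_\eps(\phi_{\dl\eps}(b))|\le\lVert\phi_{\ga\eps}(a)-\phi_{\dl\eps}(b)\rVert$ rescues the argument.

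The one genuine pressure point is the one you flag yourself: in the non-unital case the restriction of a state need not be a state (its norm can drop below $1$), so $\Phi$ as defined need not land in $\prod_\ga\Om_\ga$, and $\Om$ is not weak-$*$ compact, so the final compactness step is not available verbatim. Your parenthetical fix --- run everything with quasi-state spaces --- is the right one and is how the result should be read. It is worth adding that in the situations where this paper actually invokes the theorem (e.g.\ $C_0(\mathcal X_n)\hookto C_0(\mathcal X_{n+1})$ for finite-fold coverings, or $A(\mathcal X_n)\hookto A(\mathcal X_{n+1})$), the inclusions are nondegenerate, i.e.\ an approximate unit of $A_\ga$ is an approximate unit of $A_\dl$; then restrictions of states are genuinely states and your argument goes through with state spaces as literally stated. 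So the proposal is sound; just make the unital/nondegenerate hypothesis explicit rather than leaving it in a parenthesis.
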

\begin{corollary}\label{direct_lim_state_cor}\cite{takeda:inductive}
	If a commutative $C^*$-algebra $A$ is a $C^*$-inductive limit of the
	commutative  $C^*$-algebras $A_\ga$ ($\ga \in \Ga$), the spectrum $\mathcal X$ of $A$ is the projective limit of spectrums $\mathcal X_\ga$ of $A_\ga$ ($\ga \in \Ga$).
\end{corollary}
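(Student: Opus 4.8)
The plan is to reduce everything to character spaces and then to quote Takeda's Theorem \ref{direct_lim_state_thm}, checking that the homeomorphism of state spaces it provides respects purity in both directions. Throughout, write $\varphi_\gamma\colon A_\gamma \to A$ for the canonical morphisms into $A = \varinjlim A_\gamma$, and $\Omega,\Omega_\gamma$ for the state spaces of $A,A_\gamma$.

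First I would recall the standard facts underlying Gelfand--Na\u{\i}mark (Theorem \ref{gelfand-naimark}): for a commutative $C^*$-algebra $B \cong C_0(\mathcal Y)$ the evaluation map $\mathcal Y \to P(B)$, $y \mapsto \mathrm{ev}_y$, is a homeomorphism onto the pure state space $P(B)$ with the weak-$*$ topology, and the pure states of a commutative $C^*$-algebra are exactly its characters (the nonzero $*$-homomorphisms $B \to \C$); moreover, if $\varphi\colon B \to B'$ is the pullback along a proper continuous surjection $\mathcal Y' \to \mathcal Y$, then $s \mapsto s\circ\varphi$ carries characters of $B'$ to characters of $B'$ (never to the zero functional) and realizes the dual map $\mathcal Y' \to \mathcal Y$. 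Since the connecting $*$-homomorphisms of $\mathfrak{S}_{C_0(\mathcal X)}$ are of exactly this form, it suffices to prove that the character space of $A$ is the inverse limit of the character spaces of the $A_\gamma$, with bonding maps given by restriction.

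Next I would invoke Theorem \ref{direct_lim_state_thm} in the form: the map $\Phi\colon \Omega \to \varprojlim \Omega_\gamma$, $s \mapsto (s\circ\varphi_\gamma)_{\gamma}$, is a homeomorphism. I then check that $\Phi$ restricts to a bijection $P(A) \to \varprojlim P(A_\gamma)$, where the target is regarded inside $\varprojlim \Omega_\gamma$ as the coherent families all of whose components are pure. If $s$ is a character of $A$, each $s\circ\varphi_\gamma$ is multiplicative, and it is nonzero: density of $\bigcup_\gamma \varphi_\gamma(A_\gamma)$ forces $s$ to be nonzero on some $\varphi_{\gamma_0}(A_{\gamma_0})$, and since the connecting maps are pullbacks along proper surjections, a character of a later algebra restricts to a character — never $0$ — of an earlier one; hence every component $s\circ\varphi_\gamma$ is a character. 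Conversely, a coherent family $(s_\gamma)$ of characters defines a multiplicative functional of norm one on the dense $*$-subalgebra $\bigcup_\gamma \varphi_\gamma(A_\gamma)$, which extends uniquely by continuity to a character $s$ of $A$ with $s\circ\varphi_\gamma = s_\gamma$. Because $P(A)$ carries the subspace (weak-$*$) topology from $\Omega$ and $\varprojlim P(A_\gamma)$ the subspace topology from $\varprojlim \Omega_\gamma$, the restricted bijection is a homeomorphism. Transporting through the evaluation homeomorphisms $\mathcal X_\gamma \approx P(A_\gamma)$, which intertwine the bonding maps of $\varprojlim \mathcal X_\gamma$ with the restriction maps $P(A_\delta) \to P(A_\gamma)$, gives $\mathcal X \approx P(A) \approx \varprojlim P(A_\gamma) \approx \varprojlim \mathcal X_\gamma$.

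The main obstacle is precisely the step that $\Phi$ and $\Phi^{-1}$ both preserve purity, i.e.\ that a coherent family of states is pure if and only if each of its components is. In the commutative setting this rests entirely on the identity ``pure $=$ multiplicative'' together with the facts that multiplicativity descends under restriction and extends by density; the only genuinely delicate point is to rule out that some component collapses to the zero functional (relevant because for non-unital $A_\gamma$ the zero functional sits in the weak-$*$ closure of the state space), and this is where properness/surjectivity of the underlying covering maps — equivalently nondegeneracy of the connecting $*$-homomorphisms — is used. A reader preferring to bypass Theorem \ref{direct_lim_state_thm} can argue directly: $\mathrm{Hom}(\varinjlim A_\gamma,\C) = \varprojlim \mathrm{Hom}(A_\gamma,\C)$ as sets by the universal property of the $C^*$-inductive limit, and both sides carry the topology of pointwise convergence, which coincide since $\bigcup_\gamma \varphi_\gamma(A_\gamma)$ is dense in $A$.
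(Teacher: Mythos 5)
The paper offers no proof of this statement at all: both Theorem \ref{direct_lim_state_thm} and this corollary are imported verbatim from Takeda's paper, so there is nothing internal to compare your argument against. Your reconstruction is the natural one and is correct: you restrict Takeda's homeomorphism of state spaces to pure states, use that for commutative $C^*$-algebras ``pure $=$ multiplicative,'' and observe that multiplicativity passes to each component $s\circ\varphi_\gamma$ and, conversely, extends from the dense $*$-subalgebra $\bigcup_\gamma\varphi_\gamma(A_\gamma)$ by continuity; the Gelfand evaluation homeomorphisms then transport the result to spectra. You also correctly isolate the only delicate point, namely that in the non-unital setting one must rule out a component collapsing to the zero functional, which requires nondegeneracy of the connecting maps (automatic in the paper's application, where they are pullbacks along proper surjective coverings); the same caveat is in fact needed even to make sense of Theorem \ref{direct_lim_state_thm} for non-unital $A_\gamma$, since $s\circ\varphi_\gamma$ need not have norm one in general. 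Your closing remark that one can bypass the state-space theorem entirely via $\mathrm{Hom}(\varinjlim A_\gamma,\C)=\varprojlim\mathrm{Hom}(A_\gamma,\C)$ plus agreement of the pointwise-convergence topologies is also sound and arguably cleaner.
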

\begin{empt}
	From the Corollary \ref{direct_lim_state_cor} it turns out $\widehat{C_0\left(\mathcal X \right)} = C_0\left(\widehat{\mathcal X} \right)$ where  $\widehat{\mathcal X}= \varprojlim \mathcal X_n$. If $\overline{  \mathcal X}$ is the disconnected inverse limit of $\mathfrak{S}_\mathcal{X}$ then there is the natural bicontinuous map $f:\overline{  \mathcal X} \to \widehat{  \mathcal X}$. The map induces the injective *-homomorphism  $C_0\left(\widehat{\mathcal X} \right) \hookto  C_b\left(\overline{\mathcal X} \right)$. It follows that there is the natural inclusion of enveloping von Neumann algebras $C_0\left(\widehat{\mathcal X} \right)'' \hookto  C_0\left(\overline{\mathcal X} \right)''$.
	Denote by $G_n = G\left(\mathcal X_n~|~\mathcal X \right)$ groups of covering transformations and $\widehat{G} = \varprojlim G_n$. Denote by $\overline{\pi}:\overline{  \mathcal X} \to \mathcal X$, $\overline{\pi}_n:\overline{  \mathcal X} \to \mathcal X_n$, $\pi^n:  \mathcal X_n \to \mathcal X$, $\pi^m_n:  \mathcal X_m \to \mathcal X_n$ ($m > n$) the natural covering projections.
\end{empt}
\begin{lemma}\label{comm_c_cp_lem}
	Following conditions hold:
	\begin{enumerate}
		\item [(i)]	If $\overline{  \mathcal U} \subset \mathcal \overline{  \mathcal X}$ is a compact set then there is $N \in \N$ such that for any $n \ge N$ the restriction $\overline{\pi}_n|_{\overline{  \mathcal U}}:\overline{  \mathcal U} \xrightarrow{\approx} \overline{\pi}_n\left( {\overline{  \mathcal U}}\right)$ is a homeomorphism,
		\item[(ii)]  If $\overline{a} \in C_c\left(\overline{  \mathcal X }\right)_+ $ is a positive element then there there is $N \in \N$ such that for any $n \ge \N$ following condition holds
		\begin{equation}\label{comm_a_eqn}
	a_n\left(\overline{   \pi }_n \left( \overline{x}\right)\right) =\left\{
	\begin{array}{c l}
	\overline{   a}\left( \overline{x}\right) &\overline{x} \in   \supp~ \overline{a} ~\&~ \overline{   \pi }_n \left( \overline{x}\right) \in \supp~ a_n \\
	0 &\overline{   \pi }_n \left( \overline{x}\right) \notin \supp~ a_n 
	\end{array}\right.
	\end{equation}	
		where 	$$
		a_n  = \sum_{g \in \ker\left( \widehat{G} \to  G_n\right)}g\overline{a}.
		$$
	\end{enumerate}
\end{lemma}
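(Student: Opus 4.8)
The plan is to establish (i) by a compactness argument carried out in the fine topology of $\overline{\mathcal X}$, and then to read off (ii) from it with almost no extra work.

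For (i), I would fix a compact $\overline{\mathcal U}\subset\overline{\mathcal X}$ and, for each $m\in\N$, set $F_m=\{(\overline x,\overline y)\in\overline{\mathcal U}\times\overline{\mathcal U}\mid\overline{\pi}_m(\overline x)=\overline{\pi}_m(\overline y)\}$. Since $\overline{\pi}_m$ is continuous and $\mathcal X_m$ is Hausdorff, $F_m$ is closed in the compact space $\overline{\mathcal U}\times\overline{\mathcal U}$ and hence compact; the family $\{F_m\}_{m\in\N}$ is decreasing (apply $\pi^{m+1}_m$), and $\bigcap_m F_m$ is exactly the diagonal $\Delta=\{(\overline x,\overline x):\overline x\in\overline{\mathcal U}\}$, because $\overline{\mathcal X}$ and $\widehat{\mathcal X}=\varprojlim\mathcal X_n$ share the same underlying set. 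The key point — and the step I expect to need the most care — is that $F_m\setminus\Delta$ is again closed in $\overline{\mathcal U}\times\overline{\mathcal U}$: if a net in $F_m\setminus\Delta$ converged to a diagonal point $(\overline z,\overline z)$, then, picking a special neighbourhood $\widehat{\mathcal U}$ of $\overline z$ (special sets form a base of $\overline{\mathcal X}$) on which $\overline{\pi}_m$ is injective by Definition \ref{top_spec_defn}, both coordinates of the net would eventually lie in $\widehat{\mathcal U}$ and therefore coincide, contradicting membership in $F_m\setminus\Delta$. Then $\{F_m\setminus\Delta\}_m$ is a decreasing family of compact sets with empty intersection, so some $F_N\setminus\Delta$ is empty, i.e. $\overline{\pi}_N|_{\overline{\mathcal U}}$ is injective; the same holds for every $n\ge N$ since $F_n\subset F_N$. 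Finally, a continuous injection from a compact space into a Hausdorff space is a homeomorphism onto its image, so $\overline{\pi}_n|_{\overline{\mathcal U}}$ is a homeomorphism onto $\overline{\pi}_n(\overline{\mathcal U})$, which is (i).

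For (ii), I would apply (i) with $\overline{\mathcal U}=\supp\overline a$, obtaining $N$ such that $\overline{\pi}_n|_{\supp\overline a}$ is a homeomorphism onto its image for all $n\ge N$. One first notes that $\widehat G$ acts freely on $\overline{\mathcal X}$ (each $G_n$ acts freely on the connected space $\mathcal X_n$) and that $\ker(\widehat G\to G_n)$ is precisely the group of covering transformations of $\overline{\pi}_n$. Hence for $n\ge N$ the translates $\{g\,\supp\overline a\}_{g\in\ker(\widehat G\to G_n)}$ are pairwise disjoint: an overlap would, after applying $\overline{\pi}_n$ and invoking injectivity on $\supp\overline a$ together with freeness, force the translating element to be trivial. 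Consequently the series defining $a_n$ is a locally finite sum of continuous functions with pairwise disjoint supports, hence a well-defined element of $C_c(\mathcal X_n)$; and for $\overline x\in\supp\overline a$ and $e\ne g\in\ker(\widehat G\to G_n)$ one has $g^{-1}\overline x\notin\supp\overline a$, since otherwise $g^{-1}\overline x$ and $\overline x$ would be two distinct points of $\supp\overline a$ with the same $\overline{\pi}_n$-image. Therefore $a_n(\overline{\pi}_n(\overline x))=\sum_{g}\overline a(g^{-1}\overline x)=\overline a(\overline x)$ whenever $\overline x\in\supp\overline a$, which, combined with the tautological case $\overline{\pi}_n(\overline x)\notin\supp a_n$, is precisely \eqref{comm_a_eqn}.

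Aside from the closedness of $F_m\setminus\Delta$, everything reduces to bookkeeping with deck transformations and supports; I would also record at the outset the routine facts, all implicit in the construction of $\overline{\mathcal X}$ in Lemma \ref{top_universal_covering_lem}, that each $\overline{\pi}_n:\overline{\mathcal X}\to\mathcal X_n$ is a covering and that a finite product of compact subsets of $\overline{\mathcal X}$ is compact.
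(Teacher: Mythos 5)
Your proof is correct, and for part (i) it takes a genuinely different route from the paper's. The paper writes the compact set $\overline{\mathcal U}$ as a finite disjoint union of $M$ connected components, observes that the number $c_n$ of connected components of $\overline{\pi}_n\left(\overline{\mathcal U}\right)$ is non-decreasing in $n$ and bounded by $M$, and concludes from the stabilization $c_n=M$ that $\overline{\pi}_n|_{\overline{\mathcal U}}$ is eventually a homeomorphism; the passage from ``right number of components'' to injectivity is left largely implicit there. Your coincidence-set argument --- showing each $F_m\setminus\Delta$ is compact by means of the special-neighbourhood base of $\overline{\mathcal X}$, and then using that a decreasing family of nonempty compact sets cannot have empty intersection --- replaces that step by a self-contained compactness argument that needs no connectedness considerations at all; the one delicate point, the closedness of $F_m\setminus\Delta$, you verify correctly from the injectivity of $\overline{\pi}_m$ on special sets. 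For (ii) both you and the paper simply apply (i) to $\supp\overline a$; you supply the freeness and deck-transformation bookkeeping that the paper omits, and your derivation of \eqref{comm_a_eqn} from the pairwise disjointness of the translates $g\,\supp\overline a$ is sound. One minor caution: disjointness of the supports yields the pointwise identity \eqref{comm_a_eqn}, which is all the lemma asserts, but your parenthetical claim that the sum is ``locally finite'' and hence defines an element of $C_c\left(\mathcal X_n\right)$ needs an extra word --- for instance that $\bigsqcup_g g\,\supp\overline a=\overline{\pi}_n^{-1}\left(\overline{\pi}_n\left(\supp\overline a\right)\right)$ is closed, that $a_n$ vanishes off its image, and that on $\overline{\pi}_n\left(\supp\overline a\right)$ one has $a_n=\overline a\circ\left(\overline{\pi}_n|_{\supp\overline a}\right)^{-1}$ with $\overline a$ vanishing on the boundary of its support, so the two pieces glue continuously.
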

\begin{proof}
(i)	The set $\overline{  \mathcal U}$ is compact, hence $\supp ~\overline{a}$ is a finite disconnected union of connected compact sets, i.e.
	$$
\overline{  \mathcal U}=	\bigsqcup_{j = 1}^M \overline{  \mathcal V }_j
	$$
		It is known \cite{spanier:at} that any covering is an open map, and any open map maps any closed set onto a closed set, so for any $n \in \N$ the set $\overline{\pi}_n\left( {\overline{  \mathcal U}}\right)$ is compact. For any $n \in \N$ denote by $c_n \in \N$ the number of connected components of $\overline{\pi}_n\left( {\overline{  \mathcal U}}\right)$. If $n > m$ then any connected component of $\overline{\pi}_n\left( {\overline{  \mathcal U}}\right) $ is mapped into a connected component of  $\overline{\pi}_m\left( {\overline{  \mathcal U}}\right)$, it turns out $c_n \ge c_m$. Clearly $c_n \le M$. The sequence $\left\{c_n\right\}_{n \in \N}$ is  non-decreasing and $c_n \le M$. It follows that there is $N \in \N$ such that $c_N = M$. For any $n > N$ the set	$\overline{\pi}_n\left( {\overline{  \mathcal U}}\right)$ is mapped homeomorphically onto  $\overline{\pi}_N\left( {\overline{  \mathcal U}}\right)$, hence from the sequence of homeomorphisms it follows 
	$$
	\dots\cong \overline{\pi}_n\left( {\overline{  \mathcal U}}\right) \cong \dots \cong \overline{  \mathcal U}
		$$
	it follows that $\overline{\pi}_n|_{\overline{  \mathcal U}}:\overline{  \mathcal U} \xrightarrow{\approx} \overline{\pi}_n\left( {\overline{  \mathcal U}}\right)$ is a homeomorphism.\\
	(ii)
	The set $\supp \overline{a} = \overline{  \mathcal U}$ is compact, it follows that from (i) and $\overline{a}>0$ that $\supp \overline{a}$ is mapped homeomorphically onto $\supp~ a_N$. 	
	It turns out that if
	$$
	a_n  = \sum_{g \in \ker\left( \widehat{G} \to  G_n\right)}g\overline{a}
	$$
and $n \ge N$	then $a_n$ is given by 	\eqref{comm_a_eqn}.
\end{proof}
\begin{lemma}\label{comm_c_c_lem}
	If $\mathcal X$ is a locally compact Hausdorff space then any positive element $\overline{a} \in C_c\left(\overline{  \mathcal X }\right)_+ $ is special.
\end{lemma}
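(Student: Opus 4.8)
The plan is to check the three defining conditions (a)--(c) of Definition~\ref{special_el_defn} one at a time, exploiting commutativity to keep all the relevant elements inside $C_c\left(\overline{\mathcal X}\right)_+$ and using Lemma~\ref{comm_c_cp_lem} to control the fibrewise sums. Throughout I would write $\overline{\mathcal U}=\supp \overline a$, which is compact, and $\overline G_n=\ker\left(\widehat G\to G_n\right)$ with $G_n=G\left(\mathcal X_n~|~\mathcal X\right)$; as in the discussion preceding Lemma~\ref{comm_c_cp_lem}, every bounded continuous function on $\overline{\mathcal X}$ is to be regarded as an element of $\widehat A''\subset B\left(\H\right)$ via $C_0\left(\widehat{\mathcal X}\right)''\hookto C_0\left(\overline{\mathcal X}\right)''$, which is legitimate since special sets are Borel by Lemma~\ref{top_equ_borel_set_lem}.

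For (a): first I would note that for each $n$ the pointwise sum $a_n=\sum_{g\in\overline G_n}g\overline a$ is finite at every point, because each fibre of the covering $\overline\pi_n:\overline{\mathcal X}\to\mathcal X_n$ meets the compact set $\overline{\mathcal U}$ in only finitely many points; then $a_n$ is just the pushforward of $\overline a$ along $\overline\pi_n$, hence a continuous function with $\supp a_n\subset\overline\pi_n\left(\overline{\mathcal U}\right)$ compact, so $a_n\in C_c\left(\mathcal X_n\right)\subset A_n$. For the strong convergence I would observe that the partial sums $\sum_{g\in F}g\overline a$ over finite $F\subset\overline G_n$ form an increasing net of positive operators bounded in norm by $\|a_n\|_\infty$, so Lemma~\ref{increasing_convergent_w} applies and the limit must be $a_n$. (One may also just quote Lemma~\ref{comm_c_cp_lem}(ii).)

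For (b): fixing $z\in A=C_0\left(\mathcal X\right)$, pulled back to a bounded continuous function on $\overline{\mathcal X}$, commutativity gives $z\overline a z^*=|z|^2\overline a\in C_c\left(\overline{\mathcal X}\right)_+$ with support inside $\overline{\mathcal U}$; similarly $\left(z\overline a z^*\right)^2=|z|^4\overline a^2\in C_c\left(\overline{\mathcal X}\right)_+$, and since $f_\eps\left(0\right)=0$ also $f_\eps\left(z\overline a z^*\right)=f_\eps\circ\left(|z|^2\overline a\right)\in C_c\left(\overline{\mathcal X}\right)_+$ with support inside $\overline{\mathcal U}$. Applying the argument of (a) to each of these three positive elements of $C_c\left(\overline{\mathcal X}\right)_+$ then shows that $b_n$, $c_n$, $d_n$ are strongly convergent with sums in $A_n$.

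For (c), which I expect to be the only delicate point: put $w=z\overline a z^*\in C_c\left(\overline{\mathcal X}\right)_+$, so $\supp w\subset\overline{\mathcal U}$, and take $N$ from Lemma~\ref{comm_c_cp_lem}(i) so that $\overline\pi_n|_{\overline{\mathcal U}}$ is injective for $n\ge N$. For such $n$ and distinct $g,g'\in\overline G_n$ the sets $g\,\supp w$ and $g'\,\supp w$ must be disjoint: a common point $\overline x$ would give $g^{-1}\overline x,\,g'^{-1}\overline x\in\overline{\mathcal U}$ with $\overline\pi_n\left(g^{-1}\overline x\right)=\overline\pi_n\left(\overline x\right)=\overline\pi_n\left(g'^{-1}\overline x\right)$ (as $g,g'$ cover $\id_{\mathcal X_n}$), hence $g^{-1}\overline x=g'^{-1}\overline x$ by injectivity, i.e.\ $g'g^{-1}$ fixes $\overline x$; but the $\widehat G$-action on $\overline{\mathcal X}$ is free --- each $\mathcal X_n$ is connected, so $G_n$ acts freely on $\mathcal X_n$, and $\widehat G=\varprojlim G_n$ --- so $g=g'$. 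Hence the functions $\left\{g w\right\}_{g\in\overline G_n}$ have pairwise disjoint supports, and therefore $b_n^2=\bigl(\sum_g gw\bigr)^2=\sum_g\left(gw\right)^2=\sum_g g\left(w^2\right)=c_n$, giving $\|b_n^2-c_n\|=0<\eps$ for all $n\ge N$. Thus (a)--(c) all hold and $\overline a$ is special; the substantive ingredients are Lemma~\ref{comm_c_cp_lem} together with freeness of the deck action (from connectedness of the $\mathcal X_n$), everything else being bookkeeping with commutativity and the monotone strong-convergence Lemma~\ref{increasing_convergent_w}.
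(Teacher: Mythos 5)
Your proposal is correct and follows essentially the same route as the paper: both use Lemma~\ref{comm_c_cp_lem} to find $N$ so that $\overline{\pi}_n$ restricted to $\supp\overline{a}$ is injective for $n\ge N$, deduce that the translates $g\left(z\overline{a}z^*\right)$ for $g\in\overline{G}_n$ have pairwise disjoint supports, and conclude $b_n^2=c_n$ exactly, while commutativity keeps all auxiliary elements in $C_c\left(\overline{\mathcal X}\right)_+$. Your explicit freeness argument for the deck action (via connectedness of the $\mathcal X_n$ and triviality of $\bigcap\ker h_n$) just makes precise a disjointness claim the paper leaves implicit in \eqref{comm_bcd_eqn}.
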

\begin{proof} 
	From the Lemma \ref{comm_c_cp_lem} it follows that there is $N \in \N$ such that the equation \eqref{comm_a_eqn} holds. It turns out that for any $z \in C_0\left(\mathcal X \right)$, $n \ge \N$ and $f_\eps$ given by \eqref{f_eps_eqn}  the series
	\begin{equation*}
	\begin{split}
	b_n = \sum_{g \in \ker\left( \widehat{G} \to  G_n\right)} g \left(z  \overline{a} z^*\right) ,\\
	c_n = \sum_{g \in \ker\left( \widehat{G} \to  G_n\right)} g \left(z  \overline{a} z^*\right)^2,\\
	d_n = \sum_{g \in \ker\left( \widehat{G} \to  G_n\right)} f_\eps \left(z  \overline{a} z^*\right)\\
	\end{split}
	\end{equation*}
	are given by
\begin{equation}\label{comm_bcd_eqn}
\begin{split}
	b_n\left(\overline{   \pi }_n \left( \overline{x}\right)\right) =\left\{
\begin{array}{c l}
	z\left(\overline{   \pi }_n \left( \overline{x}\right)\right)\overline{   a}\left( \overline{x}\right)	z^*\left(\overline{   \pi }_n \left( \overline{x}\right)\right) &\overline{x} \in   \supp~ \overline{a} ~\&~ \overline{   \pi }_n \left( \overline{x}\right) \in \supp~ a_n \\
0 &\overline{   \pi }_n \left( \overline{x}\right) \notin \supp~ a_n 
\end{array}\right.,\\
	c_n\left(\overline{   \pi }_n \left( \overline{x}\right)\right) =\left\{
\begin{array}{c l}
\left( z\left(\overline{   \pi }_n \left( \overline{x}\right)\right)\overline{   a}\left( \overline{x}\right)	z^*\left(\overline{   \pi }_n\right) \left( \overline{x}\right)\right)^2 &\overline{x} \in   \supp~ \overline{a} ~\&~ \overline{   \pi }_n \left( \overline{x}\right) \in \supp~ a_n \\
0 &\overline{   \pi }_n \left( \overline{x}\right) \notin \supp~ a_n 
\end{array}\right.,\\
	d_n\left(\overline{   \pi }_n \left( \overline{x}\right)\right) =\left\{
\begin{array}{c l}
f_\eps\left( z\left(\overline{   \pi }_n \left( \overline{x}\right)\right)\overline{   a}\left( \overline{x}\right)	z^*\left(\overline{   \pi }_n \left( \overline{x}\right)\right)\right)  &\overline{x} \in   \supp~ \overline{a} ~\&~ \overline{   \pi }_n \left( \overline{x}\right) \in \supp~ a_n \\
0 &\overline{   \pi }_n \left( \overline{x}\right) \notin \supp~ a_n 
\end{array}\right..\\
\end{split}	
	\end{equation}
	From  \eqref{comm_bcd_eqn} it turns out $	b_n^2 = c_n$, i.e. $\overline{   a}$	 satisfies to the condition (c) of the Definition \ref{special_el_defn}. Otherwise \eqref{comm_a_eqn}, \eqref{comm_bcd_eqn} from 
that $a_n,~ b_n, ~c_n,~ d_n\in  C_0\left(\mathcal X_n \right)$  for any $n \ge N$.	  If $n < N$ then 
	\begin{equation*}
	\begin{split}
	a_n = \sum_{g\in G\left( \mathcal X_N~|~\mathcal X_n\right)} g a_N,\\
	b_n = \sum_{g\in G\left( \mathcal X_N~|~\mathcal X_n\right)} g b_N,\\
	c_n = \sum_{g \in G\left( \mathcal X_N~|~\mathcal X_n\right)} g c_N,\\
	d_n = \sum_{g \in G\left( \mathcal X_N~|~\mathcal X_n\right)} g d_N.
	\end{split}
	\end{equation*}
	Above sums are finite, 
	it turns out $a_n,~ b_n, ~c_n,~ d_n\in C_0\left(\mathcal X_n \right)$ for any $n \in \N^0$, i.e. $\overline{   a}$ satisfies to conditions (a), (b) of the Definition \ref{special_el_defn}.

\end{proof}
\begin{corollary}\label{comm_c_c_cor}
	If $\overline{A}$ is a  disconnected inverse noncommutative limit of $$\mathfrak{S}_{C_0\left( \mathcal{X}\right)}=\left\{C_0\left( \mathcal{X}_0\right)\to ... \to C_0\left( \mathcal{X}_n\right) \to ...\right\} $$ then $C_0\left(\overline{\mathcal X} \right)\subset\overline{A}$.
\end{corollary}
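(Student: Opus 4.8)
The plan is to prove that every element of $C_c\left(\overline{\mathcal X}\right)$ already lies in $\overline A$ and then to pass to the $C^*$-norm closure. Since $\pi$ is the universal representation of $\widehat A = \varinjlim C_0\left(\mathcal X_n\right) = C_0\left(\widehat{\mathcal X}\right)$, the algebra $\overline A$ is, by Definition \ref{main_defn_full}, the $C^*$-norm completion inside $B\left(\H\right)$ (with $\H$ the space of that universal representation) of the $*$-algebra generated by the weakly special elements. By the remarks following Corollary \ref{direct_lim_state_cor}, the natural continuous map $f:\overline{\mathcal X}\to\widehat{\mathcal X}$ induces an injective $*$-homomorphism $C_0\left(\widehat{\mathcal X}\right)\hookto C_b\left(\overline{\mathcal X}\right)$, and since every special set of $\widehat{\mathcal X}$ is Borel in $\widehat{\mathcal X}$ (Lemma \ref{top_equ_borel_set_lem}) the Borel structures of $\widehat{\mathcal X}$ and $\overline{\mathcal X}$ coincide; hence $C_b\left(\overline{\mathcal X}\right)$ — in particular $C_c\left(\overline{\mathcal X}\right)$ and $C_0\left(\overline{\mathcal X}\right)$ — is canonically a subspace of $C_0\left(\widehat{\mathcal X}\right)'' = \widehat A''\subset B\left(\H\right)$, in a way extending the canonical inclusion $\widehat A\subset\widehat A''$.

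First I would fix a positive element $\overline a\in C_c\left(\overline{\mathcal X}\right)_+$; by Lemma \ref{comm_c_c_lem} it is special. Put $\overline{\mathcal U}=\supp\overline a$, which is compact in $\overline{\mathcal X}$, so $f\left(\overline{\mathcal U}\right)$ is compact in $\widehat{\mathcal X}$. The space $\widehat{\mathcal X}$ is locally compact Hausdorff, being the spectrum of the commutative $C^*$-algebra $\widehat A$ (Corollary \ref{direct_lim_state_cor}, Theorem \ref{gelfand-naimark}), so there is $\chi\in C_c\left(\widehat{\mathcal X}\right)\subset C_0\left(\widehat{\mathcal X}\right)=\widehat A$ with $0\le\chi\le 1$ and $\chi|_{f\left(\overline{\mathcal U}\right)}=1$. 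Viewing $\chi$ inside $C_b\left(\overline{\mathcal X}\right)$ through $f$, the pointwise identity $\chi\,\overline a\,\chi=\overline a$ holds on $\overline{\mathcal X}$ (on $\overline{\mathcal U}$ one has $\chi=1$, and off $\overline{\mathcal U}$ one has $\overline a=0$), hence in $B\left(\H\right)$. Taking $x=y=\chi\in\widehat A$ in Definition \ref{special_el_defn}, this exhibits $\overline a=x\,\overline a\,y$ as a weakly special element.

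Then I would observe that an arbitrary $\overline a\in C_c\left(\overline{\mathcal X}\right)$ is a $\C$-linear combination of four elements of $C_c\left(\overline{\mathcal X}\right)_+$, namely the positive and negative parts of its real and imaginary parts; by the previous step each of these is weakly special, so $\overline a$ lies in the linear span of the weakly special elements and a fortiori in the $*$-algebra they generate. Thus $C_c\left(\overline{\mathcal X}\right)\subset\overline A$. Finally, since $C_0\left(\overline{\mathcal X}\right)$ is the $C^*$-norm closure of $C_c\left(\overline{\mathcal X}\right)$ (Definition \ref{c_c_def_1}) and $\overline A$ is norm-closed in $B\left(\H\right)$, I conclude $C_0\left(\overline{\mathcal X}\right)\subset\overline A$.

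I expect the main obstacle to be the bookkeeping in the second paragraph: one must make sure the cutoff $\chi$ can genuinely be chosen in $\widehat A=C_0\left(\widehat{\mathcal X}\right)$ itself, rather than merely in the multiplier algebra $M\left(\widehat A\right)=C_b\left(\widehat{\mathcal X}\right)$ — this is precisely why the compactness of $\supp\overline a$, and hence the route through $C_c\left(\overline{\mathcal X}\right)$ before taking closures, is essential — and that $\overline a$, $\chi$ and the product $\chi\overline a\chi$ are all being compared inside one and the same $B\left(\H\right)$, which is what the identification $C_b\left(\overline{\mathcal X}\right)\subset\widehat A''\subset B\left(\H\right)$ recorded at the outset guarantees. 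The remaining verifications are routine.
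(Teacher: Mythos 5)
Your proof is correct and follows the same basic route as the paper: invoke Lemma \ref{comm_c_c_lem} to get positive elements of $C_c\left(\overline{\mathcal X}\right)$ into $\overline{A}$, decompose a general element into positive parts, and pass to the $C^*$-norm closure using Definition \ref{c_c_def_1}. The paper's own proof is two lines and silently jumps from ``positive elements of $C_c\left(\overline{\mathcal X}\right)$ are \emph{special}'' to ``$C_c\left(\overline{\mathcal X}\right)\subset\overline{A}$,'' even though Definition \ref{main_defn_full} builds $\overline{A}$ from \emph{weakly} special elements $x\overline{a}y$ with $x,y\in\widehat{A}$; since $\widehat{A}$ is non-unital, a special element is not automatically weakly special. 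Your second paragraph — choosing a cutoff $\chi\in C_c\left(\widehat{\mathcal X}\right)\subset\widehat{A}$ equal to $1$ on the compact set $f\left(\supp\overline{a}\right)$ so that $\overline{a}=\chi\,\overline{a}\,\chi$ is exhibited as weakly special — fills exactly this gap, and your insistence that the identity be read inside the single algebra $B\left(\H\right)$ via $C_b\left(\overline{\mathcal X}\right)\subset\widehat{A}''$ is at the level of care the rest of the paper implicitly assumes (cf.\ the treatment of $C_0\left(\overline{\mathcal X}\right)''$ in Lemma \ref{comm_main_lem}). So this is the paper's argument, made complete; no errors.
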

\begin{proof}
	From the Lemma 	\ref{comm_c_c_lem} it follows that $C_c\left(\overline{\mathcal X} \right) \subset \overline{A}$, and taking into account the Definition \ref{c_c_def_1} one has $   C_0\left(\overline{\mathcal X} \right)\subset\overline{A}$.
\end{proof}

\begin{lemma}\label{comm_main_lem}
	Suppose that $\mathcal X$ is a  locally compact Hausdorff space. Let $\overline{a} \in C_0\left(\overline{   \mathcal X }\ \right)''_+$ be such that following conditions hold:
	\begin{enumerate}
		\item[(a)]  If $f_\eps$ is given by \eqref{f_eps_eqn} then following series
		\begin{equation*}
		\begin{split}
		a_n = \sum_{g \in \ker\left( \widehat{G} \to  G_n\right)} g \overline a,\\
		b_n = \sum_{g \in \ker\left( \widehat{G} \to  G_n\right)} g \overline a^2,\\
		c_n = \sum_{g \in \ker\left( \widehat{G} \to  G_n\right)} g f_\eps\left( \overline a\right) ,\\
		\end{split}
		\end{equation*}
		are strongly convergent and  $a_n, b_n, c_n \in C_0\left(\mathcal X_n \right)$,
		\item[(b)] For any $\eps > 0$ there is $N \in \N$ such that 
		\begin{equation*}
		\begin{split}
		\left\|a^2_n-b_n\right\| < \eps; ~\forall n \ge N.
		\end{split}
		\end{equation*}
	\end{enumerate}
	Then $\overline{a} \in C_0\left(\overline{   \mathcal X }\ \right)_+$.
\end{lemma}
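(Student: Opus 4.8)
The plan is to deduce the statement from the single claim that $f_\eps\!\left(\overline a\right)\in C_0\!\left(\overline{\mathcal X}\right)$ for every $\eps>0$, with $f_\eps$ as in \eqref{f_eps_eqn}: since $\left\|\overline a-f_\eps\!\left(\overline a\right)\right\|\le\eps$ and $C_0\!\left(\overline{\mathcal X}\right)$ is norm closed in $C_0\!\left(\overline{\mathcal X}\right)''$, letting $\eps\to 0$ then gives $\overline a\in C_0\!\left(\overline{\mathcal X}\right)_+$. Throughout I write $\overline G_n=\ker\!\left(\widehat G\to G_n\right)$, so $a_n=\sum_{g\in\overline G_n}g\overline a$, and I identify $a_m\in C_0\!\left(\mathcal X_m\right)$ with the bounded continuous function $a_m\circ\overline\pi_m$ on $\overline{\mathcal X}$; hypothesis (a) is read as in Definition \ref{special_el_defn}, i.e. for every truncation level.

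Two preliminary facts would be established first. \emph{(1) $\overline a$ is upper semicontinuous on $\overline{\mathcal X}$.} As in the proof of Lemma \ref{stong_conv_inf_lem} the net $\left\{a_m\right\}$ is decreasing, bounded, and dominates $\overline a$, so $\overline a=\inf_m a_m$ with the infimum a strong limit; being the pointwise infimum of the continuous functions $a_m\circ\overline\pi_m$, $\overline a$ — hence each $f_\eta\!\left(\overline a\right)=\inf_m f_\eta\!\left(a_m\right)\circ\overline\pi_m$ for $\eta>0$ — is upper semicontinuous. \emph{(2) Eventual disjointness.} Given $\eta>0$, apply hypothesis (b) with accuracy $\eta^2$ to obtain $N$ with $\left\|a_n^2-b_n\right\|<\eta^2$ for $n\ge N$; since $a_n^2-b_n=\sum_{g\ne h\text{ in }\overline G_n}(g\overline a)(h\overline a)$ is a sum of mutually commuting positive elements, each cross term $(g\overline a)(h\overline a)$ with $g\ne h$ has norm $<\eta^2$, so the sets $\left\{g\overline a>\eta\right\}$, $g\in\overline G_n$, are pairwise disjoint. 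As $\overline G_n$ is the deck group of the regular covering $\overline\pi_n\colon\overline{\mathcal X}\to\mathcal X_n$ and acts transitively on its fibres, this means precisely that $\overline\pi_n$ is injective on $\overline{\mathcal V}_\eta:=\left\{\overline a>\eta\right\}$.

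Now fix $\eps>0$ and take $n\ge N$ as above. By hypothesis (a) the element $c_n:=\sum_{g\in\overline G_n}g f_\eps\!\left(\overline a\right)$ lies in $C_0\!\left(\mathcal X_n\right)$, and since its summands are supported on the pairwise disjoint sets $g\overline{\mathcal V}_\eps$, on $\overline{\mathcal V}_\eps$ only the $g=e$ term survives: $f_\eps\!\left(\overline a\right)=c_n\circ\overline\pi_n$ there, while $f_\eps\!\left(\overline a\right)\le c_n\circ\overline\pi_n$ everywhere and $\left\{c_n>0\right\}=\overline\pi_n\!\left(\overline{\mathcal V}_\eps\right)$. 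Put $\mathcal O:=\overline\pi_n^{-1}\!\left(\left\{c_n>0\right\}\right)=\bigsqcup_{g\in\overline G_n}g\overline{\mathcal V}_\eps$, an open subset of $\overline{\mathcal X}$. The key point is that $\overline{\mathcal V}_\eps$ is relatively clopen in $\mathcal O$. It is relatively closed: if $\overline x_k\in\overline{\mathcal V}_\eps$ and $\overline x_k\to\overline x\in\mathcal O$, then $f_\eps\!\left(\overline a\right)\!\left(\overline x_k\right)=c_n\!\left(\overline\pi_n\!\left(\overline x_k\right)\right)\to c_n\!\left(\overline\pi_n\!\left(\overline x\right)\right)>0$, and upper semicontinuity forces $f_\eps\!\left(\overline a\right)\!\left(\overline x\right)>0$, i.e. $\overline x\in\overline{\mathcal V}_\eps$ (and similarly for every $g\overline{\mathcal V}_\eps$). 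It is relatively open: over any connected, evenly covered open $\mathcal W\subset\left\{c_n>0\right\}\subset\mathcal X_n$ — such $\mathcal W$ exist, every point of $\mathcal X_n$ having a connected open neighbourhood (cf. the remark after Definition \ref{top_spec_defn}) — $\overline{\mathcal V}_\eps$ meets each fibre of $\overline\pi_n$ exactly once, hence partitions $\mathcal W$ into the at most countably many (second countability) relatively closed pieces $\overline\pi_n\!\left(\overline{\mathcal V}_\eps\cap\overline{\mathcal W}_i\right)$, one per sheet $\overline{\mathcal W}_i$ of $\overline\pi_n^{-1}(\mathcal W)$; a connected space cannot be split into countably many ($\ge2$) disjoint nonempty closed subsets, so exactly one piece equals $\mathcal W$ and the others are empty, i.e. $\overline{\mathcal V}_\eps\cap\overline\pi_n^{-1}(\mathcal W)$ is one of the open sheets. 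Consequently $f_\eps\!\left(\overline a\right)=\left(c_n\circ\overline\pi_n\right)\cdot\mathbf 1_{\overline{\mathcal V}_\eps}$ is continuous on $\mathcal O$, and since it is dominated by $c_n\circ\overline\pi_n$, which tends to $0$ off $\mathcal O$, it extends by $0$ to a continuous function on all of $\overline{\mathcal X}$.

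Finally, $\overline{\mathcal V}_\eps$ being open and $\overline\pi_n$ an open local homeomorphism injective on it, $\overline\pi_n$ carries $\overline{\mathcal V}_\eps$ homeomorphically onto the open set $\left\{c_n>0\right\}$; for $\delta>0$ the closed set $\left\{f_\eps\!\left(\overline a\right)\ge\delta\right\}$ lies in $\overline{\mathcal V}_\eps$ and is carried by this homeomorphism onto $\left\{c_n\ge\delta\right\}$, compact because $c_n\in C_0\!\left(\mathcal X_n\right)$; hence $\left\{f_\eps\!\left(\overline a\right)\ge\delta\right\}$ is compact and $f_\eps\!\left(\overline a\right)\in C_0\!\left(\overline{\mathcal X}\right)_+$. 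Letting $\eps\to0$ completes the proof. The main obstacle is the \emph{open} half of the clopenness of $\overline{\mathcal V}_\eps$ in $\mathcal O$ — equivalently, lower semicontinuity of $\overline a$, which the hypotheses do not supply directly (upper semicontinuity comes from $\overline a=\inf_m a_m$, but not the reverse): it is precisely here that the covering/connectedness structure of $\mathfrak S_{\mathcal X}$ must be used, through a Sierpiński-type non-partition argument, together with a careful justification that $\overline a\in C_0\!\left(\overline{\mathcal X}\right)''_+$ may be treated as a genuine function on $\overline{\mathcal X}$ via $\overline a=\inf_m a_m$.
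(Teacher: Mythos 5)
Your overall strategy is the same as the paper's: reduce everything to showing $f_\eps\left(\overline a\right)\in C_0\left(\overline{\mathcal X}\right)$ and let $\eps\to 0$, with the pairwise disjointness of the translates $\left\{g\overline a>\eps\right\}$ extracted from hypothesis (b) by the same positivity estimate the paper uses (the paper runs it downstairs on the finite fibres of $\pi^n_M\colon\mathcal X_n\to\mathcal X_M$, where $a_n,b_n$ are honest continuous functions, which sidesteps the question of evaluating bidual elements at points of $\overline{\mathcal X}$; your version upstairs is the same computation in substance). The "relatively closed" half of your clopenness claim, the identity $f_\eps\left(\overline a\right)=c_n\circ\overline\pi_n$ on $\overline{\mathcal V}_\eps$, and the final compactness argument are all fine, granted the earlier steps.

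The gap is in the "relatively open" half, which you yourself flag as the crux. Your partition of a connected evenly covered $\mathcal W\subset\left\{c_n>0\right\}$ consists of one relatively closed piece per sheet of $\overline\pi_n^{-1}\left(\mathcal W\right)$, and the sheets are indexed by $\overline G_n=\ker\left(\widehat G\to G_n\right)$. This is a finite-index subgroup of the profinite group $\widehat G=\varprojlim G_m$ and is therefore uncountable whenever the covering sequence is genuinely infinite (e.g. $\widehat G\cong\varprojlim\Z_{2^m}$ for the tower of coverings of $S^1$). Second countability of $\mathcal W$ does not force only countably many pieces to be nonempty: a second-countable connected space admits partitions into continuum many disjoint nonempty closed sets (the singletons of $\left[0,1\right]$), so no cardinality reduction is available, and the Sierpi\'nski-type theorem you invoke — which is moreover a statement about continua (compact connected Hausdorff spaces) and \emph{countable} closed partitions, whereas $\mathcal W$ is merely an open connected subset of a locally compact space — does not apply. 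Hence lower semicontinuity of $\overline a$ on $\mathcal O$ is not established and the proof does not close. For comparison, the paper reaches this point by a different route: it uses hypothesis (a) at \emph{every} level $n\ge M$ simultaneously, shows each $\pi^n_M|_{\supp c_n}$ is a closed continuous bijection onto $\supp c_M$, and deduces continuity of $f_\eps\left(\overline a\right)$ from the resulting identification of $\overline{\mathcal U}=\bigcap_{n\ge M}\overline\pi_n^{-1}\left(\supp c_n\right)$ with $\supp c_M$; your argument, by working at a single level $n$, discards exactly the information that is supposed to control the selector $x\mapsto i\left(x\right)$ across sheets.
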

\begin{proof}
	The dual space $C_0\left(\overline{   \mathcal X }\right)^*$ of $C_0\left(\overline{   \mathcal X }\right)$ is a space of Radon measures on $\overline{   \mathcal X }$. 
	If $\overline{f}: \overline{   \mathcal X } \to \R$ is given by
	\begin{equation*}
	\begin{split}
	\overline{f}\left( 	\overline{x}\right)   = \lim_{n \to \infty} a_n\left(\overline{\pi}_n\left( \overline{x}\right)\right)= \inf_{n \in \N} a_n\left(\overline{\pi}_n\left( \overline{x}\right)\right)
	\end{split}
	\end{equation*}
	then from the Proposition \ref{env_alg_sec_dual} and the Lemma \ref{stong_conv_inf_lem} it follows that 	$\overline{f}$ represents $\overline{a}$, i.e. following conditions hold:
	\begin{itemize}
		\item The function $\overline{f}$ defines a following functional
		\begin{equation*}\label{comm_func_eqn}
		\begin{split}
		C_0\left(\overline{   \mathcal X }\right)^* \to \C,\\
		\mu \mapsto \int_{	\overline{   \mathcal X }} \overline{f}~d\mu 
		\end{split}
		\end{equation*}
		where $\mu$ is a Radon measure on $\overline{   \mathcal X }$,
		\item The functional corresponds to $\overline{   a}\in 	C_0\left(\overline{   \mathcal X }\right)^{**}=C_0\left(\overline{   \mathcal X }\right)''$.
	\end{itemize}
	If $m > n$ then 
	\begin{equation}\label{comm_an_bn_eqn}
	\begin{split}
	a_n = \sum_{g \in G\left(\mathcal X_m ~|  \mathcal X_n\right) } g  a_m,\\
	b_n = \sum_{g \in G\left(\mathcal X_m ~|  \mathcal X_n\right) } g  b_m.\\
	\end{split}
	\end{equation}
	Let $M \in \N$ be such that for any $n \ge M$ following condition holds
	\begin{equation}\label{comm_delta_eqn}
	\begin{split}
	\left\|a^2_n - b_n\right\| < 2 \eps^2.
	\end{split}
	\end{equation}
	
Let $n > M$, $p_n =  \pi^{n}_M: \mathcal X_m \to\mathcal X_M$, and let $\widetilde{x}_1, \widetilde{x}_2 \in \mathcal X_{n}$  be such that
	\begin{equation}\label{comm_neq_eqn}
\begin{split}
\widetilde{x}_1\neq \widetilde{x}_2,\\
p_n\left( \widetilde{x}_1\right) = p_n\left( \widetilde{x}_2\right)=x,\\
a_n\left( \widetilde{x}_1\right) \ge \eps; ~a_n\left( \widetilde{x}_2\right) \ge \eps.
\end{split}
\end{equation}
  From  \eqref{comm_an_bn_eqn} it turns out
	\begin{equation*}
	\begin{split}
	a_M\left(x \right)  = \sum_{\widetilde{x} \in p_n^{-1}\left( x\right)  } a_{n}\left( \widetilde{x}\right) ,\\
	b_M\left(x \right)  = \sum_{\widetilde{x} \in p_n^{-1}\left( x\right)  } b_{n}\left( \widetilde{x}\right).\\
	\end{split}
	\end{equation*}
	From the above equation and  $a^2_n \ge b_n$ it turns out
	\begin{equation*}
	\begin{split}
a_M^2\left(x \right)= \sum_{\widetilde{x} \in p_n^{-1}\left( x\right)} a_n^2\left( \widetilde{x}\right) + \sum_{\substack{\left(\widetilde{x}', \widetilde{x}'' \right)\in p_n^{-1}\left(x \right)\times p_n^{-1}\left(x \right)\\\widetilde{x}' \neq \widetilde{x}'' }}~ a_n\left(\widetilde{x}' \right) a_n\left(\widetilde{x}'' \right)\ge\\
\ge
 \sum_{\widetilde{x} \in p_n^{-1}\left( x\right)} b_n\left( \widetilde{x}\right) + a_n\left( \widetilde{x}_1\right) a_n\left( \widetilde{x}_2\right)+a_n\left( \widetilde{x}_2\right) a_n\left( \widetilde{x}_1\right)=\\= b_M\left(x \right) + 2  a_n\left( \widetilde{x}_1\right) a_n\left( \widetilde{x}_2\right). 
	\end{split}
	\end{equation*}

	Taking into account $a_n\left( \widetilde{x}_1\right) \ge \eps$, $a_n\left( \widetilde{x}_2\right) \ge \eps$ one has
	\begin{equation*}
\begin{split}
a^2_M\left(x \right) - b_M\left(x \right)  \ge 2 \eps^2,\\\left\|a^2_M - b_M\right\| \ge 2 \eps^2.
\end{split}
\end{equation*}
So \eqref{comm_neq_eqn} contradicts with \eqref{comm_delta_eqn}, it follows that
\begin{equation}\label{comm_x1_eq_x2_eqn}
\begin{split}
p_n\left( \widetilde{x}_1\right) = p_n\left( \widetilde{x}_2\right)=x ~\&~
a_n\left( \widetilde{x}_1\right) \ge \eps~\&~a_n\left( \widetilde{x}_2\right) \ge \eps \Rightarrow \widetilde{x}_1 = \widetilde{x}_2.
\end{split}
\end{equation}
If $f_\eps$ is given by \eqref{f_eps_eqn} and
$$
	c_n = \sum_{g \in \ker\left( \widehat{G} \to  G_n\right)} g f_\eps\left( \overline a\right)
$$
then
\begin{equation*}
\begin{split}
\supp c_n = \left\{x \in \mathcal X_n~|~\inf_{m > n} ~~\max_{\widetilde x \in \pi^m_n\left( x\right) } a_m\left(\widetilde x \right)\ge \eps \right\}= \\
= \left\{x \in \mathcal X_n~|~\exists~ \overline{x}\in \overline{   \mathcal X }; ~\overline{\pi}_n\left(\overline{x} \right) = x ~\&~\overline{f}\left(\overline{x} \right) \ge \eps \right\}.
\end{split}
\end{equation*}
Indeed $f_\eps\left( \overline{   a}\right) $ as a functional on $C_0\left(\overline{   \mathcal X } \right)^*$ is represented by the following  function
\begin{equation*}
\begin{split}
\overline{f}_\eps: \overline{   \mathcal X } \to \R,\\
\overline{   x}\mapsto f_\eps\left(\overline{f}\left(\overline{   x} \right)  \right). 
\end{split}
\end{equation*} 

From $\widetilde x \in \supp c_n$ it turns out $a_n\left(\widetilde x \right) \ge \eps$ and taking into account \eqref{comm_x1_eq_x2_eqn} one concludes that the restriction $\pi^n_M|_{\supp c_n}$ is an injective map. Clearly $\pi^n_M\left(\supp c_n \right) = \supp c_M$, so there is a bijection $\supp c_n \xrightarrow{\approx}\supp c_M$. The map $\pi^n_M$ is a covering and it is known \cite{spanier:at} that any covering is an open map. Any bijective open map is a homeomorphism, hence one has a sequence of homeomorphisms
\begin{equation}\label{comm_homeo_sec_eqn}
\supp c_M \xleftarrow{\approx}\dots \xleftarrow{\approx}\supp c_n \xleftarrow{\approx}\dots
\end{equation}
If $\overline{   \mathcal U } \subset \overline{   \mathcal X }$ is given by
$$
\overline{   \mathcal U } = \bigcap_{n = M}^\infty \overline{   \pi }^{-1}_n\left(\supp c_n \right) 
$$
then from \eqref{comm_homeo_sec_eqn} it turns out that  $ \overline \pi_M$ homeomorphically maps $\overline{   \mathcal U }$ onto $\supp c_M$. Moreover following condition holds 
\begin{equation*}
\begin{split}
f_\eps\left(\overline{a} \right)  \left( \overline{x}\right) =\left\{
\begin{array}{c l}
c_M\left(\overline \pi_M\left(\overline{x} \right)  \right)  &\overline{x} \in   \overline{   \mathcal U } \\
0 &\overline{x} \notin   \overline{   \mathcal U }
\end{array}\right..
\end{split}	
\end{equation*}
From the above equation it follows that $f_\eps\left(\overline{a} \right)$ is a continuous function, i.e. $f_\eps\left(\overline{a} \right)\in C_b\left(\overline{   \mathcal X } \right) $ 
From the Definition \ref{c_c_def_2} it  turns out $D = \left\{x \in \mathcal X_M~|~a_M\left(x\right)\ge \eps  \right\}$ is compact, therefore the closed subset $\supp c_M \subset D$ is  compact, hence $\overline{   \mathcal U }= \supp f_\eps\left(\overline{a} \right)\approx \supp c_M$ is also compact. It turns out
 $f_\eps\left(\overline{a} \right)\in C_c\left(\overline{   \mathcal X } \right) $. From $\left\| f_\eps\left(\overline{a} \right)- \overline{a} \right\| \le \eps$ it follows that $\overline{a}= \lim_{\eps \to 0}f_\eps\left(\overline{a} \right)$ and from the Definition \ref{c_c_def_1} it turns out $\overline{a} \in C_0\left(\overline{   \mathcal X } \right)$.

\end{proof}
\begin{corollary}\label{comm_main_cor}
If $\mathfrak{S}_{C_0\left( \mathcal{X}\right)}=\left\{C_0\left( \mathcal{X}_0\right)\to ... \to C_0\left( \mathcal{X}_n\right) \to ...\right\} $ and $\overline{A}$ is a  disconnected inverse noncommutative limit of $\downarrow\mathfrak{S}_{C_0\left( \mathcal{X}\right)}$ then following conditions hold:
\begin{enumerate}
	\item [(i)] Any special element $\overline{a} \in C_0\left(\overline{   \mathcal X }\ \right)''_+$ of $\mathfrak{S}_{C_0\left( \mathcal{X}\right)}$  lies in $C_0\left(\overline{   \mathcal X }\ \right)$, i.e. $\overline{a} \in C_0\left(\overline{   \mathcal X }\ \right)$,
	\item[(ii)] $C_0\left(\overline{\mathcal X} \right)\subset \overline{A} $.
\end{enumerate}

\end{corollary}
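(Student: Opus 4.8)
The plan is to derive both assertions from results already in hand. Part~(ii) needs nothing new: the inclusion $C_0(\overline{\mathcal X}) \subset \overline{A}$ is precisely Corollary~\ref{comm_c_c_cor}. So everything reduces to part~(i), and the strategy there is to show that a special element $\overline{a} \in C_0(\overline{\mathcal X})''_+$ of $\mathfrak{S}_{C_0(\mathcal X)}$ satisfies hypotheses~(a) and~(b) of Lemma~\ref{comm_main_lem}; that lemma then gives $\overline{a} \in C_0(\overline{\mathcal X})_+ \subset C_0(\overline{\mathcal X})$, which is~(i).

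For hypothesis~(a) of Lemma~\ref{comm_main_lem} write $a_n = \sum g\overline{a}$, $b_n = \sum g\overline{a}^2$, $c_n = \sum g f_\eps(\overline a)$, the sums being over $g \in \ker(\widehat G \to G_n)$ and $f_\eps$ as in~\eqref{f_eps_eqn}. Strong convergence of $a_n$ together with the membership $a_n \in C_0(\mathcal X_n)$ is condition~(a) of Definition~\ref{special_el_defn}. Since $0 \le \overline{a}^2 \le \|\overline a\|\,\overline a$ and $0 \le f_\eps(\overline a) \le \overline a$, the partial sums of $b_n$ and of $c_n$ are increasing and norm-bounded by $\|\overline a\| a_n$, respectively $a_n$, so Lemma~\ref{increasing_convergent_w} yields strong convergence, with $0 \le b_n \le \|\overline a\| a_n$ and $0 \le c_n \le a_n$. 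To see that $b_n$ and $c_n$ lie in $C_0(\mathcal X_n)$ rather than merely in $C_0(\mathcal X_n)''$, I would use commutativity of $C_0(\overline{\mathcal X})''$: for any compact $K \subset \mathcal X$ pick $u \in C_0(\mathcal X)_+$ with $0 \le u \le 1$ and $u \equiv 1$ on $K$ (Corollary~\ref{com_a_u_cor}); then $u\overline a u^* = u^2\overline a$, and because $u$ is $\widehat G$-invariant the series of condition~(b) of Definition~\ref{special_el_defn} for $z = u$ collapse to $u^4 b_n$ and, over $(\pi^n)^{-1}(K)$, to $c_n$. Since these lie in $C_0(\mathcal X_n)$ and $K$ may be taken to exhaust $\mathcal X$, the functions $b_n, c_n$ are continuous; being dominated by elements of $C_0(\mathcal X_n)$, they then lie in $C_0(\mathcal X_n)$.

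For hypothesis~(b) I must bound $\|a_n^2 - b_n\|$. Note $a_n^2 - b_n = \sum_{g \ne g'} (g\overline a)(g'\overline a) \ge 0$ lies in $C_0(\mathcal X_n)$ (positivity because the ambient algebra is commutative), and $a_n \le a_0 \circ \pi^n$ where $a_0 = \sum_{g \in \widehat G} g\overline a \in C_0(\mathcal X)$. Given $\eps > 0$, pick a compact $K \subset \mathcal X$ with $a_0 < \sqrt\eps$ off $K$; on $\mathcal X_n \setminus (\pi^n)^{-1}(K)$ this gives $0 \le a_n^2 - b_n \le a_n^2 < \eps$. On $(\pi^n)^{-1}(K)$, take $u$ as above with $u \equiv 1$ on $K$: by condition~(c) of Definition~\ref{special_el_defn} for $z = u$, where the relevant combinations are $u^2 a_n$ and $u^4 b_n$ by the same computation, there is $N$ with $\|u^4(a_n^2 - b_n)\| < \eps$ for $n \ge N$, and since $u^4 \circ \pi^n \equiv 1$ on $(\pi^n)^{-1}(K)$ we also get $a_n^2 - b_n < \eps$ there. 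Hence $\|a_n^2 - b_n\| < \eps$ for $n \ge N$, so hypothesis~(b) holds and Lemma~\ref{comm_main_lem} applies.

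I expect the crux to be exactly this last estimate: the special-element axioms supply norm control only of the twisted quantities $u^4(a_n^2 - b_n)$, whereas $a_n^2 - b_n \to 0$ is, on its face, only strong convergence. The resolution is the two-region argument above, handling the behavior near infinity with the single function $a_0 \in C_0(\mathcal X)$ and the compact part with a cutoff $u$ trivializing the twist; this works only because $C_0(\mathcal X)$ is commutative, which is what makes $u\overline a u^* = u^2\overline a$ and turns the twisted series into scalar multiples of the untwisted ones. A more routine point is verifying those collapses of the twisted series, which rests on $\widehat G$-invariance of $u$ and the fact that each $g \in \widehat G$ acts by a $*$-automorphism commuting with continuous functional calculus.
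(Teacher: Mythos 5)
Your proof is correct, and for part (i) it takes a genuinely different route from the paper's. The paper applies Lemma \ref{comm_main_lem} not to $\overline{a}$ itself but to the compressions $\overline{b}_\lambda = e_\lambda \overline{a} e_\lambda$ by an approximate unit $\left\{e_\lambda\right\}$ of $C_0\left(\mathcal X\right)$: for these the hypotheses of the lemma are verbatim conditions (b) and (c) of Definition \ref{special_el_defn} with $z = e_\lambda$, so no work is needed, and one concludes by passing to the norm limit $\lim_\lambda \overline{b}_\lambda = \overline{a}$. You instead verify the hypotheses of Lemma \ref{comm_main_lem} for $\overline{a}$ directly, which requires exactly the extra localization you supply: the definition only controls the twisted series $\sum g\left(z\overline{a}z^*\right)$, $\sum g\left(z\overline{a}z^*\right)^2$, and your cutoff $u\equiv 1$ on a compact set together with the domination $a_n \le a_0\circ\pi^n$, $a_0\in C_0\left(\mathcal X\right)$, untwists them region by region. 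The trade-off: the paper's argument is shorter, but it silently uses that $e_\lambda\overline{a}e_\lambda\to\overline{a}$ in norm, which for a positive element of the enveloping von Neumann algebra is not automatic and itself rests on the same domination $\overline{a}\le a_0$ that you use explicitly; your version avoids that implicit step at the cost of the two-region estimate. Two small points to tighten: when you pass from continuity of $u^4 b_n$ (resp.\ of $\sum g f_\eps\left(u^2\overline{a}\right)$) to continuity of $b_n$ (resp.\ $c_n$), take $u\equiv 1$ on a compact \emph{neighborhood} of each point rather than on a bare compact set, so that the agreement holds on an open set; and note that the $N$ produced by condition (c) depends on the chosen $u$, hence on $\eps$, which is harmless but worth saying. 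Part (ii) is indeed just Corollary \ref{comm_c_c_cor}.
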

\begin{proof}
(i) Let $\left\{e_\lambda \in C_0\left( \mathcal{X}\right) \right\}_{\lambda \in \Lambda}$ be an approximate unit of $C_0\left( \mathcal{X}\right)$. From the Definition \ref{special_el_defn} it follows that $\overline{b}_\lambda =  e_\lambda \overline{a} e_\lambda$ satisfies to conditions of the Lemma \ref{comm_main_lem}. Otherwise from the Lemma \ref{comm_main_lem} it turns out $\overline{b}_\lambda \in C_0\left(\overline{\mathcal X} \right)$. From the $C^*$-norm limit 
$
\lim_{\lambda \in \Lambda} \overline{b}_\lambda = \overline{a}
$
it follows that $\overline{a}\in C_0\left(\overline{\mathcal X} \right)$.
\\
(ii) Follows from (i) and the Definitions \ref{special_el_defn}, \ref{main_defn_full}. 
\end{proof}

\begin{empt}\label{comm_transitive_constr}
	Let $\widetilde{\mathcal X} \subset \overline{\mathcal X}$ be a connected component of $ \overline{\mathcal X}$ and suppose that $$G \subset G\left(\varprojlim C_0\left(\mathcal X_n \right) ~|~C_0\left(  \mathcal X\right)   \right)$$ is the maximal among subgroups $G'$ such that $G'\widetilde{\mathcal X} = \widetilde{\mathcal X}$. If $J\subset\widehat{G}$ is a set of representatives of $\widehat{G}/G$ then  from the \eqref{top_disconnected_repr_eqn} it follows that
	\begin{equation*}
	\overline{\mathcal X}=  \bigsqcup_{g \in J} g \widetilde{\mathcal X}
	\end{equation*}
	and $C_0\left( \overline{\mathcal X}\right) $ is a $C^*$-norm completion of the direct sum
	\begin{equation}\label{comm_transitive_eqn}
	\bigoplus _{g \in J}  C_0\left( g\widetilde{\mathcal X} \right) .
	\end{equation}

\end{empt}

\begin{theorem}\label{comm_main_thm}
	If $\mathfrak{S}_{\mathcal X} = \left\{\mathcal{X} = \mathcal{X}_0 \xleftarrow{}... \xleftarrow{} \mathcal{X}_n \xleftarrow{} ...\right\} \in \mathfrak{FinTop}$ and
	$$\mathfrak{S}_{C_0\left(\mathcal{X}\right)}=
	\left\{C_0(\mathcal{X})=C_0(\mathcal{X}_0)\to ... \to C_0(\mathcal{X}_n) \to ...\right\} \in \mathfrak{FinAlg}$$ is an algebraical  finite covering sequence then following conditions hold:
	\begin{enumerate}
		\item [(i)] $\mathfrak{S}_{C_0\left(\mathcal{X}\right)}$ is good,
		\item[(ii)] There are  isomorphisms:

		\begin{itemize}
			\item $\varprojlim \downarrow \mathfrak{S}_{C_0\left(\mathcal{X}\right)} \approx C_0\left(\varprojlim \downarrow \mathfrak{S}_{\mathcal X}\right)$;
			\item $G\left(\varprojlim \downarrow \mathfrak{S}_{C_0\left(\mathcal{X}\right)}~|~ C_0\left(\mathcal X\right)\right) \approx G\left(\varprojlim \downarrow \mathfrak{S}_{\mathcal{X}}~|~ \mathcal X\right)$.
		\end{itemize}
	\end{enumerate}
	
\end{theorem}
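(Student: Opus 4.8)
The plan is to identify, one by one, the $C^*$-algebras of Definitions \ref{main_defn_full}--\ref{main_defn} with function algebras on the spaces of Subsection \ref{inf_to}; throughout I take $\pi$ to be the universal representation, so the qualifier ``with respect to $\pi$'' is dropped. Write $\widehat{\mathcal X}=\varprojlim\mathcal X_n$, let $\overline{\mathcal X}$ be the disconnected inverse limit of $\mathfrak{S}_{\mathcal X}$, and let $\widetilde{\mathcal X}=\varprojlim\downarrow\mathfrak{S}_{\mathcal X}$ be the topological inverse limit (Definition \ref{top_topological_inv_lim_defn}), a connected component of $\overline{\mathcal X}$; set $G:=G\left(\widetilde{\mathcal X}~|~\mathcal X\right)$. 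By Corollary \ref{direct_lim_state_cor}, $\widehat A=\varinjlim C_0(\mathcal X_n)=C_0(\widehat{\mathcal X})$, and (as observed after that corollary) $C_0(\widehat{\mathcal X})\hookto C_b(\overline{\mathcal X})=M\left(C_0(\overline{\mathcal X})\right)$. The first step is to show $\overline A=C_0(\overline{\mathcal X})$: the inclusion $C_0(\overline{\mathcal X})\subseteq\overline A$ is Corollary \ref{comm_c_c_cor} (equivalently Corollary \ref{comm_main_cor}(ii)); conversely a weakly special element is $x\overline a y$ with $x,y\in\widehat A\subseteq M\left(C_0(\overline{\mathcal X})\right)$ and $\overline a$ special, so $\overline a\in C_0(\overline{\mathcal X})$ by Corollary \ref{comm_main_cor}(i), and since $C_0(\overline{\mathcal X})$ is an ideal of its multiplier algebra $x\overline a y\in C_0(\overline{\mathcal X})$; passing to the generated $C^*$-algebra gives $\overline A\subseteq C_0(\overline{\mathcal X})$. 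Hence the spectrum of $\overline A$ is $\overline{\mathcal X}$, and by \eqref{top_disconnected_repr_eqn} together with \ref{comm_transitive_constr} the maximal irreducible subalgebras of $\overline A$ are precisely the $C_0(g\widetilde{\mathcal X})$ with $g\in J$, all $C_0(\mathcal X)$-isomorphic; I choose $\widetilde A:=C_0(\widetilde{\mathcal X})$. By Lemmas \ref{top_surj_group_lem} and \ref{top_biject_lem}, $G\left(\overline A~|~A\right)=\widehat G=\varprojlim G\left(\mathcal X_n~|~\mathcal X\right)$ acts on $\overline{\mathcal X}$ by covering transformations, freely on the fibres of $\overline{\mathcal X}\to\mathcal X$; restriction to $\widetilde{\mathcal X}$ therefore embeds the $\widetilde A$-invariant group $G_\pi$ into $G$, while $G\subseteq G_\pi$ is clear, so $G_\pi=G\left(\widetilde{\mathcal X}~|~\mathcal X\right)$.

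I would then check that $\mathfrak{S}_{C_0(\mathcal X)}$ is good, i.e.\ conditions (a)--(c) of Definition \ref{good_seq_defn}. For (a): the natural map $\varinjlim C_0(\mathcal X_n)=C_0(\widehat{\mathcal X})\to M(\widetilde A)=C_b(\widetilde{\mathcal X})$ is restriction of functions along the map $\widetilde{\mathcal X}\to\widehat{\mathcal X}$ coming from the set-theoretic bijection $\overline{\mathcal X}\to\widehat{\mathcal X}$; since every $\pi^{\widetilde{\mathcal X}}_n:\widetilde{\mathcal X}\to\mathcal X_n$ is surjective, the image of $\widetilde{\mathcal X}$ meets each nonempty basic open set $\widehat\pi_n^{-1}(U)$ of $\widehat{\mathcal X}$, so it is dense and the restriction is isometric, in particular injective. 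Condition (b) is exactly \eqref{comm_transitive_eqn}, which presents $C_0(\overline{\mathcal X})$ as the $C^*$-completion of $\bigoplus_{g\in J}C_0(g\widetilde{\mathcal X})=\bigoplus_{g\in J}g\widetilde A$. For (c): $\widetilde{\mathcal X}\to\mathcal X$ is a regular covering (its deck group $G_\pi=G$ is transitive on each fibre, as in the proof of Lemma \ref{top_universal_covering_lem}) and $\widetilde{\mathcal X}\to\mathcal X_n$ is surjective, so given $\sigma\in G\left(\mathcal X_n~|~\mathcal X\right)$ and $\widetilde x\in\widetilde{\mathcal X}$ one may choose $\widetilde x'\in\widetilde{\mathcal X}$ with $\pi^{\widetilde{\mathcal X}}_n(\widetilde x')=\sigma\,\pi^{\widetilde{\mathcal X}}_n(\widetilde x)$; regularity yields $g\in G_\pi$ with $g\widetilde x=\widetilde x'$, and then $h_n(g)$ and $\sigma$ agree at $\pi^{\widetilde{\mathcal X}}_n(\widetilde x)$, hence coincide by freeness of the deck action on the connected space $\mathcal X_n$. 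Thus $h_n|_{G_\pi}$ is onto $G\left(\mathcal X_n~|~\mathcal X\right)=G\left(A_n~|~A\right)$, and $\mathfrak{S}_{C_0(\mathcal X)}$ is good, which is (i).

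Claim (ii) then follows from Definition \ref{main_defn}: the inverse noncommutative limit $\varprojlim\downarrow\mathfrak{S}_{C_0(\mathcal X)}$ is the connected component $\widetilde A=C_0(\widetilde{\mathcal X})=C_0\left(\varprojlim\downarrow\mathfrak{S}_{\mathcal X}\right)$, with covering transformation group $G_\pi=G\left(\widetilde{\mathcal X}~|~\mathcal X\right)=G\left(\varprojlim\downarrow\mathfrak{S}_{\mathcal X}~|~\mathcal X\right)$. The hard part is the identity $\overline A=C_0(\overline{\mathcal X})$, and inside it the inclusion ``$\subseteq$'': it asserts that the purely von Neumann-algebraic axioms (a)--(c) of Definition \ref{special_el_defn} force a special element of $C_0(\overline{\mathcal X})''$ to be a genuine continuous function vanishing at infinity on $\overline{\mathcal X}$. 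This is Corollary \ref{comm_main_cor}, whose proof rests on the analytic core of the section, Lemma \ref{comm_main_lem}; the rest is bookkeeping, the one genuine care being to keep the three topologies on the common underlying set of $\widehat{\mathcal X}$, $\overline{\mathcal X}$ and $\widetilde{\mathcal X}$ apart and to use freeness of the $\widehat G$-action so that $G_\pi$ equals $G\left(\widetilde{\mathcal X}~|~\mathcal X\right)$ and nothing larger.
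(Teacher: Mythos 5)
Your proposal is correct and follows essentially the same route as the paper: establish $\overline A=C_0\left(\overline{\mathcal X}\right)$ via Corollaries \ref{comm_c_c_cor} and \ref{comm_main_cor} (with Lemma \ref{comm_main_lem} as the analytic core), identify the connected components with $C_0\left(g\widetilde{\mathcal X}\right)$ through \eqref{top_disconnected_repr_eqn} and \ref{comm_transitive_constr}, and verify conditions (a)--(c) of Definition \ref{good_seq_defn} by the same topological arguments. Your added detail on why weakly special elements $x\overline a y$ land in $C_0\left(\overline{\mathcal X}\right)$ (ideal in its multiplier algebra) and on the surjectivity in (c) only makes explicit what the paper leaves implicit.
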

\begin{proof}
	The proof of this theorem uses a following notation:
	\begin{itemize}
		\item The topological inverse limit $\widetilde{\mathcal X}= \varprojlim \downarrow \mathfrak{S}_{\mathcal{X}}$;
		\item The limit in the category of spaces and continuous maps $\widehat{\mathcal X} = \varprojlim \mathcal X_n$;
		\item The disconnected covering space $\overline{\mathcal X}$ of $\mathfrak{S}_{\mathcal X}$;
		\item The disconnected covering algebra $\overline{A}$ of $\mathfrak{S}_{C_0\left(\mathcal{X}\right)}$;
		\item A connected component $\widetilde{A}\subset \overline{A}$;
		\item The disconnected $\overline{G}_{\mathcal X} = \varprojlim G\left(\mathcal X_n ~|~ \mathcal X \right)$ and the connected $G_{\mathcal X} = G\left(\widetilde{\mathcal X} ~| ~\mathcal X \right)=G\left(\varprojlim \downarrow \mathfrak{S}_{\mathcal{X}}~|~ \mathcal X\right)$ covering groups of $\mathfrak{S}_{\mathcal X}$;
		\item The disconnected group $\overline{G}_{C_0\left(\mathcal X\right)} = \varprojlim G\left(C_0\left(\mathcal X_n\right) ~| ~C_0\left(\mathcal X\right) \right)$ and $\widetilde{A}$-invariant group $G_A$. 
	\end{itemize}
	
	From the Corollary \ref{comm_c_c_cor} it follows that $\overline{A}\subset C_0\left(\overline{\mathcal X}\right)$. From the Corollary  \ref{comm_main_cor} it turns out $C_0\left(\overline{\mathcal X}\right) \subset \overline{A}$, hence $\overline{A}= C_0\left(\overline{\mathcal X}\right)$. If $~J \subset \overline{G}_{\mathcal{X}}$ is a set of representatives of $\overline{G}_{\mathcal X}/G\left(\widetilde{\mathcal X}~|~ \mathcal X\right)$ then $\overline{\mathcal X} = \bigsqcup_{\overline{g}\in J}\overline{g}\widetilde{\mathcal X}$ is the disconnected union of connected homeomorphic spaces, i.e. $\overline{g}\widetilde{\mathcal X}\xrightarrow{\approx}\widetilde{\mathcal X}$. 
	
	(i) We need check (a) - (c) of the Definition \ref{good_seq_defn}.
	$\overline{A}= C_0\left(\overline{\mathcal X}\right)$ is the $C^*$-norm completion of the algebraical the direct sum \eqref{comm_transitive_eqn}.
	Any maximal irreducible subalgebra of $\overline{A}$ is isomorphic to $C_0\left(\widetilde{\mathcal X}\right)$. The map $\widetilde{\mathcal X} \to {\mathcal X}_n$ is a covering for any $n \in \N$, it turns out $C_0\left( \mathcal{X}_n\right) \hookto C_b\left(\widetilde{\mathcal X}\right)= M\left(C_0\left(\widetilde{\mathcal X}\right) \right)$ is injective *-homomorphism. It follows that the natural *-homomorphism $C_0\left(\widehat{\mathcal X}\right)= \lim_{n \to \infty}C_0 \left( {\mathcal X}_n\right) \hookto C_b\left(\widetilde{\mathcal X}\right)=M\left(  C_0\left(\widetilde{\mathcal X}\right)\right) $ is injective, i.e. the  condition (a) holds. The algebraic direct sum $\bigoplus_{\overline{g}\in J} \overline{g}C_0\left(\widetilde{\mathcal X}\right)$ is is a dense subalgebra of $\overline{A}$, i.e. condition (b) holds. The homomorphism $ G\left(\widetilde{\mathcal X} ~| ~\mathcal X \right) \to  G\left(\mathcal X_n ~| ~\mathcal X \right)$ is surjective for any $n \in \N$. From the following isomorphisms
	\begin{equation*}
	\begin{split}
	G_A \approx G\left(\widetilde{\mathcal X} ~| ~\mathcal X \right), \\
	G\left(C_0\left( \mathcal X_n\right)  ~| ~C_0\left( \mathcal X\right)  \right) \approx G\left(\mathcal X_n ~| ~\mathcal X \right),
	\end{split}
	\end{equation*}
	it turns out that $G_A \to G\left(C_0\left( \mathcal X_n\right)  ~| ~C_0\left( \mathcal X\right)  \right)$ is surjective, i.e. condition (c) holds.
	\newline
	(ii) From the proof of (i) it turns out
	\begin{equation*}
	\begin{split}
	\varprojlim \downarrow\mathfrak{S}_{\mathcal X} = \widetilde{\mathcal X};~~\widetilde{A} = C_0\left(\widetilde{\mathcal X}\right),
	\\
	\varprojlim \downarrow \mathfrak{S}_{C_0\left(\mathcal{X}\right)} = \widetilde{A}= C_0\left(\widetilde{\mathcal X}\right)= C_0\left(\varprojlim \downarrow\mathfrak{S}_{\mathcal X} \right),
	\\
	G\left(\varprojlim \downarrow \mathfrak{S}_{\mathcal{X}}~|~ \mathcal X\right) = G_{\mathcal X}=G_A = G\left(\varprojlim \downarrow \mathfrak{S}_{C_0\left(\mathcal{X}\right)}~|~ C_0\left(\mathcal X\right)\right).
	\end{split}
	\end{equation*}
	
\end{proof}

\section{Continuous trace $C^*$-algebras and their  coverings}\label{cont_tr_exm}

\paragraph*{}   Let $A$ be a $C^*$-algebra. For each positive $x\in A_+$ and irreducible representation $\pi: A \to B\left( \H\right)$   the (canonical) trace of $\pi(x)$ depends only on the equivalence class of $\pi$, so that we may define a function $\hat x : \hat A \to [0,\infty]$ by $\hat x(t)=\mathrm{Tr}(\pi(x))$ where $\hat A$ is the space of equivalence classes of irreducible representations. From Proposition 4.4.9 \cite{pedersen:ca_aut} it follows that $\hat x$ is lower semicontinuous function  in the Jacobson topology.
%\begin{rem}
%	In contrary with \cite{pedersen:ca_aut} we use "$\hat A$" instead "$\hat A$", because "$\hat A$" is used for another purpose. 
%\end{rem}

\begin{defn}\label{continuous_trace_c_a_defn}\cite{pedersen:ca_aut} We say that element $x\in A$ has {\it continuous trace} if $\hat x \in C_b(\hat A)$. We say that $C^*$-algebra has {\it continuous trace} if a set of elements with continuous trace is dense in $A$.
\end{defn}

\begin{defn}\label{abelian_element_defn}\cite{pedersen:ca_aut}
	A positive element in $C^*$ - algebra $A$ is {\it Abelian} if subalgebra $xAx \subset A$ is commutative.
\end{defn}
\begin{defn}\cite{pedersen:ca_aut}
	We say that a $C^*$-algebra $A$ is of type $I$ if each non-zero quotient of $A$ contains a non-zero
	Abelian element. If $A$ is even generated (as $C^*$-algebra) by its Abelian elements we say
	that it is of type $I_0$.
\end{defn}

\begin{prop}\label{abelian_element_proposition}\cite{pedersen:ca_aut}
	A positive element $x$ in $C^*$-algebra $A$ is Abelian if $\mathrm{dim}~\pi(x) \le 1$ for every irreducible representation $\pi:A \to B(\H)$.
\end{prop}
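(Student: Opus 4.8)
The plan is to reduce the assertion to a short computation carried out inside each irreducible representation of $A$. Recall that $x$ being Abelian means precisely that the subalgebra $xAx = \{\, xax : a \in A\,\}$ is commutative; since a typical product in $xAx$ is $(xax)(xbx) = xax^2bx$, commutativity of $xAx$ is equivalent to the family of identities $xax^2bx = xbx^2ax$ for all $a,b \in A$. Because the direct sum of all irreducible representations of $A$ is faithful — equivalently, irreducible representations separate the points of $A$, a standard fact, cf. \cite{pedersen:ca_aut} — it suffices to show that $\pi(xax^2bx) = \pi(xbx^2ax)$ for every irreducible representation $\pi : A \to B(\H)$.

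So fix an irreducible $\pi$ and set $T = \pi(x) \in B(\H)_+$. By hypothesis $T$ has rank at most one, so by the spectral theorem $T = t\,p$, where $t \ge 0$ and $p$ is the orthogonal projection onto $\C e$ for some unit vector $e \in \H$ (if $\pi(x) = 0$ take $t = 0$, in which case the desired identity is trivial). First I would record the two elementary facts $p^2 = p$ and $pSp = \braket{e}{Se}\,p$ for every $S \in B(\H)$; both are immediate upon evaluating at an arbitrary vector of $\H$ and using $\braket{e}{e}=1$. Consequently $T^2 = t^2 p = tT$, and $TST = t^2\,pSp = t\,\braket{e}{Se}\,T$, i.e. $TST$ is always a scalar multiple of $T$.

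With these in hand the main computation is one line: $\pi(xax^2bx) = T\,\pi(a)\,T^2\,\pi(b)\,T = t\,\bigl(T\pi(a)T\bigr)\,\pi(b)\,T = t\,\alpha\,\bigl(T\pi(b)T\bigr) = t\,\alpha\,\beta\,T$, where $\alpha = t\,\braket{e}{\pi(a)e}$ and $\beta = t\,\braket{e}{\pi(b)e}$ are scalars. Since $t\alpha\beta$ is symmetric under interchange of $a$ and $b$, the same computation gives $\pi(xbx^2ax) = t\,\beta\,\alpha\,T = \pi(xax^2bx)$. As $\pi$ was an arbitrary irreducible representation, $xax^2bx = xbx^2ax$ holds in $A$, hence $xAx$ is commutative and $x$ is Abelian.

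I do not anticipate a genuine obstacle. The only two points that deserve an explicit line are: (i) the reduction of ``$xAx$ is commutative'' to the single algebraic identity $xax^2bx = xbx^2ax$, which is immediate from the definition of multiplication in $xAx$ together with $x = x^*$; and (ii) the joint faithfulness of the irreducible representations of a $C^*$-algebra, for which I would simply cite \cite{pedersen:ca_aut} (pure states separate points, or pass to the universal representation and decompose it into irreducibles). Everything else is the rank-one operator bookkeeping indicated above.
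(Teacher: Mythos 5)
Your argument is correct: the reduction of commutativity of $xAx$ to the identity $xax^2bx=xbx^2ax$, the passage to a separating family of irreducible representations, and the rank-one computation $T\pi(a)T=t\langle e,\pi(a)e\rangle\,T$ all check out. Note that the paper itself states this proposition as a quoted result from \cite{pedersen:ca_aut} without proof; your argument is essentially the standard one found there, so there is nothing to reconcile.
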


%Theorem 5.6

\begin{thm}\label{peder_id_thm}  \cite{pedersen:ca_aut} For each $C^*$-algebra $A$ there is a dense hereditary ideal $K(A)$,
	which is minimal among dense ideals.
	
\end{thm}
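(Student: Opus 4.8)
The plan is to exhibit $K(A)$ explicitly from the cut-down functions $f_\eps$ of \eqref{f_eps_eqn}, obtain density for free, prove minimality by showing that every dense ideal already contains all the cut-downs, and establish the hereditary property by a factorisation argument. For $a\in A_+$ write $f_\eps(a)=(a-\eps)_+$, and let $K(A)$ be the \emph{algebraic} two-sided ideal of $A$ generated by the (self-adjoint) set $\{\,(a-\eps)_+ \ :\ a\in A_+,\ \eps>0\,\}$; it is then automatically a $*$-ideal, and it is dense because $\|a-(a-\eps)_+\|\le\eps$ gives $A_+\subset\overline{K(A)}$ while the closure of an ideal is a closed ideal, forcing $\overline{K(A)}=A$.

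\emph{Minimality.} Let $I\subset A$ be an arbitrary dense two-sided ideal; it suffices to prove $(a-\eps)_+\in I$ for all $a\in A_+$, $\eps>0$, for then $I$ (being an ideal that contains every generator of $K(A)$) contains $K(A)$, whence minimality and, in turn, uniqueness. By density choose $c\in I$ with $\|a^{1/2}-c\|$ as small as desired; then $b:=cc^{*}\in I\cap A_+$ and $\|a-b\|<\eta$ with $\eta$ arbitrarily small. Use now the standard facts about cut-downs: (a)~$\|a-b\|\le\eta\Rightarrow(a-\eta)_+\precsim b$; (b)~$0\le x\le y\Rightarrow x\precsim y$, and $\precsim$ is transitive; (c)~$x\precsim b\Rightarrow(x-\eta')_+=d\,b\,d^{*}$ for some $d\in A$, every $\eta'>0$. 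Since $(a-\eps)_+\le(a-\eta)_+$ once $\eta<\eps$, (a)--(c) give $(a-\eps-\eta')_+=\bigl((a-\eps)_+-\eta'\bigr)_+=d\,b\,d^{*}\in I$; and since $t\mapsto(t-\eps-2\eta')_+/(t-\eps-\eta')_+$ extends to a bounded continuous function vanishing at $0$, the element $(a-\eps-2\eta')_+$ is an $A$-multiple of $d\,b\,d^{*}$ and hence lies in $I$. Letting $\eta'\to0$ (equivalently, since $\eps>0$ was arbitrary) yields $(a-\eps)_+\in I$ for every $\eps>0$.

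\emph{Hereditary property.} One must show that $0\le x\le y$ with $y\in K(A)\cap A_+$ forces $x\in K(A)$, i.e.\ that $K(A)\cap A_+$ is the hereditary cone generated by the cut-downs. Since $x\le y$, the element $x$ lies in the hereditary subalgebra $B=\overline{yAy}$, which carries $(y^{1/n})_n$ as an approximate unit; for every $\delta>0$ one has $(x-\delta)_+=g_\delta(x)^{1/2}\,x\,g_\delta(x)^{1/2}\le g_\delta(x)^{1/2}\,y\,g_\delta(x)^{1/2}$ with $g_\delta(t)=(t-\delta)_+/t$, so $(x-\delta)_+\precsim g_\delta(x)^{1/2}yg_\delta(x)^{1/2}\in K(A)$ and, by (c), $(x-\gamma)_+\in K(A)$ for all $\gamma>0$, with $(x-\gamma)_+\to x$. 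A Cohen--Hewitt factorisation in $B$ then upgrades this to $x=e\,x'$ with $e$ in the algebraic ideal generated by $y$, so $x\in K(A)\cdot A\subset K(A)$.

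\emph{Where the difficulty lies.} Everything above is routine except two occurrences of the same step --- passing from ``dominated or approximated by elements of the ideal'' to ``belongs to the ideal'' for a merely algebraic (non-closed, not a priori hereditary) ideal. In the minimality argument this is effected by the sharp functional calculus of the cut-downs (a norm-small perturbation is absorbed \emph{exactly} after subtracting $\eps$, and Cuntz subordination upgrades to a genuine identity $x'=d\,b\,d^{*}$, facts (a)--(c)); in the hereditary argument by Cohen--Hewitt factorisation. The polar decomposition in $A''$ (Proposition~\ref{polar_decomposition_prop}) together with $A''\cong A^{**}$ (Proposition~\ref{env_alg_sec_dual}) can be used to streamline the functional-calculus manipulations. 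These technical points, and the routine verifications, are carried out in detail in Pedersen's monograph, which is the reference for the statement.
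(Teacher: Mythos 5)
The paper does not prove this theorem at all --- it is quoted from Pedersen's monograph --- so I am comparing your argument with the standard reference proof. Your construction of $K(A)$ as the algebraic ideal generated by the cut-downs $(a-\eps)_+$ does produce the right object, the density argument is fine, and the minimality argument is correct modulo the standard lemmas you quote (that $\|a-b\|<\eta$ with $a,b\ge 0$ gives $(a-\eta)_+=dbd^*$, and that $(a-\eps)_+$ lies in the algebraic left ideal generated by $(a-\eta)_+$ for $\eta<\eps$ via functional calculus). Those lemmas are genuinely standard, and the parenthetical ``since $\eps>0$ was arbitrary'' rescues the otherwise illegitimate ``letting $\eta'\to 0$''.

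The hereditary property, however, is not established, and this is a genuine gap rather than a routine omission. Your argument correctly shows that $0\le x\le y$ with $y\in K(A)_+$ forces $(x-\gamma)_+\in K(A)$ for every $\gamma>0$, but the passage from there to $x\in K(A)$ fails: $K(A)$ is not norm-closed, so $(x-\gamma)_+\to x$ gives nothing, and Cohen--Hewitt factorisation in $B=\overline{yAy}$ produces $x=e\,x'$ with $e$ and $x'$ in the \emph{closed} hereditary subalgebra $\overline{yAy}$, not in the algebraic ideal generated by $y$ or by the cut-downs; such a factorisation therefore does not place $x$ in $K(A)$. This is exactly the ``approximated by the ideal versus belongs to the ideal'' difficulty you flag yourself, and in this instance it is not resolved. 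Pedersen's proof sidesteps it by reversing the logical order: he first defines the hereditary cone $K^+=\{x\in A_+ : x\le\sum_{i=1}^n y_i,\ y_iz_i=y_i \text{ for some } z_i\in A_+\}$, so that hereditariness is built in, and then proves that $\mathrm{span}(K^+)$ is an ideal with $\mathrm{span}(K^+)\cap A_+=K^+$ --- using the polarisation identity $4am=\sum_{k=0}^3 i^k(a+i^k)m(a+i^k)^*$, the invariance of $K^+$ under $m\mapsto bmb^*$, and the exchange $c^*c\in K^+\Leftrightarrow cc^*\in K^+$. To repair your version you would have to supply an argument of that type showing that every positive element of the ideal generated by the cut-downs is actually \emph{dominated} by a finite sum of cut-downs; as written, that implication is assumed rather than proved.
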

 
\begin{defn}
	The ideal $K(A)$ from the theorem \ref{peder_id_thm} is said to be the {\it Pedersen ideal of $A$}. Henceforth Pedersen ideal shall be denoted by $K(A)$.
\end{defn}
\begin{prop}\label{continuous_trace_c_a_proposition}\cite{pedersen:ca_aut}
	Let $A$ be a $C^*$ - algebra with continuous trace. Then
	\begin{enumerate}
		\item[(i)] $A$ is of type $I_0$;
		\item[(ii)] $\hat A$ is a locally compact Hausdorff space;
		\item[(iii)] For each $t \in \hat A$ there is an Abelian element $x \in A$ such that $\hat x \in K(\hat A)$ and $\hat x(t) = 1$.
	\end{enumerate}
	The last condition is sufficient for $A$ to have continuous trace.
\end{prop}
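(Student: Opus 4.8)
The statement is classical (it is essentially Pedersen, \S\,4.5); the plan is to reconstruct the argument in four moves. Write $\widehat{x}\colon\widehat{A}\to[0,\infty]$ for the trace function, recall it is lower semicontinuous, and let $T$ denote the dense (by hypothesis) set of $x\in A$ with $\widehat{x}\in C_b(\widehat{A})$. The first and decisive step is to show that $A$ is \emph{liminal}, i.e.\ every irreducible representation maps onto the compacts. Indeed, for each irreducible $\pi\colon A\to B(\mathcal H_\pi)$ and each $x\in T_+$ one has $\mathrm{Tr}(\pi(x))=\widehat{x}([\pi])<\infty$, so $\pi(x)$ is of trace class, in particular compact; since the compact operators form a norm-closed ideal of $B(\mathcal H_\pi)$ and $T$ is dense in $A$, we get $\pi(A)\subseteq\mathcal K(\mathcal H_\pi)$; and a $C^*$-algebra of compact operators acting irreducibly is necessarily all of $\mathcal K(\mathcal H_\pi)$.

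For (iii) and (i) I would argue locally, using liminality. Fix $t_0=[\pi_0]\in\widehat{A}$ and $x\in T_+$ with $\widehat{x}(t_0)>0$; then $\pi_0(x)$ is a nonzero positive compact operator with largest eigenvalue $\lambda_0>0$ of finite multiplicity. Choosing $f\in C_0((0,\infty))_+$ equal to $1$ near $\lambda_0$ and supported in a small interval about $\lambda_0$, the element $f(x)\in A$ has $\widehat{f(x)}(t_0)=\dim(\text{top eigenspace of }\pi_0(x))$, and continuity of $\widehat{x}$, together with continuity of functional calculus and lower semicontinuity of the trace, forces $\widehat{f(x)}$ to be \emph{constant} on a neighbourhood of $t_0$. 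Trimming the cut-off further (or passing to a hereditary corner $g(x)Ag(x)$ for suitable $g$) one brings this constant down to $1$, producing an \emph{abelian} element $e\in A_+$ --- one with $\dim\pi(e)\le 1$ for every irreducible $\pi$, hence abelian by Proposition~\ref{abelian_element_proposition} --- with $\widehat{e}(t_0)=1$; and after multiplying by a cut-off supported on $\{\widehat{e}\ge\tfrac12\}$ one obtains $\widehat{e}\in K(\widehat{A})$. That is (iii). For (i): such abelian $e$ exist near every point, so the open sets $\{\widehat{e}>0\}$ cover $\widehat{A}$; since for abelian $e$ and $a\ge 0$ the element $eae=(a^{1/2}e)^*(a^{1/2}e)$ is again positive with one-dimensional image under every irreducible representation, hence again abelian, the hereditary corners $\overline{eAe}$ all lie in the $C^*$-subalgebra generated by abelian elements, and a standard argument with an approximate unit built from such corners shows that subalgebra is all of $A$, i.e.\ $A$ is of type $I_0$.

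Part (ii) then follows from (iii) and liminality: $\{\widehat{e}\ge\tfrac12\}$ is a compact neighbourhood of $t_0$ --- compact because $\widehat{e}\in K(\widehat{A})$, the Pedersen ideal of $C_0(\widehat{A})$, consists of compactly supported continuous functions --- so $\widehat{A}$ is locally compact; and $\widehat{A}$ is Hausdorff because the continuous functions $\widehat{e}$ coming from abelian elements separate its (automatically $T_0$) points, using that for abelian $e$ the trace function $\widehat{e}$ coincides with the norm function $[\pi]\mapsto\|\pi(e)\|$. Finally, the converse: assuming (iii), for arbitrary $a\in A_+$ and $t_0\in\widehat{A}$ take abelian $e$ with $\widehat{e}(t_0)=1$ and $\widehat{e}\in K(\widehat{A})$; then $y=e^{1/2}ae^{1/2}$ has $\pi(y)$ of rank $\le 1$ for every irreducible $\pi$, so on its support $\widehat{y}$ equals $[\pi]\mapsto\|\pi(y)\|$ and everywhere $\widehat{y}\le\|a\|\,\widehat{e}$, whence $\widehat{y}\in C_b(\widehat{A})$, i.e.\ $y\in T$; as $t_0$ and $a$ are arbitrary and these $y$ approximate $a$ locally, a gluing argument makes $T$ dense, so $A$ has continuous trace.

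The main obstacle is the middle step: upgrading ``$\widehat{x}$ is continuous near $t_0$'' to the \emph{existence of a genuine abelian element with trace identically $1$ on a neighbourhood}. This is exactly where one must invoke the local structure of liminal continuous-trace algebras (Fell's condition / local homogeneity $\mathcal K(\mathcal H)\otimes C_0(U)$): the subtlety is to show that continuity of the single scalar function $\widehat{x}$ controls enough of the spectral data of $\pi_s(x)$ --- eigenvalues together with their multiplicities --- as $s$ varies, so that a functional-calculus cut-off really isolates a one-dimensional sub-bundle. Everything else is bookkeeping with approximate units and hereditary subalgebras.
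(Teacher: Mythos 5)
The paper offers no proof of this proposition: it is quoted verbatim from Pedersen's book and used as a black box, so there is no in-paper argument to measure yours against. Judged on its own terms, your skeleton --- liminality first, then a local functional-calculus analysis producing abelian elements, then Hausdorffness from the resulting continuous functions, then the converse --- is the standard and correct architecture.

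There is, however, a genuine gap at exactly the step you flag, and the patch you propose does not close it. To get (iii) you must produce $e$ with $\dim\pi(e)\le 1$ for \emph{every} irreducible $\pi$ and $\hat e\equiv 1$ near $t_0$. Your cut-off $f(x)$ controls only $\pi_0$: at $t_0$ it has rank equal to the multiplicity $m_0$ of the top eigenvalue, which may exceed $1$, and for nearby $t$ the spectrum of $\pi_t(x)$ can meet the region where $0<f<1$, so $\widehat{f(x)}$ need not be integer-valued, let alone locally constant, and $f(x)$ need not be abelian in the sense of Proposition \ref{abelian_element_proposition}. Splitting the eigenspace down to rank one requires conjugation inside $A$ (choose $a$ with $\pi_0(a)$ a rank-one cut-down of $\pi_0(f(x))$ and verify $a^*f(x)a$ stays of rank $\le 1$ on a neighbourhood), and your proposed remedy --- ``invoke Fell's condition / local homogeneity'' --- is circular: condition (iii) \emph{is} Fell's condition, and local triviality of the form $C_0(U)\otimes\mathcal K$ is a downstream consequence (via Dixmier--Douady, Theorem \ref{ctr_di_do_thm}), not an available input. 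A secondary soft spot is (ii): that the functions $\hat e$ separate points of $\hat A$ is asserted rather than derived; one needs to combine (iii) with the $T_1$ property coming from liminality and with abelian elements of the form $eae$, $a$ ranging over a primitive ideal. One concrete way to make your middle step rigorous without circularity: for $f\in C_0((0,\infty))_+$ with $f(\lambda)\le c\lambda$, write $c\,\mathrm{id}=f+f'$ with $f'\ge 0$ vanishing at $0$; then $\widehat{f(x)}$ and $\widehat{f'(x)}$ are lower semicontinuous with continuous sum $c\,\hat x$, hence each is continuous. That lemma, rather than ``lower semicontinuity of the trace'' alone, is what lets the functional calculus track spectral multiplicities as $t$ varies.
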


\begin{rem}\label{ctr_is_ccr}
	From \cite{dixmier_tr}, Proposition 10, II.9 it follows that a continuous trace
	$C^*$-algebra $A$ is always a $CCR$-algebra, i.e. for every irreducible
	representation $\rho:A \to B(H)$ following condition hold
	\begin{equation}\label{ccr_compact}
	\rho\left(A\right) \approx \mathcal{K}
	\end{equation}
\end{rem}
	%6.1.12
%	\begin{prop}\label{ctr_pedersen}
%	If $A$ is a $C^*$-algebra with continuous trace there is for each $x \in K\left(A \right)_+$ an $n < \infty$ such that $\dim\rho \left(A\right) \le n$ for every irreducible representation  $\rho:A \to B\left(\H \right) $. Moreover, the map $x \mapsto \tr\left(\rho_x\left(a\right) \right)$  is a faithful, positive linear surjection, of $K\left(A \right)$ onto $K\left(\hat A \right)$.  
%	\end{prop}

%\begin{defn}\label{hull_top_defn}
%	Let $A$ be a $C^*$-algebra, and $\mathrm{Prim}(A)$ is a set of primitive ideals. For any subset $F \in A$ there exist subset $F^-$ such that
	
%	\begin{equation}\nonumber
%	F^- = \{t \in \mathrm{Prim}(A) : F \in t\}.
%	\end{equation}
%	$\mathrm{Prim}(A)$ is topological space such that for any closed subset $X \in \mathrm{Prim}(A)$, $\exists F \subset A, \ X = F^-$.
	
%\end{defn}
\begin{thm}\label{ctr_di_do_thm}\cite{ros:ctr} (Dixmier–Douady). Any stable separable algebra $A$ of continuous
	trace over a second-countable locally compact Hausdorff space $\mathcal{X}$ is isomorphic to
$\Ga_0\left( \mathcal{X}\right)$ , the sections vanishing at infinity of a locally trivial bundle of algebras
	over $\mathcal{X}$, with fibres $\mathcal K$ and structure group $\Aut(\mathcal{K}) = PU = U/T$. Classes of 	such bundles are in natural bijection with the \v{C}ech cohomology group $H_3(\mathcal{X}, \Z)$.
	The 3-cohomology class $\delta(A)$ attached to (the stabilisation of) a continuous-trace
	algebra $A$ is called its Dixmier–Douady class.
\end{thm}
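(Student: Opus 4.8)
The statement is the classical Dixmier--Douady theorem, so the plan is to reconstruct its proof from the structural facts already recorded, in particular Proposition \ref{continuous_trace_c_a_proposition} and Remark \ref{ctr_is_ccr}. The first step is to establish \emph{local triviality}. Since $A$ has continuous trace, $\hat A = \mathcal X$ is locally compact Hausdorff, and by Proposition \ref{continuous_trace_c_a_proposition}(iii), for every $t\in\mathcal X$ there is an Abelian element $x\in A$ with $\hat x\in K(\hat A)$ and $\hat x(t)=1$; on the open set $\mathcal U=\{\,s : \hat x(s)>\tfrac12\,\}$ the element $x$ provides a continuous field of rank-one projections, and using that each fibre is $\mathcal K$ (Remark \ref{ctr_is_ccr}) together with separability and stability of $A$ one produces a $C_0(\mathcal U)$-linear unitary trivialisation exhibiting $A|_{\mathcal U}\cong C_0(\mathcal U)\otimes\mathcal K$. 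Covering $\mathcal X$ by such sets $\{\mathcal U_i\}$ and noting that $A$ is recovered from its restrictions, one concludes that $A\cong\Gamma_0(\mathcal X)$ for a locally trivial bundle of $C^*$-algebras with fibre $\mathcal K$.

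The second step identifies the structure group. Every $*$-automorphism of $\mathcal K(\mathcal H)$ is implemented by a unitary of $\mathcal H$, unique up to a scalar of modulus one, so $\Aut(\mathcal K)\cong PU(\mathcal H)=U(\mathcal H)/\mathbb T$ (with the point--norm topology, which for separable $A$ over second-countable $\mathcal X$ agrees with the norm topology on $PU$). The transition isomorphisms of the bundle over double overlaps $\mathcal U_i\cap\mathcal U_j$ are then continuous $PU$-valued functions satisfying the cocycle identity, so $A$ determines a class in the \v{C}ech cohomology set $\check H^1(\mathcal X,\underline{PU})$; conversely such a cocycle reconstitutes the bundle, hence the algebra up to isomorphism, so isomorphism classes of stable separable continuous-trace algebras over $\mathcal X$ are in bijection with $\check H^1(\mathcal X,\underline{PU})$.

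The third step computes this set. The central extension of sheaves of continuous functions
\[
1\longrightarrow\underline{\mathbb T}\longrightarrow\underline{U}\longrightarrow\underline{PU}\longrightarrow 1
\]
yields a connecting map $\delta:\check H^1(\mathcal X,\underline{PU})\to\check H^2(\mathcal X,\underline{\mathbb T})$. By Kuiper's theorem $U(\mathcal H)$ is norm-contractible, so $\underline U$ is cohomologically trivial (every principal $U$-bundle is trivial and the relevant $\check H^2$ vanishes), whence $\delta$ is a bijection; equivalently, the fibration $\mathbb T\to U(\mathcal H)\to PU(\mathcal H)$ forces $PU(\mathcal H)\simeq K(\mathbb Z,2)$, hence $BPU\simeq K(\mathbb Z,3)$ and $\check H^1(\mathcal X,\underline{PU})=[\mathcal X,BPU]=\check H^3(\mathcal X,\mathbb Z)$. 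The exponential sequence $0\to\mathbb Z\to\underline{\mathbb R}\to\underline{\mathbb T}\to 0$, with $\underline{\mathbb R}$ a soft sheaf on the paracompact space $\mathcal X$, then identifies $\check H^2(\mathcal X,\underline{\mathbb T})\cong\check H^3(\mathcal X,\mathbb Z)$, giving the class $\delta(A)\in\check H^3(\mathcal X,\mathbb Z)$. Surjectivity is direct: given $\delta\in\check H^3(\mathcal X,\mathbb Z)$, build a principal $PU$-bundle $P$ with that class, form the associated bundle $P\times_{PU}\mathcal K$ under the conjugation action, and take $\Gamma_0$; by construction this is stable of continuous trace with Dixmier--Douady class $\delta$, and injectivity follows since two stable algebras with the same class have isomorphic bundles.

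The main obstacle is the first step: upgrading the pointwise statement ``each irreducible representation lands in $\mathcal K$'' (Remark \ref{ctr_is_ccr}) to a \emph{continuous} local field of rank-one projections and then a $C_0(\mathcal U)$-linear unitary trivialisation. This requires a Tietze-type extension of Abelian elements, careful bookkeeping with the Pedersen ideal (Theorem \ref{peder_id_thm}), and crucially the stability hypothesis to guarantee that the fibre is the full $\mathcal K$ rather than a matrix algebra. A secondary technical point is the comparison of topologies on $PU(\mathcal H)$: one must check that the point--norm and norm topologies yield the same bundle theory in this separable, second-countable setting, so that Kuiper's theorem can be invoked in the form used above.
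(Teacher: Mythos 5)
The paper does not prove this statement at all: it is quoted verbatim as a classical result from the cited reference \cite{ros:ctr}, so there is no ``paper's proof'' to compare against. Your sketch is the standard bundle-theoretic argument (essentially the one in Rosenberg's and in Raeburn--Williams' treatments), and its overall architecture --- local triviality from Fell's condition, identification of the structure group with $PU(\mathcal H)$, and the boundary map of $1\to\mathbb T\to U\to PU\to 1$ combined with Kuiper's theorem and the exponential sequence --- is correct.

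The one place where your outline is genuinely a sketch rather than a proof is the first step, and you correctly identify it. To close it with the tools already in the paper: on $\mathcal U=\{\hat x>\tfrac12\}$ the function $1/\hat x$ is bounded and continuous, and by the Dauns--Hofmann theorem $C_b(\hat A)$ acts on $A$ by central multipliers, so $y=(1/\hat x)\cdot x$ restricted to $\mathcal U$ is an element of $A(\mathcal U)$ with $\rho_s(y)$ a rank-one projection for every $s\in\mathcal U$ (rank exactly one because $\dim\rho_s(x)\le 1$ by Proposition \ref{abelian_element_proposition} while $\operatorname{Tr}\rho_s(x)=\hat x(s)>0$). Thus $y$ is a full projection in $A(\mathcal U)$ with $yA(\mathcal U)y\cong C_0(\mathcal U)$, and Brown's stable isomorphism theorem together with the stability of ideals of stable $\sigma$-unital algebras gives the $C_0(\mathcal U)$-linear isomorphism $A(\mathcal U)\cong C_0(\mathcal U)\otimes\mathcal K$; this is exactly local triviality. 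Your ``secondary technical point'' about topologies on $PU(\mathcal H)$ is also real but standard: $\Aut(\mathcal K)$ carries the point-norm topology, which agrees with the quotient of the strong topology on $U(\mathcal H)$, and $U(\mathcal H)$ is contractible in that topology as well, so the homotopy-theoretic identification $\check H^1(\mathcal X,\underline{PU})\cong\check H^3(\mathcal X,\mathbb Z)$ goes through unchanged. With these two points supplied, your argument is a complete proof of the cited theorem.
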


\begin{rem}
	Any commutative $C^*$-algebra has continuous trace. So described in the Section \ref{top_chap}  case is a special case of described in the Section  \ref{cont_tr_exm} construction.
\end{rem}

	\begin{empt}
	For any $x \in \hat A$ denote by $\rho_x: A \to B\left(\H \right)$ a representation which corresponds to $x$. For any $a \in A$ denote by $\supp a \subset \hat A$ the closure of the set
	$$
	\supp a \stackrel{\mathrm{def}}{=}\left\{x \in \hat A~|~\rho_x\left(a \right)\neq 0  \right\}.
	$$
\end{empt}

\subsection{Basic construction}
\paragraph*{}
Let $A$ be a continuous trace $C^*$-algebra such that the spectrum $\hat A= \mathcal X$ of is a second-countable locally compact Hausdorff space. For any open subset $\mathcal U \subset \mathcal X$ denote by
$$
A\left( \mathcal U\right) = \left\{a \in A~|~ \rho_x\left(a \right)= 0; ~\forall x \in \mathcal X \backslash \mathcal U  \right\}
$$
where $\rho_x$ is an irreducible representation which corresponds to $x \in \mathcal X$.
If $\mathcal V \subset \mathcal U$ then there is a natural inclusion $A\left( \mathcal V\right) \hookto A\left( \mathcal U\right)$. Let $\pi: \widetilde{\mathcal X}\to \mathcal X$ be a topological covering. Let $\widetilde{\mathcal U} \subset \widetilde{\mathcal X}$ be a connected open subset  homeomorphically mapped onto $\mathcal U = \pi\left(  \widetilde{\mathcal U}\right)$, and suppose that the closure of $\widetilde{\mathcal U}$ is compact. Denote by  $\widetilde{A}\left( {\widetilde{\mathcal U}}\right) $ the algebra such that $\widetilde{A}\left( {\widetilde{\mathcal U}}\right) \approx A\left( \mathcal U\right)$. If $\widetilde{\mathcal V} \subset \widetilde{\mathcal U}$ and $\mathcal V = \pi\left( \widetilde{\mathcal V}\right)$  then the inclusion $A\left( \mathcal V\right) \hookto A\left( \mathcal U\right)$ naturally induces an inclusion $i^{\widetilde{\mathcal V}}_{\widetilde{\mathcal U}} :\widetilde{A}\left( {\widetilde{\mathcal V}}\right) \hookto \widetilde{A}\left( {\widetilde{\mathcal U}}\right)$.
Let us consider $\widetilde{\mathcal U}$ as indexes, and let
\begin{equation*}
\begin{split}
A' = \bigoplus_{\widetilde{\mathcal U}} \widetilde{A}\left( {\widetilde{\mathcal U}}\right) / I,\\
\end{split}
\end{equation*}
where $\bigoplus$ means the algebraic direct sum of $C^*$-algebras and  $I$ is the two sided ideal generated by elements $i^{\widetilde{\mathcal U}_1\bigcap\widetilde{\mathcal U}_2}_{\widetilde{\mathcal U}_1}\left(a \right) - i^{\widetilde{\mathcal U}_1\bigcap\widetilde{\mathcal U}_2}_{\widetilde{\mathcal U}_2}\left(a \right)$, for any $a \in A\left(\widetilde{\mathcal U}_1\bigcap\widetilde{\mathcal U}_2 \right)$. There is the natural $C^*$-norm of the direct sum on $\bigoplus_{\widetilde{\mathcal U}} \widetilde{A}\left( {\widetilde{\mathcal U}}\right)$ and let us define the norm on $A' = \bigoplus_{\widetilde{\mathcal U}} \widetilde{A}\left( {\widetilde{\mathcal U}}\right) / I$ given by
\be\label{ctr_norm_eqn}
\left\| a + I\right\|= \inf_{a' \in I}  \left\| a + a'\right\|; ~\forall \left( a + I\right)  \in \bigoplus_{\widetilde{\mathcal U}} \widetilde{A}\left( {\widetilde{\mathcal U}}\right) / I
\ee
 \begin{defn}\label{ctr_cov_defn}
 If $A\left(\widetilde{\mathcal X} \right) $ is completion of $A'$ with respect to the given by \eqref{ctr_norm_eqn} then we say that $A\left(\widetilde{\mathcal X} \right)$  is an \textit{induced by  $\pi: \widetilde{\mathcal X}\to \mathcal X$ covering} of $A$.  
 \end{defn}
The action of $G\left(\widetilde{\mathcal X}~|~\mathcal X\right)$ on  $\widetilde{\mathcal X}$ induces an action of $G\left(\widetilde{\mathcal X}~|~\mathcal X\right)$ on  $A'$, so there is a natural action of $G\left(\widetilde{\mathcal X}~|~\mathcal X\right)$ on $A\left(\widetilde{\mathcal X} \right)$.
\begin{definition}
	We say that the action of $G\left(\widetilde{\mathcal X}~|~\mathcal X\right)$ on  $\widetilde{\mathcal X}$ \textit{induces} the action of $G\left(\widetilde{\mathcal X}~|~\mathcal X\right)$ on  $A\left(\widetilde{\mathcal X} \right)$.
\end{definition} 
From the Proposition \ref{continuous_trace_c_a_proposition} it follows that $A\left(\widetilde{\mathcal X} \right) $ is a continuous trace $C^*$-algebra, and the spectrum of $A\left(\widetilde{\mathcal X} \right)$ coincides with $\widetilde{\mathcal X}$. If $G=G\left(\widetilde{\mathcal X}~|~\mathcal X\right)$ is a finite group then
\begin{equation}\label{ctr_inv_eqn}
A = A\left(\widetilde{\mathcal X} \right)^G
\end{equation}
and the above equation induces an injective *-homomorphism $A \hookto A\left(\widetilde{\mathcal X} \right)$.
\begin{lem}\label{ctr_adm_lem}
	Let $A$ be a continuous trace  $C^*$-algebra, and let $\mathcal X = \hat A$ be a spectrum of $A$.  Suppose that $\mathcal X$ is a second-countable locally compact Hausdorff space and $B$ is a $C^*$-algebra such that
	\begin{itemize}
		\item $A \subset B \subset A''$,
		\item For any $b \in B_+$ and $x_0 \in  \mathcal X$ such that $\rho_{x_0}\left(b \right)\neq 0$  there is an open neighborhood  $\mathcal W \subset \mathcal X$ of $x_0$ and an Abelian $z \in A$ such that
		\begin{equation*}
		\begin{split}
		\supp z \subset \mathcal W,\\
		\tr\left(z b z \right) \in C_0\left(\mathcal X \right),\\
				\tr\left(z b z \right) \left(x_0 \right)\neq 0.
				\end{split}
		\end{equation*}
	\end{itemize}
	Then $B = A$.
\end{lem}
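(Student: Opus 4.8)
The plan is to prove $B_+\subseteq A$, which together with $A\subseteq B$ gives $B=A$, since any element of a $C^*$-algebra is a linear combination of four positive ones. Fix $b\in B_+\subseteq (A'')_+$; for $x\in\mathcal X$ write $b(x):=\rho_x(b)\in B(\H_x)_+$ for the value at $x$ of the normal extension of $\rho_x$ to $A''$. Since $x\mapsto\|b(x)\|$ is lower semicontinuous, $\mathcal O_b:=\{x\in\mathcal X:\rho_x(b)\neq 0\}$ is open, and it suffices to produce, for each $\varepsilon>0$, some $c_\varepsilon\in A$ with $\|b-c_\varepsilon\|<\varepsilon$, assembled from the local data the second hypothesis provides.

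The first and main technical step is the local claim: \emph{if $z\in A$ is Abelian and $\tr(zbz)\in C_0(\mathcal X)$, then $zbz\in A$, and more generally $xbx\in A$ for every $x$ in the hereditary $C^*$-subalgebra $\overline{zAz}$.} By Proposition~\ref{abelian_element_proposition} every $\rho_x(z)$ has rank $\leq 1$, hence $\rho_x(zbz)=\rho_x(z)\rho_x(b)\rho_x(z)$ has rank $\leq 1$ and $\tr(zbz)(x)=\|\rho_x(zbz)\|$. The hereditary subalgebra $C:=\overline{zAz}$ is commutative because $z$ is Abelian, so $C\cong C_0(\mathcal O_z)$ with $\mathcal O_z:=\{x:\rho_x(z)\neq 0\}$ open, and $zbz$ belongs to the enveloping von Neumann algebra $C''=pA''p$, $p$ the open projection of $C$, its Gelfand transform over $\mathcal O_z$ being $\tr(zbz)|_{\mathcal O_z}$. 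Truncating by the function $f_\varepsilon$ of \eqref{f_eps_eqn}, the element $f_\varepsilon(zbz)$ is supported, in the sense of $\hat A$, over the compact set $\{\tr(zbz)\geq\varepsilon\}\subseteq\mathcal O_z$, on which the continuity of its rank-one trace function forces $f_\varepsilon(zbz)\in C_0(\mathcal O_z)=C\subseteq A$; letting $\varepsilon\to 0$ gives $zbz\in A$. For $x\in zAz$ the element $xbx$ factors through $zbz\in A$, so $xbx\in zAz\subseteq A$, and this passes to the norm closure $C=\overline{zAz}$. Finally, replacing $z$ by $\psi z$ for $\psi\in C_c(\mathcal X)$ (still Abelian, still with continuous $C_0$ trace of $(\psi z)b(\psi z)=\psi^2\,zbz$) we may assume the relevant $z$'s have relatively compact support.

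Applying the second hypothesis to $b$, for each $x_0\in\mathcal O_b$ we obtain an Abelian $z_{x_0}\in A$ of relatively compact support with $z_{x_0}bz_{x_0}\in A$ and $\rho_{x_0}(z_{x_0}bz_{x_0})\neq 0$; the open sets $\{x:\rho_x(z_{x_0}bz_{x_0})\neq 0\}$ cover $\mathcal O_b$, and as $\mathcal X$ is second-countable ($C_0(\mathcal X)$ separable, Theorem~\ref{comm_sep_thm}) hence Lindel\"of, I extract a countable subcover $\{z_i\}_{i\in\N}$. It remains to glue these into norm approximants of $b$. Using that a continuous-trace algebra is a $C_0(\mathcal X)$-algebra, so $C_b(\mathcal X)$ acts on $A$ by central multipliers, choose a countable partition of unity $\{\varphi_i\}$ subordinate to $\{\mathcal O_{z_i}\}$ and positive $a_i\in\overline{z_iAz_i}$ agreeing with $\varphi_i^{1/2}$ near the support of $z_i$; then every finite sum $\sum_{i\in F}a_i\,b\,a_i$ lies in $A$ by the local claim, and — re-applying the hypothesis to the residual positive elements where the rank-one slices $a_i\,b\,a_i$ have not yet recovered the whole fibre of $b$, and using that over each relatively compact subset of $\mathcal X$ only finitely many fibre directions carry mass $\geq\varepsilon$ — these sums can be arranged to approximate $b$ within $\varepsilon$. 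This yields the required $c_\varepsilon$, and $\varepsilon\to 0$ gives $b\in\overline A=A$.

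The hard part is this last, global, step. The hypothesis furnishes at each point only an Abelian — hence fibrewise rank $\leq 1$ — cutoff, so a single application captures merely a one-dimensional slice of the operator-valued $b$; recovering all of $b$ forces one to re-apply the hypothesis to the residual positive elements produced along the way, so as to exhaust every fibre direction, and then to control the accumulated errors uniformly over compacta — which is exactly what second-countability and the $C_0(\mathcal X)$-module structure of a continuous-trace algebra are there to supply. A subsidiary point needing care is the passage, inside the local claim, from ``$\tr(zbz)$ continuous and (after truncation) compactly supported'' to ``$zbz\in A$'', which relies on the rank-$\leq 1$ structure together with the truncations $f_\varepsilon$.
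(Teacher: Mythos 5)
Your strategy — prove $B_+\subseteq A$ by directly approximating each $b\in B_+$ with elements of $A$ glued from the rank-one slices $zbz$ — is genuinely different from the paper's, and its global step does not go through as described. Because each $z$ is Abelian, $\rho_x(a_i\,b\,a_i)$ has rank $\le 1$ for every $x$ and every $a_i\in\overline{z_iAz_i}$, so a finite sum $\sum_{i\in F}a_i\,b\,a_i$ has fibrewise rank $\le|F|$, whereas $\rho_x(b)$ is in general an infinite-rank compact operator (the fibres of a continuous-trace algebra are copies of $\mathcal K$). Thus no finite sum of this form can be norm-close to $b$ unless you exhaust infinitely many fibre directions, and the only mechanism you offer for doing so — ``re-applying the hypothesis to the residual positive elements'' — fails at the first step: the residual $b-\sum_{i\in F}a_i\,b\,a_i$ is not positive in general (there is no order relation between $b$ and $\sum_i a_i b a_i$ when the $a_i$ do not commute with $b$), so the hypothesis, which is stated only for elements of $B_+$, cannot be applied to it. The error control ``uniformly over compacta'' is likewise asserted rather than proved. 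A second, subsidiary gap sits inside your local claim: an element of the bidual $C_0(\mathcal O_z)''$ is \emph{not} determined by its values under the point evaluations $\delta_x$, $x\in\mathcal O_z$ (the bidual has a large non-atomic part), so continuity and compact support of the function $x\mapsto\tr\rho_x\bigl(f_\eps(zbz)\bigr)$ do not by themselves force $f_\eps(zbz)\in C_0(\mathcal O_z)$; some additional argument is needed there, and you only flag the point without supplying it.

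The paper avoids the whole reconstruction problem. It first shows that the hypothesis forces the Jacobson topologies of $\hat A$ and $\hat B$ to coincide (a set closed for $\hat B$ is closed for $\hat A$ because $\tr(zbz)$ is continuous and nonzero at $x_0$). It then invokes the hereditary-ideal argument of Pedersen's Theorem 6.1.11: the set of positive elements of $B$ with continuous trace is hereditary and its closure is the positive part of an ideal $J$; since every point of $\hat B=\hat A$ sees an Abelian element of $K(A)$ with nonvanishing trace, $J$ lies in no primitive ideal, so $J=B$ and $B$ itself has continuous trace. Finally $\rho_x(A)=\rho_x(B)\approx\mathcal K$ for all $x$, and the Dixmier--Douady classification identifies $B$ with $A$. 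The key idea you are missing is precisely this reduction: one never approximates $b$ fibre by fibre; one only needs \emph{enough} elements with continuous trace to generate $B$ as an ideal, and then structure theory of continuous-trace algebras finishes the job.
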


\begin{proof}

The spectrum $\hat B$ of $B$ coincides with the spectrum of $A$ as a set. Let $\mathcal V \subset \mathcal X$ be a closed subset with respect to topology of $\hat B$. There is a closed ideal $I \subset B$ which corresponds to $\mathcal V$.  Denote by $I_+$ the positive part of $I$.  For any $x_0 \in \mathcal X \backslash \mathcal V$ there is  $b \in I_+$ such that $\rho_{x_0}\left(b \right) \neq 0$. There is an Abelian element $\overline{z} \in A$ such that $\tr\left( \rho_{x_0}\left(\overline{z}b\overline{z} \right)\right)  \neq 0$. If $\mathcal W \subset\mathcal X$ is an open neighborhood of $x_0$ then from the Corollary \ref{com_a_u_cor} it follows that there is a bounded positive continuous function $a: \mathcal X \to \R$ such that $a\left( x_0\right) \neq 0$ and $a\left(\mathcal X \backslash \mathcal W \right)= \{0\}$. If $z = a\overline{z}$ then $z$ is an Abelian document, $\tr\left(z b z \right)\left(x_0 \right)=\left( \tr\left(\overline{z} b \overline{z} \right)\left(x_0 \right)\right) a^2\left(x_0 \right)  \neq 0$ and $\supp z \subset \mathcal W$. From $\tr\left(z b z \right) \in C_0\left(\mathcal X \right)$ it turns out that there is an open (with respect to topology of $\hat A$) neighborhood $\mathcal U$ of $x_0$ such that $\tr\left( \rho_{x}\left(zbz \right) \right) \neq 0$ for any $x \in \mathcal U$, i.e. $\mathcal V \bigcap \mathcal U = \emptyset$. It follows that $\mathcal V $ is a closed subset with respect to the  topology of $\hat A$. Hence there is a homeomorhism $\hat A \approx \hat B$. Below we apply the method of proof of the Theorem 6.1.11 \cite{pedersen:ca_aut}. Let us consider the set $M$ of elements in $B_+$ with continuous trace, $M$ is hereditary and the closure of $M$ is the positive part of an ideal $J$ of $B$. However for any $x \in \mathcal{ X}= \hat B$ there is an Abelian $a \in K\left( A\right) $ such that $\tr\left( \rho_x\left(a \right) \right) \neq 0$. It turns out $J$ is not contained in any primitive ideal of $B$, hence $J = B$. It turns out $B$ has continuous trace. From this fact it turns out $\rho_x\left(A \right) = \rho_x\left(B \right)\approx \mathcal K$ for any $x \in \mathcal X$. Taking into account $\rho_x\left(A \right) = \rho_x\left(B \right)$, homeomorphism $\hat A \approx \hat B$ and the Theorem \ref{ctr_di_do_thm} one has $B = A$.
\end{proof}

\subsection{Finite-fold coverings}
\paragraph*{}
If $\pi:\widetilde{\mathcal X} \to \mathcal X$ is a finite-fold covering, such that ${\mathcal X}$ and $\widetilde{\mathcal X}$ are  compact Hausdorff  spaces, then there is a finite family $\left\{\mathcal U_\iota\subset  \mathcal X\right\}_{\iota \in I_0}$ of connected open subsets of $ \mathcal X$ evenly covered by $\pi$ such that $ \mathcal X = \bigcup_{\iota \in I_0} \mathcal U_\iota$. There is a  partition of unity subordinated to $\left\{\mathcal U_\iota\right\}$, i.e.
$$
1_{C\left(\mathcal X \right) }= \sum_{\iota \in I_0}a_\iota
$$
where $a_\iota \in C\left(\mathcal X \right)_+$ is such that $\supp a_\iota \subset \mathcal U_\iota$. Denote by $e_\iota = \sqrt{a_\iota}\in C\left(\mathcal X \right)_+$.
For any $\iota \in I_0$ we select $\widetilde{\mathcal U}_\iota \subset \widetilde{\mathcal X}$ such that $\widetilde{\mathcal U}_\iota$ is homemorphically mapped onto $\mathcal U_\iota$. If $\widetilde{e}_\iota \in C\left( \widetilde{\mathcal X}\right) $ is given by
\begin{equation*}
\widetilde{e}_\iota\left(\widetilde{x} \right)  = \left\{
\begin{array}{c l}
e_\iota\left(\pi\left( \widetilde{x}\right) \right)  & \widetilde{x} \in \widetilde{\mathcal U}_\iota \\
0 & \widetilde{x} \notin \widetilde{\mathcal U}_\iota
\end{array}\right.
\end{equation*}
and $G = G\left( \widetilde{\mathcal X}~|{\mathcal X} \right)$ then
\begin{equation*}
\begin{split}
1_{C\left(\widetilde{\mathcal X} \right) }= \sum_{ \left(g, \iota\right)\in G \times I_0}g\widetilde{e}^2_\iota,\\
\widetilde{e}_\iota \left( g\widetilde{e}_\iota\right) =0; \text{ for any nontrivial } g \in G.
\end{split}
\end{equation*}
If $I= G\times I_0$ and $\widetilde{e}_{\left(g, \iota\right)}= g\widetilde{e}_\iota$ the from the above equation it turns out
\begin{equation}\label{ctr_unity_eqn}
\begin{split}
1_{C\left(\widetilde{\mathcal X} \right) }= \sum_{\iota \in I}\widetilde{e}_\iota \left\rangle \right\langle \widetilde{e}_\iota
\end{split}
\end{equation}
where $\widetilde{e}_\iota \left\rangle \right\langle \widetilde{e}_\iota$ means a compact operator induced by the $C^*$-Hilbert module structure given by \eqref{finite_hilb_mod_prod_eqn}.
\begin{proposition}\label{mult_str_pos_prop}\cite{apt_mult}
	If $B$ is a $C^*$-subalgebra of $A$ containing an
	approximate unit for $A$, then $M\left(B \right) \subset M\left( A\right)$  (regarding $B''$ as a subalgebra	of $A''$).
\end{proposition}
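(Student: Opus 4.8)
The plan is to realise both multiplier algebras inside the enveloping von Neumann algebras and then transport the action of a multiplier of $B$ onto $A$ by means of the approximate unit. First I would recall the standard identification (see \cite{pedersen:ca_aut}, together with Proposition \ref{env_alg_sec_dual}): for any $C^*$-algebra $A$ one has
\[
M(A) = \left\{x \in A'' \ :\ xA \subseteq A \ \text{ and } \ Ax \subseteq A\right\},
\]
the idealizer of $A$ in its enveloping von Neumann algebra $A''$; likewise $M(B)$ is the idealizer of $B$ in $B''$. The isometric inclusion $B \hookto A$ induces an injective normal $*$-homomorphism $B'' \hookto A''$ (the adjoint of the Hahn--Banach quotient map $A^{*} \to B^{*}$), whose image is the weak-$*$ closure of $B$ in $A''$; via this map we regard $B'' \subseteq A''$, and hence $M(B) \subseteq B'' \subseteq A''$.

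Next, fix an approximate unit $\{e_\lambda\}_{\lambda\in\Lambda}\subseteq B$ for $A$, with $\left\|e_\lambda\right\|\le 1$ and $e_\lambda a \to a$, $a e_\lambda \to a$ in norm for every $a \in A$. Then $\{e_\lambda\}$ converges strongly to $1_{A''}$, and since the $e_\lambda$ lie in $B$ this shows $1_{A''}$ lies in the weak-$*$ closure of $B$, so the embedding $B'' \subseteq A''$ is unital. Now take $x \in M(B)$, viewed inside $A''$. For $a \in A$ we have $x e_\lambda \in B \subseteq A$ because $xB \subseteq B$, hence $x e_\lambda a \in A$, and
\[
\left\|x e_\lambda a - x a\right\| \le \left\|x\right\|\,\left\|e_\lambda a - a\right\| \longrightarrow 0,
\]
so $xa = \lim_\lambda (x e_\lambda) a$ lies in the norm-closed set $A$. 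Symmetrically $e_\lambda x \in B \subseteq A$ and $\left\|a e_\lambda x - a x\right\| \le \left\|a e_\lambda - a\right\|\,\left\|x\right\| \to 0$, giving $ax \in A$. Thus $x$ belongs to the idealizer of $A$ in $A''$, i.e. $x \in M(A)$.

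Consequently the inclusion $M(B)\hookto M(A)$ is simply the restriction of the injective unital normal embedding $B''\hookto A''$, hence an injective $*$-homomorphism extending $B\hookto A$, which is the assertion. The only genuinely delicate points are the two structural facts invoked at the start --- that $M(\,\cdot\,)$ is the idealizer inside the bidual, and that the bidual of a $C^*$-subalgebra embeds normally in the bidual of the ambient algebra; granting these (all contained in \cite{pedersen:ca_aut,blackadar:ko}), the approximate-unit computation is routine. One can also give a bidual-free argument: from a double centralizer $(L,R)$ on $B$, define $\widetilde L(a)=\lim_\lambda L(e_\lambda)a$ and $\widetilde R(a)=\lim_\lambda aR(e_\lambda)$ for $a\in A$, check that these norm limits exist using that $\{e_\lambda\}$ is an approximate unit for $A$ together with $L(e_\lambda e_\nu)\to L(e_\nu)$, and verify that $(\widetilde L,\widetilde R)$ is a double centralizer on $A$ restricting to $(L,R)$ on $B$.
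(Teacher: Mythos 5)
Your argument is correct and is essentially the standard proof of this fact from Akemann--Pedersen--Tomiyama: identify $M(B)$ and $M(A)$ with the idealizers of $B$ and $A$ in their enveloping von Neumann algebras, embed $B''$ into $A''$ as the adjoint of the (Hahn--Banach surjective) restriction map $A^*\to B^*$, and use the approximate unit $\{e_\lambda\}\subset B$ for $A$ to obtain $xa=\lim_\lambda (xe_\lambda)a\in A$ and $ax=\lim_\lambda a(e_\lambda x)\in A$ as norm limits. Note that the paper states this proposition only as a citation of \cite{apt_mult} and gives no proof of its own, so there is no in-text argument to compare against; the two structural facts you invoke (the idealizer description of $M(\cdot)$ inside the bidual, and the injective unital normal embedding $B''\hookto A''$) are both standard and are used correctly.
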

%\begin{lem}\label{mult_iso_lem}
%Let  $\left(A, \widetilde{A},G \right)$ be a noncommutative finite-fold covering such that there is $h \in A$ is a strictly positive element of $A$. Then following conditions hold:
%\begin{enumerate}
%	\item[(i)] Element $h$ is a strictly positive element of $\widetilde{A}$,
%	\item[(ii)] There is the natural *-isomorphism  
%	\be
%	M\left( A\right)\cong M\left( \widetilde{A}\right)^G.
%	\ee
%\end{enumerate}
%\end{lem}
%\begin{proof}
%		If $\widetilde{\tau}: \widetilde{A} \to \C$ is a state on $\widetilde{A}$ such that $\widetilde{\tau}\left( h\right) = 0$ then there is a state ${\tau}: A \to \C$ on $A$ such that
%		\bean
%		{\tau}: A \to \C, \\
%		\tau\left(\sum_{g \in G} g \widetilde{a}\right)= \widetilde{\tau}\left( \sum_{g \in G} g \widetilde{a} \right) 
%				\eean
%\end{proof}

\begin{lem}\label{ctr_mult_lem}
	Let $A$ be a continuous trace algebra, and let $\hat A = \mathcal X$ be the spectrum of $A$. Suppose that $\mathcal X$ is a  locally compact second-countable Hausdorff space, and let $\pi: \widetilde{\mathcal X} \to \mathcal X$ be a finite-fold covering. There is the natural *-isomorphism $M\left(A \right) \cong M\left( A\left(  \widetilde{\mathcal X}\right) \right)^G$ of multiplier algebras.
\end{lem}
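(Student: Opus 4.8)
The plan is to use Lemma \ref{comm_fin_lem} to realize the situation as a genuine finite-fold covering $\pi:\widetilde{\mathcal X}\to\mathcal X$ of locally compact Hausdorff spaces, and then to analyze multipliers sectionwise. First I would observe that $A$ and $A(\widetilde{\mathcal X})$ are both continuous-trace $C^*$-algebras with spectra $\mathcal X$ and $\widetilde{\mathcal X}$ respectively (by the construction preceding Definition \ref{ctr_cov_defn} and Proposition \ref{continuous_trace_c_a_proposition}), and that $A=A(\widetilde{\mathcal X})^G$ by \eqref{ctr_inv_eqn}. The key structural input is that, locally over an evenly covered open set $\mathcal U_\iota\subset\mathcal X$ with compact closure, $A(\widetilde{\mathcal X})$ decomposes as a direct sum $\bigoplus_{g\in G}\widetilde A(g\widetilde{\mathcal U}_\iota)$ with each summand $\ast$-isomorphic to $A(\mathcal U_\iota)$; the partition-of-unity identity \eqref{ctr_unity_eqn} assembles these local pieces. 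Since $M(A)$ acts by multipliers and multipliers are determined by their action on an approximate unit, I would use $\{e_\iota\}$ (and their lifts $\widetilde e_\iota$) as a bookkeeping device.

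Next I would construct the map $M(A)\to M(A(\widetilde{\mathcal X}))^G$. Given $m\in M(A)$, its action on the local ideals $A(\mathcal U_\iota)$ transports via the isomorphism $\widetilde A(\widetilde{\mathcal U}_\iota)\cong A(\mathcal U_\iota)$ to an action on $\widetilde A(\widetilde{\mathcal U}_\iota)$; spreading this $G$-equivariantly over the translates $g\widetilde{\mathcal U}_\iota$ and patching over overlaps (using that $m$ is already globally defined on $A$, hence compatible on intersections $\mathcal U_{\iota_1}\cap\mathcal U_{\iota_2}$) yields a well-defined multiplier $\widetilde m$ of $A(\widetilde{\mathcal X})$ which is manifestly $G$-invariant. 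Injectivity is immediate because $A\hookrightarrow A(\widetilde{\mathcal X})$ and the action of $\widetilde m$ restricted to $A$ recovers $m$. For surjectivity, take $\widetilde m\in M(A(\widetilde{\mathcal X}))^G$; then $\widetilde m$ preserves the invariant subalgebra $A=A(\widetilde{\mathcal X})^G$ by the argument of Lemma \ref{ind_mult_inv_lem}, so $\widetilde m|_A\in M(A)$, and one checks this restriction map inverts the construction above — the $G$-invariance of $\widetilde m$ forces it to be the equivariant spreading of $\widetilde m|_A$, again by the local direct-sum decomposition.

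The main obstacle I anticipate is the surjectivity step, specifically verifying that a $G$-invariant multiplier of $A(\widetilde{\mathcal X})$ is \emph{entirely} determined by its restriction to $A$. This requires showing that the map $A(\widetilde{\mathcal X})\to A$ furnished by averaging (or by the Hilbert-module inner product \eqref{finite_hilb_mod_prod_eqn}) together with the $A$-module structure recovers all of $A(\widetilde{\mathcal X})$ in a way compatible with multiplier actions; concretely, every $\widetilde a\in A(\widetilde{\mathcal X})$ can be written, via \eqref{ctr_unity_eqn}, as $\widetilde a=\sum_\iota \widetilde e_\iota\langle\widetilde e_\iota,\widetilde a\rangle_{A(\widetilde{\mathcal X})}$-type expressions with $\langle\widetilde e_\iota,\widetilde a\rangle\in A$, so $\widetilde m(\widetilde a)$ is pinned down by $\widetilde m(\widetilde e_\iota)$ and $\widetilde m|_A$, and $G$-invariance pins down $\widetilde m(\widetilde e_\iota)$ from $\widetilde m(\widetilde e_\iota^2)=\widetilde m$ applied to an element of $A$ up to the finite group action. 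I would also need Proposition \ref{mult_str_pos_prop} to ensure the inclusions of multiplier algebras are the expected ones (using that $A$ contains an approximate unit for $A(\widetilde{\mathcal X})$, which follows from \eqref{ctr_unity_eqn} since $1_{C(\widetilde{\mathcal X})}$ is built from translates of elements of $A$). Once these compatibilities are in place, the two maps are mutually inverse $\ast$-homomorphisms, giving the claimed isomorphism $M(A)\cong M(A(\widetilde{\mathcal X}))^G$.
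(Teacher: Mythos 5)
Your overall skeleton is the right one and matches the paper's: the inclusion $M(A)\subset M\left(A\left(\widetilde{\mathcal X}\right)\right)$ comes from Proposition \ref{mult_str_pos_prop}, its image is visibly $G$-invariant, and the reverse inclusion $M\left(A\left(\widetilde{\mathcal X}\right)\right)^G\subset M(A)$ is exactly Lemma \ref{ind_mult_inv_lem}; the two mutually inverse inclusions give the isomorphism. The elaborate local construction of the forward map, transporting $m$ over evenly covered sets and patching over overlaps, is unnecessary once Proposition \ref{mult_str_pos_prop} applies, and it would itself require a nontrivial boundedness and gluing argument (multipliers do not obviously glue from local data), so I would drop it.

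The genuine gap is in how you justify the hypothesis of Proposition \ref{mult_str_pos_prop}, namely that $A$ contains an approximate unit for $A\left(\widetilde{\mathcal X}\right)$. You derive this from \eqref{ctr_unity_eqn}, but that identity is established in the subsection where $\mathcal X$ and $\widetilde{\mathcal X}$ are \emph{compact}: it rests on a finite partition of unity and on the unit $1_{C\left(\widetilde{\mathcal X}\right)}$, neither of which is available in the locally compact setting of the present lemma, where $A\left(\widetilde{\mathcal X}\right)$ is nonunital. The same objection applies to your surjectivity step, where you expand $\widetilde a$ as a finite sum $\sum_\iota\widetilde e_\iota\left\langle\widetilde e_\iota,\widetilde a\right\rangle$. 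The paper closes this gap by a different device: it chooses a countable cover $\left\{\mathcal U_k\right\}$ trivializing $A$, functions $a_k\in C_0\left(\mathcal X\right)_+$ with $\supp a_k\subset\mathcal U_k$ and $\sum_k a_k=1$ strictly, rank-one mutually orthogonal positive $e_k\in\mathcal K$ with $\sum_k e_k=1$ strictly, and shows that $h=\sum_p 2^{-p}u_p\in A$, where $\left\{u_p\right\}$ enumerates the products $a_j\otimes e_k$, is a strictly positive element of both $A$ and $A\left(\widetilde{\mathcal X}\right)$; a strictly positive element of $A\left(\widetilde{\mathcal X}\right)$ lying in $A$ yields the required approximate unit. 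You would need to supply some such argument before Proposition \ref{mult_str_pos_prop} can be invoked; with that in place, your two directions assemble into the paper's proof.
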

\begin{proof}
	For any $x \in \mathcal X$ there is an open neighborhood $\mathcal U$ such that $A\left( \mathcal U\right) \cong C_0\left( \mathcal U \right) \otimes \mathcal{K}$. Since $\mathcal X$ is second-countable there is an enumerable family $\left\{\mathcal{U}_k\right\}_{k \in \N}$ such that $A\left( \mathcal U_k\right) \cong C_0\left( \mathcal U_k \right) \otimes \mathcal{K}$ for any $k \in \N$. There is a family $\left\lbrace a_k \in C_0\left( \mathcal X\right)_+  \right\rbrace_{k \in \N}$ such that 
	\begin{itemize}
		\item $\supp a_k \subset  \mathcal U_k$, 
		\item 	
		\be\label{ctr_u_eqn}
		1_{C_b\left( \mathcal{X}\right) } = \sum_{k=0}^\infty a_k
		\ee where sum of the series means the strict convergence (cf. Definition \ref{strict_topology}).
	\end{itemize}

	There is an enumerable family $\left\{e_k\in \mathcal{K} \right\}_{k \in \N}$ of rank-one positive mutually orthogonal operators such that
\be\label{ctr_k_eqn}
	1_{M\left(\mathcal K\right) }= \sum_{k=0}^\infty e_k
\ee
	where above sum assumes strict topology (cf. Definition \ref{strict_topology}).
	The family of products $\left\lbrace u_{jk}= a_j\otimes e_k\right\rbrace_{j,k \in \N} $ is enumerable and let us introduce an enumeration of $\left\lbrace u_{jk} \right\rbrace_{j,k \in \N}$, i.e. $\left\lbrace u_{jk} \right\rbrace_{j,k \in \N}= \left\lbrace u_{p} \right\rbrace_{p \in \N}$. From \eqref{ctr_u_eqn} and \eqref{ctr_k_eqn} it follows that
	\be\label{ctr_q_eqn}
	1_{M\left(A\right) }= \sum_{\substack{j=1 \\ k=1}}^\infty u_{jk}=\sum_{p=0}^\infty u_p.
	\ee
If $h \in A$ is given by
	$$
	h = \sum_{p=0}^\infty \frac{1}{2^p} u_p
	$$
	and $\tau: A \to \C$ is a state such that $\tau\left( h\right) = 0$ then  from $u_p > 0$ for any $p \in \N$ it follows that $\tau\left(u_p \right) = 0$ for any $p \in \N$.
	However from \eqref{ctr_q_eqn} it turns out
	$$
	1 = \tau\left( 1_{M\left(A\right) }\right)= \tau\left(\sum_{p=0}^\infty u_p \right) = \sum_{p=0}^\infty \tau\left( u_p \right),
	$$
	and above equation contradicts with  $\tau\left(u_p \right) = 0$ for any $p \in \N$. It follows that $\tau\left( h\right) \neq 0$	for any state $\tau$, i.e. $h$ is strictly positive element of $A$.
	Similarly one can prove that $h$ is strictly positive element of $A\left(  \widetilde{\mathcal X}\right)$ because
	$$
		1_{M\left(A\left(  \widetilde{\mathcal X}\right)\right) }= \sum_{p=0}^\infty u_p.
	$$
From the Proposition \ref{mult_str_pos_prop} it follows that there is the natural injective *-homomorphism $f: M\left( A\right) \hookto M\left(A\left(  \widetilde{\mathcal X}\right) \right)$. Clearly $g f\left( a\right) = f\left(ga \right)= f\left( a\right) $ for any $a \in A$ and $g \in G$, it follows that $f\left( M\left( A\right)\right) \subset M\left(  \widetilde{A} \right)^G$, or equivalently $M\left( A\right) \subset M\left(  \widetilde{A} \right)^G$. Otherwise from the Lemma  \ref{ind_mult_inv_lem} one has $ M\left(  \widetilde{A} \right)^G\subset M\left( A\right)$. Taking into account mutually inverse inclusions $ M\left(  \widetilde{A} \right)^G\subset M\left( A\right)$ and $M\left( A\right) \subset M\left(  \widetilde{A} \right)^G$ we conclude that
	$$
	M\left(A \right) \cong M\left( A\left(  \widetilde{\mathcal X}\right) \right)^G.
	$$
\end{proof}

\begin{lem}\label{ctr_fin_lem}
	Let $A$ be a continuous trace algebra, and let $\hat A = \mathcal X$ be the spectrum of $A$. Suppose that $\mathcal X$ is a  locally compact second-countable Hausdorff space, and let $\pi: \widetilde{\mathcal X} \to \mathcal X$ be a finite-fold covering with compactification. Then the triple
	 $\left( A, A\left(\widetilde{\mathcal X} \right),G = G\left(\widetilde{\mathcal X}~|~\mathcal X \right)\right)$ is a finite-fold noncommutative covering with compactification.
\end{lem}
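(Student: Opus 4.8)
The plan is to verify, for the triple $\left(A,A\left(\widetilde{\mathcal X}\right),G\right)$ with $G=G\left(\widetilde{\mathcal X}~|~\mathcal X\right)$, the three conditions of Definition \ref{fin_comp_def}. Since $\pi$ is a finite-fold covering with compactification, I first fix compactifications $\mathcal X\hookto\mathcal Y$, $\widetilde{\mathcal X}\hookto\widetilde{\mathcal Y}$ and a finite-fold covering $\overline\pi:\widetilde{\mathcal Y}\to\mathcal Y$ with $\pi=\overline\pi|_{\widetilde{\mathcal X}}$. Here $\mathcal Y,\widetilde{\mathcal Y}$ are compact Hausdorff; a locally compact subspace of a Hausdorff space being open in its closure, $\mathcal X$ (resp. $\widetilde{\mathcal X}$) is open and dense in $\mathcal Y$ (resp. $\widetilde{\mathcal Y}$); one checks from $\pi$ being a covering that $\widetilde{\mathcal X}=\overline\pi^{-1}\left(\mathcal X\right)$, so that $\overline\pi$ may be taken to be regular with $G\left(\widetilde{\mathcal Y}~|~\mathcal Y\right)\cong G$ restricting to the $G$-action on $\widetilde{\mathcal X}$. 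Write $\widetilde A:=A\left(\widetilde{\mathcal X}\right)$; recall $\widetilde A$ is a continuous trace algebra with spectrum $\widetilde{\mathcal X}$, that $\widetilde{\mathcal X}$ is second-countable locally compact Hausdorff, and that $A=\widetilde A^{G}$ by \eqref{ctr_inv_eqn}. Since the centre of the multiplier algebra of a continuous trace algebra is the algebra of bounded continuous functions on its spectrum, the compactifications give central unital inclusions $C\left(\mathcal Y\right)\subset Z\left(M\left(A\right)\right)$ and $C\left(\widetilde{\mathcal Y}\right)\subset Z\left(M\left(\widetilde A\right)\right)$.

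Next I would put $B:=C\left(\mathcal Y\right)+A\subset M\left(A\right)$ and $\widetilde B:=C\left(\widetilde{\mathcal Y}\right)+\widetilde A\subset M\left(\widetilde A\right)$. These are unital $C^*$-algebras: a closed ideal plus a closed $*$-subalgebra is always closed, since the injective $*$-homomorphism from the subalgebra modulo its intersection with the ideal into the quotient $C^*$-algebra is isometric. By construction $A$ (resp. $\widetilde A$) is an essential ideal of $M\left(A\right)$ (resp. $M\left(\widetilde A\right)$), hence of $B$ (resp. $\widetilde B$), and the inclusion $A\hookto\widetilde A$ together with $\overline\pi^{*}:C\left(\mathcal Y\right)\hookto C\left(\widetilde{\mathcal Y}\right)$ induces an injective $*$-homomorphism $B\hookto\widetilde B$. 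Condition \ref{fin_comp_def}(c), $G\widetilde A=\widetilde A$, is just the given $G$-action on $A\left(\widetilde{\mathcal X}\right)$; and once $\widetilde B^{G}=B$ is known, the identity $A=B\cap\widetilde A$ needed for \ref{fin_comp_def}(a) drops out of $B\cap\widetilde A\subset\widetilde B^{G}\cap\widetilde A=\widetilde A^{G}=A\subset B\cap\widetilde A$.

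The substance is condition \ref{fin_comp_def}(b): that $\left(B,\widetilde B,G\right)$ is a unital noncommutative finite-fold covering (Definition \ref{fin_def_uni}). The $G$-action on $\widetilde B$ (on $\widetilde A$ as given, on $C\left(\widetilde{\mathcal Y}\right)$ through $\widetilde{\mathcal Y}$) is involutive and, $\overline\pi$ being a nontrivial regular covering, non-degenerate. For $\widetilde B^{G}=B$ I would use the conditional expectation $E=\tfrac{1}{|G|}\sum_{g\in G}g$, which projects $\widetilde B$ onto $\widetilde B^{G}$ and, preserving the decomposition $\widetilde B=C\left(\widetilde{\mathcal Y}\right)+\widetilde A$, maps $\widetilde B$ into $C\left(\widetilde{\mathcal Y}\right)^{G}+\widetilde A^{G}=C\left(\mathcal Y\right)+A=B$ (regularity giving $C\left(\widetilde{\mathcal Y}\right)^{G}=C\left(\mathcal Y\right)$ and \eqref{ctr_inv_eqn} giving $\widetilde A^{G}=A$); the reverse inclusion $B\subset\widetilde B^{G}$ is clear. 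The main assertion left is that $\widetilde B$ is a finitely generated projective $B$-Hilbert module for $\langle x,y\rangle_{\widetilde B}=\sum_{g\in G}g\left(x^{*}y\right)$. For this I would carry the partition-of-unity frame of \eqref{ctr_unity_eqn} over to the compact base $\mathcal Y$: choose a finite open cover $\left\{\mathcal U_\iota\right\}_{\iota\in I_0}$ of $\mathcal Y$ by sets evenly covered by $\overline\pi$, a subordinate partition of unity $1=\sum_\iota e_\iota^{2}$ with $\supp e_\iota\subset\mathcal U_\iota$, and a sheet $\widetilde{\mathcal U}_\iota\subset\overline\pi^{-1}\left(\mathcal U_\iota\right)$ for each $\iota$, and let $\widetilde e_\iota\in C\left(\widetilde{\mathcal Y}\right)\subset\widetilde B$ equal $e_\iota\circ\overline\pi$ on $\widetilde{\mathcal U}_\iota$ and $0$ elsewhere. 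Because $G$ acts freely and permutes the sheets over each $\mathcal U_\iota$ simply transitively, one has $\widetilde e_\iota\,g\!\left(\widetilde e_\iota\right)=0$ for $g\neq e$ and $\sum_{\left(g,\iota\right)\in G\times I_0}\!\left(g\widetilde e_\iota\right)^{2}=\left(\sum_\iota e_\iota^{2}\right)\circ\overline\pi=1_{\widetilde B}$, whence $\sum_{\left(g,\iota\right)}\left(g\widetilde e_\iota\right)\,\langle g\widetilde e_\iota,b\rangle_{\widetilde B}=b$ for every $b\in\widetilde B$; equivalently $\widetilde B\cong p\,B^{N}$ with $N=|G|\,|I_0|$ and $p=\bigl(\langle g\widetilde e_\iota,g'\widetilde e_{\iota'}\rangle_{\widetilde B}\bigr)\in\mathbb{M}_N\left(B\right)$ a self-adjoint idempotent. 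This gives \ref{fin_comp_def}(b), completes \ref{fin_comp_def}(a), and hence $\left(A,A\left(\widetilde{\mathcal X}\right),G\right)$ is a noncommutative finite-fold covering with compactification.

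I expect the real obstacle to be exactly this finite generation and projectivity of $\widetilde B$ over $B$ — the continuous-trace analogue of Theorem \ref{pavlov_troisky_thm} — together with the bookkeeping around the compactifications: that $B=C\left(\mathcal Y\right)+A$ really is a $C^*$-algebra, and the transfer of regularity and of the covering group from $\pi$ to $\overline\pi$ by density, via the local identification $\widetilde{\mathcal X}=\overline\pi^{-1}\left(\mathcal X\right)$. If $\widetilde{\mathcal Y}$ is disconnected one first passes to its connected components, which leaves the continuity and partition-of-unity arguments intact.
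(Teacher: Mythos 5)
Your proof is correct and follows the same overall route as the paper's: both verify conditions (a)--(c) of Definition \ref{fin_comp_def} by embedding $A$ and $A\left(\widetilde{\mathcal X}\right)$ into unital algebras built from the compactifications $\mathcal Y$, $\widetilde{\mathcal Y}$ inside the multiplier algebras, and both obtain finite generation and projectivity of $\widetilde B$ over $B$ from the partition-of-unity frame \eqref{ctr_unity_eqn}. The differences are local. The paper takes $B = C\left(\mathcal Y\right)M\left(A\right)$ and $\widetilde B = C\left(\widetilde{\mathcal Y}\right)M\left(A\left(\widetilde{\mathcal X}\right)\right)$ (which, since $1\in C\left(\mathcal Y\right)$, are just the multiplier algebras) and derives $\widetilde B^G = B$ from Lemma \ref{ctr_mult_lem}, i.e. from $M\left(A\left(\widetilde{\mathcal X}\right)\right)^G\cong M\left(A\right)$; you take the smaller $B = C\left(\mathcal Y\right)+A$, $\widetilde B = C\left(\widetilde{\mathcal Y}\right)+A\left(\widetilde{\mathcal X}\right)$ and get $\widetilde B^G=B$ from the conditional expectation $\frac{1}{\left|G\right|}\sum_{g\in G}g$, which bypasses Lemma \ref{ctr_mult_lem} entirely (you only need the Dauns--Hofmann centrality $C\left(\widetilde{\mathcal Y}\right)\subset Z\left(M\left(A\left(\widetilde{\mathcal X}\right)\right)\right)$ for the frame computation). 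For projectivity the paper appeals to the Kasparov stabilization theorem after finite generation, while you exhibit the Gram projection $p\in\mathbb{M}_N\left(B\right)$ explicitly; both are standard. Your added discussion of $\widetilde{\mathcal X}=\overline\pi^{-1}\left(\mathcal X\right)$ and of extending the $G$-action to $\widetilde{\mathcal Y}$ addresses a point the paper passes over silently (it simply asserts $G\widetilde{\mathcal Y}=\widetilde{\mathcal Y}$), though your justification there is itself only sketched, so on that point the two arguments are at the same level of rigor.
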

\begin{proof}
We need check conditions (a) - (c) of the Definition \ref{fin_comp_def}.\\
(a)
	There is the action of $G$ on $A\left(\widetilde{\mathcal X} \right)$ induced by the action of $G$ on $\widetilde{\mathcal X}$.
	From \eqref{ctr_inv_eqn} it turns out that  $A = A\left(\widetilde{\mathcal X} \right)^G$ and there is an injective *-homomorphism $A \hookto A\left(\widetilde{\mathcal X} \right)$. 
	Denote by $M\left(A \right)$ and  $M\left( A\left(\widetilde{\mathcal X}\right)  \right)$ multiplier algebras of $A$ and $A\left(\widetilde{\mathcal X} \right)$. 
	Denote by $\mathcal X \hookto \mathcal Y$, $\widetilde{\mathcal X}\hookto\widetilde{\mathcal Y}$ compactifications such that $\widetilde{   \pi}:\widetilde{\mathcal Y}\to \mathcal Y$ is a (topological) finite covering and  $\pi= \widetilde{   \pi}|_{\widetilde{\mathcal X}}$. From $C_b\left(\mathcal X \right)\subset  M\left( A\right)$, $C_b\left(\widetilde{\mathcal X} \right) \subset M\left( A\left(\widetilde{\mathcal X} \right)\right)$ and $C\left(\mathcal Y \right) \subset C_b\left(\mathcal X \right)$, $C\left(\widetilde{\mathcal Y} \right)\subset C_b\left(\widetilde{\mathcal X} \right)$ it follows that  $C\left(\mathcal Y \right) \subset M\left(A \right) $, $C\left(\widetilde{\mathcal Y} \right)\subset M\left( A\left(\widetilde{\mathcal X} \right)\right) $.
 If $B = C\left(\mathcal Y \right)  M\left( A\right)$ and $\widetilde{B} =  C\left(\widetilde {\mathcal Y} \right)    M\left( A\left(\widetilde{\mathcal X} \right) \right)$ then $A$ (resp. $A\left(\widetilde{\mathcal X} \right)$) is an essential ideal of $B$ (resp. $\widetilde{B}$).  Clearly $A = B \bigcap A\left( \widetilde{\mathcal X}\right)$. \\
 (b)
  Since $G\widetilde {\mathcal Y}= \widetilde {\mathcal Y}$ the action $G\times M\left( A\left(\widetilde{\mathcal X} \right)\right)\to M\left( A\left(\widetilde{\mathcal X} \right)\right)$ induces an action $G \times \widetilde B\to\widetilde B$. From the Lemma \ref{ctr_mult_lem} on has  the natural *-isomorphism $M\left( A\left(\widetilde{\mathcal X} \right)\right)^G \cong M\left( A\right)$. It follows that $B = C\left(\mathcal Y \right)  M\left( A\right)\cong C\left(\mathcal Y \right) M\left( A\left(\widetilde{\mathcal X} \right)\right)^G=\widetilde{B}^G$. From \eqref{ctr_unity_eqn} it turns out that there is a finite family $\left\{e_\iota\in C\left( \mathcal Y\right) \right\}_{\iota \in I}$ such that
 $$
 1_{C\left(\widetilde{\mathcal Y} \right) }=1_{\widetilde{B}  }= \sum_{\iota \in I}\widetilde{e}_\iota \left\rangle \right\langle \widetilde{e}_\iota.
 $$
 It turns out that any $\widetilde{b} \in \widetilde{B}$ is given by
 \begin{equation*}
 \begin{split}
\widetilde{b} = \sum_{\iota \in I}\widetilde{e}_\iota b_\iota~, \\ 
b_\iota = \left\langle \widetilde{b}, \widetilde{e}_\iota \right\rangle_{\widetilde{B}} \in B,
 \end{split}
 \end{equation*} 
 i.e. $\widetilde{B}$ is a finitely generated (by $\left\{e_\iota \right\}_{\iota \in I}$ ) right $B$-module. From the Kasparov Stabilization Theorem \cite{blackadar:ko} it turns out that $\widetilde{B}$ is a projective $B$ module. So $\left(B, \widetilde {B},G \right)$ is an unital finite-fold noncommutative covering. \\
 (c) Follows from $G \widetilde{\mathcal X}= \widetilde{\mathcal X}$.
\end{proof}
\begin{thm}\label{ctr_fin_thm}
	Let $A$ be a continuous trace algebra, and let $\hat A = \mathcal X$ be the spectrum of $A$. Suppose that $\mathcal X$ is a  locally compact second-countable Hausdorff space, and let $\pi: \widetilde{\mathcal X} \to \mathcal X$ be a finite-fold covering. Then the triple
	$\left( A, A\left(\widetilde{\mathcal X} \right),G = G\left(\widetilde{\mathcal X}~|~\mathcal X \right)\right)$ is a finite-fold noncommutative covering.
\end{thm}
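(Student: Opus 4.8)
The plan is to verify conditions (a) and (b) of Definition \ref{fin_def} for the triple $\left(A, A\left(\widetilde{\mathcal X}\right), G\right)$, following closely the proof of Theorem \ref{comm_fin_thm}, with Lemma \ref{ctr_fin_lem} playing the role that Lemma \ref{comm_fin_lem} played there. The action of $G$ on $\widetilde{\mathcal X}$ induces an action on $A\left(\widetilde{\mathcal X}\right)$, whose spectrum is again $\widetilde{\mathcal X}$; since $G$ acts faithfully on $\widetilde{\mathcal X}$ this induced action is involutive and non-degenerated, and since $G$ is finite, \eqref{ctr_inv_eqn} gives $A\cong A\left(\widetilde{\mathcal X}\right)^G$, which is condition (a).

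For condition (b) I would obtain the required family of ideals by localizing over the base. Let $\left\{\mathcal U_\la\subset\mathcal X\right\}_{\la\in\La}$ be the family of all open subsets of $\mathcal X$ with compact closure $\overline{\mathcal U}_\la$; local compactness of $\mathcal X$ gives $\mathcal X=\bigcup_{\la\in\La}\mathcal U_\la$, and this family is directed under inclusion. Because $\pi$ is finite-fold, $\pi^{-1}\left(\overline{\mathcal U}_\la\right)$ is compact, so $\overline{\mathcal U}_\la$ and $\pi^{-1}\left(\overline{\mathcal U}_\la\right)$ are compactifications of $\mathcal U_\la$ and $\pi^{-1}\left(\mathcal U_\la\right)$, and the restriction of $\pi$ exhibits $\pi^{-1}\left(\mathcal U_\la\right)\to\mathcal U_\la$ as a finite-fold covering with compactification. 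Set $\widetilde I_\la=A\left(\widetilde{\mathcal X}\right)\left(\pi^{-1}\left(\mathcal U_\la\right)\right)$; this is a closed ideal of $A\left(\widetilde{\mathcal X}\right)$, it is $G$-invariant since $G\pi^{-1}\left(\mathcal U_\la\right)=\pi^{-1}\left(\mathcal U_\la\right)$, and $\widetilde I_\la\bigcap A=A\left(\mathcal U_\la\right)$. One checks from Definition \ref{ctr_cov_defn} and Proposition \ref{continuous_trace_c_a_proposition} that $\widetilde I_\la$ is naturally isomorphic to the covering of $A\left(\mathcal U_\la\right)$ induced by $\pi|_{\pi^{-1}\left(\mathcal U_\la\right)}$, so Lemma \ref{ctr_fin_lem} applies and yields that $\left(\widetilde I_\la\bigcap A,\widetilde I_\la,G\right)$ is a noncommutative finite-fold covering with compactification.

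It remains to check the density statements. A continuous trace algebra with Hausdorff spectrum is a $C_0(\mathcal X)$-algebra, so the elements of compact support are dense (this also follows from the density of the Pedersen ideal together with Proposition \ref{continuous_trace_c_a_proposition}); any such element has its support contained in a single $\mathcal U_\la$, since the family is directed, hence lies in $A\left(\mathcal U_\la\right)=\widetilde I_\la\bigcap A$, and therefore $\bigcup_\la\left(\widetilde I_\la\bigcap A\right)$ is dense in $A$. The same argument upstairs, using $\widetilde{\mathcal X}=\bigcup_\la\pi^{-1}\left(\mathcal U_\la\right)$ and that $A\left(\widetilde{\mathcal X}\right)$ is a continuous trace algebra with spectrum $\widetilde{\mathcal X}$, shows that $\bigcup_\la\widetilde I_\la$ is dense in $A\left(\widetilde{\mathcal X}\right)$. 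Together with condition (a) and the per-$\la$ statement, this is exactly Definition \ref{fin_def}, so $\left(A, A\left(\widetilde{\mathcal X}\right), G\right)$ is a noncommutative finite-fold covering.

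I expect the main obstacle to be the bookkeeping in the second paragraph: precisely, confirming that restricting the induced covering $A\left(\widetilde{\mathcal X}\right)$ to the open set $\pi^{-1}\left(\mathcal U_\la\right)$ reproduces the covering of $A\left(\mathcal U_\la\right)$ induced by $\pi|_{\pi^{-1}\left(\mathcal U_\la\right)}$ (so that Lemma \ref{ctr_fin_lem} genuinely applies to the local pieces), together with identifying $\widetilde I_\la\bigcap A$ with $A\left(\mathcal U_\la\right)$ and placing a compactly supported element into a single ideal via the $C_0(\mathcal X)$-module structure. The purely covering-theoretic facts used here — that $\pi^{-1}$ of a relatively compact set is compact when $\pi$ is finite-fold, and that the restriction of a covering to the preimage of a subspace is again a covering — are routine.
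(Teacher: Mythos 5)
Your proposal is correct and takes essentially the same route as the paper's own proof: condition (a) from $A=A\left(\widetilde{\mathcal X}\right)^G$, and condition (b) by covering $\mathcal X$ with relatively compact open sets $\mathcal U_\la$, forming the $G$-invariant ideals $\widetilde I_\la=A\left(\pi^{-1}\left(\mathcal U_\la\right)\right)$ with $\widetilde I_\la\bigcap A=A\left(\mathcal U_\la\right)$, and invoking Lemma \ref{ctr_fin_lem} on each piece. Your density argument (directedness of the family plus density of compactly supported elements) is slightly more explicit than the paper's, which simply appeals to Definition \ref{ctr_cov_defn}, but the substance is identical.
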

\begin{proof}
	We need check (a), (b) of the Definition \ref{fin_def}.\\
	(a) Follows from $\mathcal X = \widetilde{\mathcal X}/G$.
	\\
	(b)
Let us consider a family $\left\{ \mathcal U_\la \subset \mathcal X\right\}_{\la \in \La}$ of open sets such that
\begin{itemize}
\item $\mathcal X = \bigcup_{\la \in \La} \mathcal U_\la$,
\item The closure $\overline{ \mathcal U}_\la$ of  $\mathcal U_\la$ in $\mathcal X$ is compact $\forall \la \in \La$.
\end{itemize}
Clearly $\pi^{-1}\left(\overline{ \mathcal U}_\la \right)\to \overline{ \mathcal U}_\la$ is a covering, so  $\pi^{-1}\left({ \mathcal U}_\la \right)\to { \mathcal U}_\la$ is a covering with compactification.
If  $\widetilde{I}_\la \stackrel{\mathrm{def}}{=}A\left( \pi^{-1}\left( {\mathcal U}_\la\right)\right)  \subset A\left( \widetilde{\mathcal X}\right)$ and $I_\la = \widetilde{I}_\la \bigcap A$ then from  $G\pi^{-1}\left( \mathcal U_\la\right) = \pi^{-1}\left( \mathcal U_\la\right)$ it follows that
\bean
G\widetilde{I}_\la = \widetilde{I}_\la,\\
I_\la = A\left({\mathcal U}_\la \right), 
\eean
i.e. $\widetilde{I}_\la$ satisfies to \eqref{gi-i}.
 From the Lemma \ref{ctr_fin_lem} it follows that there is a finite-fold noncommutative covering with compactification 
$
\left(I_\la,  \widetilde{I}_\la, G \right) =\left(A\left( \mathcal U_\la\right), A\left( \pi^{-1} \left(\mathcal U_\la \right)\right),G \right)
$. From the Definition \ref{ctr_cov_defn} and  $\mathcal X = \bigcup_{\la \in \La} \mathcal U_\la$ it follows that $\bigcup_{\la \in \La}  I_\la = \bigcup_{\la \in \La} A\left( \mathcal U_\la\right)$ (resp. $\bigcup_{\la \in \La}  \widetilde{I}_\la = \bigcup_{\la \in \La} A\left( \pi^{-1}\left( \mathcal U_\la\right) \right)$) is dense in $A$ (resp. $A\left( \widetilde{\mathcal X}\right)$.
\end{proof}

\subsection{Infinite coverings}\label{ctr_case_sec}
\paragraph*{}
Let $A$ be a continuous trace $C^*$-algebra such that the spectrum $\hat A= \mathcal X$ of is a second-countable locally compact Hausdorff space.  Suppose that
\begin{equation*}
\mathfrak{S}_\mathcal{X} = \left\{\mathcal{X}=\mathcal{X}_0 \xleftarrow{}... \xleftarrow{} \mathcal{X}_n \xleftarrow{} ... \right\}
\end{equation*}

is  a topological  finite covering sequence. From the Theorem \ref{ctr_fin_thm} it turns out that
$$\mathfrak{S}_{A\left( \mathcal{X}\right)}=\left\{A = A\left( \mathcal{X}_0\right)\to ... \to A\left( \mathcal{X}_n\right) \to ...\right\} $$ is an algebraical  finite covering sequence.
%4.12.1
%\begin{thm}\cite{pedersen:ca_aut}
%	Let $\pi: A \to B\left(\H \right) $ be a non-degenerate separable representation of the separable $C^*$-algebra. There exist:
%\begin{enumerate}
%	\item[(i)] a standard measure $\mu$ on $\breve{A}$;
%	\item[(ii)] a Borel field $\left\{\H_t~|~t \in \breve{A}\right\}$ of Hilbert spaces;
%	\item[(iii)] an isometry $u$ from $\int^\oplus \H_t d \mu_t $ onto $\H$;
%	\item[(iv)] a Borel field $\left\{\rho_t~|~t \in \breve{A}\right\}$ of factor representations of $A$ such that $\rho_t \in t$, on a standard set $T$ of $\breve{A}$ which is M-Borel and has outer measure 1 and such that %u\pi u = \int^\oplus \rho_t d \mu_t $. 
%\end{enumerate}
%If $\nu$, $\left\{\H'_t~|~t \in \breve{A}\right\}$ and $\left\{\rho_t~|~t \in \breve{A}\right\}$ satisfy the same conditions then $\nu$ is equivalent to $\mu$ and there is a D-Borel set $N$ in $\breve{A}$ with $\mu\left(N \right)= 0$ and a family $\left\{v_t~|~t \in \breve{A}\backslash N\right\}$ such that $v_t \H_t = \H'_t$ and $v^*_t \rho_t v_t = \rho_t$ for all $t$ in $T \backslash N$.
%\end{thm}
If $\widehat{A} = \varinjlim A\left( \mathcal{X}_n\right)$ then from the Theorem \ref{direct_lim_state_thm} it follows that there is the spectrum of $\widehat{A}$ is homeomorphic to  $\widehat{  \mathcal X}= \varprojlim \mathcal X_n$.
\begin{lem}
	If $\mathfrak{S}_\mathcal{X} = \left\{\mathcal{X}=\mathcal{X}_0 \xleftarrow{}... \xleftarrow{} \mathcal{X}_n \xleftarrow{} ... \right\} \in \mathfrak{FinTop}$ and  $\overline{\mathcal X}$ is disconnected inverse limit of $\mathfrak{S}_\mathcal{X}$, then there is the natural inclusion of $\widehat{A}'' \to A\left(\overline{\mathcal X} \right)'' $ of von Neumann enveloping algebras.
\end{lem}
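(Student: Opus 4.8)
The plan is to follow the commutative argument recorded just before this lemma. Write $\widehat{\mathcal X}=\varprojlim\mathcal X_n$ for the ordinary topological inverse limit; by construction $\overline{\mathcal X}$ has the same underlying set, equipped with the finer topology generated by the special subsets, so there is a natural continuous bijection $\overline{\mathcal X}\to\widehat{\mathcal X}$ and, as in the proof of Lemma \ref{top_universal_covering_lem}, each projection $\overline{\pi}_n\colon\overline{\mathcal X}\to\mathcal X_n$ is a covering, in particular a local homeomorphism; moreover $\overline{\mathcal X}$ is locally compact Hausdorff because the special sets form a base and may be chosen with compact closures. First I would unwind Definition \ref{ctr_cov_defn} to identify the continuous-trace bundle over $\overline{\mathcal X}$ whose section algebra vanishing at infinity is $A\left(\overline{\mathcal X}\right)$ with the $\overline{\pi}_n$-pullback of the bundle over $\mathcal X_n$ underlying $A\left(\mathcal X_n\right)$: both are obtained from the bundle of $A$ over $\mathcal X$ by the same local gluing recipe, and they are compatible since $\overline{\pi}_0=\pi^n\circ\overline{\pi}_n$.

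Using this, I would build for each $n$ an injective $*$-homomorphism $\iota_n\colon A\left(\mathcal X_n\right)\hookrightarrow M\left(A\left(\overline{\mathcal X}\right)\right)$ by pulling sections back along $\overline{\pi}_n$: the pullback of a section over $\mathcal X_n$ is a bounded continuous section over $\overline{\mathcal X}$ which multiplies $A\left(\overline{\mathcal X}\right)$ into itself but, because $\overline{\pi}_n$ is not proper, need not itself vanish at infinity --- exactly as, commutatively, $C_0\left(\widehat{\mathcal X}\right)$ maps into $C_b\left(\overline{\mathcal X}\right)=M\left(C_0\left(\overline{\mathcal X}\right)\right)$ rather than into $C_0\left(\overline{\mathcal X}\right)$. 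Injectivity of $\iota_n$ follows from surjectivity of $\overline{\pi}_n$, and the identities $\overline{\pi}_n=\pi^{n+1}_n\circ\overline{\pi}_{n+1}$ make the $\iota_n$ compatible with the connecting $*$-homomorphisms $A\left(\mathcal X_n\right)\to A\left(\mathcal X_{n+1}\right)$ of $\mathfrak S_{A\left(\mathcal X\right)}$; hence they pass to the $C^*$-inductive limit and give an injective isometric $*$-homomorphism $\iota\colon\widehat A=\varinjlim A\left(\mathcal X_n\right)\hookrightarrow M\left(A\left(\overline{\mathcal X}\right)\right)$.

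Finally I would pass to enveloping von Neumann algebras by a purely formal step. An injective $*$-homomorphism $\varphi\colon B\to C$ of $C^*$-algebras is isometric, so by Hahn--Banach its transpose $C^*\to B^*$ is surjective and therefore the second transpose $\varphi^{**}\colon B^{**}\to C^{**}$ is injective; under the identifications $B''\cong B^{**}$, $C''\cong C^{**}$ of Proposition \ref{env_alg_sec_dual} this is a normal injective $*$-homomorphism $B''\hookrightarrow C''$. Applying it to $\iota$ yields $\widehat A''\hookrightarrow M\left(A\left(\overline{\mathcal X}\right)\right)''$, and applying it to the canonical embedding $M\left(A\left(\overline{\mathcal X}\right)\right)\hookrightarrow A\left(\overline{\mathcal X}\right)''$ of the multiplier algebra into the bidual yields $M\left(A\left(\overline{\mathcal X}\right)\right)''\hookrightarrow A\left(\overline{\mathcal X}\right)''$; composing gives the asserted natural inclusion $\widehat A''\hookrightarrow A\left(\overline{\mathcal X}\right)''$.

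The main obstacle is the first step: making the phrase ``$A\left(\overline{\mathcal X}\right)$ is the $\overline{\pi}_n$-pullback of $A\left(\mathcal X_n\right)$'' precise enough to define $\iota_n$ as a $*$-homomorphism. Concretely this requires unpacking the gluing in Definition \ref{ctr_cov_defn} (the algebraic direct sum of the local pieces $\widetilde A\left(\widetilde{\mathcal U}\right)\approx A\left(\mathcal U\right)$ modulo the overlap ideal $I$), checking that $\overline{\pi}_n$ carries special sets homeomorphically onto open subsets of $\mathcal X_n$, and verifying that a bounded continuous section of the pulled-back bundle really does act as a multiplier of $A\left(\overline{\mathcal X}\right)$. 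Once that is in place, the inductive-limit passage and the bidual argument are routine and rely only on results already available (Theorem \ref{ctr_fin_thm}, Proposition \ref{env_alg_sec_dual}).
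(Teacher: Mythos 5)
Your route is the same as the paper's: the paper's entire proof consists of three sentences, namely that the surjections $\overline{\mathcal X}\to\mathcal X_n$ give injective $*$-homomorphisms $A\left(\mathcal X_n\right)\hookrightarrow M\left(A\left(\overline{\mathcal X}\right)\right)$, that these induce an injective $*$-homomorphism $\widehat A\hookrightarrow M\left(A\left(\overline{\mathcal X}\right)\right)$, and that ``it turns out'' one gets $\widehat A''\to A\left(\overline{\mathcal X}\right)''$. Your first two paragraphs are a faithful and more careful elaboration of the first two sentences, and that part is fine.

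The last step is where you need to be careful, and your specific mechanism does not work as written. The second transpose of the canonical embedding $M\left(B\right)\hookrightarrow B''$ (with $B=A\left(\overline{\mathcal X}\right)$) is a map $M\left(B\right)^{**}\to\left(B''\right)^{**}=B^{****}$, not a map into $B^{**}$, so you cannot obtain $M\left(B\right)''\hookrightarrow B''$ this way. The standard substitute is to represent everything on the universal Hilbert space $H_u$ of $B$: since $M\left(B\right)\subset B''\subset B\left(H_u\right)$ and $B''$ is weakly closed, the faithful representation $\rho\colon\widehat A\to B\left(H_u\right)$ satisfies $\rho\left(\widehat A\right)''\subset B''$, and the normal extension of $\rho$ gives a normal $*$-homomorphism $\widehat A''\to\rho\left(\widehat A\right)''\subset B''$. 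But that extension has kernel $\left(1-z_\rho\right)\widehat A''$, where $z_\rho$ is the central cover of $\rho$, and it is injective only when $z_\rho=1$, i.e.\ when every representation of $\widehat A$ is quasi-contained in $\rho$; this is not a formal consequence of the faithfulness of $\rho$ and is not verified by your argument. To be fair, the paper's own proof asserts the conclusion without addressing this point either, so the gap is shared; but a complete proof must either verify $z_\rho=1$ for this particular $\rho$ or weaken ``natural inclusion'' to ``natural normal $*$-homomorphism restricting to an injection on $\widehat A$,'' which is all that the subsequent applications (e.g.\ the chain $A\left(\overline{\mathcal X}\right)\subset\overline A\subset A\left(\overline{\mathcal X}\right)''$) actually appear to use.
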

\begin{proof}
Surjective maps $\overline{\mathcal X} \to \mathcal{X}_n $ give injective *-homomorphisms $A\left( \mathcal{X}_n\right) \hookto M\left(A\left( \overline{\mathcal X}\right)  \right)$, which induce the injective *-homomorphism $\widehat{A} \hookto   M\left(A\left( \overline{\mathcal X}\right)  \right)$. It turns out the injective *-homomorphism of von Neumann enveloping algebras $\widehat{A}'' \to A\left(\overline{\mathcal X} \right)'' $.
\end{proof}

%\section{Algebraic constructions of topological inverse limits}

\begin{empt}
	Denote by $G_n = G\left(\mathcal X_n~|~\mathcal X \right)$ groups of covering transformations and $\widehat{G} = \varprojlim G_n$. Denote by $\overline{\pi}_n:\overline{  \mathcal X} \to \mathcal X_n$, $\pi^n:  \mathcal X_n \to \mathcal X$ $\pi^m_n:  \mathcal X_m \to \mathcal X_n$ ($m > n$) the natural covering projections.
\end{empt}
\begin{lem}\label{ctr_1_lem}
	If $\overline{  \mathcal U }\subset \overline{  \mathcal X }$ is an open subset mapped homeomorphically onto $\mathcal U \subset \mathcal X$ then any positive element in $\overline{a}\in A\left( \overline{  \mathcal U }\right)_+\subset A\left( \overline{  \mathcal X}\right)_+ $ is special.
\end{lem}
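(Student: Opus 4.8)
The plan is to imitate, in the continuous-trace setting, the proof of Lemma~\ref{comm_c_c_lem}; the essential simplification is that the hypothesis on $\overline{\mathcal U}$ makes $\overline a$ ``disentangled'' already at level~$0$, so that no passage to a large index is required. I would begin by recording that the $\widehat G$-translates of $\overline{\mathcal U}$ are pairwise disjoint. Each $G_n = G\left(\mathcal X_n\mid\mathcal X\right)$ acts freely on the connected space $\mathcal X_n$ (a deck transformation fixing a point is the identity), so $\widehat G = \varprojlim G_n$ acts freely on $\overline{\mathcal X}$. Since $\overline\pi_0\circ g = \overline\pi_0$ for $g\in\widehat G$ and $\overline\pi_0|_{\overline{\mathcal U}}$ is injective, any point of $g\overline{\mathcal U}\cap\overline{\mathcal U}$ would be fixed by $g$; hence $g\overline{\mathcal U}\cap\overline{\mathcal U}=\emptyset$ for $g\neq e$, and the translates $\{g\overline{\mathcal U}\}_{g\in\widehat G}$ are pairwise disjoint. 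Moreover, for every $n$ and every $g\in\widehat G$, $\overline\pi_n$ maps $g\overline{\mathcal U}$ homeomorphically onto $\mathcal U_n:=\overline\pi_n\left(\overline{\mathcal U}\right)$, and $\pi^n$ maps $\mathcal U_n$ homeomorphically onto $\mathcal U$.

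Next I would fix a positive $h\in A\left(\overline{\mathcal X}\right)$ supported in $\overline{\mathcal U}$; the instances needed are $h=\overline a$ and, for arbitrary $z\in A=A\left(\mathcal X_0\right)$, $h=z\overline a z^*$, $h=\left(z\overline a z^*\right)^2$ and $h=f_\eps\left(z\overline a z^*\right)$ — each positive and supported in $\overline{\mathcal U}$, since $z\in A\subset\widehat A\subset M\left(A\left(\overline{\mathcal X}\right)\right)$ acts as a multiplier over $\mathcal X_0$, so $z\overline a z^*\in A\left(\overline{\mathcal X}\right)$ with $\supp\left(z\overline a z^*\right)\subseteq\supp\overline a\subseteq\overline{\mathcal U}$, and functional calculus with $f_\eps$ preserves supports. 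For $n\in\mathbb N^0$ and $\overline G_n=\ker\left(\widehat G\to G_n\right)$, the operators $\{gh\}_{g\in\overline G_n}$ have pairwise orthogonal supports (they live over the disjoint sets $g\overline{\mathcal U}$), so each finite partial sum $\sum_{g\in F}gh$ is an orthogonal sum of positive operators of norm $\|h\|$; by Lemma~\ref{increasing_convergent_w} the series $\sum_{g\in\overline G_n}gh$ converges strongly. Its sum I would identify with the element $s^h_n\in A\left(\mathcal U_n\right)\subset A\left(\mathcal X_n\right)$ obtained by transporting $h\in A\left(\overline{\mathcal U}\right)\cong A\left(\mathcal U\right)\cong A\left(\mathcal U_n\right)$ along the homeomorphisms above: pulled back by $\overline\pi_n$, the section $s^h_n$ agrees fibrewise with $\sum_{g\in\overline G_n}gh$ on $\bigsqcup_{g\in\overline G_n}g\overline{\mathcal U}$ and both vanish off that set. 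Letting $h$ run through the four choices, this yields conditions (a) and (b) of Definition~\ref{special_el_defn} with $a_n=s^{\overline a}_n$, $b_n=s^{z\overline a z^*}_n$, $c_n=s^{\left(z\overline a z^*\right)^2}_n$, $d_n=s^{f_\eps\left(z\overline a z^*\right)}_n$; and condition (c) is automatic, since disjointness of the sheets makes $h\mapsto s^h_n$ multiplicative on positive elements supported in $\overline{\mathcal U}$, so $b_n^2=\left(s^{z\overline a z^*}_n\right)^2=s^{\left(z\overline a z^*\right)^2}_n=c_n$ for every $n$ and $\left\|b_n^2-c_n\right\|=0<\eps$.

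The main obstacle is making the ``transport'' $h\mapsto s^h_n$ precise and checking that $\sum_{g\in\overline G_n}gh$, a priori only a positive element of $A\left(\overline{\mathcal X}\right)''$, is genuinely (the image of) an element of $A\left(\mathcal X_n\right)$ — an honest section vanishing at infinity rather than merely a bounded multiplier of $A\left(\overline{\mathcal X}\right)$. This uses that $\overline\pi_n$ is a covering, so that near any point the continuous-trace bundle over $\overline{\mathcal X}$ is the $\overline\pi_n$-pullback of the one over $\mathcal X_n$ (Definition~\ref{ctr_cov_defn}, Theorem~\ref{ctr_di_do_thm}), together with the inclusion $A\left(\mathcal X_n\right)\hookrightarrow M\left(A\left(\overline{\mathcal X}\right)\right)$; the vanishing at infinity of $s^h_n$ is then inherited from $h\in A\left(\mathcal U\right)$ through the homeomorphism $\mathcal U_n\cong\mathcal U$. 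Once this identification is in place, the argument above goes through and shows that $\overline a$ satisfies (a)--(c), hence is special.
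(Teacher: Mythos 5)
Your proof is correct and follows essentially the same route as the paper's: both identify $\sum_{g\in\ker(\widehat G\to G_n)}g h$ with the image of $h$ under the $*$-isomorphism $A\left(\overline{\mathcal U}\right)\xrightarrow{\approx}A\left(\mathcal U_n\right)\subset A\left(\mathcal X_n\right)$ induced by the homeomorphism $\overline{\mathcal U}\to\mathcal U_n$, and obtain $b_n^2=c_n$ exactly from multiplicativity of that isomorphism. You merely supply details the paper leaves implicit (disjointness of the $\widehat G$-translates of $\overline{\mathcal U}$ and strong convergence of the orthogonal sums), so no further changes are needed.
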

\begin{proof}
	If $\mathcal{U}_n = \overline{\pi}_n\left(\overline{  \mathcal U } \right)$ then there is a *-isomorphism $\overline{\varphi }_n: A\left( \overline{  \mathcal U }\right)\xrightarrow{\approx} A\left( \mathcal{U}_n\right) $.
	 For any $n \in \mathbb{N}^0$ and $z \in A$ and $f_\eps$ given by \eqref{f_eps_eqn} 
\begin{equation*}
\begin{split}
a_n = \sum_{g \in \ker\left( \widehat{G} \to  G_n\right)} g  \overline{a} = \overline{\varphi }_n\left( \overline{a}\right),  \\
b_n = \sum_{g \in \ker\left( \widehat{G} \to  G_n\right)} g \left(z  \overline{a} z^*\right) = z\overline{\varphi }_n\left( \overline{a}\right)z^*,\\
c_n = \sum_{g \in \ker\left( \widehat{G} \to  G_n\right)} g \left(z  \overline{a} z^*\right)^2= \left(z\overline{\varphi }_n\left( \overline{a}\right)z^*\right)^2 ,\\
d_n = \sum_{g \in \ker\left( \widehat{G} \to  G_n\right)} g f_\eps\left(z  \overline{a} z^*\right)=f_\eps\left(z\overline{\varphi }_n\left( \overline{a}\right)z^*\right).
\end{split}
\end{equation*}

From the above equations it follows that $a_n,~ b_n,~ c_n,~ d_n \in A\left(\mathcal X_n \right)$ and $b_n^2 = c_n$, 
i.e. $\overline{   a}$ satisfies to the Definition \ref{special_el_defn}.

\end{proof}
\begin{cor}\label{ctr_c_cor}
	If	$\overline{A}$ is the disconnected  inverse noncommutative limit of $\mathfrak{S}_{A\left( \mathcal{X}\right)}$, then $ A\left(\overline{  \mathcal X} \right) \subset \overline{A}$.
\end{cor}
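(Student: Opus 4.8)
The plan is to reduce the claim to Lemma \ref{ctr_1_lem} together with the basic structure theory of continuous–trace algebras, in close analogy with the commutative case (Corollary \ref{comm_c_c_cor}). First I would recall that $\overline{A}$ is by definition the $C^*$-norm completion of the algebra generated by weakly special elements inside $B\left(\H\right)$, so it suffices to exhibit a dense subset of $A\left(\overline{\mathcal X}\right)$ consisting of (weakly) special elements, and then invoke norm-closure. The natural candidate for such a dense subset is the algebra generated by the local pieces $A\left(\overline{\mathcal U}\right)$, where $\overline{\mathcal U}\subset\overline{\mathcal X}$ ranges over open subsets that are mapped homeomorphically by some $\overline{\pi}_n$ onto an open subset $\mathcal U_n\subset\mathcal X_n$ whose closure is compact; every such $\overline{a}\in A\left(\overline{\mathcal U}\right)_+$ is special by Lemma \ref{ctr_1_lem}.

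The key steps, in order, are as follows. First I would use that $\overline{\mathcal X}$ (as a set, the inverse limit $\widehat{\mathcal X}=\varprojlim\mathcal X_n$, retopologized by special sets) is locally compact Hausdorff with a base of special open sets $\overline{\mathcal U}$ each homeomorphic via $\overline{\pi}_n$ to an open set in some $\mathcal X_n$ with compact closure; this is the content of the construction in Lemma \ref{top_universal_covering_lem} applied to $\mathfrak S_{\mathcal X}$. Second, since $A\left(\overline{\mathcal X}\right)$ is a continuous–trace algebra with spectrum $\overline{\mathcal X}$ (Proposition \ref{continuous_trace_c_a_proposition}), the Pedersen ideal $K\left(A\left(\overline{\mathcal X}\right)\right)$ is dense, and by part (iii) of Proposition \ref{continuous_trace_c_a_proposition} together with local triviality (Theorem \ref{ctr_di_do_thm}, which gives $A\left(\overline{\mathcal U}\right)\cong C_0\left(\overline{\mathcal U}\right)\otimes\mathcal K$ over a special $\overline{\mathcal U}$), every element of $K\left(A\left(\overline{\mathcal X}\right)\right)$ is a finite sum of elements supported in special open sets of compact closure. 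Third, each such locally supported positive element is special by Lemma \ref{ctr_1_lem}, hence lies in $\overline{A}$; products and sums of these stay in $\overline{A}$; passing to the $C^*$-norm closure of $K\left(A\left(\overline{\mathcal X}\right)\right)$ gives all of $A\left(\overline{\mathcal X}\right)$. Therefore $A\left(\overline{\mathcal X}\right)\subset\overline{A}$.

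The main obstacle I anticipate is the second step: making precise that a positive element of the Pedersen ideal of $A\left(\overline{\mathcal X}\right)$, or at least a norm-dense subset thereof, can be written as a finite sum of pieces each \emph{entirely} supported inside one special open set of compact closure. For the commutative case this is immediate from a partition of unity subordinate to a finite cover of the compact support. In the continuous–trace case one must additionally control the $\mathcal K$-valued fibre behaviour, but since a compact subset of $\overline{\mathcal X}$ is covered by finitely many special trivializing opens, one can combine a partition of unity $\sum e_\iota^2=1$ on the (compact) support with the local isomorphisms $A\left(\overline{\mathcal U}_\iota\right)\cong C_0\left(\overline{\mathcal U}_\iota\right)\otimes\mathcal K$ to write $\overline{a}=\sum_\iota e_\iota\overline{a}\,e_\iota$ with each summand supported in $\overline{\mathcal U}_\iota$; then Lemma \ref{ctr_1_lem} applies to $e_\iota\overline{a}\,e_\iota$ (or its positive part), and $\overline{a}\in\overline{A}$ follows. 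The remaining details — that the finite sum of special elements lies in the $*$-algebra generated by weakly special elements, and that norm-density of $K\left(A\left(\overline{\mathcal X}\right)\right)$ upgrades this to $A\left(\overline{\mathcal X}\right)\subset\overline{A}$ — are routine and parallel the argument already given in Corollary \ref{comm_c_c_cor}.
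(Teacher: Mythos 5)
Your proposal is correct and follows essentially the same route as the paper: apply Lemma \ref{ctr_1_lem} to positive elements supported in open sets mapped homeomorphically onto open subsets of $\mathcal X$, and then pass to the $C^*$-norm closure. The ``main obstacle'' you anticipate --- decomposing a dense set of elements of $A\left(\overline{\mathcal X}\right)$ into finitely many locally supported pieces via the Pedersen ideal, local trivializations and a partition of unity --- in fact dissolves, because by Definition \ref{ctr_cov_defn} the algebra $A\left(\overline{\mathcal X}\right)$ is \emph{constructed} as the norm completion of the algebra generated by the local subalgebras $A\left(\overline{\mathcal U}\right)$, so the required density is built into the definition and the paper's proof needs only the one-line closure argument.
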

\begin{proof}
	From the Lemma \ref{ctr_1_lem} it turns out $A\left(\overline{  \mathcal U} \right) \subset \overline{A}$. However $A\left(\overline{  \mathcal X} \right)$ is the $C^*$-norm completion of its subalgebras $A\left(\overline{  \mathcal U} \right) \subset A\left(\overline{  \mathcal X} \right)$.
\end{proof}
\begin{lem}\label{ctr_lem1}
	If $\overline{a} \in A\left(\overline{   \mathcal X }\right)''$ is a special element and  $z \in K\left(A \right)$ is an Abelian element then 	$\overline{b}=	\tr\left(z\overline{a}z \right) \in C_0\left(\overline{\mathcal X} \right)$.

\end{lem}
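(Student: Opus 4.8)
The plan is to realize $\overline b:=\tr(z\overline a z)$ as a bounded Borel function on $\overline{\mathcal X}$, i.e. as a positive element of $C_0(\overline{\mathcal X})''$, and then to verify that $\overline b$ satisfies the hypotheses of Lemma \ref{comm_main_lem}; that lemma then gives $\overline b\in C_0(\overline{\mathcal X})_+$, which is the assertion. First I would record three facts about $z$. Being in $K(A)\subseteq A$, the element $z$ is pulled back from $\mathcal X$ and hence fixed by all of $\widehat G$. Since each covering $A(\mathcal X_n)\to A(\mathcal X_m)$ ($m>n$) and $A(\mathcal X_n)\hookto M(A(\overline{\mathcal X}))$ is locally a $*$-isomorphism over evenly covered open sets, Proposition \ref{abelian_element_proposition} shows that $z$ is again an Abelian element of each $A(\mathcal X_n)$ and of $A(\overline{\mathcal X})$, so $\rho_t(z)$ has rank $\le1$ at every point $t$ of $\mathcal X_n$ or of $\overline{\mathcal X}$. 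Finally, $z$ lying in the Pedersen ideal of the continuous-trace algebra $A$, its support $\supp z\subseteq\mathcal X$ is compact, whence $(\pi^n)^{-1}(\supp z)\subseteq\mathcal X_n$ is compact for every $n$.

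Write $\overline G_n=\ker\!\big(\widehat G\to G(\mathcal X_n\,|\,\mathcal X)\big)$ and apply Definition \ref{special_el_defn} to the special element $\overline a$ with the inner element taken to be $z\in A$ (legitimate since $z=z^{*}$): the series
\[
a_n=\sum_{g\in\overline G_n}g\overline a,\quad b_n=\sum_{g\in\overline G_n}g(z\overline a z),\quad c_n=\sum_{g\in\overline G_n}g(z\overline a z)^2,\quad d_n=\sum_{g\in\overline G_n}g\,f_\eps(z\overline a z)
\]
converge strongly, lie in $A(\mathcal X_n)$, satisfy $\|b_n^2-c_n\|<\eps$ for $n\ge N$, and $b_n=z\,a_n\,z$ because $gz=z$. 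At each $\overline x\in\overline{\mathcal X}$ the operator $\rho_{\overline x}(z\overline a z)$ is a nonnegative scalar times a rank-one projection, so $\overline b(\overline x)=\tr\rho_{\overline x}(z\overline a z)$, $\overline b(\overline x)^2=\tr\rho_{\overline x}\big((z\overline a z)^2\big)$ and $f_\eps\big(\overline b(\overline x)\big)=\tr\rho_{\overline x}\big(f_\eps(z\overline a z)\big)$. Moreover $z\overline a z$ is the strong decreasing limit of the $b_n$ (Lemma \ref{stong_conv_inf_lem}), so $\overline b=\inf_n\tr(b_n)$ is a bounded Borel function, i.e. $\overline b\in C_0(\overline{\mathcal X})''_+$. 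Using $gz=z$, the equivariance of the canonical fibrewise trace (the $\widehat G$-action on $A(\overline{\mathcal X})$ being induced by that on $\overline{\mathcal X}$) and its normality on each fibre, I obtain
\[
\sum_{g\in\overline G_n}g\,\overline b=\tr(b_n),\qquad \sum_{g\in\overline G_n}g\,\overline b^{\,2}=\tr(c_n),\qquad \sum_{g\in\overline G_n}g\,f_\eps(\overline b)=\tr(d_n).
\]

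It then remains to see that $\tr(b_n),\tr(c_n),\tr(d_n)\in C_0(\mathcal X_n)$ and that the square condition of Lemma \ref{comm_main_lem} holds. Each of $b_n,c_n,d_n$ lies in $A(\mathcal X_n)$, is supported in the compact set $(\pi^n)^{-1}(\supp z)$, and has nonnegative rank-$\le1$ images (their ranges lie in that of $\rho_{\tilde x}(z)$), so its fibrewise trace equals its norm function $\tilde x\mapsto\|\rho_{\tilde x}(\cdot)\|$, which is continuous since the continuous-trace algebra $A(\mathcal X_n)$ is locally trivial with fibre $\mathcal K$; hence $\tr(b_n),\tr(c_n),\tr(d_n)\in C_c(\mathcal X_n)\subset C_0(\mathcal X_n)$, and the monotone partial sums of the three series above converge strongly to them by Lemma \ref{increasing_convergent_w}. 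Finally, at each $\tilde x\in\mathcal X_n$ the self-adjoint operators $\rho_{\tilde x}(b_n^2)$ and $\rho_{\tilde x}(c_n)$ both lie in the one-dimensional range of $\rho_{\tilde x}(z)$, so $\rho_{\tilde x}(b_n^2-c_n)$ has rank $\le1$; combining this with the rank-$\le1$ identity $\tr(b_n)^2=\tr(b_n^2)$ we get
\[
\Big\|\big(\textstyle\sum_{g\in\overline G_n}g\overline b\big)^2-\sum_{g\in\overline G_n}g\overline b^{\,2}\Big\|=\big\|\tr(b_n^2-c_n)\big\|\le\|b_n^2-c_n\|<\eps
\]
for $n\ge N$. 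Thus $\overline b$ satisfies the hypotheses of Lemma \ref{comm_main_lem}, and that lemma yields $\overline b=\tr(z\overline a z)\in C_0(\overline{\mathcal X})_+$.

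The step I expect to be the main obstacle is the trace book-keeping of the second paragraph: justifying rigorously that the canonical fibrewise trace intertwines the two $\widehat G$-actions and commutes with the strongly convergent infinite sums (via normality), and --- slightly more delicate --- that $\tr(z\,a_n\,z)$ is a genuinely continuous (not merely upper semicontinuous) function on $\mathcal X_n$, for which one uses both that $A(\mathcal X_n)$ has continuous trace and that $z$ (hence $b_n,c_n,d_n$) has rank-$\le1$ images. Once these points are in place the rest is the formal verification above feeding into Lemma \ref{comm_main_lem}.
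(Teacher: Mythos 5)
Your proposal is correct and follows essentially the same route as the paper's proof: apply Definition \ref{special_el_defn}(b)--(c) with the inner element $z$, use compactness of $\supp z$ and the Abelian (rank $\le 1$) property to show $\tr(b'_n),\tr(c'_n),\tr(d'_n)\in C_c(\mathcal X_n)$ with $\tr(b'_n)^2=\tr(b_n'^2)$ and $\|\tr(b_n'^2)-\tr(c'_n)\|\le\|b_n'^2-c'_n\|<\eps$, and then invoke Lemma \ref{comm_main_lem}. Your treatment is, if anything, slightly more careful than the paper's on the equivariance of the fibrewise trace and the continuity of $\tr(z a_n z)$.
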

\begin{proof}
	Any Abelian element is positive, hence $z = z^*$. If $f_\eps$ is given by \eqref{f_eps_eqn} and $\overline{b}'= z\overline{a}z$ then from (b) of the Definition \ref{special_el_defn} it turns out 
	\begin{equation*}
\begin{split}
b'_n =  \sum_{g \in \ker\left( \widehat{G} \to  G_n\right)}  g\overline{b}' \in A\left(\mathcal X_n \right),\\ 
c'_n =  \sum_{g \in \ker\left( \widehat{G} \to  G_n\right)} g\overline{b}'^2 \in A\left(\mathcal X_n \right),\\
d'_n =  \sum_{g \in \ker\left( \widehat{G} \to  G_n\right)} gf_\eps\left( \overline{b}'\right) \in A\left(\mathcal X_n \right).
\end{split}
\end{equation*}
From $z \in K\left(A \right)$ it turns out that $\supp \tr\left(z \right)$ is compact. The map $\pi_n :\mathcal X_n \to \mathcal X$ is a finite-fold covering, it turns out $\pi^{-1}_n\left(\supp \tr\left(z \right) \right)$ is compact. If $\supp b'_n ,~ \supp c'_n, ~\supp d'_n  \subset \pi^{-1}_n\left(\supp \tr\left(z \right) \right)$ it turns out that all sets  $\supp b'_n ,~ \supp c'_n,~ \supp d'_n$ are compact. Taking into account that all $b'_n$, $c'_n$, $d'_n$ are Abelian one has $b'_n,~c'_n, ~d'_n \in K \left( A\left(\mathcal X_n \right)\right) $ where $K \left( A\left(\mathcal X_n \right)\right)$ means the Pedersen ideal of $A\left(\mathcal X_n \right)$. It follows that
\begin{equation*}
\begin{split}
	b_n =  \tr\left(b'_n\right)=\sum_{g \in \ker\left( \widehat{G} \to  G_n\right)}  \tr\left(g \overline{b}') \right)\in C_c\left(\mathcal X_n\right),\\
		b^2_n =  \tr\left(b'^2_n\right)=\tr\left(b'_n\right)^2 \in C_c\left(\mathcal X_n\right),\\
	c_n= \tr\left(c'_n\right) =\sum_{g \in \ker\left( \widehat{G} \to  G_n\right)}  \tr\left(g \overline{b}'^2\right)=\sum_{g \in \ker\left( \widehat{G} \to  G_n\right)}  \tr\left(g \overline{b}'\right)^2\in C_c\left(\mathcal X_n\right),\\
	d_n= \tr\left(d'_n\right) =\sum_{g \in \ker\left( \widehat{G} \to  G_n\right)}  \tr\left(g f_\eps\left( \overline{b}'\right) \right)=\sum_{g \in \ker\left( \widehat{G} \to  G_n\right)} f_\eps\left(  \tr\left(g \overline{b}'\right)\right) \in C_c\left(\mathcal X_n\right).\\
\end{split}
\end{equation*}
From the above equations it follows that $b_n,~c_n,~ d_n$ satisfy to the condition (a) of the Lemma  \ref{comm_main_lem}.
From the condition (c) the Definition \ref{special_el_defn} it follows that for any for any $\eps > 0$ there is $N \in \N$ such that for any $n \ge N$ following condition holds
\begin{equation}\label{ctr_ineq_eqn}
\left\|b'^2_n -c'_n\right\| < \eps.
\end{equation}

	Both $b'_n$ and $c'_n$ are Abelian and the range projection of $b'_n$ equals to the range projection of $c'_n$, i.e. $\left[b'_n \right] = \left[c'_n \right]$,  it turns out 	
	\begin{equation*}
	\begin{split}
\left\|b^2_n-c_n\right\|=\left\|\tr\left(b'\right)^2 -\tr\left(c'_n\right) \right\|= \left\|b'^2_n-c'_n\right\|.
	\end{split}
	\end{equation*}
From \eqref{ctr_ineq_eqn} it follows that $\left\|b^2_n-c_n\right\| < \eps$ for any $n \ge N$. It means that $b_n$ and $c_n$ satisfy to condition (b) of the Lemma  \ref{comm_main_lem}.
	From the Lemma \ref{comm_main_lem} it turns out that $\overline{b}=	\tr\left(z\overline{a}z \right) \in C_0\left(\overline{\mathcal X} \right)$.
\end{proof}
\begin{lem}\label{ctr_top_lem}
If	$\overline{A}$ is the disconnected  inverse noncommutative limit of $\downarrow\mathfrak{S}_{A\left( \mathcal{X}\right)}$, then $\overline{A} = A\left(\overline{  \mathcal X} \right)$.
\end{lem}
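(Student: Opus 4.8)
The inclusion $A\left(\overline{\mathcal X}\right)\subset\overline{A}$ is already Corollary \ref{ctr_c_cor}, so the statement reduces to the reverse inclusion $\overline{A}\subset A\left(\overline{\mathcal X}\right)$, and my plan is to obtain this from Lemma \ref{ctr_adm_lem}. Recall that $\overline{A}$ is the $C^*$-completion of the $*$-algebra generated by the weakly special elements, that a weakly special element has the form $x\overline a y$ with $x,y\in\widehat A$ and $\overline a\in B\left(\H\right)_+$ special, and that $\widehat A\subset M\left(A\left(\overline{\mathcal X}\right)\right)$, so that multiplication by elements of $\widehat A$ preserves the ideal $A\left(\overline{\mathcal X}\right)$ of $M\left(A\left(\overline{\mathcal X}\right)\right)$. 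Hence it suffices to prove that every special element lies in $A\left(\overline{\mathcal X}\right)$: then so does every weakly special element $x\overline a y$, hence the whole generated $*$-algebra and, passing to the $C^*$-closure, $\overline A$ itself. I would also record at this stage the sandwich $A\left(\overline{\mathcal X}\right)\subset\overline A\subset A\left(\overline{\mathcal X}\right)''$ needed to invoke Lemma \ref{ctr_adm_lem}: the left inclusion is Corollary \ref{ctr_c_cor}, while for the right one Corollary \ref{special_cor} places every weakly special element in $\widehat A''$, and $\widehat A''$ embeds into the (norm closed) von Neumann algebra $A\left(\overline{\mathcal X}\right)''$ by the inclusion of enveloping algebras established just before this lemma.

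A preliminary point is that Lemma \ref{ctr_adm_lem} requires a second-countable spectrum, whereas $\overline{\mathcal X}$ may have uncountably many connected components. So I would first pass to a connected component $\widetilde{\mathcal X}$ of $\overline{\mathcal X}$ — which is second-countable, being a connected covering of the second-countable space $\mathcal X$ — apply the lemma to $A\left(\widetilde{\mathcal X}\right)$ together with the corresponding connected component of $\overline A$ in the sense of Definition \ref{main_sdefn}, and then reassemble the global equality $\overline A=A\left(\overline{\mathcal X}\right)$ from the disconnected decomposition $\overline{\mathcal X}=\bigsqcup_{g\in J}g\widetilde{\mathcal X}$ (cf. \eqref{top_disconnected_repr_eqn}) and the matching $C^*$-direct-sum decompositions of $\overline A$ and of $A\left(\overline{\mathcal X}\right)$ over $J$.

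The substantive part is the verification of the local hypothesis of Lemma \ref{ctr_adm_lem}: given a positive $b$ in the relevant $C^*$-algebra and a point $\overline x_0$ with $\rho_{\overline x_0}(b)\neq 0$, produce an Abelian element $z$ of $A\left(\widetilde{\mathcal X}\right)$ with $\supp z$ inside a prescribed neighbourhood of $\overline x_0$, with $\tr\left(zbz\right)\in C_0$ and $\tr\left(zbz\right)\left(\overline x_0\right)\neq 0$. For the last condition I would invoke Proposition \ref{continuous_trace_c_a_proposition} to choose an Abelian $z$ supported in a small special set around $\overline x_0$ whose ``direction'' at $\overline x_0$ is not orthogonal to the range of $\rho_{\overline x_0}(b)$, shrinking $\supp z$ by Corollary \ref{com_a_u_cor}. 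For the membership $\tr\left(zbz\right)\in C_0$, the key input is Lemma \ref{ctr_lem1}: since $z$ is Abelian, $z\overline A z$ is a commutative $C^*$-subalgebra of $A\left(\overline{\mathcal X}\right)''$ on which $\tr$ is just the Gelfand transform into the bounded Borel functions on $\overline{\mathcal X}$; on the generators of $z\overline A z$ coming from special elements $\overline a$ the values $\tr\left(z\overline a z\right)$ lie in $C_0\left(\overline{\mathcal X}\right)$ by Lemma \ref{ctr_lem1} (using an Abelian $z\in K(A)$ downstairs, supported over a compact subset of each finite level $\mathcal X_n$, and then comparing with $z$ upstairs on the special set via the already-settled finite-fold case for $A\left(\mathcal X_n\right)$), and on the generators coming from $A\left(\overline{\mathcal X}\right)$ they lie in $C_0$ because these are Abelian elements with compact support; commutativity of $z\overline A z$ then forces $\tr\left(zbz\right)\in C_0\left(\overline{\mathcal X}\right)$ for every positive $b$.

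The step I expect to be the main obstacle is precisely this last verification for \emph{all} positive elements of $\overline A$, rather than just for the special building blocks — i.e. controlling traces of products of weakly special elements — together with the bookkeeping that links the Abelian elements living upstairs (demanded by Lemma \ref{ctr_adm_lem}) with the Abelian elements of $K(A)$ living downstairs (the hypothesis of Lemma \ref{ctr_lem1}). Commutativity of the Abelian compression $z\overline A z$ is what makes this manageable, since it turns the claim into the assertion that a commutative $C^*$-algebra of bounded Borel functions on $\overline{\mathcal X}$ generated by functions in $C_0\left(\overline{\mathcal X}\right)$ is itself contained in $C_0\left(\overline{\mathcal X}\right)$. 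Once the hypotheses of Lemma \ref{ctr_adm_lem} are in place it yields $\overline A=A\left(\overline{\mathcal X}\right)$ immediately, with no separate ``vanishing at infinity'' estimate needed, that content having been absorbed into Lemmas \ref{comm_main_lem} and \ref{ctr_lem1}.
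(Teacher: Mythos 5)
Your overall route coincides with the paper's: both arguments rest on the sandwich $A\left(\overline{\mathcal X}\right)\subset\overline{A}\subset A\left(\overline{\mathcal X}\right)''$ (Corollaries \ref{ctr_c_cor} and \ref{special_cor}) and an application of Lemma \ref{ctr_adm_lem}, with Lemma \ref{ctr_lem1} supplying the condition $\tr\left(z\overline{a}z\right)\in C_0\left(\overline{\mathcal X}\right)$ for special $\overline{a}$; your observation that one should first pass to a connected component to secure second-countability of the spectrum before invoking Lemma \ref{ctr_adm_lem} is a sensible refinement which the paper in fact omits. The genuine gap is in your mechanism for promoting the trace condition from the special building blocks to an arbitrary positive $b\in\overline{A}$, which you yourself flag as the main obstacle. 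The compression $b\mapsto zbz$ is linear but not multiplicative, so $z\overline{A}z$ is \emph{not} generated as a $C^*$-algebra by the elements $zwz$ with $w$ weakly special or $w\in A\left(\overline{\mathcal X}\right)$: a typical element of $\overline{A}$ is a norm limit of sums of products $w_{i_1}\cdots w_{i_k}$ of weakly special elements, and the compression $z\,w_{i_1}\cdots w_{i_k}\,z$ is not a product of the compressions $z w_{i_j}z$. Concretely, if $\rho_x\left(z\right)$ is a rank-one operator with range spanned by a unit vector $\xi$, then $\tr\,\rho_x\left(zw_1w_2z\right)$ is a constant multiple of $\left(\rho_x\left(w_1\right)^*\xi,\rho_x\left(w_2\right)\xi\right)$, which is not determined by $\left(\xi,\rho_x\left(w_1\right)\xi\right)$ and $\left(\xi,\rho_x\left(w_2\right)\xi\right)$. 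So although $\tr$ is indeed fiberwise multiplicative on the commutative algebra $z\overline{A}z$, the elements on which you actually control the trace do not generate it, and the claim does not reduce to the (true) statement that a commutative $C^*$-algebra generated by functions in $C_0\left(\overline{\mathcal X}\right)$ is contained in $C_0\left(\overline{\mathcal X}\right)$.

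The paper closes this step by a different device: it asserts that the special elements are norm-dense in $\overline{A}_+$ and extends the trace condition by continuity, using the uniform bound $\sup_x\left|\tr\,\rho_x\left(z\left(b-b'\right)z\right)\right|\le\left\|b-b'\right\|\cdot\left\|\tr\left(z^2\right)\right\|_\infty$, so that $\tr\left(zbz\right)$ is a uniform limit of functions in $C_0\left(\overline{\mathcal X}\right)$ and hence lies in $C_0\left(\overline{\mathcal X}\right)$. If you replace your commutativity step by this density-plus-continuity argument (or otherwise prove directly that compressions of products of weakly special elements have traces in $C_0\left(\overline{\mathcal X}\right)$), the remainder of your outline, including the component-by-component application of Lemma \ref{ctr_adm_lem} and the reassembly over $J$, goes through.
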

\begin{proof}
From the Corollary \ref{ctr_c_cor} it follows that $A\left(\overline{  \mathcal X} \right) \subset \overline{A}$. From the Corollary \ref{special_cor} it follows that
\begin{equation}\label{ctr_inc_eqn}
A\left(\overline{  \mathcal X} \right) \subset \overline{A} \subset A\left(\overline{  \mathcal X} \right)''.
\end{equation}
Let $\overline{\pi}: \overline{  \mathcal X} \to {  \mathcal X}$ and let $\overline{a} \in A\left(\overline{  \mathcal X} \right)_+''$. Let $\overline{x} \in \overline{\mathcal X}$ be such that $\rho_{\overline{x}}\left( \overline{a}\right) \neq 0$  and let $\overline{  \mathcal W}$ be an open neighborhood of $x$ such that $\overline{\pi}$ homeomorphically maps $\overline{  \mathcal W}$ onto $\mathcal W = \overline{\pi}\left( \overline{  \mathcal W}\right)$.
If $\overline{z} \in K\left( A\left(\overline{  \mathcal X} \right)\right)$ is an Abelian element such that $\supp  \overline{z} \subset \overline{  \mathcal W}$ and $\rho_{\overline{x}}\left(\overline{z}~\overline{a}~\overline{z} \right)\neq 0$ then the element $z = \sum_{g \in \widehat{G}}g \overline{z}\in A$  is Abelian and $\supp z \in \mathcal W$. If $\overline{a}$ is special, then from the Lemma \ref{ctr_lem1} it turns out that 
$$
\tr\left( z\overline{a}z \right) \in C_0\left( \overline{\mathcal X}\right) .
$$
However from
$$
\rho_{\overline{x}}\left( \overline{z}~\overline{a}~\overline{z}\right) \left( \overline{a}\right) = \left\{
\begin{array}{c l}
\rho_{\overline{x}}\left( z\overline{a}z\right)  & \overline{x} \in \overline{   \mathcal W} \\
0 & \overline{x} \notin \overline{   \mathcal W}
\end{array}\right.
$$
it turns out
\begin{equation}\label{ctr_zaz_eqn}
\begin{split}
\tr\left( \overline{z}~\overline{a}~\overline{z} \right) \in C_0\left( \overline{\mathcal X}\right),\\
\tr\left( \overline{z}~\overline{a}~\overline{z} \right)\left(x \right) \neq 0.
\end{split}
\end{equation}
The set of special elements is dense in $\overline{A}_+$ it turns out that any $\overline{a} \in \overline{A}_+$ satisfies to \eqref{ctr_zaz_eqn}. Taking into account this fact and \eqref{ctr_inc_eqn} it turns out
	\begin{itemize}
	\item $A\left(\overline{  \mathcal X} \right) \subset \overline{A} \subset A\left(\overline{  \mathcal X} \right)''$,
	\item For any $\overline{a} \in \overline{A}_+$ and $x \in \overline{  \mathcal X} $ such that $\rho_{\overline{x}}\left(\overline{a} \right)\neq 0$ there is an open neighborhood  $\overline{  \mathcal W} \subset \overline{  \mathcal X}$ of $x$ and an Abelian $\overline{z} \in A\left(\overline{  \mathcal X} \right)$ such that
	\begin{equation*}
\begin{split}
\supp \overline{z} \subset \overline{\mathcal W},\\
\tr\left(\overline{z}~\overline{a}~\overline{z} \right) \in C_0\left(\overline{\mathcal X} \right),\\
\tr\left(\overline{z}~\overline{a}~\overline{z} \right) \left(x\right) \neq 0.
\end{split}
\end{equation*}
\end{itemize}
 From the Lemma \ref{ctr_adm_lem} it follows that $\overline{A}= A\left(\overline{  \mathcal X} \right)$.
\end{proof}

\begin{empt}\label{ctr_transitive_constr}
	Let $\widetilde{\mathcal X} \subset \overline{\mathcal X}$ be a connected component and let $G \subset G\left(\varprojlim C_0\left(\mathcal X_n \right) ~|~C_0\left(  \mathcal X\right)   \right)$ be maximal subgroup among subgroups $G' \subset G\left(\varprojlim C_0\left(\mathcal X_n \right) ~|~C_0\left(  \mathcal X\right)   \right)$ such that $G'\widetilde{\mathcal X} = \widetilde{\mathcal X}$. If $J\subset\widehat{G}$ is a set of representatives of $\widehat{G}/G$ then  from the \eqref{top_disconnected_repr_eqn} it follows that
	\begin{equation*}
	\overline{\mathcal X}=  \bigsqcup_{g \in J} g \widetilde{\mathcal X}.
	\end{equation*}
	and  the algebraic direct sum
	\begin{equation}\label{ctr_transitive_eqn}
	 \bigoplus _{g \in J}  A\left( g\widetilde{\mathcal X} \right) \subset A\left( \overline{\mathcal X}\right).
	\end{equation}
is a dense subalgebra of $A\left( \overline{\mathcal X}\right)$.

\end{empt}

\begin{thm}\label{ctr_main_thm}
	Let $A$ be $C^*$-algebra of continuous trace, and let $\mathcal{X}$ be the spectrum of $A$.
	Let 
	$$\mathfrak{S}_{\mathcal X} = \left\{\mathcal{X} = \mathcal{X}_0 \xleftarrow{}... \xleftarrow{} \mathcal{X}_n \xleftarrow{} ...\right\} \in \mathfrak{FinTop}$$ 
	be a topological  finite covering sequence,	and let
	$$\mathfrak{S}_{A\left( \mathcal{X}\right)}=\left\{A = A\left( \mathcal{X}_0\right)\to ... \to A\left( \mathcal{X}_n\right) \to ...\right\} \in \mathfrak{FinAlg}$$ be an algebraical   finite covering sequence. Following conditions hold:
	\begin{enumerate}
		\item [(i)] $\mathfrak{S}_{A\left(\mathcal{X}\right)}$ is good,
		\item[(ii)] There are  isomorphisms:

		\begin{itemize}
			\item $\varprojlim \downarrow \mathfrak{S}_{A\left(\mathcal{X}\right)} \approx A\left(\varprojlim \downarrow \mathfrak{S}_{\mathcal X}\right)$,
			\item $G\left(\varprojlim \downarrow \mathfrak{S}_{A\left(\mathcal{X}\right)}~|~ A\right) \approx G\left(\varprojlim \downarrow \mathfrak{S}_{\mathcal{X}}~|~ \mathcal X\right)$.
		\end{itemize}
	\end{enumerate}
	
\end{thm}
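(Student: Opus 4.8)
The plan is to reproduce, almost line for line, the argument of Theorem~\ref{comm_main_thm}, replacing the function algebra $C_0(\cdot)$ by the continuous-trace section algebra $A(\cdot)$ and substituting the continuous-trace analogues of the lemmas used there. I would first fix the notation: $\widetilde{\mathcal X}=\varprojlim\downarrow\mathfrak S_{\mathcal X}$ for the topological inverse limit (the final object of $\downarrow\mathfrak S_{\mathcal X}$), $\widehat{\mathcal X}=\varprojlim\mathcal X_n$, $\overline{\mathcal X}$ for the disconnected inverse limit of $\mathfrak S_{\mathcal X}$, $\overline A$ for the disconnected inverse noncommutative limit of $\mathfrak S_{A(\mathcal X)}$ with respect to the universal representation, $\widetilde A\subset\overline A$ for a connected component, $\overline G_{\mathcal X}=\varprojlim G(\mathcal X_n\,|\,\mathcal X)$, $G_{\mathcal X}=G(\widetilde{\mathcal X}\,|\,\mathcal X)$, and $G_A$ for the $\widetilde A$-invariant group of $\mathfrak S_{A(\mathcal X)}$.

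The heavy lifting is already contained in Lemma~\ref{ctr_top_lem}, which gives $\overline A=A(\overline{\mathcal X})$. From \ref{ctr_transitive_constr} one has the clopen decomposition $\overline{\mathcal X}=\bigsqcup_{g\in J}g\widetilde{\mathcal X}$ into connected homeomorphic pieces, $g\widetilde{\mathcal X}\xrightarrow{\approx}\widetilde{\mathcal X}$, together with the density of the algebraic direct sum $\bigoplus_{g\in J}A(g\widetilde{\mathcal X})$ in $A(\overline{\mathcal X})=\overline A$. Since each $A(g\widetilde{\mathcal X})$ is the section algebra of the bundle restricted to a connected clopen summand, every maximal irreducible subalgebra of $\overline A$ is isomorphic to $A(\widetilde{\mathcal X})$, so I would identify $\widetilde A=A(\widetilde{\mathcal X})$. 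By Theorem~\ref{ctr_fin_thm} one has $G(A(\mathcal X_n)\,|\,A)\cong G(\mathcal X_n\,|\,\mathcal X)$, hence $G(\overline A\,|\,A)=\widehat G=\overline G_{\mathcal X}$ by Definition~\ref{main_defn_full}; and since $g\widetilde A=A(g\widetilde{\mathcal X})$, an element $g\in\widehat G$ fixes $\widetilde A$ setwise exactly when it fixes $\widetilde{\mathcal X}$, whence $G_A=G_{\mathcal X}$ under this identification.

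Next I would verify conditions (a)--(c) of Definition~\ref{good_seq_defn}. For (a): the projection $\widetilde{\mathcal X}\to\mathcal X_n$ is a covering, so it pulls the bundle over $\mathcal X_n$ back to (the finite stage of) the bundle over $\widetilde{\mathcal X}$, and pull-back of bounded continuous sections yields a $*$-homomorphism $A(\mathcal X_n)\to M(A(\widetilde{\mathcal X}))$ compatible with the connecting maps; it is injective because pull-back along a surjection is injective, and passing to the inductive limit the map $\widehat A=\varinjlim A(\mathcal X_n)\to M(\widetilde A)$ stays injective since its kernel is $\varinjlim\ker=0$. Condition (b) is exactly the density statement in \ref{ctr_transitive_constr}. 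For (c): under the identifications $G_A\cong G(\widetilde{\mathcal X}\,|\,\mathcal X)$ and $G(A(\mathcal X_n)\,|\,A)\cong G(\mathcal X_n\,|\,\mathcal X)$, the restriction $h_n|_{G_A}$ becomes the natural homomorphism $G(\widetilde{\mathcal X}\,|\,\mathcal X)\to G(\mathcal X_n\,|\,\mathcal X)$, which is surjective because $\widetilde{\mathcal X}$ is the inverse limit of $\downarrow\mathfrak S_{\mathcal X}$ (cf.\ Lemmas~\ref{top_surj_group_lem} and~\ref{top_biject_lem}). This establishes (i), and (ii) is then read off: $\varprojlim\downarrow\mathfrak S_{A(\mathcal X)}=\widetilde A=A(\widetilde{\mathcal X})=A(\varprojlim\downarrow\mathfrak S_{\mathcal X})$ and $G(\varprojlim\downarrow\mathfrak S_{A(\mathcal X)}\,|\,A)=G_A=G_{\mathcal X}=G(\varprojlim\downarrow\mathfrak S_{\mathcal X}\,|\,\mathcal X)$.

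The main obstacle is entirely packaged into Lemma~\ref{ctr_top_lem}, i.e.\ the equality $\overline A=A(\overline{\mathcal X})$, which in turn rests on the admissibility criterion Lemma~\ref{ctr_adm_lem} (hence on the structure of the Pedersen ideal, Proposition~\ref{continuous_trace_c_a_proposition}) and on the Dixmier--Douady classification Theorem~\ref{ctr_di_do_thm}. Once that is in hand, the remaining difficulty is purely organizational: making precise that an indecomposable direct summand of $A(\overline{\mathcal X})$ is $A$ of a connected clopen component, and that pull-back along the infinite covering $\widetilde{\mathcal X}\to\mathcal X_n$ embeds $\widehat A$ injectively into $M(\widetilde A)$; no new analytic estimate is needed beyond those already obtained in the commutative case of Theorem~\ref{comm_main_thm}.
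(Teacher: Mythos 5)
Your proposal is correct and follows exactly the route the paper intends: the paper's own proof of Theorem~\ref{ctr_main_thm} is literally the single sentence ``Similar to the proof of the Theorem~\ref{comm_main_thm},'' and you have carried out precisely that transposition, with Lemma~\ref{ctr_top_lem} playing the role of Corollaries~\ref{comm_c_c_cor} and~\ref{comm_main_cor} and the decomposition of~\ref{ctr_transitive_constr} replacing~\ref{comm_transitive_constr}. If anything, your write-up supplies more detail than the paper does, in particular the verification of conditions (a)--(c) of Definition~\ref{good_seq_defn} in the continuous-trace setting.
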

\begin{proof}
Similar to the proof ot the Theorem \ref{comm_main_thm}.
\end{proof}
\section{Noncommutative tori and their coverings}  
   \subsection{Fourier transformation}
   \paragraph*{}
        	There is a norm on $\mathbb{Z}^n$ given by
   \begin{equation}\label{mp_znorm_eqn}
   \left\|\left(k_1, ..., k_n\right)\right\|= \sqrt{k_1^2 + ... + k^2_n}.
   \end{equation}
 The space of complex-valued Schwartz  functions on $\Z^n$ is given by 
 \begin{equation*}
 \sS\left(\mathbb{Z}^n\right)= \left\{a = \left\{a_k\right\}_{k \in \mathbb{Z}^n} \in \mathbb{C}^{\mathbb{Z}^n}~|~ \mathrm{sup}_{k \in \mathbb{Z}^n}\left(1 + \|k\|\right)^s \left|a_k\right| < \infty, ~ \forall s \in \mathbb{N} \right\}.
 \end{equation*}

 Let $\mathbb{T}^n$ be an ordinary $n$-torus. We will often use real coordinates for $\mathbb{T}^n$, that is, view $\mathbb{T}^n$ as $\mathbb{R}^n / \mathbb{Z}^n$. Let $\Coo\left(\mathbb{T}^n\right)$ be an algebra of infinitely differentiable complex-valued functions on $\mathbb{T}^n$. 
 There is the bijective Fourier transformations  $\mathcal{F}_\T:\Coo\left(\mathbb{T}^n\right)\xrightarrow{\approx}\sS\left(\mathbb{Z}^n\right)$;  $f \mapsto \widehat{f}$ given by
 \begin{equation}\label{nt_fourier}
 \widehat{f}\left(p\right)= \mathcal F_\T (f) (p)= \int_{\mathbb{T}^n}e^{- 2\pi i x \cdot p}f\left(x\right)dx
 \end{equation}
 where $dx$ is induced by the Lebesgue measure on $\mathbb{R}^n$ and   $\cdot$ is the  scalar
 product on the Euclidean space $\R^n$.
 The Fourier transformation carries multiplication to convolution, i.e.
 \begin{equation*}
 \widehat{fg}\left(p\right) = \sum_{r +s = p}\widehat{f}\left(r\right)\widehat{g}\left(s\right).
 \end{equation*}
 The inverse Fourier transformation $\mathcal{F}^{-1}_\T:\sS\left(\mathbb{Z}^n\right)\xrightarrow{\approx} \Coo\left(\mathbb{T}^n\right)$;  $ \widehat{f}\mapsto f$ is given by
 $$
 f\left(x \right) =\mathcal{F}^{-1}_\T \widehat f\left( x\right)  = \sum_{p \in \Z^n} \widehat f\left( p\right)   e^{ 2\pi i x \cdot p}.
 $$
 There  is the $\C$-valued scalar product  on $\Coo\left( \T^n\right)$ given by
 $$
 \left(f, g \right) = \int_{\T^n}fg dx =\sum_{p \in \Z^n}\widehat{f}\left( -p\right) \widehat{g}\left(p \right).  
 $$ 
 Denote by $\SS\left( \R^{n}\right) $ be the space of
 complex Schwartz (smooth, rapidly decreasing) functions on $\R^{n}$. 
 \be\label{mp_sr_eqn}
 \begin{split}
 	\SS\left(\mathbb {R} ^{n}\right)=\left\{f\in C^{\infty }(\mathbb {R} ^{n}):\|f\|_{\alpha  ,\beta )}<\infty \quad \forall \alpha =\left( \al_1,...\al_n\right) ,\beta =\left( \bt_1,...\bt_n\right)\in \mathbb {Z} _{+}^{n}\right\},\\
 	\|f\|_{{\alpha ,\beta }}=\sup_{{x\in {\mathbb  {R}}^{n}}}\left|x^{\alpha }D^{\beta }f(x)\right|
 \end{split}
 \ee
 where 
 \bean
 x^\al = x_1^{\al_1}\cdot...\cdot x_n^{\al_n},\\
D^{\beta} = \frac{\partial}{\partial x_1^{\bt_1}}~...~\frac{\partial}{\partial x_n^{\bt_n}}.
 \eean
 The topology on $\SS\left(\mathbb {R} ^{n}\right)$ is given by seminorms $\|\cdot\|_{{\alpha ,\beta }}$.
 \begin{defn}\label{nt_*w_defn}
 	Denote by $\SS'\left( \R^{n}\right) $ the vector space dual to $\SS\left( \R^{n}\right) $, i.e. the space of continuous functionals on $\SS\left( \R^{n}\right)$. Denote by $\left\langle\cdot, \cdot \right\rangle:\SS'\left( \R^{n}\right)\times	\SS\left( \R^{n}\right)\to\C$ the natural pairing. We say that $\left\{a_n \in \SS'\left(\mathbb {R} ^{n}\right)\right\}_{n \in \N}$ is \textit{weakly-* convergent} to $a \in \SS'\left(\mathbb {R} ^{n}\right)$ if for any $b \in  \SS\left(\mathbb {R} ^{n}\right)$ following condition holds
 	$$
 	\lim_{n \to \infty}\left\langle a_n, b \right\rangle = \left\langle a, b \right\rangle.	
 	$$
 	We say that
 	$$
 	a = \lim_{n\to \infty}a_n
 	$$
 	in the \textit{sense of weak-* convergence}.
 \end{defn}

 Let $\mathcal F$ and $\mathcal F^{-1}$ be the ordinary and inverse Fourier transformations given by
 \begin{equation}\label{intro_fourier}
 \begin{split}
 \left(\mathcal{F}f\right)(u) = \int_{\mathbb{R}^{2N}} f(t)e^{-2\pi it\cdot u}dt,~\left(\mathcal F^{-1}f\right)(u)=\int_{\mathbb{R}^{2N}} f(t)e^{2\pi it\cdot u}dt %,
 % \left(\widetilde{F}f\right)(u) = \int f(t)e^{it\cdot Ju}dt.~
 \end{split}
 \end{equation}
which satisfy  following conditions
 $$
 \mathcal{F}\circ\mathcal{F}^{-1}|_{\SS\left( \R^{n}\right)} = \mathcal{F}^{-1}\circ\mathcal{F}|_{\SS\left( \R^{n}\right)} = \Id_{\SS\left( \R^{n}\right)}.
 $$
 There is the $\C$-valued scalar product  on $\SS\left( \R^n\right)$ given by
 \begin{equation}\label{fourier_scalar_product_eqn}
 \left(f, g \right)_{L^2\left( \R^n\right) } = \int_{\R^n}fg dx =\int_{\R^n}\mathcal{F}f\mathcal{F}g dx. 
 \end{equation}
 which if $\mathcal{F}$-invariant, i.e.
\be\label{mp_inv_eqn}
 \left(f, g \right)_{L^2\left( \R^n\right) } = \left(\mathcal{F}f, \mathcal{F}g \right)_{L^2\left( \R^n\right) }.
\ee
\paragraph*{}
 There is the action of $\Z^n$ on $\R^n$ such that
$$
g x = x + g; ~ x \in \R^n,~ g \in \Z^n
$$
and $\T^n \approx \R^n / \Z^n$. For any $x \in \R^n$ and $C \in \R$ the series
$$
\sum_{k \in \Z^n}\frac{C}{1 + \left|x + k \right|^{n + 1} }
$$
is convergent, and taking into account \eqref{mp_sr_eqn} one concludes that for $f \in \SS\left(
\R^n \right)$ and $x \in \R^n$
the series
$$
\sum_{g \in \Z^n} D^{\bt}f\left(x + g \right) \left(x \right) = \sum_{g \in \Z^n}\left(gD^{\bt}f \right) \left(x \right) 
$$ 
is absolutely convergent. It follows that the series 
$$
\widetilde{h} = \sum_{g \in \Z^n} g f 
$$
is point-wise convergent and $\widetilde{h}$ is a smooth $\Z^n$ - invariant function. The periodic smooth function $\widetilde{h}$ corresponds to an element of $h \in \Coo\left(\T^n \right)$. This construction provides a map
\begin{equation}\label{mp_sooth_sum_eqn}
\begin{split}
\SS\left(\R^n\right) \to  \Coo\left(\T^n\right), \\
f \mapsto h = \sum_{g \in \Z^n} g f.
\end{split}
\end{equation}
If $\mathcal U = \left(0,1 \right)^n \subset \R^n$ is a fundamental domain of the action of $\Z^n$ on $\R^n$ then $\widetilde{h}_{\mathcal U }$ can be represented by the Fourier series
\bean
\widetilde{h}_{\mathcal U } \left(x \right) = \sum_{p \in \Z^n}c_p e^{2\pi ip x},\\
c_p =\int_{\mathcal U} \widetilde{h}\left( x\right)e^{-2\pi i px}~dx =  \sum_{g \in \Z^n} \int_{\mathcal U}f\left( x+g\right)e^{-2\pi i px}~dx = \int_{\R^n}f\left( x\right)e^{-2\pi i px}~dx=\widehat f\left( p\right) 
\eean
where $\widehat f = \mathcal F f$ is the Fourier transformation of $f$. So if  $\widehat h = \mathcal F_{\T} h$ is the Fourier transformation of $h$ then for any $p \in \Z^n$ a following condition holds
\begin{equation}\label{fourier_from_r_to_z_eqn}
\widehat h\left(p\right) = \widehat f\left( p\right). 
\end{equation}

\subsection{Noncommutative torus $\mathbb{T}^n_{\Theta}$}\label{nt_descr_subsec}

     \paragraph*{}

   Denote by $\cdot: \R^n \times \R^n \to \R$ the scalar product on the Euclidean vector space $\R^n$.  Let $\Theta$ be a real skew-symmetric $n \times n$ matrix, we will define a new noncommutative product $\star_{\Theta}$ on $\sS\left(\mathbb{Z}^n\right)$ given by
     \begin{equation}\label{nt_product_defn_eqn}
     \left(\widehat{f}\star_{\Theta}\widehat{g}\right)\left(p\right)= \sum_{r + s = p} \widehat{f}\left(r\right)\widehat{g}\left(s\right) e^{-\pi ir ~\cdot~ \Theta s}.
     \end{equation}
         and an involution
     \begin{equation*}
     \widehat{f}^*\left(p\right)=\overline{\widehat{f}}\left(-p\right)).
     \end{equation*}
     In result there is an involutive algebra $\Coo\left(\mathbb{T}^n_{\Theta}\right) =\left(\sS\left(\mathbb{Z}^n\right), + , \star_{\Theta}~, ^* \right)$. 
       There is a tracial  state on $\Coo\left(\mathbb{T}^n_{\Theta}\right)$ given by
     \begin{equation}\label{nt_state_eqn}
     \tau\left(f\right)= \widehat{f}\left(0\right).
     \end{equation}
       From  $\Coo\left(\mathbb{T}^n_{\Theta} \right) \approx \SS\left( \Z^n\right)$ it follows  that there is a $\C$-linear isomorphism 
           \begin{equation}\label{nt_varphi_inf_eqn}
           \varphi_\infty: \Coo\left(\mathbb{T}^n_{\Theta} \right) \xrightarrow{\approx}  \Coo\left(\mathbb{T}^n \right).
           \end{equation} 
           such that following condition holds
     \begin{equation}\label{nt_state_integ_eqn}
            \tau\left(f \right)=  \frac{1}{\left( 2\pi\right)^n }\int_{\mathbb{T}^n} \varphi_\infty\left( f\right) ~dx.
      \end{equation}
           
     Similarly to \ref{comm_gns_constr} there is the Hilbert space $L^2\left(\Coo\left(\mathbb{T}^n_{\Theta}\right), \tau\right)$ and the natural representation $\Coo\left(\mathbb{T}^n_{\Theta}\right) \to B\left(L^2\left(\Coo\left(\mathbb{T}^n_{\Theta}\right), \tau\right)\right)$ which induces the $C^*$-norm. The $C^*$-norm completion  $C\left(\mathbb{T}^n_{\Theta}\right)$ of $\Coo\left(\mathbb{T}^n_{\Theta}\right)$ is a $C^*$-algebra and there is a faithful representation
     \begin{equation}\label{nt_repr_eqn}
 C\left(\mathbb{T}^n_{\Theta}\right) \to B\left( L^2\left(\Coo\left(\mathbb{T}^n_{\Theta}\right), \tau\right)\right) .
     \end{equation}
        We will write $L^2\left(C\left(\mathbb{T}^n_{\Theta}\right), \tau\right)$ instead of $L^2\left(\Coo\left(\mathbb{T}^n_{\Theta}\right), \tau\right)$. There is the natural $\C$-linear map  $\Coo\left(\mathbb{T}^n_{\Theta}\right) \to L^2\left(C\left(\mathbb{T}^n_{\Theta}\right), \tau\right) $  and since $\Coo\left(\mathbb{T}^n_{\Theta}\right) \approx \sS\left( \mathbb{Z}^n \right)$ there is a linear map $\Psi_\Th:\sS\left( \mathbb{Z}^n \right) \to L^2\left(C\left(\mathbb{T}^n_{\Theta}\right), \tau\right) $. If $k \in \mathbb{Z}^n$ and $U_k \in  \sS\left( \mathbb{Z}^n \right)=\Coo\left(\mathbb{T}^n_{\Theta}\right)$ is such that 
     \begin{equation}\label{unitaty_nt_eqn}
     U_k\left( p\right)= \delta_{kp}: ~ \forall p \in \mathbb{Z}^n
     \end{equation}
     then
 \begin{equation}\label{nt_unitary_product}
 U_kU_p = e^{-\pi ik ~\cdot~ \Theta p} U_{k + p}; ~~~   U_kU_p = e^{-2\pi ik ~\cdot~ \Theta p}U_pU_k.
 \end{equation}
      
    If $\xi_k = \Psi_\Th\left(U_k \right)$ then from \eqref{nt_product_defn_eqn}, \eqref{nt_state_eqn} it turns out
   \begin{equation}\label{nt_h_product}
    \tau\left(U^*_k \star_\Th U_l \right) = \left(\xi_k, \xi_l \right)  = \delta_{kl},   
    \end{equation} 
     i.e. the subset $\left\{\xi_k\right\}_{k \in \mathbb{Z}^n}\subset L^2\left(C\left(\mathbb{T}^n_{\Theta}\right), \tau\right)$ is an orthogonal basis of  $L^2\left(C\left(\mathbb{T}^n_{\Theta}\right), \tau\right)$.
     Hence the Hilbert space  $L^2\left(C\left(\mathbb{T}^n_{\Theta}\right), \tau\right)$ is naturally isomorphic to the Hilbert space $\ell^2\left(\mathbb{Z}^n\right)$ given by
     \begin{equation*}
     \ell^2\left(\mathbb{Z}^n\right) = \left\{\xi = \left\{\xi_k \in \mathbb{C}\right\}_{k\in \mathbb{Z}^n} \in \mathbb{C}^{\mathbb{Z}^n}~|~ \sum_{k\in \mathbb{Z}^n} \left|\xi_k\right|^2 < \infty\right\}
     \end{equation*}
     and the $\C$-valued scalar product on $\ell^2\left(\mathbb{Z}^n\right)$ is given by
     \begin{equation*}
     \left(\xi,\eta\right)_{ \ell^2\left(\mathbb{Z}^n\right)}= \sum_{k\in \mathbb{Z}^n}    \overline{\xi}_k\eta_k.
     \end{equation*}
   
    An alternative description of $\C\left(\mathbb{T}^n_{\Theta}\right)$ is such that if
       \begin{equation}\label{nt_th_eqn}
   \Th = \begin{pmatrix}
0& \th_{12} &\ldots & \th_{1n}\\
\th_{21}& 0 &\ldots & \th_{2n}\\
\vdots& \vdots &\ddots & \vdots\\
\th_{n1}& \th_{n2} &\ldots & 0
\end{pmatrix}
    \end{equation}
    
    then $C\left(\mathbb{T}^n_{\Theta}\right)$ is the universal $C^*$-algebra generated by unitary elements   $u_1,..., u_n \in U\left( C\left(\mathbb{T}^n_{\Theta}\right)\right) $ such that following condition holds
    \begin{equation}\label{nt_com_eqn}
    u_ju_k = e^{-2\pi i \theta_{jk} }u_ku_j.
    \end{equation}
    Elements $u_j$ are given by
    \begin{equation*}
    \begin{split}
   u_j = U_{\mathfrak{j}},\\
   \mathfrak{j}=\left(0,\dots, \underbrace{ 1}_{j^{\text{th}}-\text{place}}, \dots, 0\right) .
    \end{split}
    \end{equation*}
      \begin{defn}\label{nt_uni_defn}
      	Unitary elements 
      	$u_1,\dots, u_n \in U\left(C\left(\mathbb{T}^n_{\theta}\right)\right)$ which satisfy the relation \eqref{nt_com_eqn}
      	are said to be \textit{generators} of $C\left(\mathbb{T}^n_{\Theta}\right)$. The set $\left\{U_l\right\}_{l \in \Z^n}$ is said to be the \textit{basis} of $C\left(\mathbb{T}^n_{\Theta}\right)$.
      \end{defn}
           If $a \in C\left(\mathbb{T}^n_{\Th}\right)$ is presented by a series
           $$
           a = \sum_{l \in \mathbb{Z}^{n}}c_l U_l;~~ c_l \in \mathbb{C}
           $$
           and the series $\sum_{l \in \mathbb{Z}^{n}}\left| c_l\right| $ is convergent then from the triangle inequality it follows that
           \begin{equation}\label{nt_norm_estimation}
           \left\|a \right\| \le \sum_{l \in \mathbb{Z}^{n}}\left| c_l\right|.
           \end{equation}
      \begin{defn}\label{nt_symplectic_defn}
      	If  $\Theta$ is non-degenerated, that is to say,
      	$\sigma(s,t) \stackrel{\mathrm{def}}{=} s\.\Theta t$ to be \textit{symplectic}. This implies even
      	dimension, $n = 2N$. Then one  selects
      	\begin{equation}\label{nt_simpectic_theta_eqn}
      	\Theta = \theta J
      	\stackrel{\mathrm{def}}{=} \th \begin{pmatrix} 0 & 1_N \\ -1_N & 0 \end{pmatrix}
      	\end{equation}
      	where  $\th > 0$ is defined by $\th^{2N} \stackrel{\mathrm{def}}{=} \det\Theta$.
      	Denote by $\Coo\left(\mathbb{T}^{2N}_\th\right)\stackrel{\mathrm{def}}{=}\Coo\left(\mathbb{T}^{2N}_\Th\right)$ and $C\left(\mathbb{T}^{2N}_\th\right)\stackrel{\mathrm{def}}{=}C\left(\mathbb{T}^{2N}_\Th\right)$.
      \end{defn}
   
\subsection{Finite-fold coverings}\label{nt_fin_cov}
     \paragraph{}  In this section we write $ab$ instead $a\star_\Th b$.
      %\subsubsection{Construction of coverings} \label{nt_cov_constr_sub_sub_sec} 
  Let $\Th$ be given by \eqref{nt_th_eqn}, and let $C\left(\mathbb{T}^n_\Theta\right)$ be a noncommutative torus. If  $\left(k_1, \dots, k_n\right) \in \mathbb{N}^n$ and
               $$
               \widetilde{\Theta} = \begin{pmatrix}
               0& \widetilde{\theta}_{12} &\ldots & \widetilde{\theta}_{1n}\\
               \widetilde{\theta}_{21}& 0 &\ldots & \widetilde{\theta}_{2n}\\
               \vdots& \vdots &\ddots & \vdots\\
               \widetilde{\theta}_{n1}& \widetilde{\theta}_{n2} &\ldots & 0
               \end{pmatrix}
               $$
       is a skew-symmetric matrix such that
            \begin{equation*}
            e^{-2\pi i \theta_{rs}}= e^{-2\pi i \widetilde{\theta}_{rs}k_rk_s}
            \end{equation*}
            then there is a *-homomorphism $C\left(\mathbb{T}^n_\Th\right)\to C\left(\mathbb{T}^n_{\widetilde{\Th}}\right)$ given by
            \begin{equation}\label{nt_cov_eqn}
            u_j \mapsto v~^{k_j}_j; ~ j = 1,...,n
            \end{equation}
            where $u_1,..., u_n \in C\left(\mathbb{T}^n_{\Th}\right)$ (resp. $v_1,..., v_n \in C\left(\mathbb{T}^n_{\widetilde{\Th}}\right)$) are unitary generators of $C\left(\mathbb{T}^n_{\Th}\right)$ (resp. $C\left(\mathbb{T}^n_{\widetilde{\Th}}\right)$).	
            There is an involutive action of $G=\mathbb{Z}_{k_1}\times...\times\mathbb{Z}_{k_n}$ on $C\left(\mathbb{T}^n_{\widetilde{\Th}}\right)$ given by
            \begin{equation*}
            \left(\overline{p}_1,..., \overline{p}_n\right)v_j = e^{\frac{2\pi i p_j}{k_j}}v_j,
            \end{equation*}
            and a following condition holds $C\left(\mathbb{T}^n_{\Th}\right)=C\left(\mathbb{T}^n_{\widetilde{\Th}}\right)^G$.
            Otherwise there is a following $C\left(\mathbb{T}^n_{\Th}\right)$ - module isomorphism
            $$
           C\left(\mathbb{T}^n_{\widetilde{\Th}}\right) = \bigoplus_{\left(\overline{p}_1, ... \overline{p}_n \right)\in\mathbb{Z}_{k_1}\times...\times\mathbb{Z}_{k_n} } v_1^{p_1} \cdot ... \cdot v_n^{p_n} C\left(\mathbb{T}^n_{\Th}\right) \approx C\left(\mathbb{T}^n_{\Th}\right)^{k_1\cdot ... \cdot k_n}
            $$
            i.e. $C\left(\mathbb{T}^n_{\widetilde{\Th}}\right)$ is a finitely generated projective Hilbert $C\left(\mathbb{T}^n_{\Th}\right)$-module.
            It turns out the following theorem. 
             \begin{thm}\label{nt_fin_cov_lem}
            
           The triple $\left(C\left(\mathbb{T}^n_{\Th}\right), C\left(\mathbb{T}^n_{\widetilde{\Th}}\right),\mathbb{Z}_{k_1}\times...\times\mathbb{Z}_{k_n}\right)$  is an unital noncommutative finite-fold  covering.
           \end{thm}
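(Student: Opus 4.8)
The plan is to check the four defining conditions of Definition \ref{fin_def_uni} for the inclusion $A \hookto \widetilde{A}$ produced by \eqref{nt_cov_eqn}, where I abbreviate $A = C(\mathbb{T}^n_{\Theta})$, $\widetilde{A} = C(\mathbb{T}^n_{\widetilde{\Theta}})$ and $G = \mathbb{Z}_{k_1}\times\cdots\times\mathbb{Z}_{k_n}$. First I would verify that $u_j \mapsto v_j^{k_j}$ extends to a unital $*$-homomorphism $\varphi\colon A \to \widetilde{A}$: from $v_j v_l = e^{-2\pi i\widetilde{\theta}_{jl}}v_l v_j$ one obtains $v_j^{k_j} v_l^{k_l} = e^{-2\pi i\widetilde{\theta}_{jl} k_j k_l}\, v_l^{k_l} v_j^{k_j}$, and the hypothesis $e^{-2\pi i\theta_{rs}} = e^{-2\pi i\widetilde{\theta}_{rs} k_r k_s}$ shows that the unitaries $v_1^{k_1},\dots,v_n^{k_n}$ satisfy precisely the relations \eqref{nt_com_eqn}; since $A$ is the universal $C^*$-algebra on those generators and relations, $\varphi$ exists. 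For injectivity I would use the canonical faithful tracial states $\tau_{\Theta}$ on $A$ and $\tau_{\widetilde{\Theta}}$ on $\widetilde{A}$ of \eqref{nt_state_eqn}: on the basis, $\varphi(U_l)$ is a unit-modulus scalar times $V_{(k_1 l_1,\dots,k_n l_n)}$, and $l \mapsto (k_1 l_1,\dots,k_n l_n)$ is injective on $\mathbb{Z}^n$ because every $k_j \ge 1$; hence $\tau_{\widetilde{\Theta}}(\varphi(U_l)) = \delta_{l,0} = \tau_{\Theta}(U_l)$, so $\tau_{\widetilde{\Theta}}\circ\varphi = \tau_{\Theta}$ by $\mathbb{C}$-linearity and continuity, and faithfulness of $\tau_{\Theta}$ then forces $\varphi(a)=0 \Rightarrow \tau_{\Theta}(a^*a)=\tau_{\widetilde{\Theta}}(\varphi(a)^*\varphi(a)) = 0 \Rightarrow a = 0$.

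Next I would record that the action $(\overline{p}_1,\dots,\overline{p}_n)\, v_j = e^{2\pi i p_j/k_j} v_j$ is by $*$-automorphisms and is non-degenerate, since a nontrivial element of $G$ has $p_j \not\equiv 0 \pmod{k_j}$ for some $j$ and therefore moves $v_j$. To identify the fixed-point algebra I would use the faithful conditional expectation $E = |G|^{-1}\sum_{g\in G} g\colon \widetilde{A}\to\widetilde{A}^G$; on the basis, $g\, V_{(m_1,\dots,m_n)} = e^{2\pi i\sum_j p_j m_j/k_j}\, V_{(m_1,\dots,m_n)}$, so $E(V_m) = V_m$ when $k_j \mid m_j$ for all $j$ and $E(V_m) = 0$ otherwise. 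Thus $\widetilde{A}^G = \overline{E(\Coo(\mathbb{T}^n_{\widetilde{\Theta}}))}$ is the closed linear span of $\{V_{(k_1 m_1,\dots,k_n m_n)} : m\in\mathbb{Z}^n\}$, which is exactly $\varphi(A)$; together with the injectivity just established this gives $A \cong \widetilde{A}^G$.

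Finally, for the module condition, splitting each $m_j = r_j + k_j q_j$ with $0 \le r_j < k_j$ exhibits, first for the dense subalgebra $\Coo(\mathbb{T}^n_{\widetilde{\Theta}})$ and then — using unitarity of the $v_1^{p_1}\cdots v_n^{p_n}$, finiteness of $G$, and the norm estimate \eqref{nt_norm_estimation} — upon completion for all of $\widetilde{A}$, the internal direct sum
\[
\widetilde{A} = \bigoplus_{(\overline{p}_1,\dots,\overline{p}_n)\in G} v_1^{p_1}\cdots v_n^{p_n}\,\varphi(A),
\]
in which each summand is a free rank-one right $A$-module (and the summands are mutually orthogonal for the inner product \eqref{finite_hilb_mod_prod_eqn}, again via $E$); hence $\widetilde{A}_A$ is free of rank $k_1\cdots k_n$, in particular finitely generated and projective. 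All four conditions of Definition \ref{fin_def_uni} then hold, so $(A,\widetilde{A},G)$ is an unital noncommutative finite-fold covering. The step I expect to demand the most care is this last one — checking that the algebraic direct-sum decomposition valid on $\Coo(\mathbb{T}^n_{\widetilde{\Theta}})$ survives passage to the $C^*$-completions (closedness of $\varphi(A)$, directness of the sum after completion), together with the clean verification that $\varphi(A) = \widetilde{A}^G$ rather than merely $\varphi(A)\subseteq\widetilde{A}^G$; both are handled by the faithful trace and the conditional expectation $E$, but they are the places where the argument is more than bookkeeping.
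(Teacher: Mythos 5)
Your proof is correct and follows the same route as the paper: the paper's argument for Theorem \ref{nt_fin_cov_lem} consists precisely of the three assertions preceding it (the $*$-homomorphism $u_j\mapsto v_j^{k_j}$, the identification $C(\mathbb{T}^n_{\Theta})=C(\mathbb{T}^n_{\widetilde{\Theta}})^G$, and the free-module decomposition indexed by $G$), stated without detailed verification. Your write-up supplies exactly the checks the paper omits — universality for existence, the trace for injectivity, the conditional expectation $E$ for the fixed-point identification, and the boundedness of the coset projections for passing the direct-sum decomposition to the $C^*$-completion — so it is a correct, fully detailed version of the paper's own argument.
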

                        \subsection{Moyal plane and a representation of the noncommutative torus}\label{nt_ind_repr_subsubsec}
       \begin{defn}
       	Denote the \textit{Moyal plane} product $\star_\th$ on $\SS\left(\R^{2N} \right)$ given by
       	$$
       	\left(f \star_\th h \right)\left(u \right)= \int_{y \in \R^{2N} } f\left(u - \frac{1}{2}\Th y\right) g\left(u + v \right)e^{2\pi i y \cdot v }  dydv
       	$$ 
       	where $\Th$ is given by \eqref{nt_simpectic_theta_eqn}.
       \end{defn}
    % Denition 2.8 
    \begin{defn}\label{mp_mult_defn}\cite{varilly_bondia:phobos}
    	\label{df:Moyal-alg} 	Denote by $\SS'\left( \R^{n}\right) $ the vector space dual to $\SS\left( \R^{n}\right) $, i.e. the space of continuous functionals on $\SS\left( \R^{n}\right)$.
    	The Moyal product can be defined, by duality, on larger sets than
    	$\SS\left(\R^{2N}\right)$. For $T \in \SS'\left(\R^{2N}\right)$, write the evaluation on $g \in \SS\left(\R^{2N}\right)$ as
    	$\<T, g> \in \C$; then, for $f \in \SS$ we may define $T \star_{\theta} f$ and
    	$f \star_{\theta} T$ as elements of~$\SS'\left(\R^{2N}\right)$ by
    	\begin{equation}\label{mp_star_ext_eqn}
    	\begin{split}
    	\<T \star_{\theta} f, g> \stackrel{\mathrm{def}}{=} \<T, f \star_{\theta} g>\\
    	\<f \star_{\theta} T, g> \stackrel{\mathrm{def}}{=} \<T, g \star_{\theta} f>	\end{split}
    	\end{equation}  using the continuity of the
    	star product on~$\SS\left(\R^{2N}\right)$. Also, the involution is extended to  by
    	$\<T^*,g> \stackrel{\mathrm{def}}{=} \overline{\<T,g^*>}$.	
    %	We shall soon argue~\cite{varilly_bondia:phobos} that if $T \in \SS'\left(\R^{2N}\right)$ and     	$f \in \SS\left(\R^{2N}\right)$, then $T \star_{\theta} f,\,f \star_{\theta} T \in C^\infty(\R^{2N})$.
    %	
    %	Consider the left and right multiplier algebras:
    %	\begin{equation}\label{mp_multi_eqn}
    %	\begin{split}
    %	\M_L^\th
    %	&\stackrel{\mathrm{def}}{=} \set{T \in \SS'(\R^{2N}) : T %\star_{\theta} h \in \SS(\R^{2N})
    %		\text{ for all } h \in \SS(\R^{2N})},
    %	\\
   % 	\M_R^\th
  %  	&\stackrel{\mathrm{def}}{=} \set{T \in \SS'(\R^{2N}) : h \star_{\theta} T \in \SS(\R^{2N})
  %  		\text{ for all } h \in \SS(\R^{2N})},\\
 %   	\M^\th &\stackrel{\mathrm{def}}{=} \M_L^\th \cap \M_R^\th.\\
 %   	\end{split}
  %  	\end{equation} 
    \end{defn}
  % \begin{defn}\cite{moyal_spectral}
  % 	The Moyal product can be defined, by duality, on larger sets than $\SS\left(\R^{2N} \right)$. If $\SS'\left( \R^{2N}\right)$ is the dual to $\SS\left(\R^{2N} \right)$ space and $T \in \SS'\left( \R^{2N}\right)$,  write the evaluation on $g \in  \SS\left(\R^{2N} \right)$ as $\left\langle T, g\right\rangle$; then, for $f \in  \SS\left(\R^{2N} \right)$ we may define $T\star_\th f$ and
 %  	$f\star_\th T$ as elements of $\SS'\left(\R^{2N} \right)$ by $\left\langle T \star_\th f, g\right\rangle \stackrel{\mathrm{def}}{=} \left\langle T, f \star_\th g\right\rangle $ and $\left\langle f\star_\th T, g\right\rangle  \stackrel{\mathrm{def}}{=} \left\langle T, g \star_\th f\right\rangle$, using the continuity
%   	of the star product on $\SS\left(\R^{2N} \right)$. Also, the involution is extended to $\SS'\left(\R^{2N} \right)$ by $\left\langle T^*, g\right\rangle = \overline{\left\langle T, g^*\right\rangle}$.
 %  \end{defn} 
       \begin{rem}
       	It is proven in \cite{moyal_spectral} that the domain of  the Moyal plane product can be extended up to $L^2\left(\R^{2N} \right)$. 
       \end{rem}
       \begin{lem}\label{nt_l_2_est_lem}\cite{moyal_spectral}
       	If $f,g \in L^2 \left(\R^{2N} \right)$, then $f\star_\th g \in L^2 \left(\R^{2N} \right)$ and $\left\|f\right\|_{\mathrm{op}} < \left(2\pi\th \right)^{-\frac{N}{2}} \left\|f\right\|_2$.
       	where	$\left\|\cdot\right\|_{2}$ is the $L^2$-norm given by
       	\begin{equation}\label{nt_l2_norm_eqn}
       	\left\|f\right\|_{2} \stackrel{\mathrm{def}}{=} \left|\int_{\R^{2N}} \left|f\right|^2 dx \right|^{\frac{1}{2}}.
       	\end{equation}
       	and the operator norm $\|T\|_{\mathrm{op}} \stackrel{\mathrm{def}}{=}\sup\set{\|T \star g\|_2/\|g\|_2 : 0 \neq g \in L^2\left( \R^{2N})\right) }$ 
       \end{lem}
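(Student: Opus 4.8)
The plan is to identify $\bigl(L^2(\R^{2N}),\star_\th\bigr)$ with an algebra of Hilbert--Schmidt operators, so that the asserted estimate reduces to the elementary inequalities $\left\|TS\right\|_{\mathrm{HS}}\le\left\|T\right\|_{\mathrm{op}}\left\|S\right\|_{\mathrm{HS}}$ and $\left\|T\right\|_{\mathrm{op}}\le\left\|T\right\|_{\mathrm{HS}}$ for Hilbert--Schmidt operators on $L^2(\R^{N})$. The device is the Weyl (Wigner) transform $\mathcal W$, which assigns to a function $f$ on $\R^{2N}$ an operator $\mathcal W(f)$ on $L^2(\R^{N})$; I would record its two standard properties, both established in \cite{varilly_bondia:phobos,moyal_spectral}: it intertwines the Moyal product with operator composition, $\mathcal W(f\star_\th g)=\mathcal W(f)\,\mathcal W(g)$ for $f,g\in\SS(\R^{2N})$; and it is, up to the scalar $(2\pi\th)^{N/2}$, a unitary isomorphism of $L^2(\R^{2N})$ onto the Hilbert space $\mathcal L^2\bigl(L^2(\R^{N})\bigr)$ of Hilbert--Schmidt operators, i.e. $\left\|\mathcal W(f)\right\|_{\mathrm{HS}}=(2\pi\th)^{N/2}\left\|f\right\|_2$. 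Since $\SS(\R^{2N})$ is dense in $L^2(\R^{2N})$, the second property extends $\mathcal W$ to a bounded bijection $L^2(\R^{2N})\xrightarrow{\approx}\mathcal L^2(L^2(\R^{N}))$ and extends $\star_\th$ to $L^2$ by transport of structure; this is precisely the content of the Remark preceding the lemma.

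Granted these facts the argument is short. Given $f,g\in L^2(\R^{2N})$, set $T=\mathcal W(f)$ and $S=\mathcal W(g)$; both are Hilbert--Schmidt, hence bounded. Then $TS$ is Hilbert--Schmidt with $\left\|TS\right\|_{\mathrm{HS}}\le\left\|T\right\|_{\mathrm{op}}\left\|S\right\|_{\mathrm{HS}}$, so $f\star_\th g=\mathcal W^{-1}(TS)\in L^2(\R^{2N})$ --- this proves the first assertion --- and
\[
\left\|f\star_\th g\right\|_2=(2\pi\th)^{-N/2}\left\|TS\right\|_{\mathrm{HS}}\le(2\pi\th)^{-N/2}\left\|T\right\|_{\mathrm{op}}\left\|S\right\|_{\mathrm{HS}}=\left\|T\right\|_{\mathrm{op}}\left\|g\right\|_2 .
\]
Dividing by $\left\|g\right\|_2$ and taking the supremum over $0\ne g\in L^2(\R^{2N})$ yields $\left\|f\right\|_{\mathrm{op}}\le\left\|T\right\|_{\mathrm{op}}$; conversely, testing against $g$ with $\mathcal W(g)$ of rank one along a near-maximal singular direction of $T$ shows the supremum equals $\left\|T\right\|_{\mathrm{op}}$, so $\left\|f\right\|_{\mathrm{op}}=\left\|\mathcal W(f)\right\|_{\mathrm{op}}$. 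Finally, for any Hilbert--Schmidt operator $\left\|\mathcal W(f)\right\|_{\mathrm{op}}\le\left\|\mathcal W(f)\right\|_{\mathrm{HS}}=(2\pi\th)^{N/2}\left\|f\right\|_2$, which gives $\left\|f\right\|_{\mathrm{op}}\le(2\pi\th)^{-N/2}\left\|f\right\|_2$, the assertion of the lemma (the inequality being strict unless $\mathcal W(f)$ happens to be of rank one).

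The one genuinely nontrivial ingredient is the pair of properties of $\mathcal W$; if one wants a self-contained treatment, the likely main obstacle is pinning down the normalization constant $(2\pi\th)^{N/2}$, which is convention-sensitive, depending on the factors of $2\pi$ and on the $\tfrac12\Th$ in the definition of $\star_\th$. The cleanest way to do this, and to make the whole proof elementary, is to work in the matrix basis $\{f_{mn}\}_{m,n\in\N^{N}}\subset\SS(\R^{2N})$ of $L^2(\R^{2N})$, for which $f_{mn}\star_\th f_{kl}=\delta_{nk}\,f_{ml}$ and $\left\langle f_{mn},f_{kl}\right\rangle_{L^2}=(2\pi\th)^{N}\delta_{mk}\delta_{nl}$ (see \cite{varilly_bondia:phobos}); writing $f=\sum_{m,n}c_{mn}f_{mn}$ and $g=\sum_{k,l}d_{kl}f_{kl}$ with $\sum|c_{mn}|^2,\sum|d_{kl}|^2<\infty$, one has $\left\|f\right\|_2^2=(2\pi\th)^{N}\sum|c_{mn}|^2$ and $f\star_\th g=\sum_{m,l}\bigl(\sum_n c_{mn}d_{nl}\bigr)f_{ml}$, so the lemma collapses to the statement $\left\|CD\right\|_{\mathrm{HS}}\le\left\|C\right\|_{\mathrm{op}}\left\|D\right\|_{\mathrm{HS}}\le\left\|C\right\|_{\mathrm{HS}}\left\|D\right\|_{\mathrm{HS}}$ for the coefficient matrices $C=(c_{mn})$, $D=(d_{kl})$ on $\ell^2(\N^{N})$, together with the computation that $\sup_{D}\left\|CD\right\|_{\mathrm{HS}}/\left\|D\right\|_{\mathrm{HS}}=\left\|C\right\|_{\mathrm{op}}$ (obtained by testing on rank-one $D$). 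Everything else is bookkeeping with this orthonormal basis.
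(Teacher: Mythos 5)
The paper offers no proof of this lemma at all: it is imported verbatim from \cite{moyal_spectral}, so there is no in-paper argument to compare yours against. Your route --- identify $\bigl(L^2(\R^{2N}),\star_\th\bigr)$ with Hilbert--Schmidt operators via the Weyl transform (equivalently, via the $\{f_{mn}\}$ matrix basis of Lemma \ref{mp_osc_lem} and Proposition \ref{mp_fmn}) and then invoke $\|TS\|_{\mathrm{HS}}\le\|T\|_{\mathrm{op}}\|S\|_{\mathrm{HS}}\le\|T\|_{\mathrm{HS}}\|S\|_{\mathrm{HS}}$ --- is exactly the standard proof in the cited reference, and the logical skeleton is sound. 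You are also right that the strict inequality ``$<$'' in the statement cannot be correct as written: for rank-one $f$ (e.g.\ $f=f_{00}\ox\cdots\ox f_{00}$) one has $\|f\|_{\mathrm{op}}=(2\pi\th)^{-N/2}\|f\|_2$ exactly, so ``$\le$'' is what is actually provable.

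One internal inconsistency you should fix: in your first paragraph you posit $\|\mathcal W(f)\|_{\mathrm{HS}}=(2\pi\th)^{+N/2}\|f\|_2$, but then the chain $\|f\|_{\mathrm{op}}\le\|\mathcal W(f)\|_{\mathrm{op}}\le\|\mathcal W(f)\|_{\mathrm{HS}}$ would yield $\|f\|_{\mathrm{op}}\le(2\pi\th)^{+N/2}\|f\|_2$, not the asserted $(2\pi\th)^{-N/2}\|f\|_2$. The correct normalization for a multiplicative Weyl transform is the reciprocal one, $\|\mathcal W(f)\|_{\mathrm{HS}}=(2\pi\th)^{-N/2}\|f\|_2$, which is precisely what your own matrix-basis computation in the final paragraph produces ($\|f\|_2^2=(2\pi\th)^N\sum|c_{mn}|^2$ gives $\|C\|_{\mathrm{HS}}=(2\pi\th)^{-N/2}\|f\|_2$, whence $\|f\|_{\mathrm{op}}=\|C\|_{\mathrm{op}}\le\|C\|_{\mathrm{HS}}=(2\pi\th)^{-N/2}\|f\|_2$). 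So the matrix-basis version of your argument is the one to keep; the Weyl-transform paragraph needs its constant inverted. With that correction the proof is complete.
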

       %\begin{rem}\label{norm_remark}
       %	Since $\SS\left(\R{2N} \right)$ is dense in $L^2\left(\R{2N} \right)$ the operator norm is given by $\|T\|_{\mathrm{op}} =\sup\set{\|T \star g\|_2/\|g\|_2 : 0 \neq g \in \SS\left( \R^{2N}\right) }$ 
       %\end{rem}
       
       \begin{defn}\label{mp_star_alg_defn}
       	Denote by $\SS\left(\R^{2N}_\th \right)$  (resp. $L^2\left(\R^{2N}_\th \right)$ ) the operator algebra  which is $\C$-linearly isomorphic to $\SS\left(\R^{2N} \right)$  (resp. $L^2\left(\R^{2N} \right)$ ) and product coincides with  $\star_\th$. Both  $\SS\left(\R^{2N}_\th \right)$ and $L^2\left(\R^{2N}_\th \right)$ act on the Hilbert space $L^2\left(\R^{2N} \right)$. Denote by
       	\begin{equation}\label{mp_psi_th_eqn}
     \Psi_\th:  \SS\left(\R^{2N} \right)\xrightarrow{\approx}\SS\left(\R^{2N}_\th \right)
       	\end{equation}
       	the natural $\C$-linear isomorphism.  
       \end{defn}
       \begin{empt}
       	There is the tracial property \cite{moyal_spectral} of the Moyal product
       	\begin{equation}\label{nt_tracial_prop}
       	\int_{\R^{2N}} \left( f\star_\th g\right) \left(x \right)dx =  \int_{\R^{2N}}  f\left(x \right) g\left(x \right)dx.
       	\end{equation}
       	The Fourier transformation of the star product satisfies to the following condition.
       \begin{equation}\label{mp_fourier_eqn}
      	\mathcal{F}\left(f \star_\th g\right) \left(x \right) =    	\int_{\R^{2N}}\mathcal{F}{f}\left(x-y \right) \mathcal{F}{g}\left(y\right)e^{ \pi i  y \cdot \Th x }~dy.
       \end{equation}
 
       \end{empt} 
       \begin{defn}\label{r_2_N_repr}\cite{moyal_spectral} Let $\SS'\left(\R^{2N} \right)$ be a vector space dual to $\SS\left(\R^{2N} \right)$. Denote by $C_b\left(\R^{2N}_\th\right)\stackrel{\mathrm{def}}{=} \set{T \in \SS'\left(\R^{2N}\right) : T \star_\th g \in L^2\left(\R^{2N}\right) \text{ for all } g \in L^2(\R^{2N})}$, provided with the operator norm \begin{equation}\label{mp_op_norm_eqn}
       		\|T\|_{\mathrm{op}} \stackrel{\mathrm{def}}{=}\sup\set{\|T \star_\th g\|_2/\|g\|_2 : 0 \neq g \in L^2(\R^{2N})}.
       	\end{equation}
       	Denote by $C_0\left(\R^{2N}_\th \right)$ the operator norm completion of $\SS\left(\R^{2N}_\th \right).$  
       \end{defn}
          \begin{rem}
       	Obviously $\SS\left(\R^{2N}_\th\right)  \hookto C_b\left(\R^{2N}_\th\right)$. But $\SS\left(\R^{2N}_\th\right)$ is not dense in $C_b\left(\R^{2N}_\th\right)$, i.e. $C_0\left(\R^{2N}_\th\right) \subsetneq C_b\left(\R^{2N}_\th\right)$  (cf. \cite{moyal_spectral}).
       \end{rem}
   
          \begin{rem}
   	$L^2\left(\R^{2N}_\th\right)$ is the $\|\cdot\|_2$ norm completion of $\SS\left(\R^{2N}_\th\right)$ hence 
   	from the Lemma \ref{nt_l_2_est_lem} it follows that 
   	\begin{equation}\label{mp_2_op_eqn}
   	L^2\left(\R^{2N}_\th\right) \subset C_0\left(\R^{2N}_\th\right).
   	\end{equation} 
   \end{rem}
   \begin{rem}
   	Notation of the Definition \ref{r_2_N_repr} differs from \cite{moyal_spectral}. Here symbols $A_\th, \A_\th, A^0_\th$ are replaced with $C_b\left(\R^{2N}_\th\right), \SS\left(\R^{2N}_\th\right), C_0\left(\R^{2N}_\th\right)$ respectively.
   \end{rem}
   \begin{rem}
   	The $\C$-linear space $C_0\left(\R^{2N}_\th \right)$ does not isomorphic to $C_0\left(\R^{2N}\right)$  (cf. \cite{moyal_spectral}). 
   \end{rem}
\begin{empt}\cite{moyal_spectral}
	By plane waves we understand all functions of the form
	$$
	x \mapsto \exp(ik\cdot x) 
	$$
	for  $k\in \R^{2N}$.  One obtains for the Moyal
	product of plane waves:
	\begin{equation}\label{mp_wave_prod_eqn}
	\begin{split}
\exp\left(ik\cdot\right) \star_{\Theta}\exp\left(ik\cdot\right)=\exp\left(ik\cdot\right) \star_{\theta}\exp\left(ik\cdot\right)= \exp\left(i\left( k+l\right) \cdot\right) e^{-\pi i k \cdot \Th l}
	\end{split}
	\end{equation}

\end{empt}
  \begin{rem}\cite{moyal_spectral}
  	The algebra $C_b\left(\R^{2N}_\th \right)$  contains all plane waves.
  \end{rem}
\begin{rem}\label{nt_c_k_eqn}
If $ \left\{c_k \in \C\right\}_{k \in N^0} $ is such that $\sum_{k = 0}^\infty \left|c_k\right| < \infty$ then from $\left\| \exp\left(ik\cdot\right)\right\|_{\mathrm{op}}=1$ it turns out  $\left\|\sum_{k = 0}^\infty \exp\left(ik\cdot\right)\right\|_{\mathrm{op}}< \sum_{k = 0}^\infty \left|c_k\right| < \infty$, i.e. $\sum_{k = 0}^\infty c_k\exp\left(ik\cdot\right) \in C_b\left(\R^{2N}_\th \right)$. 
\end{rem}
 \begin{empt}
	The equation \eqref{mp_wave_prod_eqn} is similar to  the equation \eqref{nt_unitary_product} which defines $C\left(\mathbb{T}^{2N}_{\theta}\right)$. From this fact and from the Remark \ref{nt_c_k_eqn} it follows that there is an injective *-homomorphism
	$\Coo\left(\mathbb{T}^{2N}_{\theta}\right) \hookto C_b\left(\R_\th^{2N} \right);~	U_k \mapsto \exp\left(2\pi ik\cdot\right)$. An algebra $\Coo\left(\mathbb{T}^{2N}_{\theta}\right)$ is dense in $C\left(\mathbb{T}^{2N}_{\theta}\right)$ so there is an injective *-homomorphism
	$C\left(\mathbb{T}^{2N}_{\theta}\right) \hookto C_b\left(\R_\th^{2N} \right)$. The faithful representation $C_b\left(\mathbb{R}^{2N}_{\theta}\right)\to B\left(L^2\left( \R^{2N}\right) \right)$
	gives a representation $\pi: C\left(\mathbb{T}^{2N}_{\theta}\right) \to B\left(L^2\left( \R^{2N}\right) \right)$
	\begin{equation}\label{nt_l2r_eqn}
	\begin{split}
	\pi: C\left(\mathbb{T}^{2N}_{\theta}\right) \to B\left(L^2\left( \R^{2N}\right) \right), \\
	U_k \mapsto \exp\left(2\pi ik\cdot\right)
	\end{split}
	\end{equation}
	
	where $U_k\in C\left(\mathbb{T}^n_{\Theta}\right)$ is given by the Definition \ref{nt_uni_defn}.
\end{empt} 
\begin{empt}\label{mp_scaling_constr}

	Let us consider the unitary dilation operators $E_a$ given
	by
	$$
	E_af(x) \stackrel{\mathrm{def}}{=} a^{N/2} f(a^{1/2}x),
	$$
	It is proven in \cite{moyal_spectral} that
	\begin{equation}\label{eq:starscale}
	f {\star_{\theta}} g =
	(\th/2)^{-N/2} E_{2/\th}(E_{\th/2}f \star_2 E_{\th/2}g).
	\end{equation}
	We can simplify our construction by setting $\th = 2$. Thanks to
	the scaling relation~\eqref{eq:starscale} any qualitative result can is true if it is true in case of 
	$\th = 2$. We use the following notation
	\begin{equation}\label{mp_times_eqn}
	f {\times} g\stackrel{\mathrm{def}}{=}f {\star_{2}} g
	\end{equation}
\end{empt}
\begin{lem}\label{mp_ab_delta_lem}
	Let $a, b \in \SS\left(\R^{2N} \right)$. For any $\Delta \in \R^{2N}$ let $a_\Delta \in  \SS\left(\R^{2N} \right) $ be such that $a_\Delta\left(x \right)= a\left(x + \Delta  \right)$. For any $m \in \N$ there is a constant $C^{a,b}_m$ such that
	\be\nonumber
	\left\| a_\Delta \times b\right\|_2 < \frac{C^{a,b}_m}{\left( 1+\left\|\Delta \right\|\right)^m  }
	\ee
	where $\left\| \cdot\right\|_2$ is given by \eqref{nt_l2_norm_eqn}.
\end{lem}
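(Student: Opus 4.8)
The plan is to pass to the Fourier side, where the translation $a\mapsto a_\Delta$ becomes modulation, and then to reduce the claimed bound to a rapid-decay estimate for an oscillatory integral depending on the parameter $\Delta$, using the Plancherel theorem (our Fourier transform \eqref{intro_fourier} being unitary on $L^2$). Write $\Theta=2J$ as in \eqref{nt_simpectic_theta_eqn} with $\theta=2$, so that $a\times b=a\star_2 b$ and $\tfrac12\Theta=J$. Then \eqref{mp_fourier_eqn} reads
\[
\mathcal F(f\times g)(x)=\int_{\R^{2N}}\mathcal F f(x-y)\,\mathcal F g(y)\,e^{2\pi i\,y\cdot Jx}\,dy .
\]
Since $\mathcal F(a_\Delta)(\xi)=e^{2\pi i\,\Delta\cdot\xi}\,\mathcal F a(\xi)$ and $x\cdot Jx=0$ because $J$ is skew-symmetric, substituting $z=x-y$ gives
\[
\mathcal F(a_\Delta\times b)(x)=\int_{\R^{2N}}\widehat a(z)\,\widehat b(x-z)\,e^{2\pi i\,z\cdot(\Delta-Jx)}\,dz=:G(x,\Delta-Jx),
\]
where $\widehat a=\mathcal F a$, $\widehat b=\mathcal F b$ are Schwartz and $G(x,w):=\int\widehat a(z)\widehat b(x-z)e^{2\pi i z\cdot w}\,dz$. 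By Plancherel, $\|a_\Delta\times b\|_2=\|\,x\mapsto G(x,\Delta-Jx)\,\|_2$, so it suffices to estimate this.

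The second step is a routine integration-by-parts bound on $G$. For every multi-index $\gamma$ one has $(2\pi i w)^\gamma G(x,w)=(-1)^{|\gamma|}\int\partial_z^\gamma\big(\widehat a(z)\widehat b(x-z)\big)e^{2\pi i z\cdot w}\,dz$, and expanding the derivative by the Leibniz rule bounds $|w^\gamma G(x,w)|$ by a finite sum of convolutions $|\partial^\beta\widehat a|\ast|\partial^{\gamma-\beta}\widehat b|$ evaluated at $x$; each such convolution is a Schwartz function of $x$. Combining these over $|\gamma|\le M$ together with the case $\gamma=0$ (where $G(x,0)=(\widehat a\ast\widehat b)(x)$) produces, for all $M,k\in\N$, a constant $C_{M,k}$ with
\[
|G(x,w)|\le \frac{C_{M,k}}{(1+|x|)^{k}\,(1+|w|)^{M}} .
\]

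Finally I would convert decay in $w=\Delta-Jx$ into decay in $\Delta$ by Peetre's inequality $1+|\Delta|\le(1+|\Delta-Jx|)(1+|Jx|)\le C(1+|\Delta-Jx|)(1+|x|)$, whence $(1+|w|)^{-M}\le C^{M}(1+|x|)^{M}(1+|\Delta|)^{-M}$. Taking $k=M+2N+1$ gives $|G(x,\Delta-Jx)|\le C'_M(1+|\Delta|)^{-M}(1+|x|)^{-(2N+1)}$, which is square-integrable in $x$ with $L^2$-norm at most $C''_M(1+|\Delta|)^{-M}$. Choosing $M\ge m$ and enlarging the constant slightly for the strict inequality yields $\|a_\Delta\times b\|_2<C^{a,b}_m(1+\|\Delta\|)^{-m}$, with $\|\cdot\|_2$ as in \eqref{nt_l2_norm_eqn}.

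The only genuinely delicate point is the last step: a priori $Jx$ may be of order $|\Delta|$ and cancel $\Delta$ inside $w=\Delta-Jx$, so decay in $w$ alone is insufficient; what saves the argument is the competition with the rapidly decreasing weight $(1+|x|)^{-k}$, which Peetre's inequality packages cleanly. Everything else — the passage to the Fourier side, the integration by parts, and the fact that convolutions of Schwartz functions are Schwartz — is standard bookkeeping.
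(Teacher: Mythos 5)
Your argument is correct, and it reaches the estimate by a genuinely different route from the paper. Both proofs start from the Fourier formula \eqref{mp_fourier_eqn} and both invoke Plancherel to reduce to bounding $\mathcal F(a_\Delta\times b)$ in $L^2$, but from there they diverge. The paper splits the Fourier side into the two regions $\|x\|\le\|\Delta\|/2$ and $\|x\|>\|\Delta\|/2$: on the first it estimates the integrand pointwise, using that the translated factor $\mathcal F a(\cdot-\Delta-x)$ is far from the bulk of $\mathcal F b$ together with the triangle inequality \eqref{nt_triangle_eqn}; on the second it applies Parseval to the inner $t$-integral so that the oscillation $e^{\pi i x\cdot\Th t}$ becomes a large translation by $\pi\Th x$, and decay is again read off from Schwartz decay. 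You instead change variables once to exhibit the full modulation parameter $w=\Delta-Jx$ (using $x\cdot Jx=0$), prove joint decay $|G(x,w)|\le C_{M,k}(1+|x|)^{-k}(1+|w|)^{-M}$ by integration by parts against the phase, and then trade decay in $w$ for decay in $\Delta$ via Peetre's inequality, absorbing the resulting polynomial growth in $x$ into the rapid decay in $x$. This eliminates the case analysis entirely and correctly isolates the one delicate point (that $Jx$ can cancel $\Delta$ inside $w$), which is precisely what forces the paper's second region to be handled by Parseval. Two cosmetic remarks: the convolutions $|\partial^\beta\widehat a|\ast|\partial^{\gamma-\beta}\widehat b|$ are rapidly decreasing but not literally Schwartz (taking absolute values destroys smoothness) --- only the decay is needed, so nothing is lost; and the comparison $(1+|w|)^M\le C_M\sum_{|\gamma|\le M}|w^\gamma|$ used to pass from the individual monomial bounds to the weight $(1+|w|)^{-M}$ deserves a one-line mention. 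Neither affects correctness.
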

\begin{proof}
	From the definition of Schwartz functions it follows that for any $f \in \sS\left( \mathbb{R}^{2N} \right)$ and any $m \in \mathbb{N}$ there is $C^f_m>0$ such that
	\begin{equation}\label{nt_c_f_m_eqn}
	\left|f \left(u\right)\right|<\frac{C^f_m}{\left( 1 + \left\|u\right\|\right)^m }.
	\end{equation}
From \eqref{mp_fourier_eqn} it follows that
$$
\mathcal{F}\left(a_\Delta \times b  \right)\left(x \right)  =    	\int_{\R^{2N}}\mathcal{F}a_\Delta\left(x-y \right) \mathcal{F}b\left(y\right)e^{ \pi i  y \cdot \Th x }~dy = \int_{\R^{2N}}c\left(y-\Delta -x \right) d\left(y\right)e^{ \pi i  y \cdot \Th x }~dy
$$
where $c\left( x\right) = \mathcal{F}a\left(-x \right)$, $d\left( x\right) = \mathcal{F}b\left(x \right)$	If $\xi =\mathcal{F}\left(a_\Delta \times b  \right)$ then $\xi \in L^2\left(\R^{2N} \right)$. Let $\xi=\xi_1+ \xi_2$ where $\xi_1, \xi_2 \in L^2\left(\R^{2N} \right)$ are given by
\be
\begin{split}
	\xi_1\left(x \right)= \begin{cases}
\mathcal{F}\left(a_{\Delta}  b\right)\left(x \right)& \left\| x\right\| \le  \frac{\left\| \Delta\right\| }{2}\\
0& \left\| x\right\| >  \frac{\left\| \Delta\right\| }{2}
	\end{cases},\\
	\xi_2\left(x \right)= \begin{cases}
0 & \left\| x\right\|\le  \frac{\left\| \Delta\right\| }{2}\\
	\mathcal{F}\left(a_{\Delta}  b\right)\left(x \right)& \left\| x\right\| >  \frac{\left\| \Delta\right\| }{2}
\end{cases}.\\
\end{split}
\ee

 From \eqref{nt_c_f_m_eqn} it turns out
\begin{equation}\label{mp_xi_1_eqn}
\begin{split}
\left| \xi_1\left(x \right)\right|  \le ~ \int \left| c\left( t - \Delta - x \right)  d\left(t \right)e^{\frac{\pi i l}{m_j} \cdot \Th t}  \right|dt \le
  \int_{\mathbb{R}^{2N}}\frac{C^{c}_{M}}{\left(1 + \left\|t - \Delta - x\right\|  \right)^{M}}~\frac{C^{d}_{2M}}{\left(1 + \left\|t\right\|  \right)^{2M}} dt  =
\\
=  \int_{\mathbb{R}^{2N}}\frac{C^{c}_{M}}{\left(1 + \left\|t - \Delta - x\right\|  \right)^{M}\left(1 + \left\|t\right\|  \right)^{M} }~\frac{C^{d}_{2M}}{\left(1 + \left\|t\right\|  \right)^{M}} dt  \le
\\
\le \sup_{x \in \mathbb{R}^{2N}, ~\left\| x\right\|\le \frac{\left\| \Delta\right\|}{2 },~s\in \mathbb{R}^{2N}}~ \frac{C^{c}_{M}C^{d}_{2M}}{\left(1 + \left\|s - \Delta - x\right\|  \right)^{M} \left(1 + \left\|s\right\|  \right)^{M}}
\times  \int_{\mathbb{R}^{2N}}\frac{1}{\left(1 + \left\|t\right\|  \right)^{M}} dt.
\end{split}
\end{equation}
	If $x,y \in \mathbb{R}^{2N}$  then from the triangle inequality it follows that $\left\|x + y\right\|>  \left\|y\right\| - \left\|x\right\|$, hence 
$$
\left(1 + \left\|x\right\| \right)^M \left(1 + \left\|x+ y\right\| \right)^M \ge \left(1 + \left\|x\right\| \right)^M \left(1 +  \max\left(0, \left\|y\right\| - \left\|x\right\| \right)\right)^M.
$$
If $ \left\|x\right\| \le \frac{\left\|y\right\|}{2}$ then $\left\|y\right\| - \left\|x\right\| \ge \frac{\left\|y\right\|}{2}$ and 
\begin{equation}\label{nt_triangle_eqn}
\left(1 + \left\|x\right\| \right)^M \left(1 + \left\|x+ y\right\| \right)^M > \left( \frac{\left\|y\right\|}{2}\right)^M. 
\end{equation}

Clearly if $ \left\|x\right\| > \frac{\left\|y\right\|}{2}$ then condition \eqref{nt_triangle_eqn} also holds, hence\eqref{nt_triangle_eqn} is always true.
	Clearly if $ \left\|x\right\| > \frac{\left\|y\right\|}{2}$ then condition \eqref{nt_triangle_eqn} also holds, hence\eqref{nt_triangle_eqn} is always true.
It turns out from $\left\|-x-\Delta\right\|> \frac{\left\|\Delta\right\|}{2} $ and \eqref{nt_triangle_eqn} that
$$
\mathrm{inf}_{x \in \mathbb{R}^{2N}, ~\left\| x\right\|\le \frac{\left\| \Delta\right\|}{2 },~s\in \mathbb{R}^{2N}} \left(1 + \left\|s - \Delta - x\right\|  \right)^{M} \left(1 + \left\|s\right\|  \right)^{M} >\left\|\frac{\Delta}{4}\right\|^M,
$$
hence from \eqref{mp_xi_1_eqn} it turns out
\be\nonumber
\left| \xi_1\left(x \right)\right| \le \frac{4^MC^{c}_{M}C^{d}_{2M}}{\left\|\Delta\right\|^M} \times  \int_{\mathbb{R}^{2N}}\frac{1}{\left(1 + \left\|t\right\| \right)^{M}} dt
\ee
There is the well known integral
$$
\int_{x \in \R^{2N}, ~\left\| x\right\|\le \frac{\left\| \Delta\right\|}{2 }} 1 dx = \frac{\pi^N}{\Gamma\left(N+1 \right) }\left( \frac{\left\|\Delta\right\|}{2}\right)^{2N}
$$
where $\Gamma$ is the Euler gamma function.
If $M > 2N$ then the integral $C' = \int_{\mathbb{R}^{2N}}\frac{1}{\left(1 + \left\|t\right\| \right)^{M}} dt$ is convergent, it turns out
\be\nonumber
\left| \xi_1\right|^2_2 \le \left( \frac{4^MC'C^{c}_{M}C^{d}_{2M}}{\left\|\Delta\right\|^M}\right)^2   \int_{x \in \R^{2N}, ~\left\| x\right\|\le \frac{\left\| \Delta\right\|}{2 }} 1 dx = \frac{4^MC'C^{c}_{M}C^{d}_{2M}}{\left\|\Delta\right\|^M}\frac{\pi^N}{\Gamma\left(N+1 \right) }\left( \frac{\left\|\Delta\right\|}{2}\right)^{2N}.
\ee
 
If $M = 2N+ m$ then from the above equation it turns out that there is $C_1 > 0$ such that
\be\label{mp_xi1_eqn}
\left| \xi_1\right|^2_2 \le \frac{C_1}{\left\|\Delta\right\|^m}.
\ee
 If $\left( \cdot, \cdot \right)_{L^2\left(\R^{2N} \right) }$ is the given by \eqref{fourier_scalar_product_eqn} scalar product then from \eqref{mp_inv_eqn} it turns out
\begin{equation*}
\begin{split}
\left| \xi_2\left(x \right) \right|\le\left|\int c\left( t - \Delta - x \right)  d\left(t \right)e^{\pi i x \cdot \Th t}  dt\right|=\\
=\left|\left(    c\left( \bullet- \Delta-x\right) ,~ d\left(\bullet\right)e^{\pi i x\cdot \Th\bullet}   \right)_{L^2\left(\R^{2N} \right) }  \right|=\\
=\left|\left(  \mathcal{F}\left(  c\left( \bullet- \Delta-x\right)\right) ,~ \mathcal{F}\left( d\left(\bullet\right)e^{\pi i x\cdot \Th\bullet} \right)  \right)_{L^2\left(\R^{2N} \right) }  \right|=\\=
\left|\int_{\mathbb{R}^{2N}} 
\mathcal{F}\left( c\right)\left( \bullet - \Delta -x\right)\left(u\right)\mathcal{F}\left(d\left(\bullet\right)e^{\pi i x\cdot \Th\bullet} \right)\left(u\right)  du \right|\le\\
\le \int_{\mathbb{R}^{2N}}\left| e^{-i\left(-\Delta - x\right) \cdot u}\mathcal{F}\left( c\right)\left( u\right) \mathcal{F}\left(d\right)\left(u+\pi\Th x\right)\right| du \le \\
\le \int_{\mathbb{R}^{2N}}\frac{C^{\mathcal{F}\left( c\right)}_{3M}}{\left(1 + \left\|u\right\|  \right)^{3M}}\frac{C^{\mathcal{F}\left(d\right)}_{2M}}{\left(1 + \left\|u-\pi\Th x\right\|  \right)^{2M}} du \le \\
\le \sup_{x \in \mathbb{R}^{2N}, ~\left\| x\right\|> \frac{\left\| \Delta\right\|}{2 },~ s \in \mathbb{R}^{2N}}~ \frac{C^{\mathcal{F}\left( c\right)}_{3M}}{\left(1 + \left\|s\right\|  \right)^{M}}\frac{C^{\mathcal{F}\left(d\right)}_{2M}}{\left(1 + \left\|s-\pi\Th x\right\|  \right)^{M}}     
\frac{1}{\left(1 + \left\|u-\pi\Th  x\right\|  \right)^{M}\left(1 + \left\|u\right\|  \right)^{M}}\times\\
\times\int_{\mathbb{R}^{2N}}\frac{1}{\left(1 + \left\|u\right\|  \right)^{M}} du.
\end{split}
\end{equation*}

Since we consider the asymptotic dependence $\left\|\Delta\right\|\to \infty$ only  large values of $\left\|\Delta\right\|$ are interesting, so we can suppose that  $\left\|\Delta\right\| > 2$.
If $\left\|\Delta\right\| > 2$ then from $\left\|x\right\|> \frac{\left\| \Delta\right\|}{2 }$ it follows that $\left\|\pi\Th x\right\| > 1$, and  from \eqref{nt_triangle_eqn} it follows that
\begin{equation*}
\begin{split}
\left(1 + \left\|u\right\|  \right)^{M}\left(1 + \left\|u-\pi\Th x\right\|  \right)^{M} > \left\|\pi\Th x\right\|^M, 
\\
\inf_{x \in \mathbb{R}^{2N}, ~\left\| x\right\|> \frac{\left\| \Delta\right\|}{2 },~ s \in \mathbb{R}^{2N}}~ \left(1 + \left\|s\right\|  \right)^{M}\left(1 + \left\|s-\pi\Th x\right\|  \right)^{M} > \left\|\frac{\pi\Th\Delta}{4}\right\|^M,
\end{split}
\end{equation*}

hence
\begin{equation*}
\begin{split}
\left| \xi_2\left(x \right)   \right|
\le  \frac{C^{\mathcal{F}\left( c\right)}_{3M}C^{\mathcal{F}\left(d\right)}_{2M}}{\left\|\frac{\pi\Delta}{4}\right\|^M \left\|\pi\Th x\right\|^M} \int_{\R^{2N}}\frac{1}{\left(1+\left\|u\right\| \right)^M }du.
\end{split}
\end{equation*}
If $m \ge 1$ and $M = 2N + m$ then the integral $C'\int_{\R^{2N}}\frac{1}{\left(1+\left\|u\right\| \right)^M }du$ is convergent and 
$$
\left| \xi_2\left(x \right)   \right|
\le  \frac{C^{\mathcal{F}\left( c\right)}_{3M}C^{\mathcal{F}\left(d\right)}_{2M}C'}{\left\|\frac{\pi\Delta}{4}\right\|^M \left\|\pi\Th x\right\|^M}.
$$
Taking into account \eqref{nt_simpectic_theta_eqn} and $\th = 2$ one has
\be\label{mp_2x_eqn}
\left\|\Th z\right\|= \left\|2 z\right\|; ~ \forall z \in \R^{2N}.
\ee
It follows that
$$
\left| \xi_1\right|^2_2 \le \int_{x \in \R^{2N} \left\| x\right\| >  \frac{\left\| \Delta\right\| }{2}}\left(\frac{C^{\mathcal{F}\left( c\right)}_{3M}C^{\mathcal{F}\left(d\right)}_{2M}C'}{\left\|\frac{2\pi\Delta}{4}\right\|^M \left\|2\pi x\right\|^M} \right)^2dx .
$$
Since above integral is convergent one has
there is a constant $C_2$ such that
\be\label{mp_xi2_eqn}
\left| \xi_2\right|^2_2 \le \frac{C_2}{\left\|\frac{\pi\Delta}{4}\right\|^{2M}}
\ee
Since $\xi_1 \perp \xi_2$ one has $\left| \xi\right\|_2= \sqrt{\left| \xi_1\right|^2_2+\left| \xi_2\right|^2_2}$ and taking into account \eqref{mp_xi1_eqn}, \eqref{mp_xi2_eqn} it follows that for any $m \in \N$ there is $C_m > 0$ such that
$$
\left\| \xi\right\|_2 = \left\| \mathcal{F}\left(a_{\Delta}\times  b\right)\right\|_2 \le \frac{C^{a,b}_m}{\left( 1+\left\|\Delta \right\|\right)^m  }.
$$
From \eqref{mp_inv_eqn} it turns out
$$
 \left\|a_{\Delta}\times  b\right\|_2=\left\| \mathcal{F}\left(a_{\Delta}\times  b\right)\right\|_2 \le \frac{C^{a,b}_m}{\left( 1+\left\|\Delta \right\|\right)^m  }.
$$

\end{proof}
\begin{prop}\label{mp_factor_prop}\cite{moyal_spectral}
	The algebra $\SS\left(\R^{2N}, \star_\th \right)$  has the (nonunique) factorization property:
	for all $h \in  \SS\left(\R^{2N} \right)$ there exist $f,g \in  \SS\left(\R^{2N} \right)$ that $h = f \star_\th g$.
\end{prop}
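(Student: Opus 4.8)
The plan is to carry the statement across to the \emph{matrix basis} of the Moyal algebra, in which $\star_\th$ becomes nothing but matrix multiplication, and then to write down an explicit factorization by hand. Concretely, I would recall from \cite{moyal_spectral} the matrix basis $\left\{f_{mn}\right\}_{m,n\in\N^N}\subset\SS\left(\R^{2N}\right)$, characterized by
\begin{equation*}
f_{mn}\star_\th f_{kl}=\delta_{nk}\,f_{ml},\qquad f_{mn}^{*}=f_{nm},
\end{equation*}
by the fact that $\left\{f_{mn}\right\}$ is an orthogonal basis of $L^{2}\left(\R^{2N}\right)$, and — the essential analytic point — by the statement that a tempered distribution $a=\sum_{m,n}a_{mn}f_{mn}$ lies in $\SS\left(\R^{2N}\right)$ precisely when the double sequence $\left(a_{mn}\right)$ is rapidly decreasing, i.e.\ $\sup_{m,n}\left(1+|m|+|n|\right)^{k}\left|a_{mn}\right|<\infty$ for every $k\in\N$. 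Under this dictionary $\SS\left(\R^{2N}_\th\right)$ is identified with the Fr\'echet algebra of rapidly decreasing matrices under ordinary multiplication; by the scaling relation \eqref{eq:starscale} one may, if one wishes, assume $\th=2$ from the outset, and since the $2N$-dimensional basis is an external product of copies of the two-dimensional one, the bookkeeping is the same in all dimensions.

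Given $h\in\SS\left(\R^{2N}\right)$ with matrix entries $\left(h_{mn}\right)$, the factorization I would use is as follows. First set, for $n\in\N^{N}$,
\begin{equation*}
\rho_{n}=\left(2^{-|n|}+\sum_{k\in\N^{N}}\left(\left|h_{nk}\right|+\left|h_{kn}\right|\right)\right)^{1/2}>0,
\end{equation*}
and let $g=\sum_{n}\rho_{n}f_{nn}$. Since $h$ is rapidly decreasing, $\sum_{k}\left(\left|h_{nk}\right|+\left|h_{kn}\right|\right)$ — and trivially $2^{-|n|}$ — decays faster than any power of $1+|n|$; hence $\left(\rho_{n}\right)_{n}$ is a rapidly decreasing diagonal sequence and $g\in\SS\left(\R^{2N}\right)$. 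Next define the matrix $\left(c_{mn}\right)$ by $c_{mn}=h_{mn}\rho_{n}^{-1}$ and let $f=\sum_{m,n}c_{mn}f_{mn}$. Taking $k=m$ in the sum defining $\rho_{n}$ gives $\rho_{n}^{2}\ge\left|h_{mn}\right|$ for every $m$, so $\left|c_{mn}\right|\le\left|h_{mn}\right|^{1/2}$ (and $c_{mn}=0$ wherever $h_{mn}=0$); as $\left(\left|h_{mn}\right|^{1/2}\right)$ is again rapidly decreasing, so is $\left(c_{mn}\right)$, and therefore $f\in\SS\left(\R^{2N}\right)$. Finally, because $g$ corresponds to a diagonal matrix,
\begin{equation*}
\left(f\star_\th g\right)_{mn}=\sum_{l}c_{ml}\,\rho_{l}\,\delta_{ln}=c_{mn}\rho_{n}=h_{mn},
\end{equation*}
so $h=f\star_\th g$ with both factors in $\SS\left(\R^{2N}\right)$. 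Non-uniqueness is then manifest: any strictly positive rapidly decreasing diagonal regularizer dominating the rows and columns of $h$ will do, and one may equally factor on the left by applying the same construction to $h^{*}$.

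The only step that is not routine bookkeeping is the characterization ``$\left(a_{mn}\right)$ rapidly decreasing $\iff$ $a\in\SS\left(\R^{2N}\right)$'': this is where the explicit Laguerre/Hermite form of the $f_{mn}$ enters, through polynomial bounds on their Schwartz seminorms, and it is precisely the content established in \cite{moyal_spectral}. Granting it, everything above is elementary, so I expect this equivalence to be the main obstacle to a fully self-contained proof. If one prefers to bypass the matrix basis altogether, the same idea runs through the Weyl transform, which realizes $\SS\left(\R^{2N}_\th\right)$ as the algebra of operators on $L^{2}\left(\R^{N}\right)$ admitting a Schwartz-class symbol; in the Hermite-function matrix of such an operator one again divides the rows by a positive rapidly decreasing diagonal regularizer to split $h$ as a product.
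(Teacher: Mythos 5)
Your factorization is correct. The paper itself offers no proof of this proposition --- it is imported verbatim from \cite{moyal_spectral} --- so the only thing to check is your argument on its own terms, and it holds up: the regularizer $\rho_n^2 = 2^{-|n|}+\sum_k\left(|h_{nk}|+|h_{kn}|\right)$ is strictly positive and rapidly decreasing, the bound $\rho_n^2\ge |h_{mn}|$ gives $|c_{mn}|\le |h_{mn}|^{1/2}$ so that $f$ is again a rapidly decreasing matrix, and the diagonal form of $g$ makes $\left(f\star_\th g\right)_{mn}=c_{mn}\rho_n=h_{mn}$ immediate. The one input you correctly flag as non-elementary, the identification of $\SS\left(\R^{2N}_\th\right)$ with the rapidly decreasing matrices under ordinary matrix multiplication, is exactly what the paper records as Proposition \ref{mp_fmn} (with the $\ell^2$-weighted seminorms $r_k$, which are equivalent to your sup-norm formulation by a routine estimate), and this matrix-basis route is also how the cited source establishes the factorization property, so your proof is essentially the standard one rather than a genuinely different approach.
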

\begin{lem}\label{mp_weak_lem}
	Following conditions hold:
	\begin{enumerate}
\item[(i)]
	Let $\left\{a_n \in  C_b\left(\R^{2N}_\th\right)\right\}_{n \in \N}$ be a sequence such that
	\begin{itemize}
		\item $\left\{a_n \right\}$ is weakly-* convergent (cf. Definition \ref{nt_*w_defn}),
		\item If $a = \lim_{n \to \infty} a_n$ in the sense of weak-* convergence then  $a \in C_b\left(\R^{2N}_\th\right)$.
	\end{itemize}
	Then the sequence $\left\{a_n \right\}$ is convergent in sense of weak topology $\left\{a_n \right\}$ (cf. Definition \ref{weak_topology}) and $a$ is limit of $ \left\{a_n \right\}$ with respect to the weak topology. Moreover if $\left\{a_n \right\}$ is increasing or decreasing sequence of self-adjoint elements then $\left\{a_n \right\}$ is convergent in sense of strong topology  
	(cf. Definition \ref{strong_topology})  and $a$ is limit of $ \left\{a_n \right\}$ with respect to the strong topology.
\item[(ii)] If $\left\{a_n \right\}$ is strongly and/or weakly convergent (cf. Definitions \ref{strong_topology}, \ref{weak_topology}) and  $a = \lim_{n \to \infty} a_n$ is strong and/ or weak limit then $\left\{a_n \right\}$ is  weakly-* convergent and $a$ is the limit of $\left\{a_n \right\}$ in the sense of weakly-* convergence.
\end{enumerate}
\end{lem}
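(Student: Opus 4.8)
The plan is to prove (ii) first, as it is the soft direction, and then to deduce (i) from it together with Lemma \ref{increasing_convergent_w}.

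\textbf{Part (ii).} Fix $b \in \SS\left(\R^{2N}\right)$ and, using the factorization property (Proposition \ref{mp_factor_prop}), write $b = f \star_\th g$ with $f, g \in \SS\left(\R^{2N}\right)$. For $T \in C_b\left(\R^{2N}_\th\right)$ the extension identities \eqref{mp_star_ext_eqn} give $\langle T, b\rangle = \langle T \star_\th f, g\rangle$, and since $T \star_\th f \in L^2\left(\R^{2N}\right)$ by Definition \ref{r_2_N_repr}, the right-hand side equals $(T(f), \overline{g})_{L^2}$, the inner product of the vector $T(f) = T \star_\th f$ against $\overline{g}$ (up to the standard complex-conjugation convention relating the bilinear $\SS'$-pairing to \eqref{fourier_scalar_product_eqn}); the same formula defines the pairing of the limit operator $a$ with $b$. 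If $a_n \to a$ strongly then $\|a_n(f) - a(f)\|_2 \to 0$, and if $a_n \to a$ weakly then $(a_n(f), \overline{g})_{L^2} \to (a(f), \overline{g})_{L^2}$; in either case $\langle a_n, b\rangle \to \langle a, b\rangle$. Since $b$ was arbitrary, $a_n \to a$ in the weak-$*$ sense of Definition \ref{nt_*w_defn}.

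\textbf{Part (i).} For $\xi, \eta \in \SS\left(\R^{2N}\right)$ the element $\xi \star_\th \overline{\eta}$ again lies in $\SS\left(\R^{2N}\right)$, and \eqref{mp_star_ext_eqn} together with the tracial property \eqref{nt_tracial_prop} yields
\begin{equation*}
(a_n \xi, \eta)_{L^2} = \langle a_n \star_\th \xi, \overline{\eta}\rangle = \langle a_n, \xi \star_\th \overline{\eta}\rangle .
\end{equation*}
Weak-$*$ convergence of $\{a_n\}$ therefore gives $(a_n \xi, \eta)_{L^2} \to \langle a, \xi \star_\th \overline{\eta}\rangle = (a \xi, \eta)_{L^2}$ for all $\xi, \eta$ in the dense subspace $\SS\left(\R^{2N}\right) \subset L^2\left(\R^{2N}\right)$, where $a$ is regarded as the bounded operator it defines. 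To promote this to convergence of $(a_n \xi, \eta)_{L^2}$ for all $\xi, \eta \in L^2\left(\R^{2N}\right)$, i.e. to convergence in the weak topology of Definition \ref{weak_topology}, it suffices to know $\sup_n \|a_n\|_{\mathrm{op}} < \infty$, after which a routine density argument extends the convergence off the dense subspace and identifies the limit with $a$.

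\textbf{The monotone case.} Assume $\{a_n\}$ is an increasing sequence of self-adjoint elements (the decreasing case follows by passing to $-a_n$), so that $a$, as a weak-$*$ limit of self-adjoints, is self-adjoint. By the previous paragraph, for each $\xi \in \SS\left(\R^{2N}\right)$ the real sequence $(a_n \xi, \xi)_{L^2}$ is nondecreasing with limit $(a \xi, \xi)_{L^2}$, hence $(a \xi, \xi)_{L^2} \ge (a_m \xi, \xi)_{L^2}$ for every $m$; since $a - a_m$ is a bounded self-adjoint operator nonnegative on a dense subspace it is positive, i.e. $a_m \le a$. Combined with $a_1 \le a_m$ this gives $\|a_m\|_{\mathrm{op}} \le \max(\|a_1\|_{\mathrm{op}}, \|a\|_{\mathrm{op}})$ for all $m$, the uniform bound needed above, which settles the weak-topology claim. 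Moreover Lemma \ref{increasing_convergent_w} applies to the bounded increasing net $\{a_n\}$ and shows it converges strongly (Definition \ref{strong_topology}) to a self-adjoint $x \in B\left(L^2\left(\R^{2N}\right)\right)$; by Part (ii), $a_n \to x$ weak-$*$ as well, and since weak-$*$ limits in $\SS'\left(\R^{2N}\right)$ are unique we get $x = a$, which is the ``Moreover'' assertion.

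\textbf{Main obstacle.} The genuinely delicate point is the passage from $\SS\left(\R^{2N}\right)$ to all of $L^2\left(\R^{2N}\right)$: weak-$*$ convergence in $\SS'\left(\R^{2N}\right)$ only yields, via the Banach--Steinhaus theorem for Fr\'echet spaces, equicontinuity with respect to the Schwartz seminorms, not an $L^2$-operator-norm bound on $\{a_n\}$. In the monotone case — the one actually used later — this is exactly what the sandwich $a_1 \le a_n \le a$ circumvents before invoking Lemma \ref{increasing_convergent_w}; for the bare weak-topology assertion one must additionally lean on the hypothesis that each $a_n$ is a bounded operator and on uniqueness of the weak-$*$ limit to exclude a genuinely unbounded sequence. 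A secondary, purely bookkeeping issue is keeping the complex-conjugation conventions consistent across the bilinear $\SS'$-pairing, the inner product \eqref{fourier_scalar_product_eqn}, and the seminorms of Definition \ref{weak_topology}.
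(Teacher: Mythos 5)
Your proof of (ii) and your treatment of the monotone case of (i) follow essentially the same route as the paper: the factorization property (Proposition \ref{mp_factor_prop}) plus the duality identities \eqref{mp_star_ext_eqn} convert the $\SS'$-pairing into matrix elements against Schwartz vectors, and Lemma \ref{increasing_convergent_w} handles the monotone case. Your monotone argument is in fact more careful than the paper's: the paper simply asserts $a_n < a$, whereas you derive $a_m \le a$ from positivity of the bounded self-adjoint operator $a - a_m$ on the dense subspace $\SS\left(\R^{2N}\right)$, which also supplies the uniform bound $\left\|a_m\right\|_{\mathrm{op}} \le \max\left(\left\|a_1\right\|_{\mathrm{op}}, \left\|a\right\|_{\mathrm{op}}\right)$ needed to pass from Schwartz vectors to arbitrary $L^2$ vectors.

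The obstacle you flag in the general (non-monotone) weak-convergence claim of (i) is a genuine gap, and it is present in the paper's own proof as well: the paper writes $\lim_{n}\lim_{j}\left(a_n x_j, y_j\right) = \lim_{j}\lim_{n}\left\langle a_n, x_j\star_\th y_j\right\rangle$ and interchanges the two limits without justification, which is exactly the missing uniform bound $\sup_n\left\|a_n\right\|_{\mathrm{op}}<\infty$. Your proposed escape — leaning on boundedness of each individual $a_n$ and uniqueness of the weak-$*$ limit — cannot close it. Indeed, take $N=1$ and $a_n = n f_{nn}$ with $f_{nn}$ as in Lemma \ref{mp_osc_lem}: each $a_n$ lies in $\SS\left(\R^{2}_\th\right)\subset C_b\left(\R^{2}_\th\right)$ and is $n$ times a rank-one orthogonal projection, so $\left\|a_n\right\|_{\mathrm{op}}=n$; yet for any $b=\sum c_{kl}f_{kl}\in\SS\left(\R^{2}\right)$ the pairing $\left\langle a_n, b\right\rangle$ is, up to the factor $2$, equal to $n$ times a single coefficient of the rapidly decreasing expansion of $b$, hence tends to $0$. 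Thus $a_n\to 0$ weakly-$*$ with limit $0\in C_b\left(\R^{2}_\th\right)$, but since a weakly convergent sequence of operators must be norm-bounded, $\left\{a_n\right\}$ is not weakly convergent in $B\left(L^2\left(\R^{2}\right)\right)$. So part (i) as stated needs an extra hypothesis (monotonicity, or an a priori uniform operator-norm bound), and no proof can avoid this. Fortunately the downstream uses of the lemma (Lemmas \ref{mp_strong_lem} and \ref{nt_w_spec_lem_p}) concern monotone sums of translates of positive elements, which is exactly the case your argument does establish.
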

\begin{proof}
	(i)
	If $\left\langle \cdot, \cdot \right\rangle: \SS'\left(\R^{2N} \right)\times \SS\left(\R^{2N} \right) \to \C$ is the natural pairing then one has 
	\be\label{mp_an_lim_eqn}
	\lim_{n \to \infty} \left\langle a_n, b \right\rangle = \left\langle a, b \right\rangle; ~~ \forall b \in \SS\left(\R^{2N} \right).
	\ee
	Let $\xi, \eta \in L^2\left(\R^{2N} \right)$ and let $\left\{x_j \in \SS\left(\R^{2N}_\th\right)\right\}_{j \in \N}$, $\left\{y_j \in \SS\left(\R^{2N}_\th\right)\right\}_{ \in \N}$ be  sequences such that there are following  limits
	\be\label{mp_xn_lim_eqn}
	\begin{split}
		\lim_{j \to \infty} x_j = \xi,~~\lim_{j \to \infty} y_j = \eta
	\end{split}
	\ee
	in the topology of the Hilbert space $L^2\left(\R^{2N} \right)$.  If $\left(\cdot, \cdot \right): L^2\left(\R^{2N} \right) \times L^2\left(\R^{2N} \right) \to \C$ is the Hilbert pairing then from \eqref{mp_star_ext_eqn},  \eqref{mp_xn_lim_eqn} it follows that 
	\be\nonumber
	\left(a_n \xi,  \eta \right) = \lim_{j \to \infty} \left\langle a_n x_j ,y_j \right\rangle=\lim_{j \to \infty} \left\langle a_n, x_j \star_\th y_j \right\rangle,
	\ee
	hence, taking into account \eqref{mp_an_lim_eqn} one has
	\be
	\lim_{n \to \infty }\left(a_n \xi,  \eta \right)=\lim_{n \to \infty }\lim_{j \to \infty}\left(a_n x_j,  y_j \right)=\lim_{n \to \infty }\lim_{j \to \infty} \left\langle a_n, x_j \star_\th y_j \right\rangle = \lim_{j \to \infty} \left\langle a, x_j \star_\th y_j \right\rangle=\left(a \xi,  \eta \right),
	\ee
	i.e. $\left\{a_n \right\}$ is weakly convergent to $a$. 
	If $\left\{a_n \right\}$ is an increasing sequence then $a_n < a$ for any $n \in \N$ and from the Lemma \ref{increasing_convergent_w} it turns out that $\left\{a_n \right\}$ is strongly convergent. Clearly the strong limit coincides with the weak one. Similarly one can prove that  is $\left\{a_n \right\}$ is an decreasing then $\left\{a_n \right\}$ is strongly convergent.\\
	(ii) If $b \in \SS\left(\R^{2N} \right)$ then from the Proposition \ref{mp_factor_prop} it follows that $b = x \star_{\th} y$ where $x,y \in \SS\left(\R^{2N} \right)$. The sequence $\left\{a_n \right\}$  is strongly and/or weakly convergent it turns out that
	$$
	\left(x, a_n  \star_{\th} y \right) = \left\langle a_n, x \star_\th y\right\rangle =  \left\langle a_n, b\right\rangle
	$$
	is convergent. Hence $\left\{a_n \right\}$ is weakly-* convergent.
\end{proof}

\paragraph*{}There are elements $f_{mn} \in \sS\left( \mathbb{R}^2\right)$ which have very useful properties.   To present $f_{mn}$ explicitly,  we use polar coordinates $q + ip= \rho e^{i\al}$, where $p, q \in \mathbb{R}^2$ and $\rho = \left(p, q\right)\in \mathbb{R}^{2}$  Note that $\left\|\rho\right\|^2= \left\|p\right\|^2 + \left\|q\right\|^2$.
      \begin{equation*}
      \begin{split}
            f_{mn}= 2\left(-1\right)^n\sqrt{\frac{n!}{m!}}e^{i\al\left(m - n\right)}\left\|\rho\right\|^{m-n}L^{m-n}_n\left(\left\|\rho\right\|^2\right)e^{-\left\|\rho\right\|^2},
   \\
      f_{nn}\left(\rho, \al\right)= 2\left(-1\right)^nL_n\left(\left\|\rho\right\|^2\right)e^{-\left\|\rho\right\|^2/2}
      \end{split}
      \end{equation*}
       where $L^{m-n}_n$, $L^{n}$ are Laguerre functions.
       From this properties it follows that $C_0\left(\R^{2}_\th\right)$ is the $C^*$-norm completion of linear span of $f_{mn}$ (cf. {\rm\cite{varilly_bondia:phobos}}).
       
       \begin{lem}\label{mp_osc_lem} {\rm\cite{varilly_bondia:phobos}}
       	\label{lm:osc-basis}
       	Let $m,n,k,l \in \N$. Then $f_{mn} \star_\th f_{kl} = \delta_{nk}f_{ml}$
       	and $f_{mn}^* = f_{nm}$. Thus $f_{nn}$ is an orthogonal projection and
       	$f_{mn}$ is nilpotent for $m \neq n$. Moreover,
       	$\left\langle f_{mn}, f_{kl}\right\rangle  = 2^N\,\delta_{mk}\,\delta_{nl}$. The family
       	$\set{f_{mn} : m,n\in \N^0} \subset \sS\left( \mathbb{R}^{2}\right) \subset L^2(\R^{2})$ is an
       	orthogonal basis.
       \end{lem}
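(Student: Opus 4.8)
The plan is to reduce everything to a creation--annihilation calculus for the Moyal product, so that the product and adjoint relations become formal identities, and to isolate the two genuinely analytic inputs: one Laguerre integral (the normalization) and the completeness of the oscillator eigenfunctions. By the scaling relation \eqref{eq:starscale} it suffices to treat $\theta = 2$, writing $\times$ for $\star_2$ as in \eqref{mp_times_eqn} and transporting the general $\theta$ by the dilation $E_{\theta/2}$; and since the form \eqref{nt_simpectic_theta_eqn} is block diagonal, the Moyal product on $\R^{2N}$ is an $N$-fold $\times$-product of two-dimensional ones, so it is enough to argue on $\R^{2}$. On $\R^2$, with $z = q + ip = \|\rho\|e^{i\alpha}$, put $a \eqdef \tfrac{1}{\sqrt2}(q+ip)$ and $\bar a \eqdef a^{*} = \tfrac{1}{\sqrt2}(q-ip)$, viewed as polynomially bounded multipliers for $\times$. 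A direct computation, cleanest from the Fourier form \eqref{mp_fourier_eqn} applied to Gaussian-times-polynomial symbols, gives
$$a \times \bar a - \bar a \times a = 1, \qquad a \times f_{00} = 0 = f_{00}\times \bar a, \qquad f_{00}\times f_{00} = f_{00},$$
where $f_{00}$ is the Gaussian in the statement, characterized up to its normalizing constant as the unique $\times$-idempotent Gaussian (a short Gaussian-$\star$-Gaussian calculation). Equivalently, under the Weyl correspondence $W$ (with $W(f\times g) = W(f)W(g)$) these say that $W(a)$ is the harmonic-oscillator annihilation operator and $W(f_{00}) = \ketbra{0}{0}$.

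Next I would verify the matrix-unit formula
$$f_{mn} = \frac{1}{\sqrt{m!\,n!}}\;\bar a^{\,\times m}\times f_{00}\times a^{\,\times n},$$
$\bar a^{\,\times m}$ denoting the $m$-fold $\times$-power, by expanding the right-hand side in polar coordinates: $f_{00}\times a^{\times n}$ produces $z^{n}$ times a Gaussian, $\bar a^{\times m}\times(\,\cdot\,)$ produces $\bar z^{\,m}$, and collecting the radial dependence yields exactly $\|\rho\|^{m-n}L_{n}^{m-n}(\|\rho\|^{2})$ with the stated prefactor; this is the one genuine Laguerre computation, and it reproduces the displayed formulas for $f_{mn}$ and $f_{nn}$. (Under $W$ it is just $\bar a^{\,\times m}\times f_{00}\times a^{\,\times n}\leftrightarrow(\hat a^{\dagger})^{m}\ketbra{0}{0}\hat a^{n} = \sqrt{m!\,n!}\,\ketbra{m}{n}$.) Granting this, the algebraic assertions are immediate: using $a\times f_{00}=f_{00}\times\bar a=0$ and $a\times\bar a-\bar a\times a=1$ to normal-order $a^{\times n}\times\bar a^{\times k}$ between two copies of $f_{00}$ gives $f_{00}\times a^{\times n}\times\bar a^{\times k}\times f_{00}=\delta_{nk}\,n!\,f_{00}$, hence $f_{mn}\times f_{kl}=\delta_{nk}\,f_{ml}$; and since the Moyal involution is pointwise conjugation and reverses $\times$-products while $a^{*}=\bar a$, $\bar a^{*}=a$, $f_{00}^{*}=f_{00}$, we get $f_{mn}^{*}=f_{nm}$. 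Thus $f_{nn}$ is a self-adjoint idempotent, i.e.\ an orthogonal projection, and for $m\neq n$, $f_{mn}\times f_{mn}=\delta_{nm}f_{mn}=0$, so $f_{mn}$ is nilpotent.

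For the inner-product identity, combine $f_{mn}^{*}\times f_{kl}=f_{nm}\times f_{kl}=\delta_{mk}\,f_{nl}$ with the tracial property \eqref{nt_tracial_prop} to get $\langle f_{mn},f_{kl}\rangle=\delta_{mk}\int_{\R^{2}}f_{nl}\,dx$ up to the fixed normalization of the pairing; the angular factor $e^{i\alpha(n-l)}$ kills this unless $n=l$, and $\int_{\R^{2}}f_{nn}\,dx$ reduces, via $u=\|\rho\|^{2}$, to the standard integral $\int_{0}^{\infty}L_{n}(u)e^{-u/2}\,du=2(-1)^{n}$, producing the constant $2^{N}$ (here $N=1$, and in general by the tensor factorization above). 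Orthogonality of $\{f_{mn}\}$ is then proved, so it remains to see that $\{2^{-N/2}f_{mn}\}$ is \emph{complete} in $L^{2}(\R^{2})$. I would deduce this from the Weyl--Wigner correspondence: the $f_{mn}$ are, up to a constant, the Wigner transition functions of the rank-one operators $\ketbra{h_m}{h_n}$, $\{h_n\}_{n\in\N^0}$ the Hermite orthonormal basis of $L^{2}(\R)$, and the Wigner transform is (up to that constant) a unitary isomorphism of $L^{2}(\R^{2})$ onto the Hilbert--Schmidt operators on $L^{2}(\R)$; since $\{\ketbra{h_m}{h_n}\}$ is an orthonormal basis of the latter, $\{f_{mn}\}$ is a basis of $L^{2}(\R^{2})$. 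Alternatively, one argues directly that $\|\rho\|^{|m-n|}L_{n}^{|m-n|}(\|\rho\|^{2})e^{-\|\rho\|^{2}/2}e^{i(m-n)\alpha}$ is the complete eigenbasis of the two-dimensional harmonic oscillator (essentially self-adjoint, compact resolvent), which is what is recorded in \cite{varilly_bondia:phobos}. The main obstacle is exactly this last step: once the creation/annihilation picture is in place the product and adjoint relations are purely formal and the normalization is one Laguerre integral, but passing from orthogonality to density of the span requires the external completeness input; the only other nontrivial computation is the identification of $\bar a^{\,\times m}\times f_{00}\times a^{\,\times n}$ with the Laguerre expression.
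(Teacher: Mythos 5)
The paper gives no proof of this lemma at all: it is imported verbatim from \cite{varilly_bondia:phobos}, so there is no internal argument to compare yours against. Your reconstruction is essentially the proof in that reference --- reduction to $\theta=2$ by the scaling relation, the creation/annihilation functions $a,\bar a$ with $a\times\bar a-\bar a\times a=1$, the matrix-unit formula $f_{mn}=\tfrac{1}{\sqrt{m!\,n!}}\,\bar a^{\times m}\times f_{00}\times a^{\times n}$, normal ordering for the product rule, the tracial property for the pairing, and the unitarity of the Wigner transform onto Hilbert--Schmidt operators for completeness --- and it is correct in structure; the completeness step is indeed the one genuinely external input, exactly as you flag. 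The only caution is the constant bookkeeping: the exact normalizations in $a\times\bar a-\bar a\times a=1$ and in $\langle f_{mn},f_{kl}\rangle=2^{N}\delta_{mk}\delta_{nl}$ depend on the $2\pi$-conventions hidden in \eqref{mp_fourier_eqn} and \eqref{nt_tracial_prop}, which you gloss over; this affects only prefactors, not the algebraic identities. (Note also that the paper's displayed formula for $f_{mn}$ carries $e^{-\left\|\rho\right\|^{2}}$ while that for $f_{nn}$ carries $e^{-\left\|\rho\right\|^{2}/2}$ --- an inconsistency in the transcription of the source, not a defect of your argument.)
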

     
%Theorem 6: 
\begin{prop}\label{mp_fmn}\cite{moyal_spectral,varilly_bondia:phobos}
Let $N = 1$. Then $\SS\left(\R^{2N}_\th\right)=\SS\left(\R^{2}_\th\right) $ has a Fr\'echet algebra isomorphism with
the matrix algebra of rapidly decreasing double sequences
$c = (c_{mn})$ such that, for each $k \in \N$,
\begin{equation}\label{mp_matr_norm}
r_k(c) \stackrel{\mathrm{def}}{=} \biggl( \sum_{m,n=0}^\infty
\th^{2k}  \left( m+\half\right)^k \left( n+\half\right)^k |c_{mn}|^2 \biggr)^{1/2}
\end{equation}

is finite, topologized by all the seminorms $(r_k)$; via the
decomposition $f = \sum_{m,n=0}^\infty c_{mn} f_{mn}$ of~$\SS(\R^2)$ in
the $\{f_{mn}\}$ basis.
The twisted product $f \star_\th g$ is
the matrix product $ab$, where
\begin{equation}\label{mp_mult_eqn}
\left( ab\right)_{mn} \stackrel{\mathrm{def}}{=} \sum_{k= 0}^{\infty} a_{mk}b_{kn}.
\end{equation}

For $N > 1$, $\Coo\left(\R^{2N}_\th\right)$ is isomorphic to the (projective) tensor product
of $N$ matrix algebras of this kind, i.e.
\begin{equation}\label{mp_tensor_prod}
\SS\left(\R^{2N}_\th\right) \cong \underbrace{\SS\left(\R^{2}_\th\right)\otimes\dots\otimes\SS\left(\R^{2}_\th\right)}_{N-\mathrm{times}}
\end{equation}
with the projective topology induced by seminorms $r_k$ given by \eqref{mp_matr_norm}.	
\end{prop}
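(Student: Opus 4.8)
The plan is to construct the isomorphism explicitly from the oscillator basis $\{f_{mn}\}$ of Lemma~\ref{mp_osc_lem}, treating the $f_{mn}$ as matrix units for the twisted product. Consider first $N=1$. Since $\{f_{mn}:m,n\in\N^0\}$ is an orthogonal basis of $L^2(\R^2)$ with $\langle f_{mn},f_{kl}\rangle=2\,\delta_{mk}\delta_{nl}$ (the $N=1$ case of Lemma~\ref{mp_osc_lem}), every $f\in\SS(\R^2)\subset L^2(\R^2)$ has a unique $L^2$-expansion $f=\sum_{m,n}c_{mn}f_{mn}$ with $c_{mn}=\half\langle f_{mn},f\rangle$. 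First I would check that when $c=(c_{mn})$ is rapidly decreasing in the sense that $r_k(c)<\infty$ for all $k$, the series $\sum c_{mn}f_{mn}$ converges in the Fr\'echet topology of $\SS(\R^2)$ (the Schwartz seminorms of $f_{mn}$ grow at most polynomially in $(m,n)$, being Laguerre--Hermite type functions, so rapid decrease of the coefficients forces absolute convergence in each seminorm); and conversely that for $f\in\SS(\R^2)$ the coefficient sequence satisfies $r_k(c)<\infty$ for every $k$.

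The heart of the matter is that $f\mapsto(c_{mn})$ is a \emph{topological} isomorphism onto the sequence space topologized by the $r_k$. Here I would use the harmonic oscillator: the $f_{mn}$ are, up to normalization, products of Hermite functions, hence eigenfunctions of $\mathcal H=-\Delta+\|x\|^2$ with $\mathcal H f_{mn}=(2m+2n+2)f_{mn}$, while the Moyal left and right multiplications by $\half\|x\|^2$ act diagonally on $f_{mn}$ with eigenvalues proportional to $(n+\half)$ and $(m+\half)$ respectively (the standard two-sided diagonalization of the oscillator on $\{f_{mn}\}$). Since it is classical that the topology of $\SS(\R^2)$ is equivalently generated by the seminorms $f\mapsto\|\mathcal H^{p}f\|_2$, and $\|\mathcal H^{p}f\|_2^2=\sum_{m,n}(2m+2n+2)^{2p}|c_{mn}|^2$, the elementary comparisons $4^{k}(m+\half)^{k}(n+\half)^{k}\le(m+n+1)^{2k}$ and $(m+n+1)^{2k}\le 2^{2k}\bigl((m+\half)^{2k}+(n+\half)^{2k}\bigr)$ show that $\{r_k\}$ and $\{\|\mathcal H^{p}\cdot\|_2\}$ define the same topology on the space of rapidly decreasing sequences. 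That the isomorphism intertwines $\star_\th$ with the matrix product is then a direct computation: for $f=\sum a_{mn}f_{mn}$, $g=\sum b_{kl}f_{kl}$, using $f_{mn}\star_\th f_{kl}=\delta_{nk}f_{ml}$ and absolute convergence of the double series (the weighted estimates of the kind in the proof of Lemma~\ref{mp_ab_delta_lem}), one gets $f\star_\th g=\sum_{m,l}\bigl(\sum_k a_{mk}b_{kl}\bigr)f_{ml}$, i.e. $(ab)_{mn}=\sum_k a_{mk}b_{kn}$, which is \eqref{mp_mult_eqn}; and $f_{mn}^*=f_{nm}$ sends the involution to the conjugate transpose.

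For $N>1$ I would reduce to the previous case by tensor factorization. With $\Th=\th J$ the phase factor $e^{\pi i\,y\cdot\Th x}$ in the twisted product (cf.\ \eqref{mp_fourier_eqn}) splits, after reordering coordinates so that each position is paired with its conjugate momentum, into a product over the $N$ symplectic planes; hence $\star_\th$ on $\R^{2N}$ is the $N$-fold tensor product of the $\R^2$ twisted products. Since $\SS(\R^{2N})\cong\SS(\R^2)\,\widehat\otimes\cdots\widehat\otimes\,\SS(\R^2)$ as nuclear Fr\'echet spaces and the products $f_{m_1n_1}\otimes\cdots\otimes f_{m_Nn_N}$ form the oscillator basis multiplying componentwise, applying the $N=1$ result in each factor yields \eqref{mp_tensor_prod} with the projective topology generated by the seminorms $r_k$ of \eqref{mp_matr_norm}. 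I expect the main obstacle to be the middle step for $N=1$: proving that the matrix seminorms $r_k$ genuinely generate the Schwartz topology, which needs the two-sided diagonalization of the Moyal oscillator on $\{f_{mn}\}$ together with the weight comparison $(m+\tfrac12)^{k}(n+\tfrac12)^{k}$ versus the Hermite weights $(m+n+1)^{2p}$; everything else is bookkeeping with absolutely convergent series.
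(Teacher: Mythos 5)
The paper does not prove this proposition at all: it is imported verbatim from the cited references \cite{moyal_spectral,varilly_bondia:phobos}, so there is no internal argument to compare against. Your proposal is correct and is essentially the standard proof from those sources: matrix units $f_{mn}\star_\th f_{kl}=\delta_{nk}f_{ml}$, equivalence of the Schwartz topology with the harmonic-oscillator seminorms $\|\mathcal H^{p}\cdot\|_2$, the AM--GM comparison between the weights $(m+\half)^k(n+\half)^k$ and $(m+n+1)^{2k}$, and nuclearity of $\SS(\R^2)$ for the tensor factorization when $N>1$. Two cosmetic points only: the normalization $\|f_{mn}\|_2^2=2$ should be carried along in the identity $\|\mathcal H^{p}f\|_2^2=2\sum_{m,n}(2m+2n+2)^{2p}|c_{mn}|^2$, and in the usual convention left Moyal multiplication by $\half\|x\|^2$ gives the eigenvalue $\th(m+\half)$ and right multiplication gives $\th(n+\half)$ (you have them swapped), though this is immaterial since both indices enter $r_k$ symmetrically.
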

\begin{rem}
	If $A$ is  $C^*$-norm completion of the matrix algebra with the norm \eqref{mp_matr_norm} then $A \approx \mathcal K$, i.e.
	\begin{equation}\label{mp_2_eqn}
	C_0\left(\R^{2}_\th\right) \approx \mathcal K.
	\end{equation}
	Form \eqref{mp_tensor_prod} and \eqref{mp_2_eqn} it follows that
	\begin{equation}\label{mp_2N_eqn}
	C_0\left(\R^{2N}_\th\right) \cong \underbrace{C_0\left(\R^{2}_\th\right)\otimes\dots\otimes C_0\left(\R^{2}_\th\right)}_{N-\mathrm{times}} \approx \underbrace{\mathcal K\otimes\dots\otimes\mathcal K}_{N-\mathrm{times}} \approx \mathcal K
	\end{equation}
	where $\otimes$ means minimal or maximal tensor product ($\mathcal{K}$ is nuclear hence both products coincide).

\end{rem}

   \subsection{Infinite coverings}
\paragraph{} Let us consider a sequence
\begin{equation}\label{nt_long_seq_eqn}
\mathfrak{S}_{C\left( \T^n_\Th\right) } =\left\{ C\left( \T^n_\Th\right)  =C\left( \T^n_{\Th_0}\right) \xrightarrow{\pi^1}  ... \xrightarrow{\pi^j} C\left( \T^n_{\Th_j}\right) \xrightarrow{\pi^{j+1}} ...\right\}.
\end{equation}
of finite coverings of noncommutative tori.
The sequence \eqref{nt_long_seq_eqn} satisfies to the Definition  \ref{comp_defn}, i.e. $\mathfrak{S}_{C\left( \T^n_\Th\right) } \in \mathfrak{FinAlg}$.

\begin{empt}\label{nt_mp_prel_lim}
	Let $\Th = J \th$ where $\th \in \R \backslash \Q$ and
	$$
	J = \begin{pmatrix} 0 & 1_N \\ -1_N & 0 \end{pmatrix}.
	$$
	Denote by $C\left( \T^{2N}_\th\right) \stackrel{\text{def}}{=} C\left( \T^{2N}_\Th\right)$.
	Let $\{p_k \in \mathbb{N}\}_{k \in \mathbb{N}}$ be an infinite sequence of natural numbers such that $p_k > 1$ for any $k$, and let $m_j = \Pi_{k = 1}^{j} p_k$. From the \ref{nt_fin_cov} it follows that there is a  sequence of *-homomorphisms
	\begin{equation}\label{nt_long_seq_spec_eqn}
	\mathfrak{S}_\th= \left\{	C\left(\mathbb{T}^{2N}_\th\right) \to	C\left(\mathbb{T}^{2N}_{\th/m_1^{2}}\right) \to C\left(\mathbb{T}^{2N}_{\th/m_2^{2}}\right) \to... \to C\left(\mathbb{T}^{2N}_{\th/m_j^{2}}\right)\to ...\right\}
	\end{equation}
	such that
	\begin{enumerate}
		\item[(a)] 	For any $j \in \mathbb{N}$ there are generators $u_{j-1,1},..., u_{j-1,2N}\in U\left(C\left(\mathbb{T}^{2N}_{\th/m_{j-1} ^{2}}\right)\right)$ and generators $u_{j,1},..., u_{j,2N}\in U\left(C\left(\mathbb{T}^{2N}_{\th/m_j^{2}}\right)\right)$ such that the *-homomorphism $ C\left(\mathbb{T}^{2N}_{\th/m_{j-1}^{2}}\right)\to C\left(\mathbb{T}^{2N}_{\th/m_j^{2}}\right)$ is given by
		$$
		u_{j-1,k} \mapsto u^{p_j}_{j,k}; ~~ \forall k =1,..., 2N.
		$$
		There are generators $u_1,...,u_{2N} \in U\left( C\left(\mathbb{T}^{2N}_\th\right)\right) $ such that *-homomorphism $C\left(\mathbb{T}^{2N}_\th\right) \to C\left(\mathbb{T}^{2N}_{\th/m_1^{2}}\right)$ is given by
		$$
		u_{j} \mapsto u^{p_1}_{1,j}; ~~ \forall j =1,..., 2N,
		$$
		
		\item[(b)] For any $j \in \N$ the triple $\left(C\left(\mathbb{T}^{2N}_{\th/m_{j - 1}^{2}}, C\left(\mathbb{T}^{2N}_{\th/m_j^{2}}\right), \Z_{p_j}\right) \right)$ is a noncommutative finite-fold covering,
		\item[(c)] There is the sequence of groups and epimorphisms
		\begin{equation*}
		\mathbb{Z}^{2N}_{m_1} \leftarrow\mathbb{Z}^{2N}_{m_2} \leftarrow ...
		\end{equation*}
		which is equivalent to the sequence
		\begin{equation*}
		\begin{split}
		G\left(C\left(\mathbb{T}^{2N}_{\th/m_1^{2}}\right)~|~ C\left(\mathbb{T}^{2N}_{\th}\right)\right)\leftarrow G\left(C\left(\mathbb{T}^{2N}_{\th/m_2^{2}}\right)~|~ C\left(\mathbb{T}^{2N}_{\th}\right)\right)\leftarrow... \\ \leftarrow G\left(C\left(\mathbb{T}^{2N}_{\th/m_j^{2}}\right)~|~ C\left(\mathbb{T}^{2N}_{\th}\right)\right)\leftarrow...\ .
		\end{split}
		\end{equation*}
	\end{enumerate}
	The sequence \eqref{nt_long_seq_spec_eqn}, is a specialization of \eqref{nt_long_seq_eqn}, hence $\mathfrak{S}_\th \in \mathfrak{FinAlg}$. Denote by $\widehat{C\left(\mathbb{T}^{2N}_{\th}\right) } \stackrel{\text{def}}{=} \varinjlim C\left(\mathbb{T}^{2N}_{\th/m_j^{2}}\right)$, $\widehat{G} \stackrel{\text{def}}{=} \varprojlim G\left(C\left(\mathbb{T}^{2N}_{\th/m_j^{2}}\right)~|~ C\left(\mathbb{T}^{2N}_{\th}\right)\right)$ . The group $\widehat{G}$ is Abelian because it is the inverse limit of Ablelian groups. Denote by $0_{\widehat{G}}$ (resp. "+") the neutral element of $\widehat{G}$ (resp. the product operation of $\widehat{G}$).
\end{empt}
\begin{empt}\label{mp_weak_constr}
	For any $\widetilde{a} \in \SS\left( \R^{2N}_\th\right)$ from \eqref{mp_sooth_sum_eqn} it turns out that the series
	$$
	a_j = \sum_{g \in \ker\left( \Z^{2N} \to \Z^{2N}_{m_j}\right) } \widetilde{a} 
	$$
	is point-wise convergent  and $a_j$ satisfies to following conditions:
	\begin{itemize}
		\item $a_j \in \SS'\left(\R^{2N} \right)$,
		\item  $a_j$ is a smooth $m_j$ - periodic function.
	\end{itemize}
	It follows that the above series is  weakly-* convergent (cf. Definition \ref{nt_*w_defn}) and from the Lemma \ref{mp_weak_lem} it turns out that the series is weakly convergent.  From \eqref{fourier_from_r_to_z_eqn} it follows that
	$$
	a_j = \sum_{k \in Z^{2N}} c_k \exp\left(2\pi i \frac{k}{m_j} ~\cdot \right)
	$$
	where $\left\{c_k \in \C\right\}_{k \in \Z^{2N}}$ are rapidly decreasing coefficients given by
	\begin{equation}\label{mp_fourier_torus_eqn}
	c_k = \frac{1}{m_j^{2N} } \int_{\R^{2N}} \widetilde{a}\left(x \right)  \exp\left(2\pi i \frac{k}{m_j}\cdot x\right) dx = \frac{1}{m_j^{2N} } \mathcal{F}\widetilde{a}\left( \frac{k}{m_j}\right)   .
	\end{equation}

	On the other hand
	\begin{equation}\label{mp_weak_lim_eqn}
	\widetilde a = \lim_{j \to \infty} a_j
	\end{equation}
	in sense of weakly-* convergence, and from the Lemma \ref{mp_weak_lem} it follows that \eqref{mp_weak_lim_eqn} is a limit in sense of the weak topology.
\end{empt}
\begin{lem}\label{mp_strong_lem}
	Let $\overline{G}_j = \ker\left( \Z^{2N} \to \Z^{2N}_{m_j}\right)$.
Let $\widetilde{a} \in \SS\left( \R^{2N}_\th\right)$ and let
\be\label{mp_aj_eqn}
a_j = \sum_{g \in\overline{G}_j } g\widetilde{a} 
\ee
where the sum the series means weakly-* convergence. Following conditions hold:
\begin{enumerate}
	\item [(i)] $a_j \in \Coo\left(\R^{2N} \right)$,
	\item[(ii)]  The series \eqref{mp_aj_eqn} is convergent with respect to the strong topology (cf. Definition \ref{strong_topology}),
	\item[(iii)] There is a following strong limit
	\be\label{mp_ta_eqn}
	\widetilde{a} = \lim_{j \to \infty} a_j.
	\ee
\end{enumerate}
 
\end{lem}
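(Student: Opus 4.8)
The plan is to handle the three assertions in turn, after first reducing to $\th=2$ by the scaling relation \eqref{eq:starscale} of \ref{mp_scaling_constr}, so that $\star_\th$ becomes $\times$ and $\overline{G}_j=\ker\left(\Z^{2N}\to\Z^{2N}_{m_j}\right)=m_j\Z^{2N}$ acts by integer translations. Assertion (i) is essentially already contained in \ref{nt_mp_prel_lim}--\ref{mp_weak_constr}: since $\widetilde a\in\SS\left(\R^{2N}\right)$, the Schwartz decay makes each series $\sum_{g\in\overline G_j}D^{\beta}(g\widetilde a)$ absolutely and locally uniformly convergent, so $a_j$ is a smooth $m_j$-periodic function, i.e. $a_j\in\Coo\left(\R^{2N}\right)$; moreover its Fourier expansion $a_j=\sum_{k\in\Z^{2N}}c_k^{(j)}\exp\left(2\pi i\tfrac{k}{m_j}\cdot\right)$ with $c_k^{(j)}=m_j^{-2N}\mathcal F\widetilde a\left(k/m_j\right)$ from \eqref{mp_fourier_torus_eqn} satisfies $\sum_k\left|c_k^{(j)}\right|<\infty$, so by the estimate used in Remark \ref{nt_c_k_eqn} one records $a_j\in C_b\left(\R^{2N}_\th\right)$ with $\nor{a_j}_{\mathrm{op}}\le\sum_k\left|c_k^{(j)}\right|$.

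For (ii) I would fix $j$ and show that the partial sums $S_F=\sum_{g\in F}g\widetilde a$, over finite $F\subset\overline G_j$, converge to $a_j$ in the strong topology. By \ref{mp_weak_constr} the series is weakly-$*$ convergent with limit $a_j\in C_b\left(\R^{2N}_\th\right)$, hence weakly convergent by Lemma \ref{mp_weak_lem}(i); since $\left\{S_F\xi\right\}$ is then a weakly convergent net for every $\xi$, two applications of the uniform boundedness principle (first to the nets $\left\{S_F\xi\right\}$, then to $\left\{S_F\right\}$) give a uniform operator-norm bound $\nor{S_F}_{\mathrm{op}}\le C$. On the other hand, for $\xi\in\SS\left(\R^{2N}\right)$, Lemma \ref{mp_ab_delta_lem} applied with $\Delta=\pm g$ yields $\nor{(g\widetilde a)\times\xi}_2\le C^{\widetilde a,\xi}_m\left(1+\nor{g}\right)^{-m}$, and choosing $m>2N$ makes $\sum_{g\in\Z^{2N}}\left(1+\nor{g}\right)^{-m}<\infty$, so $\sum_{g\in\overline G_j}(g\widetilde a)\times\xi$ converges in $L^2\left(\R^{2N}\right)$ with limit necessarily equal to the weak limit $a_j\times\xi$. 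As $\SS\left(\R^{2N}\right)$ is dense in $L^2\left(\R^{2N}\right)$ and the $S_F$ are uniformly bounded, the standard $\varepsilon/3$ argument upgrades this to strong convergence $S_F\to a_j$.

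Assertion (iii) uses the same two ingredients. For $\xi\in\SS\left(\R^{2N}\right)$, Lemma \ref{mp_ab_delta_lem} gives
\[
\nor{(a_j-\widetilde a)\times\xi}_2\le\sum_{g\in m_j\Z^{2N}\setminus\{0\}}\frac{C^{\widetilde a,\xi}_m}{\left(1+\nor{g}\right)^m},
\]
and since $m_j\to\infty$ the smallest nonzero element of $m_j\Z^{2N}$ has norm $\ge m_j$, so (again with $m>2N$) this tail sum tends to $0$ as $j\to\infty$. Next, the operator norms $\nor{a_j}_{\mathrm{op}}\le\sum_k\left|c_k^{(j)}\right|=m_j^{-2N}\sum_{k\in\Z^{2N}}\left|\mathcal F\widetilde a\left(k/m_j\right)\right|$ are Riemann sums of mesh $1/m_j$ converging to $\int_{\R^{2N}}\left|\mathcal F\widetilde a\right|\,dx<\infty$, hence uniformly bounded in $j$; combining the decay on the dense subspace $\SS\left(\R^{2N}\right)$ with this uniform bound via $\varepsilon/3$ gives $a_j\to\widetilde a$ strongly, which is \eqref{mp_ta_eqn}.

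The main obstacle is that $\widetilde a$ need not be positive, so one cannot invoke monotonicity and Lemma \ref{increasing_convergent_w} directly as is done elsewhere in the paper; the substitute is precisely the combination of the quantitative $L^2$-decay of Lemma \ref{mp_ab_delta_lem} with a uniform operator-norm bound and a density argument. A secondary point to keep honest is tracking the topologies and identifying the weak limit, the $L^2$-limit on Schwartz vectors, and the strong limit with one and the same operator ($a_j$ in (ii), $\widetilde a$ in (iii)).
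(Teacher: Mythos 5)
Your overall route is the same as the paper's: part (i) via the Fourier expansion \eqref{mp_fourier_torus_eqn} with rapidly decreasing coefficients, and parts (ii)--(iii) by combining the quantitative $L^2$-decay of Lemma \ref{mp_ab_delta_lem} on the dense subspace $\SS\left(\R^{2N}\right)$ with a uniform operator-norm bound and an $\eps/3$ density argument. Part (iii) is sound as you present it; in fact your Riemann-sum bound $\nor{a_j}_{\mathrm{op}}\le m_j^{-2N}\sum_{k}\left|\mathcal F\widetilde a\left(k/m_j\right)\right|\le C$, uniform in $j$, is a cleaner way to get the uniform bound over $j$ than the paper's (which recycles a single constant $k$ from part (ii)).

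The genuine gap is in part (ii), in how you obtain $\sup_F\nor{S_F}_{\mathrm{op}}<\infty$ over all finite $F\subset\overline G_j$. You propose "two applications of the uniform boundedness principle" starting from weak convergence of the net $\left\{S_F\xi\right\}$. This fails for nets: a weakly convergent \emph{net} in a Hilbert space need not be norm-bounded (Banach--Steinhaus gives boundedness only for sequences, via pointwise boundedness on the whole space), and the pointwise bounds you actually have from Lemma \ref{mp_ab_delta_lem}, namely $\sup_F\nor{S_F\xi}_2\le\sum_g\nor{(g\widetilde a)\times\xi}_2<\infty$, hold only for $\xi$ in the dense (hence possibly meager) subspace $\SS\left(\R^{2N}\right)$, which is not enough to invoke Banach--Steinhaus either. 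Since the whole $\eps/3$ upgrade from the dense subspace to $L^2\left(\R^{2N}\right)$ hinges on this uniform bound, the step must be replaced. The paper gets it (tersely) by dominating every partial sum by $k=\max\left(1,\sqrt{\nor{c}}\right)$ with $c=\sum_{g\in\Z^{2N}}g\left(\widetilde a^*\widetilde a\right)$; alternatively one can derive a bound uniform in $F$ by an almost-orthogonality (Cotlar--Stein type) argument, using Lemma \ref{nt_l_2_est_lem} to convert the $L^2$ estimate of Lemma \ref{mp_ab_delta_lem} into $\nor{(g\widetilde a)^*\star_\th(g'\widetilde a)}_{\mathrm{op}}\le C_m\left(1+\nor{g-g'}\right)^{-m}$ and summing. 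Either way, some input beyond weak convergence of the net is required.
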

\begin{proof}
	
(i) From \ref{mp_fourier_torus_eqn} it turns out that 
$$
a_j = \sum_{k \in \Z^{2N}} c_k U_k
$$
where $\left\{c_k\right\}$ is a rapidly decreasing sequence, hence $a_j \in \Coo\left(\R^{2N} \right)$.\\
(ii)
From the Lemma \ref{mp_weak_lem} it turns out that the series
	$$
	c = \sum_{g \in \Z^{2N}}g\left(\widetilde{a}^*\widetilde{a} \right) 
	$$
	is strongly convergent, and the series \eqref{mp_aj_eqn} is weakly convergent. If $k = \max\left(1, \sqrt{\left\|c \right\|}  \right)$ then for any $\eta \in L^2\left( \R^{2N}\right)$ and any subset $G \subset \Z^{2N}$ following condition holds
	$$
	\left\|\left( \sum_{g \in G}g \widetilde{a}\right)\eta \right\|_2 \le	k\left\|\eta \right\|_2
	$$ 
	where $\left\|\cdot \right\|_2$ is given by \eqref{nt_l2_norm_eqn}.
If $\xi \in L^2\left(\R^{2N} \right)$ then for any $\eps > 0$ there is $\widetilde{b} \in \SS\left(\R^{2N} \right)$ such that 
	\be\label{mp_e_k_eqn}
	\left\|\xi - \widetilde{b}\right\|_2 < \frac{\eps}{2k}
	\ee
	From the Lemma \ref{mp_ab_delta_lem} it follows that for any $m \in \N$ there is a constant $C_m > 0$ such that
	\be\label{mp_est_eqn}
	\left\| \left(g \widetilde{a}\right)b\right\|_2  < \frac{C_m}{\left(1+ \left\|g\right\|\right)^m };~ \forall g \in \Z^{2N}
	\ee
	where $\left\|g\right\|$ is given by \eqref{mp_znorm_eqn}. If $m > 2N$ then there is $M \in \N$ such that if $G_0 = 
	\left\{-M, \dots, M\right\}^{2N} \subset  \Z^{2N}$ such that
\be\label{mp_cm_eqn}
	\sum_{g \in \Z^{2N} \backslash G_0} \frac{C_m}{\left(1+ \left\|g\right\|\right)^m } < \frac{\eps}{2}.
\ee
	It follows that
\be\nonumber
\begin{split}
	\left\|\left(	\sum_{g \in \overline{G}_j } \widetilde{a}-	\sum_{g \in \overline{G}_j\bigcap G_0}\widetilde{a}\right) \widetilde{b} \right\|_2 = \left\|\left(	\sum_{g \in \overline{G}_j\backslash \left(\overline{G}_j\bigcap G_0\right) } \widetilde{a}\right) \widetilde{b} \right\|_2  <  	\sum_{g \in \overline{G}_j \backslash \left(\overline{G}_j\bigcap G_0\right) } \frac{C_m}{\left(1+ \left\|g\right\|\right)^m } < \frac{\eps}{2}.
\end{split}
\ee	
Otherwise from \eqref{mp_e_k_eqn}-\eqref{mp_cm_eqn} one has
\be\nonumber
\begin{split}
	\left\|\left(	\sum_{g \in \overline{G}_j } \widetilde{a}-	\sum_{g \in   \overline{G}_j\bigcap G_0}\widetilde{a}\right) \xi \right\|_2 < 	\left\|\left(	\sum_{g \in \overline{G}_j \backslash \left(\overline{G}_j\bigcap G_0\right) } \widetilde{a}\right) \widetilde{b} \right\|_2 + \left\|\left(	\sum_{g \in \overline{G}_j \backslash G_0 } \widetilde{a}\right) \left(\xi - \widetilde{b} \right)   \right\|_2 <\\<\frac{\eps}{2}+ k \left\|\xi- \widetilde{b}\right\|_2 < \eps.
\end{split}
\ee
Above equation means that the series \eqref{mp_aj_eqn} is strongly convergent.\\
(iii)
If $j \in \N$ is such that $m_j > M$ then
\be\nonumber
	\left\|\left( a_j - \widetilde{a}\right) \xi \right\|_2 = \left\|\left( \sum_{g \in \overline{G}_j}g \widetilde{a}-  \widetilde{a}\right) \xi \right\|_2= \left\|\left( \sum_{g \in \overline{G}_j\backslash \{0\}}g \widetilde{a}  \right) \xi \right\|_2
\ee
where $0$ is the neutral element of $\Z^{2N}$. However from  $m_j > M$ it turns out $G_0 \bigcap \left(  \overline{G}_j\backslash \{0\}\right) =\emptyset$, so from \eqref{mp_e_k_eqn}-\eqref{mp_cm_eqn} one has
\be\nonumber
\begin{split}
\left\|\left( a_j - \widetilde{a}\right) \xi \right\|_2 < \left\|\left( a_j - \widetilde{a}\right) \widetilde{b} \right\|_2  +k \left\| \xi- \widetilde{b} \right\|_2 < \\ < \left\|\left( \sum_{g \in \overline{G}_j}g \widetilde{a}-  \widetilde{a}\right) \widetilde{b} \right\|_2+ \frac{\eps}{2}= \left\|\left(  \sum_{g \in \overline{G}_j\backslash \{0\}}g \widetilde{a}  \right) \widetilde{b} \right\|_2 + \frac{\eps}{2}< \eps.
\end{split}
\ee
Above equation means that there is the strong limit \eqref{mp_ta_eqn}.

\end{proof}
\begin{cor}\label{mp_strong_cor}
Any $\widetilde{a} \in \SS\left( \R^{2N}_\th\right)$ lies in $\widehat{C\left( \T^{2N}_\th\right)}''$. 
\end{cor}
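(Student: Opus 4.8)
The plan is to obtain the statement as an immediate consequence of Lemma \ref{mp_strong_lem}. Fix $\widetilde a\in\SS\left(\R^{2N}_\th\right)$ and, for each $j\in\N$, put $\overline G_j=\ker\left(\Z^{2N}\to\Z^{2N}_{m_j}\right)$ and $a_j=\sum_{g\in\overline G_j}g\widetilde a$ as in that lemma. First I would check that each $a_j$ actually belongs to the corresponding finite-level algebra $C\left(\T^{2N}_{\th/m_j^2}\right)\subset\widehat{C\left(\T^{2N}_\th\right)}$. By Lemma \ref{mp_strong_lem}(i) together with the Fourier computation \eqref{mp_fourier_torus_eqn} one has $a_j=\sum_{k\in\Z^{2N}}c_kU_k$ with $\{c_k\}$ rapidly decreasing; since $\Coo\left(\T^{2N}_{\th/m_j^2}\right)\cong\SS\left(\Z^{2N}\right)$ consists precisely of the rapidly decreasing sequences, this realizes $a_j$ as an element of $\Coo\left(\T^{2N}_{\th/m_j^2}\right)$, hence of $C\left(\T^{2N}_{\th/m_j^2}\right)$. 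The only bookkeeping here is to match the twisted products: under the Moyal representation \eqref{nt_l2r_eqn} the plane wave $\exp\left(2\pi i\,(k/m_j)\cdot\right)$ is exactly the image of the basis unitary $U_k\in C\left(\T^{2N}_{\th/m_j^2}\right)$, which one reads off by comparing \eqref{mp_wave_prod_eqn} with \eqref{nt_unitary_product} after the rescaling $\th\mapsto\th/m_j^2$ coming from \eqref{nt_cov_eqn} and the scaling relation \eqref{eq:starscale}. Thus $a_j\in\widehat{C\left(\T^{2N}_\th\right)}$ for every $j$.

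Next I would invoke Lemma \ref{mp_strong_lem}(iii): $\widetilde a=\lim_{j\to\infty}a_j$ in the strong operator topology on $B\left(L^2\left(\R^{2N}\right)\right)$. Hence $\widetilde a$ lies in the strong-operator closure of $\widehat{C\left(\T^{2N}_\th\right)}$ acting on $L^2\left(\R^{2N}\right)$. By the bicommutant theorem (Theorem \ref{vN_thm}) this closure is a von Neumann algebra, and — in exactly the sense in which Corollary \ref{special_cor} places weakly special elements of the ambient $B(\H)$ inside $\widehat A''$ — it is identified with the enveloping von Neumann algebra $\widehat{C\left(\T^{2N}_\th\right)}''$ realized through the fixed equivariant representation \eqref{nt_l2r_eqn}. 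Therefore $\widetilde a\in\widehat{C\left(\T^{2N}_\th\right)}''$.

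The analytic content, namely the strong convergence of the periodizations $a_j$, has already been supplied by Lemma \ref{mp_strong_lem}, so there is no real obstacle left; the one point deserving care is the identification of the concrete strong closure inside $B\left(L^2\left(\R^{2N}\right)\right)$ with the abstract enveloping von Neumann algebra, i.e. the observation that the Moyal representation \eqref{nt_l2r_eqn} is (quasi-equivalent to) a subrepresentation of the universal one, equivalently that throughout this section $\widehat A''$ is read as the weak closure of $\widehat A$ in the chosen equivariant representation.
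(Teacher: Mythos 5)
Your proposal is correct and follows essentially the same route as the paper: each periodization $a_j$ lies in the finite-level algebra $C\left(\T^{2N}_{\th/m_j^2}\right)\subset\widehat{C\left(\T^{2N}_\th\right)}$, and Lemma \ref{mp_strong_lem}(iii) gives $\widetilde a$ as a strong limit of the $a_j$, hence $\widetilde a\in\widehat{C\left(\T^{2N}_\th\right)}''$. Your additional remarks — checking the rapid decay of the Fourier coefficients to place $a_j$ in $\Coo\left(\T^{2N}_{\th/m_j^2}\right)$, and flagging the identification of the concrete strong closure in the Moyal representation with the enveloping von Neumann algebra — make explicit points the paper leaves implicit, but the argument is the same.
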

\begin{proof}
There is a strong limit \eqref{mp_aj_eqn}, i.e. $\widetilde{a} = \lim_{j \to \infty} a_j$. For any $j \in \N$ one has $a_j \in \widehat{C\left( \T^{2N}_\th\right)}$ it turns out $\widetilde{a} = \lim_{j \to \infty} a_j \in \widehat{C\left( \T^{2N}_\th\right)}''$.
\end{proof}

\subsubsection{Equivariant representation}

\paragraph*{}

Denote by $\left\{U^{\theta/m_j}_{k}\in U\left( C\left(\T^{2N}_{\th/m_{j}}\right)\right) \right\}_{k \in \Z^{2N}}$ the basis of $C\left(\T^{2N}_{\th/m_{j}}\right)$. Similarly to \eqref{nt_l2r_eqn} there is the representation $\pi_j: C\left(\T^{2N}_{\th/m_{j}}\right) \to B\left(L^2\left( \R^{2N}\right)  \right)$ given by
$$
\pi_j\left(U^{\theta/m_j}_{k} \right) = \exp\left(2\pi i\frac{k}{m_j}~\cdot\right).
$$
There is a following commutative diagram.
\newline
\begin{tikzpicture}
\matrix (m) [matrix of math nodes,row sep=3em,column sep=4em,minimum width=2em]
{
	C\left(\T^{2N}_{\th/m_{j}}\right)	&   & C\left(\T^{2N}_{\th/m_{j+1}}\right) \\
	& B\left(L^2\left(\R^{2N}\right)\right) &  \\};
\path[-stealth]
(m-1-1) edge node [left] {} (m-1-3)
(m-1-1) edge node [right] {$~\pi_j$} (m-2-2)
(m-1-3) edge node [above] {$\pi_{j+1}$}  (m-2-2);

\end{tikzpicture}
\newline
This diagram defines a faithful representation $\widehat{\pi}:\widehat{C\left(\mathbb{T}^{2N}_{\th}\right) } \to  B\left(L^2\left(\R^{2N}\right)\right)$. There is the action of $\Z^{2N} \times \R^{2N} \to \R^{2N}$ given by
$$
\left(k, x \right) \mapsto k + x.
$$
The action naturally induces the action of $\Z^{2N}$ on both $L^2\left(\R^{2N}\right)$ and $B\left(L^2\left(\R^{2N}\right)\right)$. Otherwise the action of $\Z^{2N}$ on  $B\left(L^2\left(\R^{2N}\right)\right)$ induces the action  of $\Z^{2N}$ on $\widehat{C\left(\mathbb{T}^{2N}_{\th}\right) }$. There is the following commutative diagram
\newline
\begin{tikzpicture}
\matrix (m) [matrix of math nodes,row sep=3em,column sep=4em,minimum width=2em]
{
	\Z^{2N}	&   & G\left(\widehat{C\left(\T^{2N}_\th\right)}~|~ C\left(\mathbb{T}^{2N}_{\th}\right)\right) \\
	& G_j = G\left(C\left(\T^{2N}_{\th/m_{j}}\right)~|~ C\left(\mathbb{T}^{2N}_{\th}\right)\right) \approx \Z^{2N}_{m_j}  &  \\};
%\draw[dashed,->] (m-1-1) -- (m-1-3);
\path[-stealth]
(m-1-1) edge node [left] {} (m-1-3)
(m-1-1) edge node [right] {} (m-2-2)
(m-1-3) edge node [above] {}  (m-2-2);

\end{tikzpicture}
\newline
%where $\phi_j$, $\varphi_j$ a group epimorphisms and the dashed arrow means the arrow to the inverse limit.
From the above diagram it follows that there is the natural homomorphism $\Z^{2N} \hookto \widehat{G}$, and 
$\Z^{2N}$ is a normal subgroup.   Let  $J \subset \widehat{G}$  be a set of representatives of $\widehat{G}/\Z^{2N}$, and suppose that $0_{\widehat{G}} \in J$. 
Any $g \in \widehat{G}$ can be uniquely represented as $g = g_J + g_\Z$ where  $g \in J$, $g_\Z \in \Z^{2N}$.  For any $g_1, g_2 \in \widehat{G}$ denote by $\Phi_J\left(g_1, g_2 \right) \in J$, $\Phi_\Z\left(g_1, g_2 \right) \in \Z^{2N}$, such that
$$
g_1 + g_2 = \Phi_J\left(g_1, g_2 \right) + \Phi_\Z\left(g_1, g_2 \right).
$$
Let us define an action of $\widehat{G}$ on  $\bigoplus_{g \in J}L^2\left(\R^{2N}\right)$ given by
$$
g_1  \left(0,..., \underbrace{ \xi}_{g_2^{\text{th}}-\text{place}},...,0,... \right) = \left(0,..., \underbrace{\Phi_\Z\left(g_1, g_2 \right)  \xi}_{\Phi_J\left(g_1 + g_2 \right) ^{\text{th}}-\text{place}},..., 0, ... \right).
$$
Let $X \subset \bigoplus_{g \in J}L^2\left(\R^{2N}\right)$ be given by
$$
X = \left\{ \eta  \in \bigoplus_{g \in J}L^2\left(\R^{2N}\right)~|~   \eta=\left(0,..., \underbrace{ \xi}_{0_{\widehat{G}}^{\text{th}}-\text{place}},...,0,... \right) \right\}.
$$
Taking into account that $X \approx L^2\left(\R^{2N}\right)$, we will write $L^2\left(\R^{2N}\right) \subset \bigoplus_{g \in J}L^2\left(\R^{2N}\right)$ instead of $X \subset\bigoplus_{g \in J}L^2\left(\R^{2N}\right)$.
This inclusion and the action of $\widehat{G}$ on $\bigoplus_{g \in J}L^2\left(\R^{2N}\right)$ enable us write $\bigoplus_{g \in J}gL^2\left(\R^{2N}\right)$ instead of $\bigoplus_{g \in J}L^2\left(\R^{2N}\right)$. If $\widehat{\pi}^\oplus: \widehat{C\left(\mathbb{T}^{2N}_{\th}\right) } \to B\left( \bigoplus_{g \in J}gL^2\left(\R^{2N}\right)\right)   $ is given by
$$
\widehat{\pi}^\oplus\left( a\right)\left(g\xi \right) =  g\left( \widehat{\pi}\left(g^{-1}a \right) \xi\right); ~ \forall a \in  \widehat{C\left(\mathbb{T}^{2N}_{\th}\right) }, ~ \forall g \in J,~ \forall \xi \in L^2\left(\R^{2N} \right) 
$$
then $\widehat{\pi}^\oplus$ is an equivariant representation.

\subsubsection{Inverse noncommutative limit}\label{nt_inv_lim_sec}
\paragraph{}
%Denote by $\Psi_\th: \SS\left( \R^{2N} \right) \xrightarrow{\approx} \SS\left( \R^{2N}_\th \right)$ the natural $\C$-isomorphism.
If $\widetilde{a} \in \SS\left( \R^{2N}_\th\right)$ then from the Corollary \ref{mp_strong_cor} it turns out
$\widetilde a \in \widehat{C\left( \T^{2N}_\th\right) }''$. Since $\widehat{\pi}^\oplus$ is a faithful representation of $\widehat{C\left( \T^{2N}_\th\right) }$, one has  an injective homomorphism  $\SS\left( \R^{2N}_\th\right) \hookto \widehat{\pi}^\oplus\left( \widehat{C\left( \T^{2N}_\th\right) }\right) ''$ of involutive algebras. %From \eqref{mp_fourier_torus_eqn}  following condition holds
%\begin{equation}\label{mp_average_form_eqn}
%a_j =  \sum_{g \in \ker\left(\Z^{2N} \to \Z^{2N}_{m_j} \right)} g\widetilde{a}  = \frac{1}{m_j^{2N}} \sum_{k \in \Z^N}\mathcal{F} \widetilde{a}\left(\frac{k}{m_j} \right) U^{\theta/m_j}_{k} \in \Coo\left( \T^{2N}_{\th/m_j}\right).
%\end{equation} 

For any $\widetilde{a} \in \SS\left( \R^{2N}_\th\right)$ following condition holds
$$
\sum_{g \in \ker\left(\widehat{G} \to G_j\right)}g\widehat{\pi}^\oplus\left(\widetilde{a} \right) = \sum_{g' \in J}g'\left( \sum_{ g''\in \ker\left(\Z^{2N} \to G_j\right)}  g''\widehat{\pi}\left(\widetilde{a} \right)\right)= \sum_{g \in J}gP. 
$$
where 
$$
P = \sum_{ g\in \ker\left(\Z^{2N} \to G_j\right)}  g\widehat{\pi}\left(\widetilde{a} \right).
$$
 
%For any $j \in \N$ the restriction  $\widehat{\pi}^\oplus|_{C\left(\T^{2N}_{\th/m_{j}}\right)} C\left(\T^{2N}_{\th/m_{j}}\right)\to  \bigoplus_{g \in J}B\left(L^2\left(\R^{2N}\right)_g\right)$ is diagonal, i.e. it corresponds to the following infinite diagonal matrix
%$$
%\sum_{g \in \ker\left(\widehat{G} \to G_j\right)}\widehat{\pi}^\oplus\left( a\right) =
%\begin{pmatrix}
%\sum_{ g''\in \ker\left(\Z^{2N} \to G_j\right)} g''\widehat{\pi}\left( a\right) &0&\ldots &  0\\
%$0& \sum_{ g''\in \ker\left(\Z^{2N} \to G_j\right)} g''\widehat{\pi}\left( a\right) &\ldots & 0\\
%\vdots& \vdots &\ddots & \vdots\\
%0& 0 &\ldots &\ldots
%\end{pmatrix}.
%$$
If $J \subset \widehat{G}$ is a set of  representatives of $\widehat{G}/\Z^{2N}$ and $g',g'' \in J$ are such that $g' \neq g''$ then operators $g'P$, $g''P$ act on mutually orthogonal Hilbert subspaces $g'L^2\left( 
R^{2N}\right)$, $g''L^2\left( 
R^{2N}\right)$ of the direct sum $\bigoplus_{g \in J} gL^2\left( 
R^{2N}\right)$, and 
 taking into account $\left\|P\right\|=\left\|gP\right\|$  one has
\begin{equation}\label{nt_norm_equ_eqn}
\left\|\sum_{g \in \ker\left(\widehat{G} \to G_j\right)}\widehat{\pi}^\oplus\left( \widetilde{a} \right)\right\|= \left\|\sum_{g \in J}gP\right\|=\left\|P\right\|=\left\|\sum_{g \in \ker\left(\Z^{2N} \to G_j\right)}\widehat{\pi}\left( \widetilde{a} \right)\right\|.
\end{equation}
%On the other hand
%$$
%\sum_{g \in \ker\left(\Z^{2N} \to G_j\right)}\widehat{\pi}\left(g a\right) = \widehat{\pi}\left( a_j\right); \text{ where } a_j \in C\left(\T^{2N}_{\th/m_{j}}\right).
%$$

\begin{lem}\label{nt_long_delta_lem}
	
	Let $a \in \sS\left(\mathbb{R}^{2N}_\th\right)$, and let  $a_\Delta\in \sS\left(\mathbb{R}^{2N}_\th\right)$ be given by
	\begin{equation}\label{nt_a_delta_eqn}
	a_\Delta\left(x\right)= a(x + \Delta);~\forall x \in \mathbb{R}^{2N}
	\end{equation}
	where $\Delta \in \mathbb{R}^{2N}$.
	For any $m \in \mathbb{N}$ there is  a dependent on $a$ real constant $C_m > 0$  such  that for any $j \in\mathbb{N}$ following condition holds
	$$
	\left\|\sum_{g \in \ker\left(\widehat{G} \to G_j\right)}\widehat{\pi}^\oplus\left(a_{\Delta} a\right)  \right\|  \le \frac{C_m}{\left\| \Delta\right\|^m}.
	$$
	
\end{lem}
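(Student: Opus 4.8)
The plan is to transport the norm from the equivariant direct sum $\bigoplus_{g\in J}gL^2(\R^{2N})$ down to $L^2(\R^{2N})$, to identify the resulting operator as Moyal multiplication by an explicit periodic function, to bound its norm by the $\ell^1$-norm of its Fourier coefficients, and finally to feed in the decay estimate already established in Lemma \ref{mp_ab_delta_lem}. Throughout, set $c:=a_\Delta\star_\th a$; since $a,a_\Delta\in\SS(\R^{2N})$ and $\SS(\R^{2N}_\th)$ is closed under the Moyal product, $c\in\SS(\R^{2N}_\th)$, and by the scaling relation \eqref{eq:starscale} we may and do reduce to $\th=2$, writing $a_\Delta a$ for $a_\Delta\times a$, so that Lemma \ref{mp_ab_delta_lem} is directly applicable.

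First I would apply the equality \eqref{nt_norm_equ_eqn} with $\widetilde a$ replaced by $c$, which reduces the norm in the statement to
$$
\Bigl\|\sum_{g\in\ker(\Z^{2N}\to G_j)}g\,\widehat\pi(c)\Bigr\|,
$$
an operator norm on $L^2(\R^{2N})$, where $\ker(\Z^{2N}\to G_j)=m_j\Z^{2N}$. By Lemma \ref{mp_strong_lem} applied to $\widetilde a=c$, this sum converges strongly and equals $\widehat\pi(c_j)$, where $c_j:=\sum_{g\in m_j\Z^{2N}}g\,c\in\Coo(\R^{2N})$ is the $m_j\Z^{2N}$-periodisation of $c$ (a bounded operator); and by \ref{mp_weak_constr} the function $c_j$ has the Fourier expansion $c_j=\sum_{k\in\Z^{2N}}\gamma_k\exp\!\left(2\pi i\tfrac{k}{m_j}\cdot\right)$ with rapidly decreasing coefficients $\gamma_k=m_j^{-2N}\,\mathcal F c\!\left(\tfrac{k}{m_j}\right)$ by \eqref{mp_fourier_torus_eqn}.

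Since every plane wave has operator norm $1$ on $L^2(\R^{2N})$, the triangle inequality (cf. Remark \ref{nt_c_k_eqn}, \eqref{nt_norm_estimation}) gives
$$
\bigl\|\widehat\pi(c_j)\bigr\|\le\sum_{k\in\Z^{2N}}|\gamma_k|=\frac{1}{m_j^{2N}}\sum_{k\in\Z^{2N}}\Bigl|\mathcal F c\!\left(\tfrac{k}{m_j}\right)\Bigr|.
$$
Now I would invoke the proof of Lemma \ref{mp_ab_delta_lem}, which establishes, for every $M\in\N$ and every $\|\Delta\|>2$, the region-wise bounds $|\mathcal F(a_\Delta a)(y)|\le C_M\|\Delta\|^{-M}$ when $\|y\|\le\|\Delta\|/2$ and $|\mathcal F(a_\Delta a)(y)|\le C_M\|\Delta\|^{-M}\|y\|^{-M}$ when $\|y\|>\|\Delta\|/2$. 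Choosing $M=m+2N+1$ and combining the two regions yields the single pointwise estimate $|\mathcal F(a_\Delta a)(y)|\le C'_m\,\|\Delta\|^{-m}\,(1+\|y\|)^{-(2N+1)}$ for $\|\Delta\|>2$. Substituting into the previous display,
$$
\bigl\|\widehat\pi(c_j)\bigr\|\le\frac{C'_m}{\|\Delta\|^{m}}\cdot\frac{1}{m_j^{2N}}\sum_{k\in\Z^{2N}}\frac{1}{\bigl(1+\|k/m_j\|\bigr)^{2N+1}}\le\frac{C_m}{\|\Delta\|^{m}},
$$
the last step because $\sup_{m\ge1}m^{-2N}\sum_{k\in\Z^{2N}}(1+\|k/m\|)^{-(2N+1)}$ is finite (a convergent Riemann-type sum), so the sum over $k$ is bounded uniformly in $j$. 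For $\|\Delta\|\le 2$ the inequality follows by enlarging $C_m$, and for $\Delta=0$ it is trivial; this proves the lemma.

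The main obstacle is the pointwise Fourier estimate: one must extract from the proof of Lemma \ref{mp_ab_delta_lem} a bound that simultaneously decays polynomially to any order in $\|\Delta\|$ and integrably in $\|y\|$, and the two regimes $\|y\|\le\|\Delta\|/2$ and $\|y\|>\|\Delta\|/2$ must be treated separately — exactly as in that proof — and then reconciled. A secondary point is that the $\ell^1$-bound $\sum_k|\gamma_k|$ must be controlled uniformly in the covering index $j$, which is the role of the elementary uniform bound on the Riemann-type sum above; and one must check that passing to the equivariant direct-sum representation $\widehat\pi^\oplus$ does not alter the norm, which is precisely \eqref{nt_norm_equ_eqn}.
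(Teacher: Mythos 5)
Your proof is correct and follows essentially the same route as the paper's: reduce the norm to $L^2\left(\R^{2N}\right)$ via \eqref{nt_norm_equ_eqn}, bound the operator norm of the periodization by the $\ell^1$-sum $m_j^{-2N}\sum_{l}\left|\mathcal F\left(a_\Delta a\right)\left(l/m_j\right)\right|$ using \eqref{nt_norm_estimation} and \eqref{mp_fourier_torus_eqn}, and control that sum by splitting at $\left\|l/m_j\right\|=\left\|\Delta\right\|/2$ with the same Schwartz-decay estimates that underlie Lemma \ref{mp_ab_delta_lem}. The only (harmless) difference is organizational: you extract the two regional pointwise bounds from the \emph{proof} of Lemma \ref{mp_ab_delta_lem} (correctly noting that its $L^2$ statement alone would not control the lattice $\ell^1$-sum), merge them into the single estimate $\left|\mathcal F\left(a_\Delta a\right)\left(y\right)\right|\le C_m'\left\|\Delta\right\|^{-m}\left(1+\left\|y\right\|\right)^{-(2N+1)}$, and finish with one Riemann-sum bound uniform in $j$, whereas the paper re-derives the lattice-sum estimates region by region, counting lattice points in the inner region and using Parseval in the outer one.
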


\begin{proof}
	From \eqref{nt_norm_equ_eqn} it follows that
	$$
	\left\|\sum_{g \in \ker\left(\widehat{G} \to G_j\right)}\widehat{\pi}^\oplus\left( a_{\Delta} a\right)\right\|=\left\|\sum_{g \in \ker\left(\Z^{2N} \to G_j\right)}\widehat{\pi}\left( a_{\Delta} a\right)\right\|.
	$$	
 From \eqref{nt_c_f_m_eqn} it follows that for any $f \in \sS\left( \mathbb{R}^{2N} \right)$ and any $m \in \mathbb{N}$ there is $C^f_m$ such that
	\begin{equation*}
	\left|f \left(u\right)\right|<\frac{C^f_m}{\left( 1 + \left\|u\right\|\right)^m }.
	\end{equation*}
	Let $M = 2N + 1 + m$. From \eqref{nt_norm_estimation} and \eqref{mp_fourier_torus_eqn} it follows that
	$$
	\left\|\sum_{g \in \ker\left(\Z^{2N} \to \Z^{2N}_{m_j} \right)} g\widehat{\pi}\left(a_\Delta a\right) \right\| \le\frac{1}{m_j^{2N}}~~\sum_{l \in \mathbb{Z}^{2N}} \left| \mathcal{F}\left(a_{\Delta}  a\right)\left(\frac{l}{m_j} \right) \right|.
	$$
	Otherwise from \eqref{mp_fourier_eqn} it follows that
	$$
	\mathcal{F}\left(a_{\Delta}  a\right)\left(x \right)  =    	\int_{\R^{2N}}\mathcal{F}a_\Delta\left(x-y \right) \mathcal{F}a\left(y\right)e^{ \pi i  y \cdot \Th x }~dy.
	$$
	From the above equations it turns out
	\begin{equation*}
	\begin{split}
	\frac{1}{m_j^{2N}}~~\sum_{l \in \mathbb{Z}^{2N}} \left| \mathcal{F}\left(a_{\Delta}  a\right)\left(\frac{l}{m_j} \right) \right| = 
	\\
	=\frac{1}{m_j^{2N}}\sum_{l \in \mathbb{Z}^{2N}} \left|\int \mathcal{F}a\left(\frac{l}{m_j} + \Delta - t  \right)  \mathcal{F}a\left(t \right)e^{\frac{\pi i l}{m_j} \cdot \Th t}  dt\right|\le
	\\ \le \frac{1}{m_j^{2N}}\sum_{l \in \mathbb{Z}^{2N}} \int \left| b\left( t - \Delta - \frac{l}{m_j} \right)  c\left(t \right)e^{\frac{\pi i l}{m_j} \cdot \Th t}  \right|dt=\\
	=\frac{1}{m_j^{2N}}~~\sum_{l \in \mathbb{Z}^{2N}, ~\left\| \frac{l}{m_j}\right\|\le \frac{\left\| \Delta\right\|}{2 }}~ \int \left| b\left( t - \Delta - \frac{l}{m_j} \right)  c\left(t \right)e^{\frac{\pi i l}{m_j} \cdot \Th t}  \right|dt +
	\\
	+
	\frac{1}{m_j^{2N}}~~\sum_{l \in \mathbb{Z}^{2N}, ~\left\| \frac{l}{m_j}\right\|> \frac{\left\| \Delta\right\|}{2 }}~ \int \left| b\left( t - \Delta - \frac{l}{m_j} \right)  c\left(t \right)e^{\frac{\pi i l}{m_j} \cdot \Th t}  \right|dt
	\end{split}
	\end{equation*}
	
	where $b\left( u\right) = \mathcal{F}a\left(-u \right)$, $c\left( u\right) = \mathcal{F}a\left(u \right)$.  From \eqref{nt_c_f_m_eqn} it turns out
	\begin{equation*}
	\begin{split}
	\frac{1}{m_j^{2N}}~~\sum_{l \in \mathbb{Z}^{2N}, ~\left\| \frac{l}{m_j}\right\|\le \frac{\left\| \Delta\right\|}{2 }}~ \int \left| b\left( t - \Delta - \frac{l}{m_j} \right)  c\left(t \right)e^{\frac{\pi i l}{m_j} \cdot \Th t}  \right|dt \le
	\\
	\le \frac{1}{m_j^{2N}}~~\sum_{l \in \mathbb{Z}^{2N}, ~\left\| \frac{l}{m_j}\right\|\le \frac{\left\| \Delta\right\|}{2 }}~ \int_{\mathbb{R}^{2N}}\frac{C^{b}_{M}}{\left(1 + \left\|t- \Delta - \frac{l}{m_j}\right\|  \right)^{M}}~\frac{C^{c}_{2M}}{\left(1 + \left\|t\right\|  \right)^{2M}} dt  =
	\\
	= \frac{1}{m_j^{2N}}~~\sum_{l \in \mathbb{Z}^{2N}, ~\left\| \frac{l}{m_j}\right\|\le \frac{\left\| \Delta\right\|}{2 }}~ \int_{\mathbb{R}^{2N}}\frac{C^{b}_{M}}{\left(1 + \left\|t - \Delta - \frac{l}{m_j}\right\|  \right)^{M}\left(1 + \left\|t\right\|  \right)^{M} }~\frac{C^{c}_{2M}}{\left(1 + \left\|t\right\|  \right)^{M}} dt  \le
	\\
	\le \frac{N^\Delta_{m_j}}{m_j^{2N}}~~\sup_{l \in \mathbb{Z}^{2N}, ~\left\| \frac{l}{m_j}\right\|\le \frac{\left\| \Delta\right\|}{2 },~s\in \mathbb{R}^{2N}}~ \frac{C^{b}_{M}C^{c}_{2M}}{\left(1 + \left\|s - \Delta - \frac{l}{m_j}\right\|  \right)^{M} \left(1 + \left\|s\right\|  \right)^{M}}~\times
	\\
	\times  \int_{\mathbb{R}^{2N}}\frac{1}{\left(1 + \left\|t\right\|  \right)^{M}} dt.
	\end{split}
	\end{equation*}
	where $N^\Delta_{m_j} = \left|\left\{l \in \mathbb{Z}^{2N}~| ~\left\| \frac{l}{m_j}\right\|\le \frac{\left\| \Delta\right\|}{2 }\right\}\right|$. The number $N^\Delta_{m_j}$ can be estimated as a number of points with integer coordinates inside $2N$-dimensional cube
	$$
	N^\Delta_{m_j} < \left\|m_j \Delta\right\|^{2N}.
	$$  
	
	From $M > 2N $ it turns out the integral $ \int_{\mathbb{R}^{2N}}\frac{1}{\left(1 + \left\|t\right\|  \right)^{M}} dt$ is convergent, hence
	
	$$
	\frac{1}{m_j^{2N}}~\sum_{l \in \mathbb{Z}^{2N}, ~\left\| \frac{l}{m_j}\right\|\le \frac{\left\| \Delta\right\|}{2 }}~~\left|\int b\left( t - \Delta - \frac{l}{m_j} \right)  c\left(t \right)e^{\frac{\pi i l}{m_j} \cdot \Th t}  dt\right| \le
	$$
	$$
	\le C_1'  \sup_{l \in \mathbb{Z}^{2N}, ~\left\| \frac{l}{m_j}\right\|\le \frac{\left\| \Delta\right\|}{2 },~s\in \mathbb{R}^{2N}}~  \frac{ \left\|\Delta\right\|^{2N} }{\left(1 + \left\|s - \Delta - \frac{l}{m_j}\right\|  \right)^{M} \left(1 + \left\|s\right\|  \right)^{M}}
	$$
	where 
	$$
	C_1' =  C^{b}_MC^c_{2M}\int_{\mathbb{R}^{2N}}\frac{1}{\left(1 + \left\|t\right\|  \right)^{M}} dt.
	$$

	It turns out from the \eqref{nt_triangle_eqn} that
	$$
	\mathrm{inf}_{l \in \mathbb{Z}^{2N}, ~\left\| \frac{l}{m_j}\right\|\le \frac{\left\| \Delta\right\|}{2 },~s\in \mathbb{R}^{2N}} \left(1 + \left\|s - \Delta - \frac{l}{m_j}\right\|  \right)^{M} \left(1 + \left\|s\right\|  \right)^{M} >\left\|\frac{\Delta}{4}\right\|^M.
	$$
	From $M = 2N+1 + m$    it turns out
	$$
	\frac{1}{m_j^{2N}}~\sum_{l \in \mathbb{Z}^{2N}, ~\left\| \frac{l}{m_j}\right\|\le \frac{\left\| \Delta\right\|}{2 }}~~\left|\int b\left( t - \Delta - \frac{l}{m_j} \right)  c\left(t \right)e^{\frac{\pi i l}{m_j} \cdot \Th t}  dt\right| \le \frac{C_1}{\left\| \Delta\right\|^{m}} 
	$$  
	where $C_1=C'_1/4^{m}$. Clearly 
	\begin{equation*}
	\begin{split}
	\frac{1}{m_j^{2N}}~\sum_{l \in \mathbb{Z}^{2N}, ~\left\| \frac{l}{m_j}\right\|> \frac{\left\| \Delta\right\|}{2 }}~~\left|\int b\left( t - \Delta - \frac{ l}{m_j} \right)  c\left(t \right)e^{\frac{\pi i l}{m_j} \cdot \Th t}  dt\right|=
	\\=
	\frac{1}{m_j^{2N}}~~\sum_{l \in \mathbb{Z}^{2N}, ~\left\| \frac{ l}{m_j}\right\|> \frac{\left\| \Delta\right\|}{2 }}~ \left| \left(  b\left( \bullet- \Delta-\frac{ l}{m_j}\right) ,~ c\left(\bullet\right)e^{\frac{\pi i l}{m_j}\cdot \Th\bullet} \right)  \right|
	\end{split}
	\end{equation*}
	where $\left( \cdot, \cdot \right)$ means the given by \eqref{fourier_scalar_product_eqn} scalar product.
	From  the $\mathcal{F}$-invariance of $\left( \cdot, \cdot \right) $  it follows that 
	\begin{equation*}
	\begin{split}
	\frac{1}{m_j^{2N}}~~\sum_{l \in \mathbb{Z}^{2N}, ~\left\| \frac{l}{m_j}\right\|> \frac{\left\| \Delta\right\|}{2 }}~ \left|  \left(  b\left( \bullet- \Delta-\frac{ l}{m_j}\right) ,~ c\left(\bullet\right)e^{\frac{\pi i l}{m_j}\cdot \Th\bullet} \right)  \right|=
	\\=
	\frac{1}{m_j^{2N}}~~\sum_{l \in \mathbb{Z}^{2N}, ~\left\| \frac{l}{m_j}\right\|> \frac{\left\| \Delta\right\|}{2 }}~ \left|\left(  \mathcal{F}\left(  b\left( \bullet+ \Delta-\frac{ l}{m_j}\right)\right) ,~ \mathcal{F}\left( c\left(\bullet\right)e^{\frac{\pi i l}{m_j}\cdot \Th\bullet} \right)  \right)  \right|=
	\\
	= \frac{1}{m_j^{2N}}~~\sum_{l \in \mathbb{Z}^{2N}, ~\left\| \frac{l}{m_j}\right\|> \frac{\left\| \Delta\right\|}{2 }}~ \left|\int_{\mathbb{R}^{2N}} 
	\mathcal{F}\left( b\right)\left( \bullet- \Delta -\frac{ l}{m_j}\right)\left(u\right)\mathcal{F}\left(c\left(\bullet\right)e^{\frac{\pi i l}{m_j}\cdot \Th\bullet} \right)\left(u\right)  du \right|\le
	\\
		\end{split}
	\end{equation*}
	\begin{equation*}
\begin{split}	
	\le \frac{1}{m_j^{2N}}~~\sum_{l \in \mathbb{Z}^{2N}, ~\left\| \frac{l}{m_j}\right\|> \frac{\left\| \Delta\right\|}{2 }}~ \int_{\mathbb{R}^{2N}}\left| e^{-i\left(\Delta - \frac{l}{m_j}\right) \cdot u}\mathcal{F}\left( b\right)\left( u\right) \mathcal{F}\left(c\right)\left(u+\Th\frac{\pi l}{m_j}\right)\right| du \le
	\\
	\le \frac{1}{m_j^{2N}}~~\sum_{l \in \mathbb{Z}^{2N}, ~\left\| \frac{l}{m_j}\right\|> \frac{\left\| \Delta\right\|}{2 }}~ \int_{\mathbb{R}^{2N}}\frac{C^{\mathcal{F}\left( b\right)}_{3M}}{\left(1 + \left\|u\right\|  \right)^{3M}}\frac{C^{\mathcal{F}\left(c\right)}_{2M}}{\left(1 + \left\|u-\Th\frac{\pi l}{m_j}\right\|  \right)^{2M}} du  \le
	\\
	\le \frac{1}{m_j^{2N}}~~\sup_{l \in \mathbb{Z}^{2N}, ~\left\| \frac{l}{m_j}\right\|> \frac{\left\| \Delta\right\|}{2 },~ s \in \mathbb{R}^{2N}}~ \frac{C^{\mathcal{F}\left( b\right)}_{3M}}{\left(1 + \left\|s\right\|  \right)^{M}}\frac{C^{\mathcal{F}\left(c\right)}_{2M}}{\left(1 + \left\|s-\Th\frac{\pi l}{m_j}\right\|  \right)^{M}}\frac{1}{\left(1 + \left\|u-\Th\frac{\pi l}{m_j}\right\|  \right)^{M}\left(1 + \left\|u\right\|  \right)^{M}}
	\times\\
	\times \sum_{l \in \mathbb{Z}^{2N}, ~\left\| \frac{l}{m_j}\right\|> \frac{\left\| \Delta\right\|}{2 }}     
	\int_{\mathbb{R}^{2N}}\frac{1}{\left(1 + \left\|u\right\|  \right)^{M}} du.
	\end{split}
	\end{equation*}
	Since we consider the asymptotic dependence $\left\|\Delta\right\|\to \infty$ only  large values of $\left\|\Delta\right\|$ are interesting, so we can suppose that  $\left\|\Delta\right\| > 2$.
	If $\left\|\Delta\right\| > 2$ then from $\left\|\frac{l}{m_j}\right\|> \frac{\left\| \Delta\right\|}{2 }$ it follows that $\left\|\Th\frac{\pi l}{m_j}\right\| > 1$, and  from \eqref{nt_triangle_eqn} it follows that
	\begin{equation*}
	\begin{split}
	\left(1 + \left\|u\right\|  \right)^{M}\left(1 + \left\|u-\Th\frac{\pi l}{m_j}\right\|  \right)^{M} > \left\|\Th\frac{\pi l}{m_j}\right\|^M, 
	\\
	\inf_{l \in \mathbb{Z}^{2N}, ~\left\| \frac{l}{m_j}\right\|> \frac{\left\| \Delta\right\|}{2 },~ s \in \mathbb{R}^{2N}}~ \left(1 + \left\|s\right\|  \right)^{M}\left(1 + \left\|s-\Th\frac{\pi l}{m_j}\right\|  \right)^{M} > \left\|\Th\frac{\Delta}{4}\right\|^M,
	\end{split}
	\end{equation*}
	
	hence, taking into account \eqref{mp_2x_eqn}, one has
	\begin{equation*}
	\begin{split}
	\frac{1}{m_j^{2N}}~~\sum_{l \in \mathbb{Z}^{2N}, ~\left\| \frac{l}{m_j}\right\|> \frac{\left\| \Delta\right\|}{2 }}~ \left|  \left(  b\left( \bullet+ \Delta-\frac{l}{m_j}\right) ,~ c\left(\bullet\right)e^{\frac{\pi i l}{m_j}\cdot \Th\bullet} \right)  \right|\le
	\\
	\le \frac{1}{m_j^{2N}} \frac{C_2'}{\left\| \Delta\right\|^M} \sum_{l \in \mathbb{Z}^{2N}, ~\left\| \frac{l}{m_j}\right\|> 1}\int_{\R^{2N}}\frac{1}{\left\|\frac{2\pi l}{m_j}\right\|^M}\frac{1}{\left(1+\left\|u\right\| \right)^M }=
	\\
	\frac{C_2'}{\left\| \Delta\right\|^M}  \frac{1}{m_j^{2N}} \left(  \sum_{l \in \mathbb{Z}^{2N}, ~\left\| \frac{l}{m_j}\right\|> 1}~ \frac{1}{\left\|\frac{2\pi l}{m_j}\right\|^M}  \right) \left(  \int_{\R^{2N}}\frac{1}{\left(1+\left\|u\right\| \right)^M }du\right),
	\end{split}
	\end{equation*}
	where $C'_2=C^{\mathcal{F}\left( b\right)}_{3m}C^{\mathcal{F}\left(c\right)}_{2m}$.
Since $M \ge 2N + 1$ the integral  $\int_{\mathbb{R}^{2N}}\frac{1}{\left(1 + \left\|u\right\|  \right)^{M}} du$ is convergent. The infinite sum in the above equation can be represented as an integral of step function, in particular  following condition holds
	$$
	\frac{1}{m_j^{2N}} \sum_{l \in \mathbb{Z}^{2N},\left\| \frac{l}{m_j}\right\|> 1}~ \frac{1}{\left\|\frac{\pi \Th l}{m_j}\right\|^M} = \int_{\R^{2N} - \left\{x \in \R^{2N}~|~\left\|x\right\|> 1 \right\}} f_{m_j}\left( x\right) dx
	$$
	where $f_{m_j}$ is a multidimensional step function such that
	$$
	f_{m_j}\left(\frac{2\pi  l}{m_j} \right) = \frac{1}{\left\|\frac{2\pi  l}{m_j}\right\|^M}
	$$
	From 
	$$
	f_{m_j}\left(x\right)  < \frac{2}{\left\|\frac{2\pi h x}{m_j}\right\|^M}
	$$ 
	it follows that
	$$
	\int_{\R^{2N} - \left\{x \in \R^{2N}~|~\left\|x\right\|> 1 \right\}} f_{m_j}\left( x \right)dx < \int_{\R^{2N} - \left\{x \in \R^{2N}~|~\left\|x\right\|> 1 \right\}} \frac{2}{\left\|2\pi x\right\|^M}dx.
	$$
	From $m > 2N+1$ it turns out the integral $$\int_{\R^{2N} - \left\{x \in \R^{2N}~|~\left\|x\right\|> 1 \right\}} \int_{\R^{2N} - \left\{x \in \R^{2N}~|~\left\|x\right\|> 1 \right\}} \frac{2}{\left\|2\pi  x\right\|^m}dx$$ is convergent, hence
	$$
	\frac{1}{m_j^{2N}} \sum_{l \in \mathbb{Z}^{2N},\left\| \frac{l}{m_j}\right\|> 1}~ \frac{1}{\left\|J\frac{l}{m_j}\right\|^m} < C''_2= \int_{\R^{2N} - \left\{x \in \R^{2N}~|~\left\|x\right\|> 1 \right\}} \frac{2}{\left\|2\pi x\right\|^M}dx.
	$$
	From above equations it follows that
	$$
	\frac{1}{m_j^{2N}}~~\sum_{l \in \mathbb{Z}^{2N}, ~\left\| \frac{l}{m_j}\right\|> \frac{\left\| \Delta\right\|}{2 }}~ \left|\int_{\mathbb{R}^{2N}} b\left( w-\frac{l}{m_j}\right)c\left(w+\Delta\right)e^{i\frac{l}{m_j}\cdot Jw} dt \right| \le \frac{C_2}{\left\| \Delta\right\|^m} 
	$$  
	where $M = 2N+1 + m$ and $C_2 = C'_2C''_2\int_{\R^{2N}}\frac{1}{\left(1+\left\|u\right\| \right)^M }du$. In result for any $m > 0$ there is $C_m \in \mathbb{R}$ such that 
	
	$$
	\left\|\sum_{g \in \ker\left(\widehat{G} \to G_j\right)}\widehat{\pi}^\oplus\left( a_{\Delta} a\right)\right\| <\frac{1}{m_j^{2N}}\sum_{l \in \mathbb{Z}^{2N}} \int \left| b\left( t + \Delta - \frac{l}{m_j} \right)  c\left(t \right)e^{\frac{\pi i l}{m_j} \cdot \Th t}  \right|dt < \frac{C_m}{\left\| \Delta\right\|^m}.
	$$
\end{proof}

\begin{lem}\label{nt_w_spec_lem_p} 
	If $\overline{a}$ in $ \sS\left(\mathbb{R}^{2N}_\th \right) $ is positive then following conditions hold:
		\begin{enumerate}	
		\item[(i)]
		For any $j \in \mathbb{N}^0$ the following  series 
		\begin{equation*}
		\begin{split}
		a_j = \sum_{g \in \ker\left( \widehat{G} \to  G_j\right) )} g \overline{a},\\
		b_j = \sum_{g \in \ker\left( \widehat{G} \to  G_j\right) } g  \overline{a}^2
		\end{split}
		\end{equation*}
		are strongly convergent and the sums lie in   $\Coo\left( \T^{2N}_{\th/m_j^2}\right) $, i.e. $a_j, b_j \in \Coo\left( \T^{2N}_{\th/m_j^2}\right) $; 
		\item[(ii)] For any $\eps > 0$ there is $N \in \N$ such that for any $j \ge N$ the following condition holds
		\begin{equation*}
		\begin{split}
		\left\| a_j^2 - b_j\right\| < \eps.
		\end{split}
		\end{equation*}
	\end{enumerate}

\end{lem}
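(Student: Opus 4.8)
The plan is to derive the lemma from the two results already proved in this subsection: the strong-convergence statement of Lemma \ref{mp_strong_lem} (for sums over $m_j\Z^{2N}$) together with the Fourier description \eqref{mp_fourier_torus_eqn}, and the asymptotic estimate of Lemma \ref{nt_long_delta_lem}. First I would pass to the faithful equivariant representation $\widehat\pi^\oplus$ and use the norm identity \eqref{nt_norm_equ_eqn}, which reduces every sum $\sum_{g\in\ker(\widehat G\to G_j)}g\,\widehat\pi^\oplus(\,\cdot\,)$ to the corresponding sum $\sum_{g\in m_j\Z^{2N}}\tau_g\,\widehat\pi(\,\cdot\,)$ on $L^2(\R^{2N})$, where $m_j\Z^{2N}=\ker(\Z^{2N}\to\Z^{2N}_{m_j})$ acts by translation. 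The single structural remark needed is that the Moyal product is translation-covariant, $\tau_v(f)\star_\th\tau_v(h)=\tau_v(f\star_\th h)$, so that $g(\overline{a}\star_\th\overline{a})=(g\overline{a})\star_\th(g\overline{a})$ and $(\tau_g\overline{a})\star_\th(\tau_h\overline{a})=\tau_g\bigl(\overline{a}\star_\th\tau_{h-g}(\overline{a})\bigr)$ for $g,h\in m_j\Z^{2N}$.

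For (i): since $\overline{a}\in\sS(\R^{2N}_\th)$ is positive it is self-adjoint, hence $\overline{a}^2=\overline{a}\star_\th\overline{a}$ is again a positive element of $\sS(\R^{2N}_\th)$. Applying Lemma \ref{mp_strong_lem} to $\overline{a}$ and to $\overline{a}^2$ shows that $\sum_{g\in m_j\Z^{2N}}\tau_g\overline{a}$ and $\sum_{g\in m_j\Z^{2N}}\tau_g(\overline{a}^2)$ are strongly convergent to smooth $m_j\Z^{2N}$-periodic functions; by \eqref{mp_fourier_torus_eqn} these have rapidly decreasing Fourier coefficients, so they are the images under $\pi_j$ of elements of $\Coo(\T^{2N}_{\th/m_j^2})$, i.e. $a_j,b_j\in\Coo(\T^{2N}_{\th/m_j^2})$. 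Transporting this through the $J$-copy decomposition $\sum_{g\in\ker(\widehat G\to G_j)}g\,\widehat\pi^\oplus(\overline{a})=\sum_{g'\in J}g'P_j$ of \S\ref{nt_inv_lim_sec}, in which the operators $g'P_j$ act on pairwise orthogonal summands $g'L^2(\R^{2N})$, gives strong convergence of the full series over $\ker(\widehat G\to G_j)$ with the same limits (the same applies to $\overline{a}^2$); strong convergence also follows directly from Lemma \ref{increasing_convergent_w}, the partial sums being an increasing net of positive operators bounded by $\|a_j\|$, resp. $\|b_j\|$.

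For (ii): for finite $F\subset m_j\Z^{2N}$ put $a_j^F=\sum_{g\in F}\tau_g\overline{a}$; then $0\le a_j^F\le a_j$, so the net $\{a_j^F\}$ is uniformly bounded and converges strongly to $a_j$, whence $(a_j^F)^2\to a_j^2$ strongly. Separating the diagonal, $(a_j^F)^2=\sum_{g\in F}\tau_g(\overline{a}^2)+\sum_{\substack{g,h\in F\\ g\ne h}}(\tau_g\overline{a})\star_\th(\tau_h\overline{a})$, the first term converging strongly to $b_j$ and, after grouping the second by $\delta=h-g$, one obtains in the limit
\begin{equation*}
a_j^2-b_j=\sum_{\delta\in m_j\Z^{2N}\setminus\{0\}}c_j^{(\delta)},\qquad c_j^{(\delta)}=\sum_{g\in m_j\Z^{2N}}\tau_g\bigl(\overline{a}\star_\th\tau_\delta(\overline{a})\bigr),
\end{equation*}
with the outer series converging in operator norm. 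Since $\overline{a}=\overline{a}^*$, $(c_j^{(\delta)})^*=\sum_{g\in m_j\Z^{2N}}\tau_g\bigl(\overline{a}_\Delta\star_\th\overline{a}\bigr)$ for a suitable translate with $\|\Delta\|=\|\delta\|$, which is exactly the expression bounded in Lemma \ref{nt_long_delta_lem}; hence for every $m\in\N$ there is $C_m$ with $\|c_j^{(\delta)}\|=\|(c_j^{(\delta)})^*\|\le C_m/\|\delta\|^m$. Summing,
\begin{equation*}
\|a_j^2-b_j\|\le\sum_{\delta\in m_j\Z^{2N}\setminus\{0\}}\frac{C_m}{\|\delta\|^m}=\frac{C_m}{m_j^{m}}\sum_{k\in\Z^{2N}\setminus\{0\}}\frac{1}{\|k\|^m},
\end{equation*}
and for $m>2N$ the last sum is a finite constant $C$, so $\|a_j^2-b_j\|\le C/m_j^{m}\xrightarrow{j\to\infty}0$; picking $N$ with $C/m_N^{m}<\eps$ completes the proof.

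The analytic content — the decay of $\|\overline{a}_\Delta\star_\th\overline{a}\|$ in $\|\Delta\|$, uniformly in $j$ after periodization — is already packaged in Lemma \ref{nt_long_delta_lem}, so the statement is essentially a corollary of it together with Lemma \ref{mp_strong_lem}. The only point requiring genuine care is the rearrangement in (ii) that turns the strongly convergent double sum defining $a_j^2$ into the norm-convergent single sum $\sum_\delta c_j^{(\delta)}$; here the positivity of $\overline{a}$ supplies the uniform bounds on the finite partial sums that make the passage to the limit legitimate, so I do not expect a serious obstacle.
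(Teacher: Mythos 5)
Your proposal is correct and follows essentially the same route as the paper: part (i) is deduced from Lemma \ref{mp_strong_lem} (plus the Fourier description \eqref{mp_fourier_torus_eqn}), and part (ii) rewrites $a_j^2-b_j$ as a sum over nonzero differences $\delta\in m_j\Z^{2N}\setminus\{0\}$ of periodized products $\sum_g\tau_g\left(\overline{a}\star_\th\tau_\delta\overline{a}\right)$, bounds each term by Lemma \ref{nt_long_delta_lem}, and sums the resulting convergent series to get a bound of order $m_j^{-m}$. Your extra care in justifying the rearrangement via uniformly bounded finite partial sums is a refinement of, not a departure from, the paper's argument.
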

\begin{proof}
	(i) Follows from the Lemmas \ref{mp_weak_lem} and/or \ref{mp_strong_lem}.
	\newline
	(ii)
	Denote by $J_j = \ker\left( \Z^{2N} \to G_j\right)= m_j\Z^{2N}$. If
	\begin{equation*}
	\begin{split}
	a_j =  \sum_{g \in J_j  }g \overline{a},\\ b_j =  \sum_{g \in J_j  }  g \overline{a}^2
	\end{split}
	\end{equation*}
	then 
	\begin{equation}\label{nt_an_bn_eqn}
	a^2_j - b_j = \sum_{g \in J_j  }g\overline{a} ~ \left(  \sum_{g' \in J_j  \backslash \{g\} }g'' \overline{a}\right) .
	\end{equation}
	From \eqref{nt_a_delta_eqn}  it follows that $g \overline{a}= \overline{a}_{g }$ where  $\overline{a}_{g}\left(x \right)  = \overline{a}\left( x +g\right)$ for any $x \in \R^{2N}$ and $g \in \Z^{2N}$. Hence the equation \eqref{nt_an_bn_eqn} is equivalent to
	\begin{equation*}
	\begin{split}
	a^2_j - b_j = \sum_{g \in J_j  }\overline{a}_{g }  \sum_{g' \in J_j  \backslash \{g\} } \overline{a}_{g' } = \sum_{g \in \mathbb{Z}^{2N}  }\overline{a}_{m_jg }  \sum_{g' \in \mathbb{Z}^{2N}  \backslash \{g\} } \overline{a}_{m_jg' }=
	\\
	=\sum_{g' \in J_j}  g'\left( \overline{a}  \sum_{g \in \mathbb{Z}^{2N}  \backslash \{0\} } \overline{a}_{m_jg } \right). 
	\end{split}
	\end{equation*}
	Let $m > 1$ and $M = 2N+1 + m$.
	From the Lemma \ref{nt_long_delta_lem} it follows that there is $C \in \mathbb{R}$ such that
	$$
	\left\|\sum_{g \in J_j }g\left( aa_\Delta\right) \right\| < \frac{C}{\left\| \Delta\right\|^M}.
	$$
	From the triangle inequality it follows that
	\begin{equation*}
	\begin{split}	\left\|a^2_j - b_j\right\|=\left\|\sum_{g \in J_j }g\left( \overline{a}  \sum_{g' \in \mathbb{Z}^{2N}  \backslash \{0\} } \overline{a}_{m_jg' } \right)\right\| \le
	\\
	\le \sum_{g' \in \mathbb{Z}^{2N}  \backslash \{0\} } \left\|\sum_{g \in J_j }g\left( \overline{a}  ~ \overline{a}_{m_jg' } \right)\right\|   \le \sum_{g \in \mathbb{Z}^{2N}  \backslash \{0\} } \frac{C}{\left\| m_jg' \right\|^M }.
	\end{split}
	\end{equation*}
	
	From  $M> 2N$ it turns out that the series
	$$
	C' = \sum_{g' \in \mathbb{Z}^{2N}  \backslash \{0\} }  \frac{C}{\left\| g' \right\|^M }
	$$
	is convergent and 
	$$
	\sum_{g \in \mathbb{Z}^{2N}  \backslash \{0\} } \frac{C}{\left\| m_jg \right\|^M } = \frac{C'}{m_j^M }.
	$$
	If $\varepsilon > 0$ is a small number and $N\in \mathbb{N}$ is such $m_N > \sqrt[M]{\frac{C'}{\varepsilon}}$ then from above equations it follows that for any $j \ge N$ the following condition holds
	$$
	\left\|a^2_j - b_j\right\| < \eps.
	$$

\end{proof}

\begin{lem}\label{nt_w_spec_lem}
	Let  us consider  a dense inclusion
	$$
	\underbrace{\SS\left(\R^{2}_\th\right)\otimes\dots\otimes\SS\left(\R^{2}_\th\right)}_{N-\mathrm{times}} \subset \SS\left(\R^{2N}_\th\right)
	$$
	of algebraic tensor product which follows from  \eqref{mp_tensor_prod}.
If $\overline{a} \in \SS\left(\R^{2N}_\th\right) $ is a positive such that
	\begin{itemize}
		\item 	$$
		\overline{a} = \sum_{\substack{j=0\\k=0}}^{M_1} c^1_{jk}f_{jk}\otimes \dots \otimes \sum_{\substack{j=0\\k=0}}^{M_N} c^N_{jk}f_{jk}
		$$
		where $c^l_{jk} \in \C$ and $f_{jk}$ are given by the Lemma \ref{lm:osc-basis},
		\item For any $l = 1,\dots, N$ the sum
		$
		\sum_{\substack{j=0\\k=0}}^{M_l} c^l_{jk}f_{jk}
		$
	is a rank-one operator.
	\end{itemize}
	 then $\overline{a}$ is special. 
\end{lem}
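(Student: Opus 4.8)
The plan is to verify that $\overline a$ satisfies conditions (a)--(c) of Definition \ref{special_el_defn} for the equivariant representation $\widehat\pi^\oplus$ of Subsection \ref{nt_inv_lim_sec}; then $\overline a$ is special by definition. Write $\overline G_n=\ker(\widehat G\to G_n)$ and recall $G_n\cong\Z^{2N}_{m_n}$, $\ker(\Z^{2N}\to\Z^{2N}_{m_n})=m_n\Z^{2N}\subset\overline G_n$ and $A_n=C(\T^{2N}_{\th/m_n^2})$. The structural input is that a tensor product of rank-one operators is rank-one, so, $\overline a$ being positive, $\overline a=\mu\,|v\rangle\langle v|$ for some $\mu\ge 0$ and $v=\xi_1\otimes\dots\otimes\xi_N$ with each $\xi_l$ a finite combination of the functions $f_{jk}$ of Lemma \ref{lm:osc-basis}; in particular $v\in\SS(\R^{2N})$, $\overline a\in\SS(\R^{2N}_\th)$ is positive, and Lemma \ref{nt_w_spec_lem_p} applies to $\overline a$ (the case $\mu=0$ is trivial). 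Fix $z\in A$. Since $A=\widehat A^{\widehat G}$, $z$ is $\widehat G$-invariant, so $g(z\overline a z^*)=z\,(g\overline a)\,z^*$ for all $g\in\widehat G$; setting $w=\widehat\pi(z)v=z\star_\th v\in L^2(\R^{2N})$ one has $z\overline a z^*=\mu\,|w\rangle\langle w|$, hence $(z\overline a z^*)^2=\mu\|w\|^2(z\overline a z^*)$ and, $z\overline a z^*$ being a scalar multiple of a rank-one projection, $f_\eps(z\overline a z^*)=\lambda_\eps(z\overline a z^*)$ with $\lambda_\eps\ge 0$. Thus, with $a_n=\sum_{g\in\overline G_n}g\overline a$, the series of Definition \ref{special_el_defn} are $a_n$, $b_n=za_nz^*$, $c_n=\mu\|w\|^2 za_nz^*$, $d_n=\lambda_\eps za_nz^*$. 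By Lemma \ref{nt_w_spec_lem_p}(i) the first series converges strongly and $a_n\in\Coo(\T^{2N}_{\th/m_n^2})\subset A_n$; since $z\in A_0\subset A_n$ and multiplication by a fixed bounded operator (on either side) is strongly continuous, $b_n,c_n,d_n\in A_n$ with their series strongly convergent. This settles (a) and (b). I also record that $\{a_n\}$ is norm-bounded: $a_n\le a_0$, and on each block of $\widehat\pi^\oplus$, via \eqref{nt_norm_equ_eqn}, $a_0$ is the periodization $\sum_{g\in\Z^{2N}}g\overline a$ of the Schwartz element $\overline a$, hence bounded.

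For condition (c), since $c_n=\mu\|w\|^2 b_n$ we have $b_n^2-c_n=\sum_{g\neq g'\in\overline G_n}g(z\overline a z^*)\,g'(z\overline a z^*)$. First take $z\in\Coo(\T^{2N}_\th)$: then $w=z\star_\th v\in\SS(\R^{2N})$, by the plane-wave form of $z$ and the Fourier formula \eqref{mp_fourier_eqn}. The block-diagonal structure of $\widehat\pi^\oplus$ and the norm identity \eqref{nt_norm_equ_eqn} reduce $\|b_n^2-c_n\|$ to $\mu^2\bigl\|(RR^*)^2-\|w\|^2RR^*\bigr\|$, where $R\colon\ell^2(m_n\Z^{2N})\to L^2(\R^{2N})$ sends the basis vector indexed by $g$ to $T_gw$, $T_g$ the translation unitary. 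As $RR^*$ and $R^*R$ share nonzero spectrum and $R^*R=\|w\|^2 I+T_n$ with $T_n$ the convolution operator on $\ell^2(m_n\Z^{2N})$ with symbol $k\mapsto\langle w,T_kw\rangle$ off the diagonal, a Schur estimate gives $\|b_n^2-c_n\|\le\mu^2(\|w\|^2+\delta_n)\delta_n$ with $\delta_n=\sum_{k\in m_n\Z^{2N}\setminus\{0\}}|\langle w,T_kw\rangle|$. For Schwartz $w$ the triangle-inequality bound from the proof of Lemma \ref{mp_ab_delta_lem} gives $|\langle w,T_kw\rangle|\le C_M(1+\|k\|)^{-M}$ for every $M$, whence $\delta_n\le C_M\sum_{k\in\Z^{2N}\setminus\{0\}}(1+m_n\|k\|)^{-M}\to 0$; this proves (c) for smooth $z$.

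For arbitrary $z\in A$ I would approximate $z=z'+e$ with $z'\in\Coo(\T^{2N}_\th)$ and $\|e\|$ small. Since $z',e\in A_0$ are $\widehat G$-invariant, each discrepancy term between $(b_n,c_n)$ and $(b_n',c_n')$ has the form $z_1 a_n z_2^*$ with $z_1,z_2\in\{z',e\}$ and at least one factor equal to $e$, hence is bounded by $(\|z\|+1)\|a_0\|\|e\|$ \emph{uniformly in $n$}; combined with the elementary estimate for $\bigl|\|w\|^2-\|z'\star_\th v\|^2\bigr|$ this yields $\|b_n^2-c_n\|\le\|b_n'^2-c_n'\|+C\|e\|$, and (c) follows by choosing $e$ first and then $N$. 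I expect the main obstacle to be precisely this operator-norm decay $\|b_n^2-c_n\|\to 0$: strong convergence of $b_n$ is automatic but useless here, and the real content is the quantitative almost-orthogonality of $\{T_gw\}_{g\in m_n\Z^{2N}}$, which rests on $w=z\star_\th v$ being Schwartz for smooth $z$. The rank-one hypothesis on $\overline a$ is exactly what collapses $(z\overline a z^*)^2$ and $f_\eps(z\overline a z^*)$ to scalar multiples of $z\overline a z^*$, thereby making the cross-terms assemble into the tractable $\mu^2\bigl((RR^*)^2-\|w\|^2RR^*\bigr)$; without it the estimate would be substantially more delicate.
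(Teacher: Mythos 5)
Your proof has the same architecture as the paper's: conditions (a) and (b) are obtained by collapsing $(z\overline a z^*)^2$ and $f_\eps(z\overline a z^*)$ to scalar multiples of $z\overline a z^*$ via the rank-one hypothesis, and condition (c) is proved first for smooth $z$ from the rapid decay of translated Moyal products of Schwartz functions, then for general $z\in A$ by approximating with $y\in\Coo\left(\T^{2N}_\th\right)$ and controlling the errors uniformly in $n$. The paper packages the smooth-$z$ estimate as Lemma \ref{nt_w_spec_lem_p}, whose proof rests on Lemma \ref{nt_long_delta_lem} (the periodized analogue of Lemma \ref{mp_ab_delta_lem}); your Schur-test organization of the cross terms is a mild repackaging of the same triangle-inequality bound.

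There is, however, one genuine error: you identify ``rank one'' in $\SS\left(\R^{2N}_\th\right)$ with rank one as an operator on $L^2\left(\R^{2N}\right)$, writing $\overline a=\mu|v\rangle\langle v|$ with $v\in\SS\left(\R^{2N}\right)\subset L^2\left(\R^{2N}\right)$ and $z\overline a z^*=\mu|w\rangle\langle w|$, $w=z\star_\th v$. This is false: by Lemma \ref{lm:osc-basis} the element $f_{00}$, which is rank one in the matrix picture of Proposition \ref{mp_fmn}, acts on $L^2\left(\R^{2}\right)$ by $f_{00}\star_\th f_{mn}=\delta_{0m}f_{0n}$, i.e. as the projection onto the infinite-dimensional span of $\left\{f_{0n}\right\}$; the left regular representation of $C_0\left(\R^{2N}_\th\right)\cong\mathcal K$ on $L^2\left(\R^{2N}\right)$ has infinite multiplicity. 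Hence the vector $w$, the eigenvalue $\mu\|w\|^2$, and the Gram entries $\langle w,T_kw\rangle$ are not literally defined as you use them. The argument is repairable and the conclusions survive: $(z\overline a z^*)^2=\left\|z\overline a z^*\right\|\,(z\overline a z^*)$ holds because $z\overline a z^*$ is a positive rank-one element of the $C^*$-algebra $C_0\left(\R^{2N}_\th\right)$ (a representation-independent identity), and the off-diagonal Gram entries must be replaced by the operator norms $\left\|g\left(z\overline a z^*\right)g'\left(z\overline a z^*\right)\right\|=\left\|\left(z\overline a z^*\right)_{\Delta}\star_\th\left(z\overline a z^*\right)\right\|$ with $\Delta=g-g'$, whose rapid decay for Schwartz $z\overline a z^*$ follows from Lemma \ref{mp_ab_delta_lem} combined with the bound $\left\|\cdot\right\|_{\mathrm{op}}\le\left(2\pi\th\right)^{-N/2}\left\|\cdot\right\|_2$ of Lemma \ref{nt_l_2_est_lem} --- exactly the estimate the paper extracts in Lemmas \ref{nt_long_delta_lem} and \ref{nt_w_spec_lem_p}. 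With that substitution your Schur bound and the final approximation step go through.
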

\begin{proof}
	Clearly $\overline{a}$ is a rank-one operator.
	If $\overline{a} \in \SS\left(\R^{2N}_\th\right) $ then from the  Lemmas \ref{mp_weak_lem} and/or \ref{mp_strong_lem} it turns out that $\overline{a}$ satisfies to (a) of the Definition \ref{special_el_defn}.
	If $z \in C\left(\T^{2N}_\th \right)$ then $z$ then from the injective *-homomorphism $C\left( \T^{2N}_{\th}\right) \hookto C\left( \T^{2N}_{\th/m_j^2}\right)$ it follows that $z$ can be regarded as element of $C\left( \T^{2N}_{\th/m_j^2}\right)$, i.e. $z\in C\left( \T^{2N}_{\th/m_j^2}\right)$. Denote by
			\begin{equation*}
	\begin{split}
	b_j=	\sum_{g \in \ker\left(\widehat{G} \to G_j \right)} g\left( 	z\overline{a}z^*\right)= z\left( \sum_{g \in \ker\left(\widehat{G} \to G_j \right)} g\overline{a}\right) z^*,\\
	c_j =  \sum_{g \in \ker\left(\widehat{G} \to G_j \right)} g\left( 	z\overline{a}z^*\right)^2= z\left( \sum_{g \in \ker\left(\widehat{G} \to G_j \right)} g\left( \overline{a}z^*z\overline{a}\right) \right) z^*,\\
	d_j =  \sum_{g \in \ker\left(\widehat{G} \to G_j \right)} gf_\eps\left( 	z\overline{a}z^*\right)
	\end{split}
	\end{equation*}
	where $f_\eps$ is given by \eqref{f_eps_eqn}. From $\overline{   a}$ in $\SS\left(\R^{2N}_\th \right)$ it turns out
	$
	a_j=\sum_{g \in \ker\left(\widehat{G} \to G_j \right)} g\overline{a}\in C\left( \T^{2N}_{\th/m_j^2}\right) 
	$, hence $b_j = za_jz^*\in C\left( \T^{2N}_{\th/m_j^2}\right)$. If $\xi\in \H$ is eigenvector of $\overline{   a}$ such that  $\overline{   a}\xi=\left\|\overline{   a}\right\|\xi$ then $\eta=z\xi$ is an is eigenvector of $\eta=z\overline{   a}z^*$ such that  $z\overline{   a}z^*\eta=\left\|z\overline{   a}z^*\right\|\eta$. It follows that $\left( z\overline{a}z^*\right)^2  = kz\overline{a}z^*$ where $k \in \R_+$ is given by
	$$
	k = \frac{\left\|z\overline{   a}z^*\right\|^2}{\left\|z\overline{   a}z^*\right\|}.
	$$
	Hence $c_j = kb_j$ and $c_j \in \Coo\left( \T^{2N}_{\th/m_j^2}\right)$. Similarly $f_\eps\left( 	z\overline{a}z^*\right) = k'\left(  z\overline{a}z^*\right)$ where
	$$
k' = \frac{\max\left( 0, \left\|z\overline{   a}z^*\right\|-\eps\right) }{\left\|z\overline{   a}z^*\right\|}.
$$
Hence $d_j = k'b_j$ and $d_j \in \Coo\left( \T^{2N}_{\th/m_j^2}\right)$, it follows that $\overline{a}$ satisfies to the condition (b) of the Definition \ref{special_el_defn}.
Let $\eps >0$, and let $\delta > 0$ be such that
\begin{equation*}
\begin{split}
\delta^4 \left\|\sum_{g \in \widehat{G}} \overline{a}\right\|^2 + 2 \delta^2 \left\|\sum_{g \in \widehat{G}} \overline{a}\right\|\left\|\sum_{g \in \widehat{G}} z\overline{a}z^*\right\| <\frac{\eps}{4},\\
\left\|\sum_{g \in \widehat{G}} g\left( 	\overline{a}z^*z\overline{a}\right)  \right\| \delta^2   <\frac{\eps}{4},\\
\left( \left\|z\right\|+\delta\right)^2\left( \delta^2 + 2 \delta\left\|z\right\| \right) \left\|  \sum_{g \in \widehat{G}} g 	\overline{a}^2\right\|<\frac{\eps}{4}.
\end{split}
\end{equation*}
The algebra $\Coo\left( \T^{2N}_{\th}\right) $ is a dense subalgebra of $C\left( \T^{2N}_{\th}\right)$, so there is $y \in \Coo\left( \T^{2N}_{\th}\right)$ such that $\left\|z - y\right\|< \delta$.
From
			\begin{equation*}
\begin{split}
\left\| b_j-	y\left( \sum_{g \in \ker\left(\widehat{G} \to G_j \right)} g 	\overline{a}\right) y^*\right\| \le\left\| \left(z-y \right) \left( \sum_{g \in \widehat{G} } g 	\overline{a}\right) (z-y)^*\right\|<\delta^2\left\|\sum_{g \in \widehat{G}} \overline{a}\right\| 
\end{split}
\end{equation*}
and taking into account $\delta^4 \left\|\sum_{g \in \widehat{G}g} \overline{a}\right\|^2 + 2 \delta^2 \left\|\sum_{g \in \widehat{G}g} \overline{a}\right\|\left\|\sum_{g \in \widehat{G}g} z\overline{a}z^*\right\|<\frac{\eps}{4}$ one has
\begin{equation}\label{nt_eps_4_1}
\begin{split}
\left\| b^2_j-	\left( y\left( \sum_{g \in \ker\left(\widehat{G} \to G_j \right)} g 	\overline{a}\right) y^*\right)^2 \right\|<\frac{\eps}{4}.
\end{split}
\end{equation}
From
\begin{equation*}
\begin{split}
\left\| c_j -  y\left( \sum_{g \in \ker\left(\widehat{G} \to G_j \right)} g\left( 	\overline{a}z^*z\overline{a}\right)\right) y^*\right\|\le\left\| \left(z-y \right) \left( \sum_{g\in \widehat{G}} g\left( 	\overline{a}z^*z\overline{a}\right)\right) \left(z-y \right)^*\right\|<\\ < \delta^2\left\|\sum_{g\in \widehat{G}} g\left( 	\overline{a}z^*z\overline{a}\right)  \right\| 
\end{split}
\end{equation*}
and taking into account $\left\|\sum_{g \in \widehat{G}} g\left( 	\overline{a}z^*z\overline{a}\right)  \right\| \delta^2   <\frac{\eps}{4}$ one has
\begin{equation}\label{nt_eps_4_2}
\begin{split}
\left\| c_j -  y\left( \sum_{g \in \ker\left(\widehat{G} \to G_j \right)} g\left( 	\overline{a}z^*z\overline{a}\right)\right) y^*\right\|<\frac{\eps}{4}.
\end{split}
\end{equation}

From $\left\|y\right\| < \left\|z\right\|+ \delta$ it turns out

			\begin{equation*}
\begin{split}
\left\| y\left( \sum_{g \in \ker\left(\widehat{G} \to G_j \right)} g\left( 	\overline{a}z^*z\overline{a}\right)\right) y^*-y\left( \sum_{g \in \ker\left(\widehat{G} \to G_j \right)} g\left( 	\overline{a}y^*y\overline{a}\right)\right) y^*\right\|\le \\
\le\left( \left\|z\right\|+\delta\right)^2\left( \delta^2 + 2 \delta\left\|z\right\| \right) \left\|  \sum_{g \in \widehat{G}} g 	\overline{a}^2\right\| 
\end{split}
\end{equation*}
and taking into account $\left( \left\|z\right\|+\delta\right)^2\left( \delta^2 + 2 \delta\left\|z\right\| \right) \left\|  \sum_{g \in \widehat{G}} g 	\overline{a}^2\right\|<\frac{\eps}{4}$ one has
\begin{equation}\label{nt_eps_4_3}
\begin{split}
\left\| y\left( \sum_{g \in \ker\left(\widehat{G} \to G_j \right)} g\left( 	\overline{a}z^*z\overline{a}\right)\right) y^*-y\left( \sum_{g \in \ker\left(\widehat{G} \to G_j \right)} g\left( 	\overline{a}y^*y\overline{a}\right)\right) y^*\right\|<\frac{\eps}{4}.
\end{split}
\end{equation}
From $y \in \Coo\left(\T^{2N}_\th \right)$ and $\overline{a} \in \SS\left(\R^{2N}_\th \right)$ it follows that $y\overline{a}y^*\in  \SS\left(\R^{2N}_\th \right)$, hence from the Lemma \ref{nt_w_spec_lem_p} it turns out the existence of $N \in \N$ such that for any $j \ge \N$ following condition holds 
\begin{equation}\label{nt_eps_4_4}
\begin{split}
\left\| \left( \sum_{g \in \ker\left(\widehat{G} \to G_j \right)} g\left( 	y\overline{a}y^*\right)\right)^2- \sum_{g \in \ker\left(\widehat{G} \to G_j \right)}\left( g\left( 	y\overline{a}y^*\right)\right)^2\right\|<\frac{\eps}{4}.
\end{split}
\end{equation}
From \eqref{nt_eps_4_1}-\eqref{nt_eps_4_4} it follows than for any $j \ge \N$ following condition holds
		\begin{equation*}
\begin{split}
\left\| b_j^2 - c_j\right\| < \eps,
\end{split}
\end{equation*}	
i.e. $\overline{   a}$ satisfies to the condition (c) of the Definition \ref{special_el_defn}.

\end{proof}
\begin{corollary}\label{nt_norm_compl}
	If $\overline{A}_{\widehat{\pi}^\oplus}$ is the disconnected inverse noncommutative limit of $\mathfrak{S}_\th$ with respect to $\widehat{\pi}^\oplus$ then 
	$$
 \bigoplus_{g \in J} g	C_0\left( \R^{2N}_\th\right) \subset \overline{A}_{\widehat{\pi}^\oplus}
	$$
\end{corollary}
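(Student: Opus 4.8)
The plan is to show directly that $C_0\left(\R^{2N}_\th\right)$, identified with its image under the faithful representation $\widehat{\pi}^\oplus$, already sits inside $\overline{A}_{\widehat{\pi}^\oplus}$, and then to transport this by the $\widehat{G}$-action. Recall that by Definition \ref{main_defn_full} the algebra $\overline{A}_{\widehat{\pi}^\oplus}$ is the $C^*$-norm completion of the algebra generated by weakly special elements, and that since $C\left(\T^{2N}_\th\right)$, hence $\widehat{C\left(\T^{2N}_\th\right)}$, is unital, every special element $\overline{a}$ is automatically weakly special (write $\overline{a} = 1\cdot\overline{a}\cdot 1$ with $1\in\widehat{C\left(\T^{2N}_\th\right)}$). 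So it suffices to exhibit, inside the $*$-algebra generated by the special elements furnished by Lemma \ref{nt_w_spec_lem}, a norm-dense subalgebra of $C_0\left(\R^{2N}_\th\right)$.

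First I would pass to the matrix-unit picture. By Lemma \ref{lm:osc-basis} the family $\{f_{mn}\}$ is a system of matrix units ($f_{mn}\star_\th f_{kl} = \delta_{nk}f_{ml}$, $f_{mn}^* = f_{nm}$, $f_{nn}$ a projection), and by Proposition \ref{mp_fmn} together with \eqref{mp_tensor_prod} the algebra $\SS\left(\R^{2N}_\th\right)$ is the algebraic tensor product of $N$ copies of $\SS\left(\R^{2}_\th\right)$, whose operator-norm completion $C_0\left(\R^{2N}_\th\right)$ is isomorphic to $\mathcal K$ by \eqref{mp_2N_eqn}, and equals the operator-norm closure of the linear span of the elementary tensors $f_{m_1 n_1}\otimes\cdots\otimes f_{m_N n_N}$ (these finitely supported double sequences lie in $\SS\left(\R^{2N}_\th\right)$, which by Definition \ref{r_2_N_repr} has operator-norm closure $C_0\left(\R^{2N}_\th\right)$). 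For each slot $l$ and each pair $m\neq n$, both $f_{mm} = |e_m\rangle\langle e_m|$ and $f_{mm}+f_{mn}+f_{nm}+f_{nn} = |e_m+e_n\rangle\langle e_m+e_n|$ are positive finite-support rank-one operators; hence any tensor product $X_1\otimes\cdots\otimes X_N$ of such operators over the $N$ factors is a positive element of $\SS\left(\R^{2N}_\th\right)$ of exactly the form covered by Lemma \ref{nt_w_spec_lem}, so it is special and therefore belongs to $\overline{A}_{\widehat{\pi}^\oplus}$.

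Next I would argue that the $*$-algebra generated by these particular special elements contains every elementary tensor of matrix units. Since a product of elementary tensors is the elementary tensor of the slotwise products, the $N$ slots can be manipulated independently: in one slot, $\left(f_{mm}+f_{mn}+f_{nm}+f_{nn}\right) - f_{mm} - f_{nn} = f_{mn}+f_{nm}$ and then $\left(f_{mn}+f_{nm}\right)\star_\th f_{nn} = f_{mn}$, so every $f_{mn}$ is reached, and hence every $f_{m_1 n_1}\otimes\cdots\otimes f_{m_N n_N}$ lies in the generated $*$-algebra. Its linear span is precisely the finite-rank operators in the picture $\SS\left(\R^{2N}_\th\right)\subset\mathcal K$, whose operator-norm closure is $C_0\left(\R^{2N}_\th\right)$. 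Consequently $C_0\left(\R^{2N}_\th\right)\subset \overline{A}_{\widehat{\pi}^\oplus}$.

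Finally I would pass to the translates. By Corollary \ref{disconnect_group_action_cor} there is a natural action of $\widehat{G}$ on $\overline{A}_{\widehat{\pi}^\oplus}$ by $*$-automorphisms, so $g\,C_0\left(\R^{2N}_\th\right)\subset \overline{A}_{\widehat{\pi}^\oplus}$ for every $g\in\widehat{G}$, in particular for every $g\in J$. For distinct $g_1,g_2\in J$ the subspaces $g_1 L^2\left(\R^{2N}\right)$ and $g_2 L^2\left(\R^{2N}\right)$ of $\bigoplus_{g\in J} g L^2\left(\R^{2N}\right)$ are mutually orthogonal, so operators in $g_1 C_0\left(\R^{2N}_\th\right)$ and $g_2 C_0\left(\R^{2N}_\th\right)$ act on orthogonal summands; hence $\bigoplus_{g\in J} g\,C_0\left(\R^{2N}_\th\right)$ is a genuine internal algebraic direct sum of subalgebras of $B\!\left(\bigoplus_{g\in J} g L^2\left(\R^{2N}\right)\right)$, each summand contained in $\overline{A}_{\widehat{\pi}^\oplus}$, and therefore $\bigoplus_{g\in J} g\,C_0\left(\R^{2N}_\th\right)\subset\overline{A}_{\widehat{\pi}^\oplus}$. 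The main obstacle is the combinatorial step of getting from the restricted supply of special elements (positive, rank-one, elementary-tensor) to all matrix units and hence to a norm-dense subset of $C_0\left(\R^{2N}_\th\right)$; once that density is established, the remaining work is routine bookkeeping with the $\widehat{G}$-action and the orthogonality of the summands.
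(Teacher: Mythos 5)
Your proof is correct and follows essentially the same route as the paper: obtain the elementary tensors of the oscillator basis elements $f_{mn}$ inside $\overline{A}_{\widehat{\pi}^\oplus}$ via Lemma \ref{nt_w_spec_lem}, use density of their span in $C_0\left(\R^{2N}_\th\right)\cong\mathcal K$, and then invoke Corollary \ref{disconnect_group_action_cor} together with the orthogonality of the summands $gL^2\left(\R^{2N}\right)$. The one place where you go beyond the paper is worth keeping: the paper asserts directly that Lemma \ref{nt_w_spec_lem} yields \emph{all} elementary tensors $f_{j_1k_1}\otimes\dots\otimes f_{j_Nk_N}$, but that lemma only certifies \emph{positive} rank-one elementary tensors as special, and $f_{jk}$ with $j\neq k$ is nilpotent, hence not positive. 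Your combinatorial step --- observing that $f_{mm}$ and $f_{mm}+f_{mn}+f_{nm}+f_{nn}$ are positive rank-one operators of the required form, so that $f_{mn}=\left(\left(f_{mm}+f_{mn}+f_{nm}+f_{nn}\right)-f_{mm}-f_{nn}\right)\star_\th f_{nn}$ lies in the $*$-algebra generated by special elements --- closes this gap, using that $\overline{A}_{\widehat{\pi}^\oplus}$ is by Definition \ref{main_defn_full} the completion of the \emph{algebra} generated by weakly special elements (and that special implies weakly special here since $\widehat{C\left(\T^{2N}_\th\right)}$ is unital). So your argument is a slightly more careful version of the paper's, not a different one.
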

\begin{proof}
	From the Lemma \ref{nt_w_spec_lem} it turns out that $\overline{A}_{\widehat{\pi}^\oplus}$ contains all elements 
	\be\label{mp_fjkel_eqn}
f_{j_1k_1}\otimes \dots \otimes f_{j_Nk_N} \in 	\underbrace{\SS\left(\R^{2}_\th\right)\otimes\dots\otimes\SS\left(\R^{2}_\th\right)}_{N-\mathrm{times}} \subset \SS\left(\R^{2N}_\th\right)
	\ee
where $f_{j_lk_l}$ ($l = 1,\dots, N$) are  given by the Lemma \ref{lm:osc-basis}. However the linear span of given by \eqref{mp_fjkel_eqn} elements is dense in  $C_0\left( \R^{2N}_\th\right)$, hence  $C_0\left( \R^{2N}_\th\right) \subset \overline{A}_{\widehat{\pi}^\oplus}$. From the Corollary \ref{disconnect_group_action_cor} it turns out
		$$
\bigoplus_{g \in J} g	C_0\left( \R^{2N}_\th\right)\subset \overline{A}_{\widehat{\pi}^\oplus}	.	
		$$
\end{proof}
\begin{empt}
	From the Lemma \ref{nt_l_2_est_lem} it turns out that $L^2 \left(\R^{2N}_\th\right) \subset B\left( L^2\left(\R^{2N}\right)\right)$ is a Hilbert space with the norm $\left\|\cdot\right\|_2$ given by \eqref{nt_l2_norm_eqn}. One can construct the Hilbert direct sum 
	\begin{equation*}
	\begin{split}
	X=	\bigoplus_{g \in J} g L^2 \left(\R^{2N}_\th\right) \subset \prod_{g \in J} B\left(g L^2\left(\R^{2N}\right)\right), \\
	X = \left\{\overline{x} \in \prod_{g \in J} B\left( g L^2\left(\R^{2N}\right)\right) ~|~ \left\|\left(...,x_{g_k},...\right)  \right\|_2= \sqrt{\sum_{g \in J}\left\|x_g\right\|^2_2} < \infty\right\}.
	\end{split}
	\end{equation*}
	If $\overline{a} \in X$ is a special element and $b = \sum_{g \in \widehat{G}}g \overline{a}^2 \in C\left(\T^{2N}_\th \right) $
	then 
	$$
	\tau\left(b\right) = \int_{\R^{2N}_g} \overline{a}^2 dx = \left\|\overline{a}\right\|^2_2
	$$ 
	where  $\tau$ is given by \eqref{nt_state_eqn}, or \eqref{nt_varphi_inf_eqn}. On the other hand  $\left|\tau\left(b \right) \right| < \infty$ for any $b \in C\left(\T^{2N}_\th \right)$ it follows that
	$\left\|\overline{a}\right\|^2_2 < \infty$ for a special element $\overline{a}$.
	In result we have the following lemma.
\end{empt}

\begin{lem}\label{nt_l2_spec_lem}
	The special element $\overline{a}\in \varinjlim C\left(\mathbb{T}^{2N}_{\th/m_j^2}\right) $ lies in  $X=\bigoplus_{g \in J} L^2\left( \R^{2N}_\th\right)$.  Moreover if $b = \sum_{g \in \widehat G}g\left( \overline{a}^2\right) \in C\left(\mathbb{T}^{2N}_\th\right)$ then
	$$
	\left\| \overline{a}\right\|^2_2 = \tau\left(b\right) < \infty
	$$
	where $\tau$ is the tracial state on $C\left(\mathbb{T}^{2N}_{\th}\right)$ given by \eqref{nt_state_eqn}, \eqref{nt_state_integ_eqn} and $\left\|\cdot\right\|_2$ is given by \eqref{nt_l2_norm_eqn}.
\end{lem}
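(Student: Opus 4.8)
The plan is to obtain both assertions at once from the definition of a special element together with the tracial property of the Moyal product. Recall first that a special element for $\mathfrak{S}_\th$ with respect to $\widehat{\pi}^\oplus$ is, by Definition \ref{special_el_defn}, a \emph{positive} operator on the Hilbert space $\bigoplus_{g\in J}gL^2\left(\R^{2N}\right)$; moreover, since every $a\in\widehat{C\left(\T^{2N}_\th\right)}$ acts on the summand $gL^2\left(\R^{2N}\right)$ through $g\,\widehat{\pi}\!\left(g^{-1}a\right)g^{-1}$, the algebra $\overline{A}_{\widehat{\pi}^\oplus}$ — and hence $\overline{a}$ — is block–diagonal for this decomposition, so I may write $\overline{a}=\bigoplus_{g\in J}\overline{a}^{(g)}$ with each $\overline{a}^{(g)}$ a positive operator on $L^2\left(\R^{2N}\right)$.

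First I would place $b$ inside $C\left(\T^{2N}_\th\right)$. Since $C\left(\T^{2N}_\th\right)=A_0$ is unital, applying condition (b) of Definition \ref{special_el_defn} with $n=0$ and the choice $z=1_{A_0}$ shows that the series $b=\sum_{g\in\widehat{G}}g\left(\overline{a}^{\,2}\right)$ is strongly convergent and that $b\in A_0=C\left(\T^{2N}_\th\right)$ — this is precisely the element denoted $c_0$ there for $z=1$ (and likewise $a_0=\sum_{g\in\widehat{G}}g\,\overline{a}\in A_0$). This already yields the first half of the statement, so it remains to compute $\tau(b)$.

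Next, since $b\in A_0=C\left(\T^{2N}_\th\right)$, under $\widehat{\pi}$ restricted to the base summand $L^2\left(\R^{2N}\right)$ the operator $b$ is Moyal multiplication by a smooth $\Z^{2N}$-periodic function $\widetilde{b}$. Unwinding the action of $\widehat{G}$ on $\bigoplus_{g\in J}gL^2\left(\R^{2N}\right)$ through the maps $\Phi_J,\Phi_\Z$ of the construction of $\widehat{\pi}^\oplus$, one checks that $\widetilde{b}$ is the $\Z^{2N}$-periodization, in the sense of \eqref{mp_sooth_sum_eqn}, of $\bigoplus_{g\in J}\left(\overline{a}^{(g)}\right)^2$; hence by \eqref{fourier_from_r_to_z_eqn} its zeroth Fourier coefficient equals $\sum_{g\in J}\int_{\R^{2N}}\left(\overline{a}^{(g)}\right)^2dx$, and by the definition \eqref{nt_state_eqn}, \eqref{nt_state_integ_eqn} of $\tau$ this coefficient is $\tau(b)$. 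Finally, each $\overline{a}^{(g)}$ is Moyal multiplication by a positive symbol, so the tracial property \eqref{nt_tracial_prop} gives $\int_{\R^{2N}}\left(\overline{a}^{(g)}\right)^2dx=\int_{\R^{2N}}\overline{a}^{(g)}\star_\th\overline{a}^{(g)}\,dx=\int_{\R^{2N}}\left|\overline{a}^{(g)}\right|^2dx=\left\|\overline{a}^{(g)}\right\|_2^{\,2}$, the last step by \eqref{nt_l2_norm_eqn}. Therefore $\tau(b)=\sum_{g\in J}\left\|\overline{a}^{(g)}\right\|_2^{\,2}=\left\|\overline{a}\right\|_2^{\,2}$, the norm of $X=\bigoplus_{g\in J}L^2\left(\R^{2N}_\th\right)$. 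Since $\tau$ is a bounded state and $b\in C\left(\T^{2N}_\th\right)$ is positive, $\tau(b)<\infty$, whence $\left\|\overline{a}\right\|_2<\infty$ and $\overline{a}\in X$; this proves both claims.

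The step I expect to be most delicate is the periodization identification: one must match the base-summand symbol of $\sum_{g\in\widehat{G}}g\left(\overline{a}^{\,2}\right)$ exactly with the periodization of $\bigoplus_{g\in J}\left(\overline{a}^{(g)}\right)^2$, which requires careful bookkeeping of how $\widehat{G}$ permutes and translates the summands $gL^2\left(\R^{2N}\right)$ via $\Phi_J$ and $\Phi_\Z$. A secondary point needing care is the identity $\int_{\R^{2N}}\overline{a}^{(g)}\star_\th\overline{a}^{(g)}\,dx=\left\|\overline{a}^{(g)}\right\|_2^{\,2}$: a priori $\overline{a}^{(g)}$ is only a bounded positive operator (Moyal multiplication by an element of $C_b\left(\R^{2N}_\th\right)$), whereas \eqref{nt_tracial_prop} is stated for Schwartz functions, so one should first write $\overline{a}^{\,2}=\inf_j a_j$ as in Lemma \ref{stong_conv_inf_lem}, apply the known relation to the Schwartz-type partial periodizations, and pass to the limit by monotone convergence — which simultaneously re-establishes that each $\overline{a}^{(g)}$ is square-integrable.
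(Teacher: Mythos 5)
Your argument is correct and follows essentially the same route as the paper: the paper's own justification (in the paragraph preceding the lemma) likewise obtains $b=\sum_{g\in\widehat G}g\left(\overline{a}^{\,2}\right)\in C\left(\T^{2N}_\th\right)$ from condition (b) of Definition \ref{special_el_defn} with $z=1$, identifies $\tau(b)=\int\overline{a}^{\,2}\,dx=\left\|\overline{a}\right\|_2^2$ via the tracial property, and concludes from $\left|\tau(b)\right|<\infty$. You merely make explicit two points the paper leaves implicit — the block-diagonality of $\widehat{A}''$ with respect to $\bigoplus_{g\in J}gL^2\left(\R^{2N}\right)$ and the approximation of $\overline{a}^{\,2}$ by its Schwartz-type partial periodizations before invoking \eqref{nt_tracial_prop} — both of which are sound.
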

\begin{rem}\label{nt_sup_norm}
	From $L^2 \left(\R^{2N}_\th\right) \subset C_0 \left(\R^{2N}_\th\right)$ it follows that any special element in $B\left( L^2 \left(\R^{2N}_\th\right)\right)$ lies in  $C_0 \left(\R^{2N}_\th\right)$. 
\end{rem}

%\begin{proof}
%	From \eqref{nt_state_integ_eqn} it follows that
%	\begin{equation}\tag{*}
%	\tau\left(b_n \right) = \frac{1}{\left(2\pi m_j \right)^{2N}}  \int_{\mathcal X_n} b_n dx_n   < \infty
%	\end{equation}
%	where $dx_n$ is associated with Lebesgue measure on $\R^{2N}$ and a homeomorphism $\mathcal X_n \approx \R^{2N}/2\pi m_j\Z^{2N}$. 
%	From \eqref{nt_tracial_prop} and (*) it follows that
%	\begin{equation*}
%	\begin{split}
%	\left( \left\| \widetilde{a}\right\|_2\right)^2 =\left\| \widetilde{a}\widetilde{a}^*\right\|_2=\left\| \widetilde{a}\widetilde{a}\right\|_2 = \int_{\R^{2N}}\widetilde{a}\left(x \right) \widetilde{a}\left( x\right) ~ dx=\\= \int_{\R^{2N}}\left( \widetilde{a}\star_\th\widetilde{a} \right)\left(x \right)  dx  
%	=\frac{1}{\left(2\pi m_j \right)^{2N}}  \int_{\mathcal X_n} b_n~ dx_n =\tau\left(b_n \right) < \infty 
%	\end{split}
%	\end{equation*}

%	whence $\left\| \widetilde{a}\right\|_2 <  \infty$, therefore $\widetilde{a}\in L^2\left(\R^{2N}_\th \right)$. 
%\end{proof}

\begin{empt}\label{nt_c_0}
	Let $\overline{A}_{\widehat{\pi}^\oplus}$ be the disconnected inverse noncommutative limit of $\mathfrak{S}_\th$ with respect to $\widehat{\pi}^\oplus$ of $\mathfrak{S}_\th$. From the Corollary \ref{nt_norm_compl} it follows that
	$$
	C_0\left( \R^{2N}_\th\right) \subset \overline{A}_{\widehat{\pi}^\oplus} \bigcap B\left(L^2\left( R^{2N}\right)  \right).
	$$
	 
	From the Remark \ref{nt_sup_norm} it follows that	$$
	\overline{A}_{\widehat{\pi}^\oplus} \bigcap B\left(L^2\left( R^{2N}\right)  \right) \subset C_0\left( \R^{2N}_\th\right).
	$$
		In result we have
	\begin{equation}\label{nt_c_0_eqn}
	\overline{A}_{\widehat{\pi}^\oplus} \bigcap B\left(L^2\left( R^{2N}\right)  \right) = C_0\left( \R^{2N}_\th\right).
	\end{equation}
	Similarly for any $g \in J$ on has
	$$
	\overline{A}_{\widehat{\pi}^\oplus} \bigcap B\left(gL^2\left( R^{2N}\right)  \right) = gC_0\left( \R^{2N}_\th\right).
	$$	
	The algebra $ C_0\left( \R^{2N}_\th\right)$ is irreducible. Clearly $ C_0\left( \R^{2N}_\th\right) \subset \overline{A}_{\widehat{\pi}^\oplus}$ is a maximal irreducible subalgebra.
\end{empt}

\begin{thm}\label{nt_inf_cov_thm} Following conditions hold:
	\begin{enumerate}
		\item[(i)] The representation $\widehat{\pi}^\oplus$ is good,
		\item[(ii)] 
		\begin{equation*}	\begin{split}
		\varprojlim_{\widehat{\pi}^\oplus} \downarrow \mathfrak{S}_\th = C_0\left(\R^{2N}_\th\right); \\
		G\left(\varprojlim_{\widehat{\pi}^\oplus} \downarrow \mathfrak{S}_\th~|~ C\left(\mathbb{T}^{2N}_\th \right)\right)  = \Z^{2N},
		\end{split}
		\end{equation*}
		\item[(iii)] The triple $\left(C\left(\mathbb{T}^{2N}_\th \right), C_0\left(\R^{2N}_\th\right), \Z^{2N} \right)$ is an  infinite noncommutative covering of $\mathfrak{S}_\th$ with respect to $\widehat{\pi}^\oplus$.
	\end{enumerate}

\end{thm}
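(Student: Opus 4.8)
The plan is to assemble the lemmas and corollaries just proved. First I would record the structure of the disconnected inverse noncommutative limit: by Corollary~\ref{nt_norm_compl} together with the discussion in~\ref{nt_c_0}, the disconnected inverse noncommutative limit $\overline{A}_{\widehat{\pi}^\oplus}$ of $\mathfrak{S}_\th$ with respect to $\widehat{\pi}^\oplus$ satisfies $\overline{A}_{\widehat{\pi}^\oplus}\bigcap B\left(gL^2\left(\R^{2N}\right)\right)=g\,C_0\left(\R^{2N}_\th\right)$ for every $g\in J$, the algebra $C_0\left(\R^{2N}_\th\right)$ is a maximal irreducible subalgebra of $\overline{A}_{\widehat{\pi}^\oplus}$ --- hence may be chosen as the connected component $\widetilde{A}_{\widehat{\pi}^\oplus}$ of $\mathfrak{S}_\th$ with respect to $\widehat{\pi}^\oplus$ --- and the algebraic direct sum $\bigoplus_{g\in J} g\,C_0\left(\R^{2N}_\th\right)$ is a dense subalgebra of $\overline{A}_{\widehat{\pi}^\oplus}$.

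Next I would identify the invariant group $G_{\widehat{\pi}^\oplus}$. The inclusion $\Z^{2N}\hookto\widehat{G}$ furnished by the equivariant-representation construction acts on $\widetilde{A}_{\widehat{\pi}^\oplus}=C_0\left(\R^{2N}_\th\right)\subset B\left(L^2\left(\R^{2N}\right)\right)$ by translations, so $\Z^{2N}\,C_0\left(\R^{2N}_\th\right)=C_0\left(\R^{2N}_\th\right)$; conversely, for $g\in\widehat{G}\backslash\Z^{2N}$ the automorphism $g$ carries $C_0\left(\R^{2N}_\th\right)$ onto $g\,C_0\left(\R^{2N}_\th\right)$, which acts on the summand $gL^2\left(\R^{2N}\right)$ orthogonal to $L^2\left(\R^{2N}\right)$, so $g\,C_0\left(\R^{2N}_\th\right)\neq C_0\left(\R^{2N}_\th\right)$. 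Hence $\Z^{2N}$ is the maximal subgroup of $\widehat{G}$ leaving $C_0\left(\R^{2N}_\th\right)$ invariant, i.e. $G_{\widehat{\pi}^\oplus}=\Z^{2N}$, and the set $J$ fixed in the construction is precisely a set of representatives of $\widehat{G}/G_{\widehat{\pi}^\oplus}$.

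Then I would verify conditions (a)--(c) of Definition~\ref{good_seq_defn}. For (a): by~\eqref{mp_2N_eqn} one has $C_0\left(\R^{2N}_\th\right)\approx\mathcal K$, so $C_0\left(\R^{2N}_\th\right)$ is an ideal of $B\left(L^2\left(\R^{2N}\right)\right)=M\left(C_0\left(\R^{2N}_\th\right)\right)$; since $\widehat{\pi}$, the restriction of $\widehat{\pi}^\oplus$ to the summand $L^2\left(\R^{2N}\right)\subset\bigoplus_{g\in J}gL^2\left(\R^{2N}\right)$, is a faithful representation of $\varinjlim C\left(\mathbb{T}^{2N}_{\th/m_j^2}\right)$ by bounded operators, it carries $\varinjlim C\left(\mathbb{T}^{2N}_{\th/m_j^2}\right)$ injectively into $M\left(C_0\left(\R^{2N}_\th\right)\right)$, which is the condition required. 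Condition (b) is the density statement recorded above. For (c): the restriction of $h_n\colon\widehat{G}\to G\left(C\left(\mathbb{T}^{2N}_{\th/m_n^2}\right)~|~C\left(\mathbb{T}^{2N}_\th\right)\right)$ to $G_{\widehat{\pi}^\oplus}=\Z^{2N}$ is, by the commutative diagram of the equivariant-representation construction, the canonical quotient $\Z^{2N}\to\Z^{2N}_{m_n}\approx G\left(C\left(\mathbb{T}^{2N}_{\th/m_n^2}\right)~|~C\left(\mathbb{T}^{2N}_\th\right)\right)$, which is surjective. This establishes~(i).

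Finally, (ii) and (iii) are immediate consequences of Definition~\ref{main_defn}: once $\widehat{\pi}^\oplus$ is known to be good, $\varprojlim_{\widehat{\pi}^\oplus}\downarrow\mathfrak{S}_\th$ is by definition the connected component $\widetilde{A}_{\widehat{\pi}^\oplus}=C_0\left(\R^{2N}_\th\right)$ with covering transformation group $G_{\widehat{\pi}^\oplus}=\Z^{2N}$, and the triple $\left(C\left(\mathbb{T}^{2N}_\th\right),C_0\left(\R^{2N}_\th\right),\Z^{2N}\right)$ is then the infinite noncommutative covering of $\mathfrak{S}_\th$ with respect to $\widehat{\pi}^\oplus$. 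The genuinely substantial input has already been supplied by Lemmas~\ref{mp_strong_lem}, \ref{nt_long_delta_lem}, \ref{nt_w_spec_lem_p}, \ref{nt_w_spec_lem} and Corollaries~\ref{mp_strong_cor}, \ref{nt_norm_compl}; the only delicate point remaining inside the present argument is pinning $G_{\widehat{\pi}^\oplus}$ down to be exactly $\Z^{2N}$ and not larger, for which the mutual orthogonality of the Hilbert-space summands $gL^2\left(\R^{2N}\right)$ is the key.
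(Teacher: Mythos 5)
Your proposal is correct and follows essentially the same route as the paper: decompose $\overline{A}_{\widehat{\pi}^\oplus}$ along the orthogonal summands $gL^2\left(\R^{2N}\right)$, identify each piece with $g\,C_0\left(\R^{2N}_\th\right)$ via \eqref{nt_c_0_eqn}, verify (a)--(c) of Definition~\ref{good_seq_defn}, and read off (ii) and (iii) from Definition~\ref{main_defn}. You in fact supply slightly more detail than the paper does on two points it leaves implicit --- the maximality of $\Z^{2N}$ as the invariant group (via orthogonality of the summands) and the injectivity into $M\left(C_0\left(\R^{2N}_\th\right)\right)$ (via $C_0\left(\R^{2N}_\th\right)\approx\mathcal K$) --- which is a welcome tightening rather than a deviation.
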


\begin{proof} (i)
	There is the natural inclusion $\overline{A}_{\widehat{\pi}^\oplus} \hookto \prod_{g \in J} B\left(g L^2\left(\R^{2N}\right)\right)$ where $\prod$ means the Cartesian product of algebras. This inclusion induces the decomposition
	$$
	\overline{A}_{\widehat{\pi}^\oplus} \hookto  \prod_{g \in J}\left( 	\overline{A}_{\widehat{\pi}^\oplus}\bigcap B\left(g L^2\left(\R^{2N}\right)\right) \right). 
	$$
	From \eqref{nt_c_0_eqn} it turns out $\overline{A}_{\widehat{\pi}^\oplus}\bigcap B\left(g L^2\left(\R^{2N}\right)\right) = gC_0\left( \R^{2N}_\th\right)$, hence there is the inclusion
	$$
	\overline{A}_{\widehat{\pi}^\oplus} \hookto  \prod_{g \in J}	 gC_0\left( \R^{2N}_\th\right).
	$$
	
	From the above equation it follows that $C_0\left( \R^{2N}_\th\right)\subset \overline{A}_{\widehat{\pi}^\oplus                                                    }$ is a maximal irreducible subalgebra.
	From the Lemma \ref{nt_l2_spec_lem} it turns out that  algebraic direct sum $\bigoplus_{g \in J}	 gC_0\left( \R^{2N}_\th\right)$ is a dense subalgebra of $\overline{A}_{\widehat{\pi}^\oplus}$,
	i.e. the condition (b) of the Definition \ref{good_seq_defn} holds.
	Clearly the map $\widehat{C\left(\T^{2N}_\th \right)} \to M\left(C_0\left( \R^{2N}_\th\right) \right)$ is injective, i.e. the condition (a) of the Definition \ref{good_seq_defn} holds. If $G \subset \widehat{G}$ is the maximal group such that $GC_0\left( \R^{2N}_\th\right) = C_0\left( \R^{2N}_\th\right)$  then $G = \Z^{2N}$. The homomorphism $\Z^{2N} \to \Z^{2N}_{m_j}$ is surjective, it turns out that the condition (c) of the Definition \ref{good_seq_defn} holds. 
	\newline
	(ii) and (iii) Follows from the proof of (i).
\end{proof}

	\section{Isospectral deformations and their coverings}

\paragraph*{}A very general construction of isospectral
deformations
of noncommutative geometries is described in \cite{connes_landi:isospectral}. The construction
implies in particular that any
  compact spin-manifold $M$ whose isometry group has rank
$\geq 2$ admits a
natural one-parameter isospectral deformation to noncommutative geometries
$M_\theta$.
We let $(\Coo\left(M \right)  , \H = L^2\left(M,S \right)  , \slashed D)$ be the canonical spectral triple associated with a
compact spin-manifold $M$. We recall that $\mathcal{A} = C^\infty(M)$ is
the algebra of smooth
functions on $M$, $S$ is the spinor bundle and $\slashed D$
is the Dirac operator.
Let us assume that the group $\mathrm{Isom}(M)$ of isometries of $M$ has rank
$r\geq2$.
Then, we have an inclusion
\begin{equation*}
\mathbb{T}^2 \subset \mathrm{Isom}(M) \, ,
\end{equation*}
with $\mathbb{T}^2 = \mathbb{R}^2 / 2 \pi \mathbb{Z}^2$ the usual torus, and we let $U(s) , s \in
\mathbb{T}^2$, be
the corresponding unitary operators in $\H = L^2(M,S)$ so that by construction
\begin{equation*}
U(s) \, \slashed D = \slashed D \, U(s).
\end{equation*}
Also,
\begin{equation}\label{isospectral_sym_eqn}
U(s) \, a \, U(s)^{-1} = \alpha_s(a) \, , \, \, \, \forall \, a \in \mathcal{A} \, ,
\end{equation}
where $\alpha_s \in \mathrm{Aut}(\mathcal{A})$ is the action by isometries on the
algebra of functions on
$M$.

\noindent
We let $p = (p_1, p_2)$ be the generator of the two-parameters group $U(s)$
so that
\begin{equation*}
U(s) = \exp(i(s_1 p_1 + s_2 p_2)) \, .
\end{equation*}
The operators $p_1$ and $p_2$ commute with $D$.
Both $p_1$ and $p_2$
have integral spectrum,
\begin{equation*}
\mathrm{Spec}(p_j) \subset \mathbb{Z} \, , \, \, j = 1, 2 \, .
\end{equation*}

\noindent
One defines a bigrading of the algebra of bounded operators in $\H$ with the
operator $T$ declared to be of bidegree
$(n_1,n_2)$ when,
\begin{equation*}
\alpha_s(T) = \exp(i(s_1 n_1 + s_2 n_2)) \, T \, , \, \, \, \forall \, s \in
\mathbb{T}^2 \, ,
\end{equation*}
where $\alpha_s(T) = U(s) \, T \, U(s)^{-1}$ as in \eqref{isospectral_sym_eqn}.
\paragraph{}
Any operator $T$ of class $C^\infty$ relative to $\alpha_s$ (i. e. such that
the map $s \rightarrow \alpha_s(T) $ is of class $C^\infty$ for the
norm topology) can be uniquely
written as a doubly infinite
norm convergent sum of homogeneous elements,
\begin{equation*}
T = \sum_{n_1,n_2} \, \widehat{T}_{n_1,n_2} \, ,
\end{equation*}
with $\widehat{T}_{n_1,n_2}$ of bidegree $(n_1,n_2)$ and where the sequence
of norms $||
\widehat{T}_{n_1,n_2} ||$ is of
rapid decay in $(n_1,n_2)$.
Let $\lambda = \exp(2 \pi i \theta)$. For any operator $T$ in $\H$ of
class $C^\infty$ we define
its left twist $l(T)$ by
\begin{equation}\label{l_defn}
l(T) = \sum_{n_1,n_2} \, \widehat{T}_{n_1,n_2} \, \lambda^{n_2 p_1} \, ,
\end{equation}
and its right twist $r(T)$ by
\begin{equation*}
r(T) = \sum_{n_1,n_2} \, \widehat{T}_{n_1,n_2} \, \lambda^{n_1 p_2} \, ,
\end{equation*}
Since $|\lambda | = 1$ and $p_1$, $p_2$ are self-adjoint, both series
converge in norm. Denote by $\Coo\left(M \right)_{n_1, n_2} \subset \Coo\left(M \right) $ the $\C$-linear subspace of elements of bidegree $\left( n_1, n_2\right) $. \\
One has,
\begin{lem}\label{conn_landi_iso_lem}\cite{connes_landi:isospectral}
\begin{itemize}
\item[{\rm a)}] Let $x$ be a homogeneous operator of bidegree $(n_1,n_2)$
and $y$ be
a homogeneous operator of  bidegree $(n'_1,n'_2)$. Then,
\begin{equation}
l(x) \, r(y) \, - \,  r(y) \, l(x) = (x \, y \, - y \, x) \,
\lambda^{n'_1 n_2} \lambda^{n_2 p_1 + n'_1 p_2}
\end{equation}
In particular, $[l(x), r(y)] = 0$ if $[x, y] = 0$.
\item[{\rm b)}] Let $x$ and $y$ be homogeneous operators as before and
define
\begin{equation}
x * y = \lambda^{n'_1 n_2} \, x y \, ; \label{star}
\end{equation}
then $l(x) l(y) = l(x * y)$.
\end{itemize}
\end{lem}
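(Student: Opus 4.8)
The plan is to verify both statements a) and b) by direct calculation with the homogeneous decompositions, exploiting the defining property of the twist maps $l$ and $r$ and the fact that bidegree is additive under operator multiplication. I would first record the elementary fact that if $x$ is homogeneous of bidegree $(n_1,n_2)$ and $y$ is homogeneous of bidegree $(n_1',n_2')$, then $xy$ is homogeneous of bidegree $(n_1+n_1',n_2+n_2')$; this is immediate from $\alpha_s(xy) = \alpha_s(x)\alpha_s(y)$ and multiplicativity of $\exp$. I would also record that $\lambda^{n p_j}$ commutes with any homogeneous operator up to a scalar: precisely, for $z$ of bidegree $(m_1,m_2)$ one has $\lambda^{n p_1} z = \lambda^{n m_1} z \lambda^{n p_1}$ and similarly for $p_2$, which follows because $\alpha_s(p_j)=p_j$ and the bigrading is implemented by conjugation by $U(s)$, so conjugating $\lambda^{np_1}=\exp(2\pi i\theta n p_1)$ past $z$ picks up exactly the phase recording the $p_1$-degree $m_1$ of $z$. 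These two commutation identities are the whole engine.

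For part b), since $x,y$ are homogeneous, $l(x) = x\,\lambda^{n_2 p_1}$ and $l(y) = y\,\lambda^{n_2' p_1}$ directly from \eqref{l_defn} (the sum collapses to one term). Then $l(x)l(y) = x\,\lambda^{n_2 p_1}\, y\,\lambda^{n_2' p_1}$; moving $\lambda^{n_2 p_1}$ past the homogeneous operator $y$ (which has $p_1$-degree $n_1'$) produces the scalar $\lambda^{n_1' n_2}$, giving $l(x)l(y) = \lambda^{n_1' n_2}\, x y\,\lambda^{(n_2+n_2')p_1}$. On the other hand $x*y = \lambda^{n_1' n_2} xy$ is homogeneous of bidegree $(n_1+n_1', n_2+n_2')$, so $l(x*y) = \lambda^{n_1'n_2}\,xy\,\lambda^{(n_2+n_2')p_1}$, and the two agree.

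For part a), I would compute $l(x)r(y) = x\,\lambda^{n_2 p_1}\, y\,\lambda^{n_1' p_2}$ and $r(y)l(x) = y\,\lambda^{n_1' p_2}\, x\,\lambda^{n_2 p_1}$. In the first, commuting $\lambda^{n_2 p_1}$ past $y$ gives $\lambda^{n_1' n_2}\, xy\,\lambda^{n_2 p_1 + n_1' p_2}$ (the two exponentials in $p_1$ and $p_2$ combine since $p_1,p_2$ commute). In the second, commuting $\lambda^{n_1' p_2}$ past $x$ (which has $p_2$-degree $n_2$) gives $\lambda^{n_1' n_2}\, yx\,\lambda^{n_2 p_1 + n_1' p_2}$. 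Subtracting yields $l(x)r(y) - r(y)l(x) = (xy - yx)\,\lambda^{n_1' n_2}\lambda^{n_2 p_1 + n_1' p_2}$, which is exactly the claimed formula; the special case $[x,y]=0 \Rightarrow [l(x),r(y)]=0$ is then obvious, and by norm-convergence of the homogeneous expansions it extends to all $C^\infty$ operators.

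The only real subtlety — the step I would be most careful about — is the scalar commutation rule $\lambda^{n p_1} z = \lambda^{n m_1} z\,\lambda^{n p_1}$ for $z$ homogeneous of $p_1$-degree $m_1$. One must justify that $p_1$ having integral spectrum makes $\lambda^{np_1}=\exp(2\pi i \theta n p_1)$ a well-defined bounded (unitary, since $|\lambda|=1$) operator, and that $U(s)\,\lambda^{np_1}\,U(s)^{-1} = \lambda^{np_1}$ while $U(s)zU(s)^{-1}=\exp(i(s_1 m_1+s_2 m_2))z$; combining these via the fact that $\lambda^{np_1}$ is a (norm-limit of a) function of $p_1$ and $U(s)$ is a function of $p_1,p_2$, one gets that $\lambda^{np_1}z\lambda^{-np_1}$ is again homogeneous of the same bidegree and differs from $z$ by the scalar obtained by evaluating $\exp(2\pi i\theta n\,\cdot)$ against the $p_1$-degree, i.e. $\lambda^{nm_1}$. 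Everything else is bookkeeping with the rapidly decaying homogeneous expansion and the continuity of $l$ and $r$ for the norm topology, which is already built into the setup preceding the lemma.
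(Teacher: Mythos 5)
Your computation is correct: the paper itself states this lemma only as a citation of Connes--Landi and offers no proof, and your direct verification — reducing everything to the commutation rule $\lambda^{np_1}z=\lambda^{nm_1}z\,\lambda^{np_1}$ for $z$ of $p_1$-degree $m_1$ (obtained from $U(s)zU(s)^{-1}=e^{i(s_1m_1+s_2m_2)}z$ at $s=(2\pi\theta n,0)$, using that $p_1,p_2$ commute with each other and have integral spectrum) — is exactly the standard argument from that reference. Both parts a) and b) then follow by the bookkeeping you describe, so there is nothing to add.
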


\noindent
The product $*$ defined in (\ref{star}) extends by linearity
to an associative product on the linear space of smooth operators and could
be called a $*$-product.
One could also define a deformed `right product'. If $x$ is homogeneous of
bidegree
$(n_1,n_2)$ and $y$ is homogeneous of bidegree $(n'_1,n'_2)$ the product is
defined by
\begin{equation*}
x *_{r} y = \lambda^{n_1 n'_2} \, x y \, .
\end{equation*}
Then, along the lines of the previous lemma one shows that $r(x) r(y) = r(x
*_{r} y)$.

We can now define a new spectral triple where both $\H$ and the operator
$D$ are unchanged while the
algebra $\Coo\left(M \right)$  is modified to $l(\Coo\left(M \right))$ . By
Lemma~{\ref{conn_landi_iso_lem}}~b) one checks that  $l\left( \Coo\left(M \right)\right) $ is still an algebra. Since $D$ is of bidegree $(0,0)$ one has,
\begin{equation*}
[D, \, l(a) ] = l([D, \, a]) \label{bound}
\end{equation*}
which is enough to check that $[D, x]$ is bounded for any $x \in l(\mathcal{A})$. There is a spectral triple $\left(l\left( \Coo\left(M \right)\right) , \H, D\right)$.
\paragraph{} Denote by $C\left(M_\th \right)$ the operator norm completion (equivalently $C^*$-norm completion) of $l\left(\Coo\left( M\right)  \right)  $, and denote by $\rho: C\left(M\right) \to L^2\left( M, S\right) $ (resp. $\pi_\th: C\left(M_\th\right) \to B\left( L^2\left( M, S\right)\right) $ ) natural representations.

\subsection{Finite-fold coverings}\label{isosectral_fin_cov}
\paragraph{}
Let $M$ be a spin - manifold with the smooth action of $\T^2$.
Let $\pi:\widetilde{M} \to M$ be a finite-fold covering. Let $\widetilde{x}_0 \in \widetilde{M}$ and $x_0=\pi\left(\widetilde{x}_0 \right)$. Denote by $\varphi: \R^2 \to \R^2 / \Z^2 = \T^2$ the natural covering. There are two closed paths $\om_1, \om_2: \left[0,1 \right]\to M$ given by
\begin{equation*}
\begin{split}
\om_1\left(t \right) = \varphi\left(t, 0 \right) x_0,~
\om_2\left(t \right) = \varphi\left(0, t \right) x_0.
\end{split}
\end{equation*} 
There are  lifts of these paths, i.e. maps $\widetilde{\om}_1 , \widetilde{\om}_2: \left[0,1 \right] \to\widetilde{M}$ such that
\begin{equation*}
\begin{split}
\widetilde{\om}_1\left(0 \right)= \widetilde{\om}_2\left(0 \right)=\widetilde{x}_0,~ \\
\pi\left( \widetilde{\om}_1\left(t \right)\right)  = \om_1\left(t\right),\\
\pi\left( \widetilde{\om}_2\left(t \right)\right)  = \om_2\left(t\right).
\end{split}
\end{equation*}
Since $\pi$ is a finite-fold covering there are $N_1, N_2 \in \N$ such that if 
$$
\gamma_1\left(t \right) = \varphi\left(N_1t, 0 \right) x_0,~
\gamma_2\left(t \right) = \varphi\left(0, N_2t \right) x_0.
$$
and $\widetilde{\gamma}_1$ (resp. $\widetilde{\gamma}_2$) is the lift of $\gamma_1$ (resp. $\gamma_2$) then both $\widetilde{\gamma}_1$, $\widetilde{\gamma}_2$ are closed. Let us select minimal positive values of $N_1, N_2$. If $\text{pr}_n: S^1 \to S^1$ is an $n$ listed covering and $\text{pr}_{N_1, N_2}$ the covering given by
$$
\widetilde{\T}^2 = S^1 \times S^1 \xrightarrow{\text{pr}_{N_1}\times\text{pr}_{N_2}} \to S^1 \times S^1 = \T^2
$$
then there is the action $\widetilde{\T}^2 \times \widetilde{M} \to  \widetilde{M}$ such that

  	  	\begin{tikzpicture}
  	  	\matrix (m) [matrix of math nodes,row sep=3em,column sep=4em,minimum width=2em]
  	  	{
\widetilde{\T}^2 \times \widetilde{M}  	&  	 & \widetilde{M} \\
\T^2. \times M 	 & 	 & M   \\};
  	  	\path[-stealth]
  	  	(m-1-1) edge node [above] {$ \ $} (m-1-3)
  	  	(m-1-1) edge node [right] {$\mathrm{pr}_{N_1N_2} \times \pi $} (m-2-1)
  	  	(m-1-3) edge node [right] {$\pi$} (m-2-3)
    	(m-2-1) edge node [above] {$ \ $} (m-2-3);
    	  
  	  	\end{tikzpicture}

 where  $\widetilde{\T}^2 \approx \T^2$.
Let $\widetilde{p} = \left( \widetilde{p}_1, \widetilde{p}_2\right) $ be the generator of the associated with $\widetilde{\T}^2$ two-parameters group $\widetilde{U}\left(s \right) $
 so that
 \begin{equation*}
\widetilde{U}\left(s \right) = \exp\left( i\left( s_1 \widetilde{p}_1 + s_2 \widetilde{p}_2\right)\right).
 \end{equation*}	
 The covering $\widetilde{M} \to M$ induces an involutive injective homomorphism
 \begin{equation*}
 \varphi:\Coo\left(M \right)\hookto\Coo\left( \widetilde{M} \right).
  \end{equation*}
 
 Since $\widetilde{M} \to M$ is a covering $\Coo\left( \widetilde{M} \right)$ is a finitely generated projective $\Coo\left( M \right)$-module, i.e. there is the following direct sum of $\Coo\left( \widetilde{M} \right)$-modules
  \begin{equation}\label{isosectral_proj_eqn}
\Coo\left( \widetilde{M} \right) \bigoplus P = \Coo\left( M \right)^n
 \end{equation}
 such that
  \begin{equation*}
 \varphi\left(\Coo\left(M \right) \right)_{n_1,n_2} \subset \Coo\left( \widetilde{M} \right)_{n_1N_1,~ n_2N_2}.
  \end{equation*}

Let $\th, \widetilde{\th} \in \R$ be such that
$$
\widetilde{\th}= \frac{\th +  n}{N_1N_2}, \text{ where }n \in \Z.
$$
If $\lambda= e^{2\pi i \th}$, $\widetilde{\lambda}= e^{2\pi i \widetilde{\th}}$ then
$
 \lambda = \widetilde{\lambda}^{N_1N_2}.
$
There are isospectral deformations $\Coo\left(M_\th \right), \Coo\left( \widetilde{M}_{\widetilde{\th}} \right)$ and $\C$-linear isomorphisms
$l:\Coo\left(M \right) \to \Coo\left(M_\th \right)$, $\widetilde{l}:\Coo\left( \widetilde{M} \right) \to \Coo\left( \widetilde{M}_{\widetilde{\th}} \right)$.
These isomorphisms and the inclusion $\varphi$ induce the inclusion
\begin{equation*}
\begin{split}
\varphi_\th:\Coo\left(M_\th \right)\to\Coo\left( \widetilde{M}_{\widetilde{\th}} \right),
\\
 \varphi_{\widetilde{\th}}\left(\Coo\left(M_\th \right) \right)_{n_1,n_2} \subset \Coo\left( \widetilde{M}_{\widetilde{\th}} \right)_{n_1N_1,~ n_2N_2}.
 \end{split}
  \end{equation*}
 
From \eqref{isosectral_proj_eqn} it follows that
\begin{equation*}
\begin{split}
\widetilde{l}\left( \Coo\left( \widetilde{M} \right)\right)  \bigoplus l\left( P \right) = l\left( \Coo\left( M \right)\right) ^n, \\
\text{or equivalently }\Coo\left( \widetilde{M}_{\widetilde{\th}} \right) \bigoplus l\left(P \right) = \Coo\left(M_\th \right)^n,
\end{split}
\end{equation*}
i.e. $\Coo\left( \widetilde{M}_{\widetilde{\th}} \right)$ is a finitely generated projective $\Coo\left(M_\th \right)$ module. There is a projection $p \in \mathbb{M}_n\left(\Coo\left(M_\th \right)\right)$ such that
$$
\Coo\left( \widetilde{M}_{\widetilde{\th}} \right) = p \Coo\left(M_\th \right)^n
$$ 
If $C\left(  \widetilde{M}_{\widetilde{\th}} \right)$ (resp. $C\left(M_\th \right)$ ) is the operator norm completion of  $\Coo\left( \widetilde{M}_{\widetilde{\th}} \right)$ (resp. $\Coo\left(M_\th \right)$ ) then
$$
C\left( \widetilde{M}_{\widetilde{\th}} \right) = p C\left(M_\th \right)^n,
$$ 
i.e. $C\left(  \widetilde{M}_{\widetilde{\th}} \right)$ is a finitely generated projective $C\left( M_\th \right)$ module.
Denote by $G = G\left(\widetilde{M}~|~M \right)$ the group of covering transformations.   Since $\widetilde{l}$ is a $\C$-linear isomorphism the action of $G$ on $\Coo\left( \widetilde{M} \right)$ induces a $\C$-linear action  $G \times \Coo\left( \widetilde{M}_{ \widetilde{\th}} \right)  \to \Coo\left( \widetilde{M}_{ \widetilde{\th}} \right)$. According to the definition of the action of $\widetilde{\T}^2$ on $\widetilde{M}$ it follows that the action of $G$ commutes with the action of $\widetilde{\T}^2$.
It turns out
$$
g \Coo\left( \widetilde{M} \right)_{n_1,n_2} = \Coo\left( \widetilde{M} \right)_{n_1,n_2}
$$
for any $n_1, n_2 \in \Z$ and $g \in G$.
If $\widetilde{a} \in \Coo\left( \widetilde{M} \right)_{n_1,n_2}$, $\widetilde{b} \in \Coo\left( \widetilde{M} \right)_{n'_1,n'_2}$ then  $g\left( \widetilde{a}\widetilde{b}\right)= \left(g\widetilde{a} \right) \left(g\widetilde{b} \right)\in \Coo\left( \widetilde{M} \right)_{n_1+n'_1,n_2+n'_2} $. One has
\begin{equation*}
\begin{split}
\widetilde{l}\left(\widetilde{a}\right)\widetilde{l}\left(\widetilde{b}\right)= \widetilde{\la}^{n'_1n_2}\widetilde{l}\left(\widetilde{a}\widetilde{b}\right), \\
\widetilde{\la}^{n_2\widetilde{p}_1}l\left( \widetilde{b}\right) = \widetilde{\la}^{n'_1n_2}l\left( \widetilde{b}\right) \widetilde{\la}^{n_2\widetilde{p}_1},\\
\widetilde{l}\left(g \widetilde{a}\right)\widetilde{l}\left(g \widetilde{b}\right)= g \widetilde{a}\widetilde{\la}^{n_2\widetilde{p}_1}g \widetilde{b}\widetilde{\la}^{n'_2\widetilde{p}_1}= \widetilde{\la}^{n'_1n_2} g\left(\widetilde{a}\widetilde{b} \right) \widetilde{\la}^{\left( n_2+n_2'\right) \widetilde{p}_1}.
\end{split}
\end{equation*}
On the other hand
\begin{equation*}
\begin{split}
g\left( \widetilde{l}\left(\widetilde{a}\right)\widetilde{l}\left(\widetilde{b}\right)\right) = g\left( \widetilde{\la}^{n'_1n_2}\widetilde{l}\left(\widetilde{a}\widetilde{b}\right)\right)= \widetilde{\la}^{n'_1n_2} g\left(\widetilde{a}\widetilde{b} \right) \widetilde{\la}^{\left( n_2+n_2'\right) \widetilde{p}_1}. 
\end{split}
\end{equation*}
From above equations it turns out
$$
\widetilde{l}\left(g \widetilde{a}\right)\widetilde{l}\left(g \widetilde{b}\right) = g\left( \widetilde{l}\left(\widetilde{a}\right)\widetilde{l}\left(\widetilde{b}\right)\right),
$$
i.e. $g$ corresponds to automorphism of $\Coo\left( \widetilde{M}_{ \widetilde{\th}}\right)$. It turns out that $G$ is the group of automorphisms of $\Coo\left( \widetilde{M}_{ \widetilde{\th}}\right)$. Clearly form $\widetilde{a} \in \Coo\left( \widetilde{M}_{ \widetilde{\th}}\right)_{n_1,n_2}$ it follows that $\widetilde{a}^* \in \Coo\left( \widetilde{M}_{ \widetilde{\th}}\right)_{-n_1,-n_2}$. One has
$$
g\left(\left( \widetilde{l}\left(\widetilde{a}\right)\right)^* \right) =  g\left( \widetilde{\la}^{-n_2\widetilde{p_1}}\widetilde{a}^*\right) =
g \left(\widetilde{\la}^{n_1 n_2} \widetilde{a}^*\widetilde{\la}^{-n_2\widetilde{p_1}}\right) = \widetilde{\la}^{n_1 n_2} g\left(\widetilde{l}\left(\widetilde{a}^* \right)  \right). 
$$
On the other hand
$$
\left(g \widetilde{l}\left(\widetilde{a}\right) \right)^*= \left(\left( g \widetilde{a}\right)\widetilde{\la}^{n_2\widetilde{p_1}}  \right)^*=\widetilde{\la}^{-n_2\widetilde{p_1}}\left(ga^* \right) = \widetilde{\la}^{n_1 n_2}\left(ga^* \widetilde{\la}^{-n_2\widetilde{p_1}}\right)= \widetilde{\la}^{n_1 n_2} g\left(\widetilde{l}\left(\widetilde{a}^* \right)  \right),
$$
i.e. $g\left(\left( \widetilde{l}\left(\widetilde{a}\right)\right)^* \right) = \left(g \widetilde{l}\left(\widetilde{a}\right) \right)^*$.
It follows that $g$ corresponds to the involutive automorphism of $\Coo\left( \widetilde{M}_{ \widetilde{\th}}\right)$. Since  $\Coo\left( \widetilde{M}_{ \widetilde{\th}}\right)$ is dense in $C\left( \widetilde{M}_{ \widetilde{\th}}\right)$ there is the unique involutive action $G \times C\left( \widetilde{M}_{ \widetilde{\th}}\right) \to C\left( \widetilde{M}_{ \widetilde{\th}}\right)$. From the above construction it turns out the following theorem.
\begin{thm}\label{isospectral_fin_thm}
	The triple $\left( C\left(M_\th\right), C\left( \widetilde{M}_{ \widetilde{\th}}\right), G\left(\widetilde{M}~|~ M \right)\right)   $ is an unital noncommutative finite-fold  covering.
	\end{thm}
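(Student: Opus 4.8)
The plan is to verify directly the three conditions in Definition \ref{fin_def_uni}, since the theorem asserts that $\left( C\left(M_\th\right), C\left( \widetilde{M}_{ \widetilde{\th}}\right), G\right)$ is an \emph{unital} noncommutative finite-fold covering and all algebras involved are unital (they are operator-norm completions of unital $*$-algebras of the form $l\left(\Coo\left(M\right)\right)$, which contains the identity operator). Most of the technical groundwork has already been laid out in Section \ref{isosectral_fin_cov}: we constructed the covering $\widetilde{\T}^2 \to \T^2$ compatible with $\pi:\widetilde M \to M$, the $\C$-linear deformation isomorphisms $l$ and $\widetilde l$, the inclusion $\varphi_{\widetilde\th}:\Coo\left(M_\th\right)\hookto\Coo\left(\widetilde M_{\widetilde\th}\right)$ respecting bidegrees, and the projection $p \in \mathbb{M}_n\left(\Coo\left(M_\th\right)\right)$ witnessing that $\Coo\left(\widetilde M_{\widetilde\th}\right) = p\,\Coo\left(M_\th\right)^n$. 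So the bulk of the proof is assembling these facts.

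First I would record that $\varphi_{\widetilde\th}$ extends to an injective $*$-homomorphism $C\left(M_\th\right)\hookto C\left(\widetilde M_{\widetilde\th}\right)$ of $C^*$-algebras; injectivity on the completions follows because $\varphi$ is injective on $\Coo\left(M\right)$ and the operator norms are faithful on the dense subalgebras. Second, I would establish the involutive non-degenerate action of $G = G\left(\widetilde M ~|~ M\right)$ on $C\left(\widetilde M_{\widetilde\th}\right)$: the computations already carried out in the excerpt show $g\left(\widetilde l(\widetilde a)\,\widetilde l(\widetilde b)\right) = \widetilde l(g\widetilde a)\,\widetilde l(g\widetilde b)$ and $g\bigl(\left(\widetilde l(\widetilde a)\right)^*\bigr) = \left(g\,\widetilde l(\widetilde a)\right)^*$ on homogeneous elements, hence by $\C$-linearity and density $G$ acts by $*$-automorphisms on $C\left(\widetilde M_{\widetilde\th}\right)$; non-degeneracy is inherited from the free (hence non-degenerate) action of $G$ on the commutative algebra $\Coo\left(\widetilde M\right)$, since $g\widetilde a \ne \widetilde a$ for some $\widetilde a$ forces $g\,\widetilde l(\widetilde a)\ne \widetilde l(\widetilde a)$ as $\widetilde l$ is a linear bijection.

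Third, I would identify the fixed-point algebra: $C\left(\widetilde M_{\widetilde\th}\right)^G \cong C\left(M_\th\right)$. Here I would argue that the conditional expectation $E = \frac{1}{|G|}\sum_{g\in G} g$ on $C\left(\widetilde M_{\widetilde\th}\right)$ restricts on each homogeneous subspace $\Coo\left(\widetilde M\right)_{n_1,n_2}$ to the classical averaging, whose image is exactly $\varphi\left(\Coo\left(M\right)\right)_{n_1,n_2}$ (this is the content of the covering being regular with group $G$, i.e. $M = \widetilde M / G$); transporting through $\widetilde l$ and $l$ and taking closures gives $C\left(\widetilde M_{\widetilde\th}\right)^G = \varphi_{\widetilde\th}\left(C\left(M_\th\right)\right)$. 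Finally, for the Hilbert-module/projectivity condition I would invoke the chain already displayed in the excerpt: from $\Coo\left(\widetilde M\right)\oplus P = \Coo\left(M\right)^n$ and the bidegree-preserving property one gets $\Coo\left(\widetilde M_{\widetilde\th}\right)\oplus l(P) = \Coo\left(M_\th\right)^n$, hence a projection $p\in\mathbb{M}_n\left(\Coo\left(M_\th\right)\right)$ with $\Coo\left(\widetilde M_{\widetilde\th}\right) = p\Coo\left(M_\th\right)^n$, which passes to completions to give $C\left(\widetilde M_{\widetilde\th}\right) = pC\left(M_\th\right)^n$; this exhibits $C\left(\widetilde M_{\widetilde\th}\right)$ as a finitely generated projective $C\left(M_\th\right)$-module, and one checks that the module $A$-valued inner product it carries coincides with $\left\langle a,b\right\rangle_{\widetilde A} = \sum_{g\in G} g(a^*b)$ up to the normalization implicit in $E$.

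The step I expect to be the main obstacle is the identification $C\left(\widetilde M_{\widetilde\th}\right)^G \cong C\left(M_\th\right)$ together with the verification that the projective module structure is the one \emph{induced by the covering} in the sense of Definition \ref{hilbert_product_defn}. The subtlety is that the deformation twists $l$ and $\widetilde l$ are only $\C$-linear, not algebra maps, so one must be careful that averaging over $G$ — an operation defined using the twisted products — still matches, bidegree by bidegree, the classical averaging for which $M=\widetilde M/G$ is available; the saving grace is that $G$ acts trivially on the bigrading, so $G$ commutes with the twisting in the precise sense needed, reducing everything to the undeformed statement $\Coo\left(\widetilde M\right)^G = \varphi\left(\Coo\left(M\right)\right)$ on each homogeneous component. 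Once this compatibility is nailed down, the remaining checks are routine density and continuity arguments.
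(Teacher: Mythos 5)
Your proposal is correct and follows essentially the same route as the paper: the paper's ``proof'' is precisely the construction of Section \ref{isosectral_fin_cov} (the compatible action of $\widetilde{\T}^2$, the bidegree-preserving inclusion $\varphi_{\widetilde\th}$, the projection $p$ giving $C\left( \widetilde{M}_{\widetilde\th}\right) = pC\left(M_\th\right)^n$, and the extension of the $G$-action to involutive automorphisms of the completion), and you assemble exactly those ingredients against Definition \ref{fin_def_uni}. Your explicit bidegree-by-bidegree identification of the fixed-point algebra $C\left( \widetilde{M}_{\widetilde\th}\right)^G \cong C\left(M_\th\right)$ — using that $G$ acts trivially on the bigrading and hence commutes with the twist — addresses the one condition of Definition \ref{fin_def_uni} that the paper leaves implicit, so it is a welcome completion of the argument rather than a departure from it.
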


\subsection{Infinite coverings}
\paragraph{} Let $\mathfrak{S}_M =\left\{M = M^0 \leftarrow M^1 \leftarrow ... \leftarrow M^n \leftarrow ... \right\} \in \mathfrak{FinTop}$ be an infinite sequence of spin  - manifolds and regular finite-fold covering. Suppose that there is the action $\T^2 \times M \to M$ given by \eqref{isospectral_sym_eqn}. From the Theorem \ref{isospectral_fin_thm} it follows that there is the  algebraical  finite covering sequence
\begin{equation*}\label{isospectral_sequence_eqn}
\mathfrak{S}_{C\left(M_\th \right) }  = \left\{C\left(M_\th \right)\to ... \to C\left(M^n_{\th_n} \right)\to ...\right\}.
\end{equation*}

So one can calculate a finite noncommutative limit of the above sequence. This article does not contain detailed properties of this noncommutative limit, because it is not known yet by the author of this article.

\section*{Acknowledgment} 
\paragraph*{}Author would like to acknowledge members of the Moscow State University Seminar ``Algebras in analysis''
leaded by professor A. Ya. Helemskii and others
for a discussion of this work.

\end{document}